\documentclass[12pt,openany]{amsbook}
\usepackage{setspace}
\usepackage{fullpage}
\usepackage{amsmath}
\usepackage{amssymb}
\usepackage{amsbsy}
\usepackage{amscd}
\usepackage{eucal}

\usepackage{palatino}
\usepackage{mathpazo}

\hyphenpenalty=8000
  \tolerance=1000

\newenvironment{artmatrix}{%
   \left[%
   \hskip -\arraycolsep
   \begin{array}{c|c*{20}{cc}}
   }{%
   \end{array}%
   \hskip -\arraycolsep
   \right]
}

\setlength{\textwidth}{6.00in} \setlength{\oddsidemargin}{.5in}
\setlength{\evensidemargin}{.5in}

\newcommand{\rank}{\operatorname{rank}}
\newcommand{\ceil}{\operatorname{ceil}}

\newtheorem{thm}{Theorem}[chapter]
\newtheorem{prop}[thm]{Proposition}
\newtheorem{lem}[thm]{Lemma}
\newtheorem{cor}[thm]{Corollary}
\newtheorem{deff}[thm]{Definition}

\newtheorem{examp}[thm]{Example}

\begin{document}
\frontmatter
\renewcommand{\thepage}{\roman{page}}
\thispagestyle{empty}
\begin{center}
\ \vspace{2cm}

{\large {\bf SOME NEW NON-UNIMODAL LEVEL ALGEBRAS
}}\\
\vspace{.5cm}
{\small A dissertation}\\
{\small submitted by}\\
\vspace{.5cm}
{\large Arthur Jay Weiss}\\
\vspace{.5cm}
{\small In partial fulfillment of the requirements}\\
{\small for the degree of}\\
\vspace{.5cm}
{\large Doctor of Philosophy}\\
{\small in}\\
{\large Mathematics}\\
\vspace{1cm}
{\large TUFTS UNIVERSITY}\\
\vspace{1cm}
February, 2007\\
\vspace{.5cm}
\copyright \; 2007, \; ARTHUR JAY WEISS\\
\vspace{.5cm}
ADVISOR: George McNinch
\end{center}
\pagebreak

\chapter*{Abstract}

In 2005, building on his own recent work and that of F. Zanello, A.
Iarrobino discovered some constructions that, he conjectured, would
yield level algebras with non-unimodal Hilbert functions.  This
thesis provides proofs of non-unimodality for Iarrobino's level
algebras, as well as for other level algebras that the author
has constructed along similar lines.\\

The key technical contribution is to extend some results published
by Iarrobino in 1984.  Iarrobino's results provide insight into some
naturally arising  vector subspaces of the vector space $R_d$ of
forms of fixed degree in a polynomial ring in several variables.  In
this thesis, the problem is approached by combinatorial methods and
results similar to Iarrobino's are proved for a different class of
vector subspaces of
$R_d$.\\

The combinatorial methods involve the definition of a new class of
matrices called \emph{L-Matrices}, which have useful properties that
are inherited by their submatrices.   A particular class of square
L-Matrices, associated with some specialized partially ordered sets
having interesting combinatorial properties, is identified. For this
class of L-Matrices, necessary and sufficient conditions are given
that they be nonsingular.\\

Several larger questions are discussed whose answers are
incrementally improved by the knowledge that the new non-unimodal
level algebras exist.

\chapter*{Acknowledgements}
This work was done under the supervision of Professor Anthony
Iarrobino of Northeastern University and Professor George McNinch of
Tufts University. The author wishes to express
his sincere appreciation.\\

The author wishes to thank Professor Fabrizio Zanello for reviewing
an early draft of this work and offering numerous insightful
comments and suggestions.\\

The author wishes to thank Professor Juan Migliore for providing a
preprint of \cite{GHMS1}.\\

The author wishes to thank Emeritus Professor George Leger and
Professor Montserrat Teixidor for their excellent advice throughout
the graduate student period.

\tableofcontents \mainmatter \setcounter{page}{1}
\renewcommand{\thepage}{\arabic{page}}

\ \vspace{5cm}
\begin{center}
\LARGE{ Some New Non-Unimodal Level Algebras}
\end{center}

\chapter{Introduction}

In this section, which is intended to provide an overview, we use
some technical terms without stopping to provide definitions.  The
definitions can all be found in later sections.\\

For over a century, mathematicians have been investigating Hilbert functions of
(standard) graded quotients of polynomial rings, and the subject is still a focus of
active study. In particular, among the graded quotients are the Gorenstein Artinian
graded algebras, which arise in various contexts. R. Stanley defined a
generalization of this class, the class of level algebras, that is useful for
studying Gorenstein Artinian graded algebras, but is also interesting in its own
right.\\

The general question for Hilbert functions of level algebras, that is, what
sequences could be their Hilbert functions, is the subject of the recent paper
\cite{GHMS1}, whose introduction provides an excellent history of work that has been
done in this direction to date. Most of that work proceeds in different directions
from what is done in this
thesis.\\

Here, we focus on a property called \emph{unimodality} that Hilbert functions of
level algebras sometimes have. In studying unimodality of level algebras, it is
usual to classify them by codimension and type; and one can ask whether it is
possible for a level algebra of some particular codimension and type to be
non-unimodal. The following is a summary of the history so far, for which the author
is indebted to A. Iarrobino, F. Zanello,
and \cite{BI1}.\\

In codimensions 1 and 2, level algebras of all types are necessarily
unimodal.  The level algebras of codimension 1 are sufficiently
simple that this is easy.  The investigations in codimension 2 were
performed by F. S. Macaulay in \cite{Mac1} and \cite{Mac3}, written
in the first several decades of the
twentieth century .\\

The next step was in showing that Gorenstein Artinian graded
algebras in codimension 3 are necessarily unimodal.  This was done
by R. Stanley in \cite{S1}, although it was D. Buchsbaum and D.
Eisenbud who first determined the actual Hilbert functions in
\cite{BE1}. In \cite{S2}, Stanley also demonstrated
a level algebra in codimension 13 that was not unimodal. \\

The next progress was accomplished in \cite{BI1} by D. Bernstein and
A. Iarrobino, who showed that a non-unimodal Gorenstein Artinian
algebra could be found in
codimension 5 and in any higher codimension.\\

Meanwhile, groundwork was being laid for further progress.   In particular, we note
the work of J. Emsalem and A. Iarrobino in \cite{EI1}, which contained some basic
concepts underlying the investigation of catalecticants by A. Iarrobino in
\cite{Iar1} and differently by R. Froberg and D. Laksov in \cite{FL1}.
Investigations of non-unimodality in Gorenstein Artinian graded algebras were
conducted in \cite{B1} by M. Boij, and by M. Boij and D. Laksov in
\cite{BL1}.\\

In 2005, F. Zanello published the first non-unimodal level algebra
in codimension 3 in \cite{Z1}.  Its type is 28.  Later that year, A.
Iarrobino used the same general idea to produce a level algebra in
codimension 3 of type 5 that, he conjectured, would prove to be
non-unimodal, as well as showing how to perform a similar
construction for any type higher than 5.  Iarrobino also suggested
methods for codimension 4 that, he conjectured, would produce
non-unimodal level algebras.  It is his construction in codimension
3, as well as some constructions in codimensions 3, 4, and 5 that
proceed along lines suggested by his work, that are analyzed in this
thesis, and shown to be non-unimodal.\\

\begin{table}[!h]
\begin{center}
\begin{tabular}{|c|c|c|c|c|c|c|}
  \hline
  $r$ $\backslash$ $t$ & 1 & 2 & 3 & 4 & 5 & \dots \\ \hline
  1 & yes & yes & yes & yes & yes & yes\\ \hline
  2 & yes & yes & yes & yes & yes & yes \\ \hline
  3 & yes & ? & ? & ? & no & no \\ \hline
  4 & ? & ? & no & no & no & no \\ \hline
  5 & no & no & no & no & no & no \\ \hline
  \vdots & no & no & no & no & no & no \\
  \hline
\end{tabular}
\vspace{\baselineskip} \caption{Is a Level Algebra of Codimension
$r$ and Type $t$ Necessarily Unimodal?}
\end{center}
\end{table}

As discussed in a later chapter, with a few additional observations
we will be able to summarize the current state of knowledge as
follows. Necessarily Unimodal: Codimensions 1 and 2 of all types,
codimension 3 of type 1. Non-unimodals exist: Codimension 3, of
types 5 and greater; codimension 4, of types 3 and greater;
codimension 5 and greater, of all types. Unknown: Codimension 3,
types 2, 3, and 4;
codimension 4, types 1 and 2.\\

Among the classes listed as unknown, some useful progress has been
made. In particular, we note \cite{IS1}.

\chapter{Algebraic Preliminaries}

\section{Level Algebras} We fix $k$, an algebraically closed field of
characteristic 0.  Throughout this work, it will be implicitly
assumed that all our vector spaces are over the field $k$.\\

Let $R$ be the polynomial ring over $k$ in $r$ variables: $R := k[X_1,...,X_r]$. $R$
can be written as a direct sum $R = \bigoplus_{d \ge 0}R_d$, where the subspaces
$R_d$ consist of all homogeneous polynomials (forms) in $R$ of degree $d$.  For
every $d$, $R_d$ is a finite-dimensional vector space, of dimension
$\genfrac(){0cm}{0}{d+r-1}{r-1}$.  One basis of $R_d$ consists of all monomials of
degree $d$.  By way of notation, let $D := (d_1,...,d_r)$ be any $r$-tuple of
non-negative integers such that $d_1 + ... + d_r = d$.  Then $D$ determines a
monomial $X^D := X_1^{d_1} \cdots X_r^{d_r}$ of degree $d$, and monomials of degree
$d$ are indexed by the $r$-tuples $D$.  $D$ is
sometimes called a \emph{multi-index} of \emph{dimension} $r$ and \emph{degree} $d$.\\

When considering the monomials $X^D$ of $R$, we sometimes use
\emph{lexicographic ordering}, defined as follows.  For two
different multi-indexes $C := (c_1,...,c_r)$ and $D :=
(d_1,...,d_r)$, we say \emph{$C$ comes before $D$} if, in the
leftmost co-ordinate for which $c_i \neq d_i$, $c_i > d_i$.  In this
case, we write $C > D$. By extension, we place an ordering on the
monomials of $R$: $X^C
> X^D \Leftrightarrow C > D$.  In this definition, there is no
requirement that $X^C$ and $X^D$ be monomials of the same degree.
When listing all monomials of a fixed degree $d$, to say that they
are listed \emph{lexicographically} means that the listing is
according to lexicographical order. For example, if $R =
k[X_1,X_2,X_3]$, we list the monomials of degree 2 lexicographically
as follows: $X_1^2, X_1X_2, X_1X_3, X_2^2,
X_2X_3, X_3^2$.\\

If $C := (c_1,...,c_r)$ and $D := (d_1,...,d_r)$ are two
multi-indexes, we define their addition and subtraction
co-ordinatewise.  That is, $C + D := (c_1 + d_1,...,c_r+ d_r)$, and
$C- D := (c_1 - d_1,...,c_r- d_r)$.\\

The direct-sum decomposition of $R = \bigoplus_{d \ge 0}R_d$ makes
it a graded $R$-module, graded by total degree, since for
non-negative integers $d$ and $e$, $R_dR_e \subseteq R_{d+e}$.\\

Let $I = \bigoplus_{d \ge 0}I_d \subseteq R$ be a homogeneous ideal
of $R$, where $I_d$ consists of all forms in $I$ of degree $d$. We
form the quotient ring $A := R/I$, which is a $k$-algebra.  The
direct-sum decomposition $A = \bigoplus_{d \ge 0}A_d$, where $A_d :=
R_d/I_d$, makes $A$ both a graded $k$-algebra and a graded
$R$-module.  In each case, the grading is by total degree.\\

In considering $R$ or its graded quotients by homogeneous ideals,
the only grading we will ever use is the grading by total degree,
which is sometimes called \emph{standard}.  From now on, standard
grading will always be
implicitly assumed.\\

For any graded quotient $A = R/I$, we say $A$ is \emph{Artinian} if
it is finite-dimensional as a vector space. In this case, we write
$A = \bigoplus_{0 \le d \le j}A_d$, where $j$ is the largest integer
for which $A_j$ is nonzero.  In writing such a direct sum
decomposition, we will always assume $A_j$ is nonzero unless
otherwise stated.\\

For an Artinian quotient $A = \bigoplus_{0 \le d \le j}A_d$, we
define soc($A$), the \emph{socle} of $A$, to be the annihilator of
the linear part of $A$:  soc($A) := \{a \in A | aA_1 = 0$\}. Soc(A)
is easily seen to be a homogeneous ideal of $A$.  We remark that
$A_j \subseteq$ soc($A$) since $A_jA_1 \subseteq A_{j+1} = 0$, but
equality need not hold.

\begin{examp} \label{socle_demo}  $A := k[X,Y]/(X^2,XY,Y^3) \simeq k \bigoplus
k\overline{X} \bigoplus k\overline{Y} \bigoplus k\overline{Y}^2$,
\end{examp}
\noindent where we adopt the usual notation that for any $F \in R,
\overline{F}$ denotes the homomorphic image of $F$ in $A = R/I$.
Then $A_j = A_2 = k\overline{Y}^2$, and soc($A$)= $k\overline{X}
\bigoplus k\overline{Y}^2$.  \\

An Artinian quotient $A = \bigoplus_{0 \le d \le j}A_d$ is said to
be \emph{level} if $A_j$ = soc($A$), and in this case we call $j$
the \emph{socle degree} of $A$, and we call $t := \dim_kA_j$ the
\emph{type} of $A$.  If the type $t = 1$, we say the level algebra
$A$ is \emph{Gorenstein}.\\

The following lemma provides an equivalent condition for a graded
Artinian quotient $A = R/I$ to be level.

\begin{lem} \label{level_char}
The graded Artinian quotient $R/I = A = \bigoplus_{0 \le d \le
j}A_d$ is level if and only if
\begin{equation} \label{level_imp}
\text{For all }d < j, F \in R_d - I_d \Rightarrow R_{j-d}F
\nsubseteq I_j.
\end{equation}
\end{lem}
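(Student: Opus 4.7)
The plan is to translate both sides into statements about $A = R/I$. Condition (\ref{level_imp}) says that for every $d < j$ and every nonzero $\overline{F} \in A_d$, the product $\overline{F} \cdot A_{j-d}$ is nonzero in $A_j$, while ``$A$ is level'' says that every homogeneous element of $\operatorname{soc}(A)$ lies in $A_j$. I will use throughout that $R$, and hence $A$, is generated in degree 1, so $A_{e+1} = A_1 \cdot A_e$ for every $e \ge 1$; this gives the handy consequence that $\overline{F} \cdot A_1 = 0$ forces $\overline{F} \cdot A_e = 0$ for all $e \ge 1$.

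The easy implication is (\ref{level_imp}) $\Rightarrow$ level, which I would prove by contrapositive. If $A$ is not level, the homogeneous ideal $\operatorname{soc}(A)$ contains some nonzero homogeneous element $\overline{F} \in A_d$ with $d < j$. Since $\overline{F} \cdot A_1 = 0$, the preceding consequence gives $\overline{F} \cdot A_{j-d} = 0$; lifting, we obtain an $F \in R_d \setminus I_d$ with $R_{j-d}F \subseteq I_j$, violating (\ref{level_imp}).

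The forward direction is the substantive one, and the main obstacle lies here: the hypothesis $R_{j-d}F \subseteq I_j$ only tells us the \emph{top} component of the ideal $(\overline{F}) \subseteq A$ vanishes, not that $\overline{F}$ itself lies in the socle. My approach is to walk up the principal ideal $(\overline{F})$ to extract an actual socle element. Since $A$ is Artinian and $(\overline{F})$ is a nonzero homogeneous ideal, it has a largest nonzero degree $d'$; the hypothesis $\overline{F} \cdot A_{j-d} = 0$ forces $d' < j$. Pick any nonzero homogeneous $\overline{H} \in (\overline{F})$ of degree $d'$. The product $\overline{H} \cdot A_1$ sits inside $(\overline{F}) \cap A_{d'+1}$, which vanishes either by maximality of $d'$ when $d'+1 < j$, or by the standing hypothesis (applied to $\overline{F} \cdot A_{j-d}$) when $d'+1 = j$. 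Hence $\overline{H}$ is a nonzero homogeneous socle element of degree $d' < j$, contradicting levelness of $A$. Producing $\overline{H}$ by passing to the top of the principal ideal $(\overline{F})$, rather than arguing directly about $\overline{F}$, is the key move.
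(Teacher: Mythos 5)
Your proof is correct. The direction \eqref{level_imp} $\Rightarrow$ level is the paper's argument phrased contrapositively: a nonzero homogeneous socle element of degree $d<j$ is killed by $A_1$, hence (since $A$ is generated in degree one) by $A_{j-d}$, which lifts to $R_{j-d}F \subseteq I_j$. For the converse, the paper proves the one-step statement that $F \in R_d - I_d$ implies $R_1F \nsubseteq I_{d+1}$ (immediate from $\overline{F}\notin \operatorname{soc}(A)$) and iterates it $j-d$ times, explicitly producing linear forms with $L_{j-d}\cdots L_1F \notin I_j$; you instead argue by contradiction, passing to the largest nonzero degree $d'$ of the principal ideal $(\overline{F})$ and observing that any nonzero element there is a socle element of degree $d'<j$. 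The two routes are logically very close---both ultimately rest on the single fact that in a level algebra a nonzero homogeneous element of degree below $j$ admits a linear form not annihilating it---but yours is an extremal argument where the paper's is a forward iteration: yours avoids bookkeeping a chain of linear forms, while the paper's is constructive and isolates the reusable one-step statement \eqref{one_step}. One cosmetic remark: your case split on whether $d'+1<j$ or $d'+1=j$ is unnecessary, since $(\overline{F})_{d'+1}=0$ already follows from the maximality of $d'$ in either case.
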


\begin{proof}
Assume \eqref{level_imp} holds.  Let $\overline{F} \in A_d$ be
nonzero, where $F \in R_d - I_d$ and $d < j$.  We must show
$\overline{F} \notin$ soc($A$).  Reasoning by contradiction, if
$\overline{F} \in$ soc($A$), then $A_1\overline{F} = 0$, and $R_1F
\subseteq I_{d+1}$, so $R_{j-d-1}R_1F \subseteq I_j$. That is,
$R_{j-d}F \subseteq I_j$, contradicting
\eqref{level_imp}.\\

Conversely, assume that $A$ is level, and let $F \in R_d - I_d$ with
$d < j$.  To prove \eqref{level_imp}, it suffices to prove the
following statement, and then iterate $j-d$ times:

\begin{equation} \label{one_step}
\text{For all } d < j, F \in R_d - I_d \Rightarrow R_1F \nsubseteq
I_{d+1}.
\end{equation}

To prove \eqref{one_step}:  Since $\overline{F} \notin$ soc($A$),
there exists some $L \in R_1$ with $\overline{L}\overline{F} \neq
0$, that is $LF \notin I_{d+1}$. This shows $R_1F \nsubseteq
I_{d+1}$.
\end{proof}
\begin{cor} \label{I_deter}
If $A = R/I$ is a level algebra of socle degree $j$, then $I$ is
determined by $I_j$.  More precisely,
\begin{equation*}
\text{For }d < j, I_d = \{F \in R_d | R_{j-d}F \subseteq I_j\}.
\end{equation*}$\hspace{\fill} \square$
\end{cor}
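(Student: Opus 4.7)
The plan is to prove the corollary by establishing the two inclusions separately, relying on Lemma \ref{level_char}. Let me write $J_d := \{F \in R_d \mid R_{j-d}F \subseteq I_j\}$ for the set on the right-hand side.

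The inclusion $I_d \subseteq J_d$ is immediate from the fact that $I$ is an ideal: if $F \in I_d$, then $R_{j-d}F \subseteq I_{(j-d)+d} = I_j$, since homogeneous pieces of ideals behave multiplicatively. No hypothesis on $A$ is needed here.

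For the reverse inclusion $J_d \subseteq I_d$, I would argue by contrapositive and invoke the level hypothesis through Lemma \ref{level_char}. Suppose $F \in R_d$ and $F \notin I_d$. Since $d < j$ and $A$ is level, the implication \eqref{level_imp} of Lemma \ref{level_char} applies and gives $R_{j-d}F \nsubseteq I_j$, so $F \notin J_d$. Contrapositively, $F \in J_d \Rightarrow F \in I_d$.

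Combining the two inclusions yields the claimed equality $I_d = J_d$ for every $d < j$; since $I = \bigoplus_d I_d$ with $I_d = R_d$ for $d > j$ (as $A_d = 0$ there), the entire ideal $I$ is determined by the single subspace $I_j \subseteq R_j$. There is no real obstacle in the argument: the content is entirely absorbed in the preceding lemma, and the corollary is essentially a repackaging of \eqref{level_imp} as a formula for the lower-degree pieces of $I$ in terms of $I_j$.
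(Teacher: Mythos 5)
Your proof is correct and is precisely the argument the paper intends: the corollary is stated without proof as an immediate consequence of Lemma \ref{level_char}, with the forward inclusion coming from $I$ being an ideal and the reverse inclusion being the contrapositive of \eqref{level_imp}. Nothing to add.
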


\section{Polynomials as Differential Operators}

A good reference for the material in this section is \cite{IK1},
Appendix A.\\

Recalling that $k$ is an algebraically closed field of
characteristic 0 and $R := k[X_1,...,X_r]$ is a polynomial ring in
$r$ variables, we define $\mathcal{D}:= k[x_1,...x_r]$, an
isomorphic copy of $R$, where the variables $x_i$ are written in
lower case to distinguish them from the variables of $R$. To
distinguish elements of the two rings, we denote elements of $R$ by
uppercase letters $F,G,...$ and elements of $\mathcal{D}$ by
lowercase letters $f,g,...$.\\

We let $R$ operate on $\mathcal{D}$ according to the rule that, for
$f \in \mathcal{D}$,
\begin{equation*}
X_1 * f = \frac{\partial}{\partial x_1} f, \;\;\;\; X_2 * f =
\frac{\partial}{\partial x_2} f, \;\;\;\; X_1X_2 * f =
\frac{\partial}{\partial x_1} \frac{\partial}{\partial
  x_2}f,
\end{equation*}
and so on, extended by linearity.  We remark that this makes
$\mathcal{D}$ an $R$-module, with scalar multiplication $Ff := F*f$
for $F \in R, f \in \mathcal{D}$.  We say the elements of $R$
\emph{act on $\mathcal{D}$ as differential operators}.  If $F$ is a
homogeneous polynomial of degree $e$, we call $F*f$ an
\emph{$e^{th}$ partial
derivative of $f$}.\\

$\mathcal{D}$ can be written as a direct sum $\bigoplus_{d \ge
0}\mathcal{D}_d$, where the submodules $\mathcal{D}_d$ consist of
all homogeneous polynomials (forms) in $\mathcal{D}$ of degree $d$.
For any $d$, $\mathcal{D}_d$ is a finite-dimensional vector space,
of dimension $\genfrac(){0cm}{0}{d+r-1}{r-1}$. One basis consists of
all monomials of degree $d$.  Analogously with monomials in $R_d$,
we adopt the notation that an $r$-tuple $D := (d_1,...,d_r)$ of
non-negative integers such that $d_1 + ... + d_r = d$ determines the
monomial $x^D := x_1^{d_1} \cdots x_r^{d_r}$. \\

The direct-sum decomposition of $\mathcal{D} = \bigoplus_{d \ge
0}\mathcal{D}_d$ does not make $\mathcal{D}$ a graded $R$-module,
since it obeys a different grading rule:
\begin{equation*}
R_d*\mathcal{D}_e \subseteq \mathcal{D}_{d-e}.
\end{equation*}

However, if we fix a value of $d$ and consider what happens when $R_d$ operates on
$\mathcal{D}_d$, we can view $R_d$ as the dual vector space of $D_d$.  Specifically,
we take as basis of $\mathcal{D}_d$ the set of all monomials $x^D$, where as usual
$D := (d_1,...,d_r)$ and $d_1 + ... + d_r = d$.  Then, setting $D! := d_1! \cdots
d_r!$, we evaluate $X^D*x^D = D!$; and for any other monomial $x^{D'} \in
\mathcal{D}_d$, we evaluate $X^D*x^{D'} = 0$.  In other words,
$X^D/D!$ is the dual vector to $x^D$.\\

Since $R_d$ is dual to $\mathcal{D}_d$, there is a perfect pairing
\begin{equation} \label{pairing}
 R_d \times \mathcal{D}_d \rightarrow k
\end{equation}
and in this context it makes sense to talk about perpendicular
spaces.  Specifically, if $V \subseteq R_d$ is a vector subspace,
$V^{\perp} := \{f \in \mathcal{D}_d | V*f = 0\}$; and if
$\mathcal{W} \subseteq \mathcal{D}_d$ is a vector subspace,
$\mathcal{W}^{\perp} := \{F \in R_d | F*\mathcal{W} = 0\}$.

\section{Matlis Duality}

The material in this section was first considered by F. S. Macaulay
in his work on \emph{inverse systems} in \cite{Mac2}.  For a more
recent
treatment, see \cite{E1} or \cite{G1}.\\

We use the structure described in the previous section to get an
alternative description of what it means for an Artinian graded
algebra $A = R/I$ to be a level algebra.  We begin with a
definition.\\

If $A = R/I$ is a level algebra of socle degree $j$, then we define
$\mathcal{W}_A := I_j^{\perp} \subseteq \mathcal{D}_j$, a vector
subspace. That is,
\begin{equation*}
\mathcal{W}_A := \{ f \in \mathcal{D}_j \; | \; I_j*f = 0\}.
\end{equation*}

We give another characterization of the vector space
$\mathcal{W}_A$.
\begin{lem} \label{WA_char}
$\mathcal{W}_A = \{ f \in \mathcal{D}_j \; | \; I*f = 0\}.$
\end{lem}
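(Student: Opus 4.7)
The plan is to prove the two inclusions separately, with one being trivial and the other using the perfect pairing together with Corollary~\ref{I_deter}.

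First, the easy direction: if $F*f = 0$ for every $F \in I$, then in particular $F*f = 0$ for every $F \in I_j$, so $f \in I_j^{\perp} = \mathcal{W}_A$. This gives the inclusion $\{f \in \mathcal{D}_j \mid I*f = 0\} \subseteq \mathcal{W}_A$ with no work.

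For the reverse inclusion, let $f \in \mathcal{W}_A$. I need to show $F*f = 0$ for every homogeneous $F \in I_d$, for every $d$, and I would split this into three cases by degree. If $d > j$, then $F*f \in \mathcal{D}_{j-d}$, which is the zero space, so there is nothing to show. If $d = j$, then $F*f = 0$ is immediate from the definition of $\mathcal{W}_A = I_j^{\perp}$. The only substantive case is $d < j$, and here the key tool is Corollary~\ref{I_deter}: for $F \in I_d$ with $d < j$, one has $R_{j-d}F \subseteq I_j$. The idea is to test whether $F*f = 0$ inside $\mathcal{D}_{j-d}$ by pairing against $R_{j-d}$: for any $G \in R_{j-d}$,
\begin{equation*}
G*(F*f) = (GF)*f,
\end{equation*}
since the $R$-action on $\mathcal{D}$ by differential operators is just the module action of the commutative ring $R$. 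But $GF \in R_{j-d}F \subseteq I_j$, and $f \in \mathcal{W}_A$, so $(GF)*f = 0$. Thus $G*(F*f) = 0$ for every $G \in R_{j-d}$, and the perfect pairing \eqref{pairing} (in degree $j-d$) forces $F*f = 0$.

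I do not expect any real obstacle. The only thing to double-check is that the action satisfies $(GF)*f = G*(F*f)$, which is immediate from the fact that partial derivatives commute and $\mathcal{D}$ is genuinely an $R$-module under the differential-operator action, as noted in the excerpt. Putting the three degree cases together gives $I_d*f = 0$ for every $d$, hence $I*f = 0$, completing the opposite inclusion $\mathcal{W}_A \subseteq \{f \in \mathcal{D}_j \mid I*f = 0\}$.
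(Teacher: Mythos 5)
Your proof is correct and follows essentially the same route as the paper: the cases $d>j$ and $d=j$ are dispatched identically, and for $d<j$ you pair $F*f$ against all of $R_{j-d}$ and use the perfect pairing, which is just the direct (contrapositive-free) version of the paper's argument that a nonzero $g = F*f$ admits $G$ with $G*g=1$, contradicting $GF\in I_j$. The only cosmetic difference is that you cite Corollary~\ref{I_deter} for $R_{j-d}F\subseteq I_j$, where the mere fact that $I$ is an ideal already suffices.
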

\begin{proof}
Assume $I_j*f = 0$, where $f \in \mathcal{D}_j$. We must show that
$I_d*f = 0$ for all $d$.  This is surely true when $d > j$, and it
is true when $d = j$ by hypothesis.  If $d < j$, we argue by
contradiction.  Suppose, for $F \in I_d$, $F*f = g \in
\mathcal{D}_{j-d}$ and $g \neq 0$.  Then, recalling that $R_{j-d}$
is dual to $\mathcal{D}_{j-d}$, there is at least one vector $G \in
R_{j-d}$ such that $G*g =1$.  Then $GF \in I_j$ and $GF*f \neq 0$, a
contradiction.
\end{proof}

Since $\mathcal{D}$ is an $R$-module and $\mathcal{W}_A \subseteq
\mathcal{D}$, it is permissible to consider $Ann_R(\mathcal{W}_A)$,
easily seen to be a homogeneous ideal of $R$.  In fact, this
construction just recovers $I$.

\begin{lem} \label{I_char}
Let $A = R/I$ be a level algebra.  Then $Ann_R(\mathcal{W}_A) = I$.
\end{lem}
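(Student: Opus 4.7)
The plan is to prove the two inclusions $I \subseteq \operatorname{Ann}_R(\mathcal{W}_A)$ and $\operatorname{Ann}_R(\mathcal{W}_A) \subseteq I$, the first being essentially a restatement of the definition and the second being the substantive content. Since both sides are homogeneous ideals, I will verify the second inclusion one graded piece at a time, splitting into three cases according to whether $d > j$, $d = j$, or $d < j$.

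The first inclusion is immediate from Lemma \ref{WA_char}: if $F \in I$, then $F * f = 0$ for every $f \in \mathcal{W}_A$, so $F \in \operatorname{Ann}_R(\mathcal{W}_A)$.

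For the reverse inclusion, suppose $F \in R_d$ satisfies $F * \mathcal{W}_A = 0$. When $d > j$, the Artinian hypothesis forces $R_d = I_d$, so $F \in I$ trivially. When $d = j$, the condition $F * \mathcal{W}_A = 0$ says exactly that $F \in \mathcal{W}_A^{\perp} = (I_j^{\perp})^{\perp}$. Because the pairing \eqref{pairing} is perfect and $R_j$, $\mathcal{D}_j$ are finite-dimensional, the double-perpendicular of a subspace is itself, so $F \in I_j \subseteq I$. When $d < j$, I would use Corollary \ref{I_deter}: to place $F$ in $I_d$ it suffices to show $R_{j-d} F \subseteq I_j$. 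So let $G \in R_{j-d}$ be arbitrary; I want to conclude $GF \in I_j$, and by the $d = j$ case already handled it is enough to show $GF * \mathcal{W}_A = 0$. Here I would invoke the associativity of the differential-operator action, $(GF) * f = G * (F * f)$ for $f \in \mathcal{W}_A$, which holds because $\mathcal{D}$ is an $R$-module. Since $F * f = 0$ by hypothesis, this gives $(GF) * f = 0$, completing the argument.

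The only genuinely subtle point is the $d < j$ case, and the key idea there is the reduction back to $d = j$ via Corollary \ref{I_deter}; everything else is either a dimension count or a direct application of the perfect pairing. I do not anticipate any serious obstacle: the structure of the argument is forced once one notices that the level hypothesis (through Corollary \ref{I_deter}) lets one reconstruct $I$ in degrees below $j$ from its top piece $I_j$, exactly matching how $\mathcal{W}_A$ only sees $I_j$ in the first place.
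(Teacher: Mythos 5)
Your proof is correct and follows essentially the same route as the paper's: the cases $d>j$ and $d=j$ are identical, and your direct use of Corollary \ref{I_deter} in the case $d<j$ is just the contrapositive-free version of the paper's argument by contradiction via Lemma \ref{level_char}, which carries the same content. No gaps.
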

\begin{proof}
Since $\mathcal{W}_A := I_j^{\perp}$, $I*\mathcal{W}_A = 0$, so $I
\subseteq Ann_R(\mathcal{W}_A)$. For the other direction, we must
show that, for all $d$,
$[Ann_R(\mathcal{W}_A)]_d \subseteq I_d$.\\

For $d > j$, $[Ann_R(\mathcal{W}_A)]_d = R_d = I_d$.

For $d = j$, $[Ann_R(\mathcal{W}_A)]_j = (I_j^{\perp})^{\perp} =
I_j$, the last equality being true because the pairing in
$\eqref{pairing}$ is perfect.

For $d < j$, we argue by contradiction.  Assume $F \in R_d - I_d$
and $F \in [Ann_R(\mathcal{W}_A)]_d$, so that $F*\mathcal{W}_A = 0.$
Then $R_{j-d}F*\mathcal{W}_A = 0$, and $R_{j-d}F \subseteq
[Ann_R(\mathcal{W}_A)]_j = I_j$. However, by Lemma \ref{level_char},
$R_{j-d}F \nsubseteq I_j$, a contradiction.
\end{proof}

We have defined $\mathcal{W}_A$ to be $I_j^{\perp}$.  We now wish to
characterize $I_d^{\perp}$ for values of $d < j$.

\begin{lem} \label{perp_char}
For $d < j, I_d^{\perp} = R_{j-d}*I_j^{\perp}$
\end{lem}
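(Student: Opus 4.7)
The plan is to establish each inclusion separately, with the non-trivial direction being handled by passing to perpendicular spaces in the perfect pairing $R_d \times \mathcal{D}_d \to k$ and invoking Corollary \ref{I_deter}. Throughout, the key formal fact will be that the $R$-action on $\mathcal{D}$ is associative, i.e.\ $F*(G*f) = (FG)*f$, which follows from the fact that $R$ acts by differential operators and partial derivatives commute.

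For the easy containment $R_{j-d}*I_j^{\perp} \subseteq I_d^{\perp}$, I would take $f \in I_j^{\perp}$ and $G \in R_{j-d}$, and show $G*f \in I_d^{\perp}$. Given any $F \in I_d$, since $I$ is an ideal, $FG \in I_j$, so $F*(G*f) = (FG)*f = 0$ because $f \in I_j^{\perp}$. This shows $I_d*(G*f) = 0$, as required.

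For the reverse containment $I_d^{\perp} \subseteq R_{j-d}*I_j^{\perp}$, I would argue by the order-reversing correspondence under perpendicular spaces: it suffices to prove $(R_{j-d}*I_j^{\perp})^{\perp} \subseteq (I_d^{\perp})^{\perp} = I_d$, where the last equality uses that the pairing in \eqref{pairing} is perfect. So let $F \in R_d$ satisfy $F*(R_{j-d}*I_j^{\perp}) = 0$. Then for every $G \in R_{j-d}$ and every $f \in I_j^{\perp}$, associativity gives $(FG)*f = F*(G*f) = 0$, so $FG \in (I_j^{\perp})^{\perp} = I_j$ by perfectness of the pairing in degree $j$. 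Hence $R_{j-d}F \subseteq I_j$, and Corollary \ref{I_deter} forces $F \in I_d$.

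The main obstacle I anticipate is essentially bookkeeping: making sure the perp operation is applied consistently in the right degree (everything lives in $\mathcal{D}_d$ or its dual $R_d$), and that the passage from $FG \in I_j$ back to $F \in I_d$ is legitimate, which is exactly where the level hypothesis enters through Corollary \ref{I_deter}. Beyond that, the argument is a clean duality computation, so no delicate estimates or case analysis should be needed.
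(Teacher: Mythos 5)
Your proof is correct and follows essentially the same route as the paper's: both rest on the associativity identity $(FG)*f = F*(G*f)$, the perfectness of the pairing to invoke double-perp, and Corollary \ref{I_deter} to pass from $R_{j-d}F \subseteq I_j$ back to $F \in I_d$. The only cosmetic difference is that you split the argument into two inclusions, whereas the paper establishes the single equality $I_d = (R_{j-d}*I_j^{\perp})^{\perp}$ by equating the vanishing sets of the two sides of the associativity identity and then takes perpendiculars once.
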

\begin{proof}
For any $F,G \in R$ and $f \in \mathcal{D}$, we have $FG*f =
F*(G*f)$.  In particular, letting $G$ range through $R_{j-d}$ and
$f$ range through $I_j^{\perp}$, we have
\begin{equation} \label {perp_eq}
\text{For any } F \in R_d, FR_{j-d}*I_j^{\perp} =
F*(R_{j-d}*I_j^{\perp}).
\end{equation}
We can equate the set of those $F \in R_d$ for which the left-hand
side of \eqref{perp_eq} equals 0 with the set for which the
right-hand side equals 0.\\

For the left-hand side, \begin{equation*} \{F \in R_d |
FR_{j-d}*I_j^{\perp} = 0\} = \{F \in R_d | FR_{j-d} \subseteq
(I_j^{\perp})^{\perp} = I_j\} = I_d,
\end{equation*} the last
equality being guaranteed by Corollary \ref{I_deter}. For the
right-hand side, \begin{equation*}\{ F \in R_d |
F*(R_{j-d}*I_j^{\perp})= 0\} =
(R_{j-d}*I_j^{\perp})^{\perp}.\end{equation*}

Thus $I_d = (R_{j-d}*I_j^{\perp})^{\perp}$, so $I_d^{\perp}
=R_{j-d}*I_j^{\perp}$.
\end{proof}
\begin{cor} \label{dim_char}
If $A = R/I$ is a level algebra, then
\begin{equation*} \label {dim_value}
\dim_k[R/I]_d = \dim_kR_{j-d}*I_j^{\perp}.
\end{equation*}
$\hspace{\fill} \square$
\end{cor}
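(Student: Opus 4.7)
The plan is to derive Corollary \ref{dim_char} as a direct consequence of Lemma \ref{perp_char} combined with the dimension-counting that the perfect pairing in \eqref{pairing} provides. The argument is essentially a two-line chain of equalities once the right observations are assembled, so no real obstacle is expected; the only point to watch is making sure the case $d = j$ is covered as well as $d < j$, and that $d \le j$ is the meaningful range (for $d > j$, both sides vanish trivially, so there is nothing to prove).

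First, I would invoke Lemma \ref{perp_char} to write $I_d^{\perp} = R_{j-d} * I_j^{\perp}$ for $d < j$; the case $d = j$ holds tautologically since $R_0 = k$ and $I_j^{\perp} = \mathcal{W}_A$. Taking $k$-dimensions on both sides yields
\begin{equation*}
\dim_k I_d^{\perp} = \dim_k\bigl(R_{j-d} * I_j^{\perp}\bigr).
\end{equation*}

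Next, I would use the perfect pairing $R_d \times \mathcal{D}_d \to k$ of \eqref{pairing}. Because the pairing is perfect, for any subspace $V \subseteq R_d$ one has $\dim_k V + \dim_k V^{\perp} = \dim_k R_d$. Applying this with $V = I_d$ (and using $\dim_k \mathcal{D}_d = \dim_k R_d$) gives
\begin{equation*}
\dim_k I_d^{\perp} = \dim_k R_d - \dim_k I_d = \dim_k[R/I]_d.
\end{equation*}

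Finally, chaining the two displayed equalities produces
\begin{equation*}
\dim_k[R/I]_d = \dim_k I_d^{\perp} = \dim_k\bigl(R_{j-d} * I_j^{\perp}\bigr),
\end{equation*}
which is exactly the claimed identity. The hardest step, such as it is, is simply recognizing that the perfect pairing transfers the quotient dimension $\dim_k[R/I]_d$ to the dual-side dimension $\dim_k I_d^{\perp}$; once that is in hand, Lemma \ref{perp_char} does all the remaining work.
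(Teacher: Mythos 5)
Your proof is correct and follows exactly the route the paper intends: the corollary is stated with only a $\square$ because it is immediate from Lemma \ref{perp_char} together with the dimension count $\dim_k I_d^{\perp} = \dim_k R_d - \dim_k I_d = \dim_k[R/I]_d$ supplied by the perfect pairing \eqref{pairing}. Your handling of the edge cases $d = j$ and $d > j$ is a sensible bit of extra care but changes nothing substantive.
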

So far we have shown that, given a level algebra $A = R/I$, we can
characterize $I$ as the annihilator (in $R$) of a vector subspace
$\mathcal{W}_A \subseteq \mathcal{D}_j$.  We next turn the question
around. Given an arbitrary vector subspace $\mathcal{W} \subseteq
\mathcal{D}_j$, we can define
\begin{equation}
I_{\mathcal{W}} := Ann_R(\mathcal{W}).
\end{equation}

It is easy to see that $I_{\mathcal{W}}$ is a homogeneous ideal.
However, is $R/I_{\mathcal{W}}$ a level algebra?

\begin{lem} \label{Matlis_bij}
Let $\mathcal{W} \subseteq \mathcal{D}_j$ be a vector subspace. Then
$A_{\mathcal{W}} := R/I_{\mathcal{W}} := R/Ann_R(\mathcal{W})$ is a
level algebra.
\end{lem}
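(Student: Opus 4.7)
The plan is to verify the two requirements of a level algebra for $A_{\mathcal{W}}$: that it is Artinian with a well-defined socle degree, and that its top graded piece coincides with its socle. The characterization in Lemma \ref{level_char} reduces the second requirement to a concrete statement about differential operators, which can be handled by the perfect pairing \eqref{pairing}.

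First, I would observe that since $\mathcal{W}\subseteq \mathcal{D}_j$, every element of $\mathcal{W}$ has degree $j$, so $R_d * \mathcal{W} \subseteq \mathcal{D}_{j-d} = 0$ for $d > j$. Hence $R_d \subseteq I_{\mathcal{W}}$ for all $d > j$, which shows $A_{\mathcal{W}} = \bigoplus_{0\le d \le j}(A_{\mathcal{W}})_d$ is finite-dimensional, i.e.\ Artinian. For the socle degree, assume $\mathcal{W} \ne 0$ (the case $\mathcal{W}=0$ gives the zero algebra, which is trivially level by convention). Then $(I_{\mathcal{W}})_j = \mathcal{W}^{\perp}$, so $(A_{\mathcal{W}})_j = R_j/\mathcal{W}^{\perp}$ has dimension $\dim_k \mathcal{W} > 0$ by the perfect pairing, confirming the socle degree equals $j$.

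Next I would verify the level condition using Lemma \ref{level_char}. Take any $d < j$ and any $F \in R_d - (I_{\mathcal{W}})_d$; I need to show $R_{j-d}F \nsubseteq (I_{\mathcal{W}})_j = \mathcal{W}^{\perp}$. By assumption there is some $f \in \mathcal{W}$ with $F * f \ne 0$ in $\mathcal{D}_{j-d}$. By the perfect pairing between $R_{j-d}$ and $\mathcal{D}_{j-d}$, there is a $G \in R_{j-d}$ with $G*(F*f) \ne 0$. Since $(GF)*f = G*(F*f) \ne 0$, we have $GF \notin \mathcal{W}^{\perp}$, so $GF \in R_{j-d}F$ witnesses $R_{j-d}F \nsubseteq (I_{\mathcal{W}})_j$. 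Applying Lemma \ref{level_char} then shows $A_{\mathcal{W}}$ is level.

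There is no real obstacle here: the argument is essentially a direct translation of the definitions using the duality between $R_d$ and $\mathcal{D}_d$. The only thing worth noting carefully is that the socle degree of $A_{\mathcal{W}}$ is precisely $j$ (rather than something smaller), which follows from the nondegeneracy of the pairing applied to $\mathcal{W}$ itself and ensures that the hypothesis $d < j$ in Lemma \ref{level_char} matches the actual top degree of $A_{\mathcal{W}}$.
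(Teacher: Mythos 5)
Your proof is correct and follows essentially the same route as the paper's: establish Artinianness from $R_d\subseteq I_{\mathcal{W}}$ for $d>j$, then verify condition \eqref{level_imp} of Lemma \ref{level_char} by picking $f\in\mathcal{W}$ with $F*f\neq 0$ and using the perfect pairing to find $G\in R_{j-d}$ with $G*(F*f)\neq 0$. Your extra remark pinning down the socle degree as exactly $j$ via nondegeneracy of the pairing is a reasonable addition the paper defers to Theorem \ref{matlis_duality}, but the core argument is identical.
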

\begin{proof}
First of all, $A_{\mathcal{W}}$ is Artinian because, for $d > j,
[I_{\mathcal{W}}]_d = R_d$.  To show $A_{\mathcal{W}}$ is level, by
Lemma \ref{level_char} it is enough to establish that, given $d < j$
and $F \in R_d - [I_{\mathcal{W}}]_d,$ we have $R_{j-d}F \nsubseteq
[I_{\mathcal{W}}]_j.$ Consider such an $F$.  Since $F \notin
I_{\mathcal{W}}$, there is some $f \in \mathcal{W}$ such that $F*f =
g \neq 0$.  $g$ is a nonzero element of  $D_{j-d}$, so there exists
at least one $G \in R_{j-d}$ such that $G*g = 1$.  Thus $GF*f =
G*(F*f)  \neq 0$, and $R_{j-d}F \nsubseteq [I_{\mathcal{W}}]_j$ as
required.
\end{proof}

We are now ready to state another characterization of level
algebras.

\begin{thm}{MATLIS DUALITY.} \label{matlis_duality}
Let $k$ be a field of characteristic 0 and let the elements of $R :=
k[X_1,...X_r]$ act on $\mathcal{D} := k[x_1,...,x_r]$ as
differential operators. Then the level quotients $R/I$ of socle
degree $j$ are in bijection with the nonzero vector subspaces
$\mathcal{W} \subseteq \mathcal{D}_j$. Specifically, given $I$, take
$\mathcal{W} = I_j^{\perp}$; and given $\mathcal{W}$, take $I =
Ann_R(\mathcal{W})$, which is the unique homogeneous ideal with $I_j
= \mathcal{W}^{\perp}$ such that $R/I$ is level.
\end{thm}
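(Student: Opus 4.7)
The plan is to observe that nearly all the substantive work has already been done in Lemmas \ref{WA_char}--\ref{Matlis_bij}, so the proof reduces to checking that the two maps $I \mapsto I_j^\perp$ and $\mathcal{W} \mapsto Ann_R(\mathcal{W})$ go between the correct sets and are mutually inverse, together with the uniqueness clause. I would organize the argument into three short stages, corresponding to (a) well-definedness, (b) inverse property, and (c) uniqueness.

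First I would verify that the two assignments land where claimed. Starting from a level quotient $R/I$ of socle degree $j$, the image $\mathcal{W}_A = I_j^\perp$ is a subspace of $\mathcal{D}_j$, and it is nonzero because $A_j \neq 0$ (our standing convention on socle degree) forces $I_j \subsetneq R_j$, whence $I_j^\perp \neq 0$ by the perfect pairing \eqref{pairing}. Conversely, starting from a nonzero $\mathcal{W} \subseteq \mathcal{D}_j$, Lemma \ref{Matlis_bij} already says $R/Ann_R(\mathcal{W})$ is a level algebra. I would just need to confirm its socle degree is exactly $j$: for $d > j$ one has $R_d * \mathcal{W} \subseteq \mathcal{D}_{j-d} = 0$, so $[Ann_R(\mathcal{W})]_d = R_d$; and in degree $j$ we have $[Ann_R(\mathcal{W})]_j = \mathcal{W}^\perp \neq R_j$ precisely because $\mathcal{W} \neq 0$. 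Hence the socle degree is $j$.

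Next I would check the two compositions. Going $I \to I_j^\perp \to Ann_R(I_j^\perp)$ recovers $I$ by Lemma \ref{I_char}. Going $\mathcal{W} \to Ann_R(\mathcal{W}) \to Ann_R(\mathcal{W})_j^\perp$ yields $(\mathcal{W}^\perp)^\perp = \mathcal{W}$, using the identity $Ann_R(\mathcal{W})_j = \mathcal{W}^\perp$ (immediate from the definition of the annihilator in degree $j$) and the fact that the pairing of \eqref{pairing} is perfect. This establishes the bijection.

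Finally, for the uniqueness statement: if $I'$ is any homogeneous ideal with $I'_j = \mathcal{W}^\perp$ such that $R/I'$ is level of socle degree $j$, then Corollary \ref{I_deter} says $I'$ is determined by $I'_j$, and we have just shown $Ann_R(\mathcal{W})$ is one such ideal, so $I' = Ann_R(\mathcal{W})$. I do not expect any serious obstacle here; the only subtle point is insisting on the exact socle degree in the backward direction, which is precisely why the bijection is stated with \emph{nonzero} subspaces $\mathcal{W}$, and which is what rules out the degenerate $\mathcal{W} = 0$ corresponding to $R/R$.
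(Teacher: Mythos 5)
Your proposal is correct and follows essentially the same route as the paper's own proof: both reduce the theorem to Lemma \ref{I_char}, Lemma \ref{Matlis_bij}, Corollary \ref{I_deter}, and the identity $[Ann_R(\mathcal{W})]_j = \mathcal{W}^{\perp}$ combined with the perfectness of the pairing. The only difference is that you spell out the well-definedness checks (nonvanishing of $I_j^{\perp}$ and the exact socle degree of $R/Ann_R(\mathcal{W})$) slightly more explicitly than the paper does, which is harmless.
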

\begin{proof}
We first remark that if $\mathcal{W} = \{0\}$, then
$Ann_R(\mathcal{W})= R$ and $R/Ann_R(\mathcal{W})$ is the 0-ring,
which is not of socle degree $j$. This explains the stipulation that
$\mathcal{W}$ be
nonzero.\\

We next remark that $[Ann_R(\mathcal{W})]_j = \{F \in R_j |
F*\mathcal{W} = 0\} = \mathcal{W}^{\perp}$.  By Lemma
\ref{Matlis_bij}, $R/Ann_R(\mathcal{W})$ is level, and by Corollary
\ref{I_deter}, $R/Ann_R(\mathcal{W})$ is the only
level quotient $R/I$ such that $I_j = \mathcal{W}^{\perp}$.\\

We define the maps $\alpha(I) := I_j^{\perp}$ and
$\beta(\mathcal{W}) := Ann_R(\mathcal{W})$.  We must show that
$\beta\alpha(I) = I$ for any homogeneous ideal $I$ such that $R/I$
is level of socle degree $j$, and $\alpha\beta(\mathcal{W}) =
\mathcal{W}$ for any nonzero vector subspace $\mathcal{W} \subseteq
\mathcal{D}_j$.  We have
\begin{equation*}
\beta\alpha(I) = \beta(I_j^{\perp})=Ann_R(I_j^{\perp})= I,
\end{equation*}
\noindent where the last equality follows from Lemma \ref{I_char}.
Also
\begin{equation*}
\alpha\beta(\mathcal{W}) =\alpha(Ann_R(\mathcal{W})) =
[Ann_R(\mathcal{W})]_j^{\perp} = \mathcal{W},
\end{equation*}
\noindent where the last equality follows because we showed that
$[Ann_R(\mathcal{W})]_j = \mathcal{W}^{\perp}$.
\end{proof}

\begin{lem} \label{hilb_char}
Let $\mathcal{W} \subseteq \mathcal{D}_j$ be a vector subspace. Then
\begin{equation*}
\dim_k{[A_{\mathcal{W}}]}_d = \dim_kR_{j-d}*\mathcal{W}.
\end{equation*}
\end{lem}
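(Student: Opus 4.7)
The plan is to reduce this directly to Corollary \ref{dim_char} applied to $A_{\mathcal{W}}$. First, I observe that by Lemma \ref{Matlis_bij}, $A_{\mathcal{W}} = R/I_{\mathcal{W}}$ is a level algebra. Assuming $\mathcal{W}$ is nonzero (otherwise both sides are handled trivially, with $A_{\mathcal{W}}$ the zero ring), the socle degree is $j$, so Corollary \ref{dim_char} applies and gives
\begin{equation*}
\dim_k [A_{\mathcal{W}}]_d = \dim_k R_{j-d} * [I_{\mathcal{W}}]_j^{\perp}.
\end{equation*}

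Next I need to identify $[I_{\mathcal{W}}]_j^{\perp}$ with $\mathcal{W}$. As noted in the proof of Theorem \ref{matlis_duality}, $[I_{\mathcal{W}}]_j = [\text{Ann}_R(\mathcal{W})]_j = \{F \in R_j \mid F * \mathcal{W} = 0\} = \mathcal{W}^{\perp}$. Because the pairing $R_j \times \mathcal{D}_j \to k$ of \eqref{pairing} is perfect, taking perpendicular spaces is an involution on subspaces, so $(\mathcal{W}^{\perp})^{\perp} = \mathcal{W}$. Substituting yields $\dim_k [A_{\mathcal{W}}]_d = \dim_k R_{j-d} * \mathcal{W}$, which is the claim.

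There is no real obstacle: the work has already been done in establishing Lemma \ref{Matlis_bij}, Corollary \ref{dim_char}, and the perfectness of the pairing. The only point requiring a bit of care is the edge case $\mathcal{W} = 0$ (where one checks both sides vanish) and the reminder that Corollary \ref{dim_char} was stated for a general level algebra $R/I$ of socle degree $j$, which is exactly the situation of $A_{\mathcal{W}}$ by Lemma \ref{Matlis_bij}.
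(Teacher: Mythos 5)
Your proof is correct and follows essentially the same route as the paper: invoke Lemma \ref{Matlis_bij} to see $A_{\mathcal{W}}$ is level, apply Corollary \ref{dim_char}, and identify $[I_{\mathcal{W}}]_j^{\perp}$ with $\mathcal{W}$ via Theorem \ref{matlis_duality} (you merely unwind that identification through the perfect pairing rather than citing the theorem wholesale). Your attention to the $\mathcal{W}=0$ edge case is a small extra care the paper omits.
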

\begin{proof}
Set $I = Ann_R(\mathcal{W})$.  Then by Theorem \ref{matlis_duality},
$\mathcal{W} = I_j^{\perp}$.  We substitute theses values into the
formula of Corollary \ref{dim_char}, which is permitted because, by
Lemma \ref{Matlis_bij}, $A_{\mathcal{W}}$ is a level algebra.
\end{proof}

\chapter{Hilbert Functions}

\section{Definitions and Preliminaries}

As we have seen, level algebras are graded Artinian quotients of the
form $A := R/I$, where $I$ is a homogeneous ideal of $R :=
k[X_1,...,X_r]$. In considering a homogeneous ideal $I$, we will
always assume that $I$ contains no constant or linear polynomials as
elements; equivalently, $I_0 = 0$ and $I_1$ = 0.  The condition that
$I_0 = 0$ ensures that $A$ is nonzero; the condition that $I_1 = 0$
is equivalent to saying that $A$ is not isomorphic (as a graded ring
with standard grading) to any quotient of a polynomial ring with fewer
than $r$ variables.  With this understanding, we define the
\emph{codimension} of $A$ to be $r$, the number of variables.\\

For a graded Artinian quotient $A = \bigoplus_{0 \le d \le j}A_d$,
we define its \emph{Hilbert function} $h_A: \mathbb{Z}_{\ge0}
\rightarrow \mathbb{Z}_{\ge0}$ as follows: For non-negative integers
$d$, $h_A(d) := \dim_kA_d$.  When we form the level algebra
$A_{\mathcal{W}} := R/Ann_R(\mathcal{W})$, where $\mathcal{W}$ is a
vector subspace of $\mathcal{D}_j$, we may write $h_{\mathcal{W}}$
instead of $h_{A_{\mathcal{W}}}$. \\

Notationally, it is sometimes useful to express the Hilbert function as an
\emph{h-vector}, which is to say a $(j+1)$ - tuple of values taken on. In Example
\ref{socle_demo}, $h_A(0) = 1, h_A(1) = 2, h_A(2) = 1$.
As an h-vector, the Hilbert function is written (1,2,1).\\

The Hilbert function turns out to be a useful concept in algebraic
geometry.  This has been known for many years, but some recent
research has extended the applicability of the Hilbert function in
some new ways.   The details are beyond the scope of this thesis,
but we describe the concept, and refer the reader to \cite{BZ1},
\cite{Mig1}, \cite{Mig2}, and \cite{GHMS1} for
basic definitions and further details.\\

The concept is this:  one uses the Hilbert function of a graded Artinian quotient
$R/I$ and of related algebras to define certain properties of $R/I$, specifically
the \emph{Uniform Position Property} (UPP), \emph{Weak Lefschetz Property} (WLP),
\emph{Strong Lefschetz Property }(SLP), and \emph{Unimodality}.  If a projective
scheme is arthmetically Cohen-Macaulay, one can discover some of its geometric
properties by asking whether all Artinian reductions of its co-ordinate ring have
these properties.\\

We are interested in the last of these properties, unimodality, as
it applies to level algebras.  We say the Hilbert function $h_A$ of
a graded Artinian quotient $A = \bigoplus_{0 \le d \le j}A_d$ is
\emph{unimodal} if there is some degree $i$ such that $h_A$ is
nondecreasing for values of $d$ between $0$ and $i$ (inclusive), and
nonincreasing for values of $d$ between $i$ and $j$ (inclusive).
Otherwise, we say $h_A$ is \emph{non-unimodal}.  By extension, we
say $A$ itself is unimodal or non-unimodal.\\

If one works with level algebras for even a small amount of time,
either by hand or using a computer, it becomes immediately evident
that, in some sense, non-unimodal Hilbert functions are difficult to
find.  One might never find one at all, unless armed with some
particular strategy of construction. Based on this experience, we
pose the following questions:

\begin{enumerate}
\item  For $r = 1,2,3,...$, is every level algebra of codimension $r$
necessarily unimodal?
\item If not, what is the lowest possible type $t$ of a non-unimodal
level algebra of codimension $r$?
\item  For a given codimension $r$ for which type-$t$ non-unimodals
exist, what is the lowest possible socle degree $j$?
\end{enumerate}

For the first question, the answer is yes for $r =1$ or $2$, no for
$r \ge 3$.\\

For the second question, the most difficult cases are $r = 3$ and $r
= 4$.  This thesis describes non-unimodal level algebras in
codimension 3 of type 5 or more, and non-unimodal level algebras in
codimensions 4 and 5, of type 3 or more, and proves that they are in
fact non-unimodal.  The strategy used in constructing some of them
is due A. Iarrobino, who conjectured that they would turn out to be
non-unimodal; others involve minor variations on Iarrobino's
strategy of construction.  For a more detailed description of the
current state of play, please refer back to Chapter 1.\\

For the third question, very little is known, and the state of
knowledge appears to be too rudimentary to attempt a comprehensive
theory.  We will make a few observations, but prove no results, on
this subject in a later section.

\section{Splicing} \label{splic}

In trying to construct non-unimodal Hilbert functions, one strategy
is to build them up out of smaller pieces. We perform our
constructions in the polynomial rings $R$ and $\mathcal{D}$ with $r$
variables, and we focus on the $j^{th}$ graded piece $\mathcal{D}_j$
of $\mathcal{D}$.  Always, the level algebras constructed will turn
out to have codimension $r$ and socle degree $j$; so when we choose
values for $r$ and $j$ we will say we are \emph{fixing the
codimension} and \emph{fixing the socle degree}.\\

We start by fixing the codimension $r$ and the socle degree $j$. We
consider two vector subspaces $\mathcal{V},\mathcal{W} \subseteq
\mathcal{D}_j$, and for convenience we require that $\mathcal{V}
\cap \mathcal{W} = \{0\}$, so that $\mathcal{V} +\mathcal{ W} =
\mathcal{V} \bigoplus \mathcal{W}$, an internal direct sum.  If we
know the Hilbert functions $h_{\mathcal{V}}$ and $h_{\mathcal{W}}$
of the level algebras $A_\mathcal{V}$ and $A_\mathcal{W}$, it is
reasonable to hope that the Hilbert function $h_{\mathcal{V}
\bigoplus \mathcal{W}}$ of $A_{\mathcal{V} \bigoplus \mathcal{W}}$
will be related to $h_{\mathcal{V}}$ and $h_{\mathcal{W}}$. A first
step is provided by the following lemma.

\begin{lem} \label{splice_lemma}
Fix codimension $r$ and socle degree $j$. Let $\mathcal{V}$ and
$\mathcal{W}$ be two vector subspaces of $\mathcal{D}_j$ such that
that $\mathcal{V} \cap \mathcal{W} = \{0\}$.  Then for any $d$,
\begin{equation} h_{\mathcal{V} \bigoplus \mathcal{W}}(d) \le
h_{\mathcal{V}}(d) + h_{\mathcal{W}}(d),
\end{equation}
\noindent with equality if and only if $R_{j-d}*\mathcal{V} \cap
R_{j-d}*\mathcal{W} = \{0\}$.
\end{lem}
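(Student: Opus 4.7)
The plan is to reduce everything to the characterization of the Hilbert function provided by Lemma \ref{hilb_char}, which says that $h_{\mathcal{U}}(d) = \dim_k R_{j-d} * \mathcal{U}$ for any nonzero vector subspace $\mathcal{U} \subseteq \mathcal{D}_j$. Applying this to each of $\mathcal{V}$, $\mathcal{W}$, and $\mathcal{V} \oplus \mathcal{W}$ turns the inequality into a purely linear-algebraic statement about subspaces of $\mathcal{D}_{j-d}$.

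The key observation is that the action of $R_{j-d}$ distributes over sums of subspaces: for any $F \in R_{j-d}$ and $f = v + w$ with $v \in \mathcal{V}$, $w \in \mathcal{W}$, we have $F*f = F*v + F*w$, and conversely every element of $R_{j-d}*\mathcal{V} + R_{j-d}*\mathcal{W}$ is of this form (after spreading across a basis). Hence
\begin{equation*}
R_{j-d} * (\mathcal{V} \oplus \mathcal{W}) \;=\; R_{j-d}*\mathcal{V} \;+\; R_{j-d}*\mathcal{W},
\end{equation*}
where on the right-hand side one has an ordinary (not necessarily direct) sum of subspaces of $\mathcal{D}_{j-d}$.

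Now I apply the standard dimension formula for the sum of two finite-dimensional vector subspaces to obtain
\begin{equation*}
h_{\mathcal{V}\oplus\mathcal{W}}(d) \;=\; h_{\mathcal{V}}(d) + h_{\mathcal{W}}(d) - \dim_k\!\bigl(R_{j-d}*\mathcal{V} \;\cap\; R_{j-d}*\mathcal{W}\bigr).
\end{equation*}
The inequality $h_{\mathcal{V}\oplus\mathcal{W}}(d) \le h_{\mathcal{V}}(d) + h_{\mathcal{W}}(d)$ follows immediately, and equality holds precisely when the intersection $R_{j-d}*\mathcal{V} \cap R_{j-d}*\mathcal{W}$ is trivial, which is the stated condition.

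There is no real obstacle here; the only subtlety is to keep clear the distinction between $\mathcal{V}\cap\mathcal{W} = \{0\}$ in $\mathcal{D}_j$ (needed to form the internal direct sum, and assumed as a hypothesis) and the condition $R_{j-d}*\mathcal{V} \cap R_{j-d}*\mathcal{W} = \{0\}$ in $\mathcal{D}_{j-d}$ (which is the equality criterion). The former does not imply the latter, because applying the differential operators can collapse dimensions, and this is precisely the content of the lemma.
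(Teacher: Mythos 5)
Your proof is correct and follows essentially the same route as the paper: both reduce the statement via $h_{\mathcal{U}}(d) = \dim_k R_{j-d}*\mathcal{U}$ (your Lemma \ref{hilb_char} is just the packaged form of the paper's citation of Theorem \ref{matlis_duality} and Corollary \ref{dim_char}) and then use the identity $R_{j-d}*(\mathcal{V}\oplus\mathcal{W}) = R_{j-d}*\mathcal{V} + R_{j-d}*\mathcal{W}$ together with the dimension formula for a sum of subspaces. Your write-up is slightly more explicit about the last step, which the paper leaves implicit, but the argument is the same.
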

\begin{proof}
From Theorem \ref{matlis_duality} and Corollary \ref{dim_char},
\begin{enumerate}
\item[] $h_{\mathcal{V} \bigoplus \mathcal{W}}(d) = \dim_k
R_{j-d}*[\mathcal{V} \bigoplus \mathcal{W}]$.
\item[] $h_{\mathcal{V}}(d) = \dim_k
R_{j-d}*\mathcal{V}$.
\item[] $h_{\mathcal{W}}(d) = \dim_k
R_{j-d}*\mathcal{W}$.
\end{enumerate}
The lemma then follows from the observation that\\
$R_{j-d}*[\mathcal{V} \bigoplus \mathcal{W}] = R_{j-d}*\mathcal{V} +
R_{j-d}*\mathcal{W}$.
\end{proof}

\begin{examp} \label{deriv_demo}
$r = 3, R = k[X,Y,Z], \mathcal{D} = k[x,y,z], j = 6, \mathcal{V} =
\langle x^3y^3\rangle , \mathcal{W} = \langle x^3z^3\rangle $.
\end{examp}
For $d = 6,5,4$,  we have $R_{6-d}*\mathcal{V} \cap
R_{6-d}*\mathcal{W} = \{0\}$ since $y$ divides every element of
$R_{6-d}*\mathcal{V}$ but divides no nonzero element of
$R_{6-d}*\mathcal{W}$.

For $d = 3,2,1,0$, we have $R_{6-d}*\mathcal{V} \cap
R_{6-d}*\mathcal{W} \neq
\{0\}$, since $x^d$ is in the intersection.\\

One constraint on the dimension of $R_{j-d}*[\mathcal{V} \bigoplus
\mathcal{W}]$ is that it cannot exceed the dimension of
$\mathcal{D}_d$, of which it is a subspace:
\begin{equation} \label{h_constraint}
 h_{\mathcal{V} \bigoplus \mathcal{W}}(d) \le \dim_k \mathcal{D}_d.
\end{equation}
This places an immediate limitation on the choices of $\mathcal{V}$
and $\mathcal{W}$ that
give equality.\\

In \cite {Iar1}, A. Iarrobino proved a result of which the following is a special
case.  To state the result, we use the word \emph{general} in the sense of algebraic
geometry, that is, to say that a statement is true for general $f$ means that the
statement is true for $f$ lying in some dense Zariski-open subset of
$\mathcal{D}_j$, regarded as an affine variety . (See, for example, \cite{Sha1}.)
We will be more precise about this notion later on.\\

\begin{thm} {} \label{quoted}
With notation as above, let $\mathcal{V}$ be arbitrary and let
$\mathcal{W}:= \langle f\rangle  \subseteq \mathcal{D}_j$, the
one-dimensional subspace generated by the single element $f$. Then
for general $f \in \mathcal{D}_j$,
\begin{equation*}
h_{\mathcal{V} \bigoplus \mathcal{W}}(d) = min( h_{\mathcal{V}}(d) +
h_{\mathcal{W}}(d), h_{\mathcal{D}}(d)).
\end{equation*}
$\hspace{\fill} \square$
\end{thm}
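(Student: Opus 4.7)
The theorem says that for generic $f \in \mathcal{D}_j$, the degree-$d$ component of $A_{\mathcal{V} \oplus \langle f \rangle}$ is as large as the constraints from Lemma \ref{splice_lemma} and the containment $R_{j-d}*(\mathcal{V} \oplus \langle f \rangle) \subseteq \mathcal{D}_d$ permit. My plan splits the equality into an upper bound that holds for every $f$, and a matching lower bound that I verify on a Zariski-dense open subset of $\mathcal{D}_j$.

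The upper bound is immediate: Lemma \ref{splice_lemma} gives $h_{\mathcal{V} \oplus \langle f \rangle}(d) \le h_{\mathcal{V}}(d) + h_{\langle f \rangle}(d)$ for every $f$, and \eqref{h_constraint} gives $h_{\mathcal{V} \oplus \langle f \rangle}(d) \le h_{\mathcal{D}}(d)$, so the minimum of the two is always an upper bound. For the reverse inequality I would fix $d$ and pick bases $v_1, \ldots, v_m$ of $\mathcal{V}$ and $G_1, \ldots, G_N$ of $R_{j-d}$. Then $h_{\mathcal{V} \oplus \langle f \rangle}(d) = \dim(R_{j-d}*\mathcal{V} + R_{j-d}*f)$ equals the rank of the matrix whose columns, expressed in a fixed monomial basis of $\mathcal{D}_d$, are the coordinates of the fixed vectors $G_i * v_\ell$ followed by the $f$-linear vectors $G_i * f$. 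Since matrix rank is lower semicontinuous, this function is lower semicontinuous on the affine space $\mathcal{D}_j$; likewise, $f \mapsto h_{\langle f \rangle}(d) = \dim R_{j-d}*f$ is lower semicontinuous. Consequently the locus where both $h_{\langle f \rangle}(d)$ attains its maximum value $M_d := \min(\binom{j-d+r-1}{r-1}, \binom{d+r-1}{r-1})$ and $h_{\mathcal{V} \oplus \langle f \rangle}(d)$ equals the expected $\min(h_{\mathcal{V}}(d) + M_d, h_{\mathcal{D}}(d))$ is Zariski open. Because the formula must hold at each of the finitely many degrees $d \in \{0, \ldots, j\}$, and a finite intersection of dense open sets is dense, the whole theorem reduces to showing that this open locus is non-empty for each $d$.

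The heart of the matter is therefore to exhibit a single $f_0 \in \mathcal{D}_j$ achieving both (a) $\dim R_{j-d}*f_0 = M_d$ (maximal catalecticant rank) and (b) $R_{j-d}*f_0$ is in general position relative to the fixed subspace $R_{j-d}*\mathcal{V}$, i.e.\ their intersection has the minimal possible dimension $\max(0, h_{\mathcal{V}}(d) + M_d - h_{\mathcal{D}}(d))$. Condition (a) is the classical statement that a generic form has maximal catalecticant rank, proved in \cite{Iar1}, with related results in \cite{EI1} and \cite{FL1}; one standard construction takes $f_0$ to be a generic linear combination of $j$-th powers of linear forms, so that $R_{j-d}*f_0$ is spanned by the corresponding $d$-th powers. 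The main obstacle is (b): because $\mathcal{V}$ is arbitrary and may be highly special, one cannot appeal to $GL_r$-invariance to reduce $f_0$ to a normal form; one instead needs to verify directly that the set of ``bad'' $f$, for which $R_{j-d}*f$ fails to be transverse to $R_{j-d}*\mathcal{V}$, is a proper closed subset of $\mathcal{D}_j$. This passage from ``absolute'' generic behavior to ``relative'' transversality against an externally specified subspace is precisely the substantive content of Iarrobino's theorem in \cite{Iar1}, of which the statement above is cited as a special case.
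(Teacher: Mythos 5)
Your proposal is not a complete proof, and you say so yourself: the reduction via lower semicontinuity of matrix rank is sound and matches the paper's own general framework (it is essentially the content of Lemma \ref{general_max} combined with Lemma \ref{zar_comp}, applied degree by degree), and the upper bound from Lemma \ref{splice_lemma} and \eqref{h_constraint} is correct. But the entire difficulty is the nonemptiness step, namely exhibiting one $f_0$ for which $R_{j-d}*f_0$ has maximal dimension \emph{and} meets the fixed, arbitrary subspace $R_{j-d}*\mathcal{V}$ in the minimal possible dimension. Your proposal defers that step back to \cite{Iar1}, so as a self-contained argument it has a genuine gap at precisely the point you identify. A generic-powers-of-linear-forms construction handles (a) but gives no purchase on (b), because, as you note, $\mathcal{V}$ cannot be normalized away.

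For comparison: the paper does not prove this theorem at all. It is quoted verbatim from Iarrobino's 1984 paper and closed with a $\square$, and the text immediately afterward states that the thesis will \emph{not} rely on it, precisely because the author needs non-general $f$ and instead proves ``similar-looking results'' where $\mathcal{V}$ is not arbitrary but is required to be spanned by monomials satisfying constraints (the L-matrix and $\mathcal{G}_Q$-pattern machinery of the later chapters, culminating in Theorems \ref{max_rank}, \ref{omnibus}, and \ref{intersect}). So your instinct that the transversality against an arbitrary $\mathcal{V}$ is the hard part, and that it is the substantive content of the cited theorem rather than something recoverable from the surrounding text, is accurate; but if the task was to prove the statement, the proposal as written does not do so.
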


In other words, for general $f \in \mathcal{D}_j$, $h_{\mathcal{V}
\bigoplus \mathcal{W}}(d)$ is as large as it could possibly be,
subject to \eqref{h_constraint}.\\

We will not rely on Theorem \ref{quoted} because we will sometimes
want to chose non-general $f \in \mathcal{D}_j$.  Instead, we will
prove similar-looking results, in contexts where $\mathcal{V}$,
rather than being arbitrary, is required to satisfy specified
conditions.\\

In order to use Theorem \ref{quoted} or anything similar, it is of course desirable
to know the Hilbert function $h_\mathcal{W}$. To this end, we quote another theorem
of Iarrobino from \cite{Iar1} (and others in \cite{FL1} and \cite{G78}).  For
details, see, for example,
\cite{IK1}.  \\

\begin{thm}{} \label{cat_max_rank}
With the notation above, let $\mathcal{W}:= \langle f\rangle
\subseteq \mathcal{D}_j$.  Then for general $f \in \mathcal{D}_j$
\begin{equation} \label{cat_max_eqn}
h_{\mathcal{W}}(d) = min( \dim_kR_{j-d}, \dim_k\mathcal{D}_d).
\end{equation}
$\hspace{\fill} \square$
\end{thm}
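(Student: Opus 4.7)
The plan is to reformulate the Hilbert-function value as the rank of a catalecticant linear map and then reduce the problem, by semicontinuity, to exhibiting a single polynomial at which that map has full rank. By Lemma \ref{hilb_char}, $h_{\mathcal{W}}(d) = \dim_k R_{j-d}*f$, which is the rank of
\[
C_{f,d}\colon R_{j-d} \longrightarrow \mathcal{D}_d, \qquad F \longmapsto F*f.
\]
The upper bound $\operatorname{rank} C_{f,d} \le m := \min(\dim_k R_{j-d},\dim_k \mathcal{D}_d)$ is automatic, so only the lower bound needs work.

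Next I would fix monomial bases of $R_{j-d}$ and $\mathcal{D}_d$; the entries of the matrix of $C_{f,d}$ are then, up to factorials, the coefficients of $f$, so every $m \times m$ minor of that matrix is a polynomial function on the affine space $\mathcal{D}_j$. Consequently the set
\[
U := \{\, f \in \mathcal{D}_j : \operatorname{rank} C_{f,d} = m \,\}
\]
is Zariski open. Since $\mathcal{D}_j$ is irreducible, nonemptiness of $U$ implies density, which is precisely the ``general $f$'' statement required. The task therefore reduces to producing one $f_0 \in U$.

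My candidate will be $f_0 := \ell_1^j + \cdots + \ell_m^j$, a sum of $j$-th powers of $m$ linear forms $\ell_i = \sum_k a_{i,k}\,x_k$. A short calculation using $X_k * \ell_i = a_{i,k}$ and repeated application of the chain rule yields
\[
X^D * \ell_i^j = \frac{j!}{d!}\, a_i^D\, \ell_i^d \qquad (|D| = j-d),
\]
where $a_i^D := a_{i,1}^{d_1}\cdots a_{i,r}^{d_r}$. Hence the image of $C_{f_0,d}$ lies in $\langle \ell_1^d,\dots,\ell_m^d\rangle$, and $C_{f_0,d}$ factors as $\Psi \circ \Phi$, with $\Phi\colon R_{j-d} \to k^m$ represented by the matrix $(a_i^D)$ and $\Psi\colon k^m \to \mathcal{D}_d$ sending $e_i \mapsto \ell_i^d$.

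The hard step, and the main obstacle, is arranging that both $\Phi$ and $\Psi$ have rank $m$ for a suitable choice of the $a_{i,k}$. For $\Psi$, independence of $\ell_1^d,\dots,\ell_m^d$ in $\mathcal{D}_d$ holds off a proper closed subset of the parameter space $(k^r)^m$, by non-degeneracy of the $d$-th Veronese image in $\mathcal{D}_d$ together with the hypothesis $m \le \dim_k \mathcal{D}_d$. For $\Phi$, the matrix $(a_i^D)$ records the $(j-d)$-th Veronese images of the points $a_i$, so having rank $m$ likewise holds off a proper closed subset, using $m \le \dim_k R_{j-d}$. Intersecting the two nonempty Zariski opens in $(k^r)^m$ produces linear forms $\ell_i$ for which $C_{f_0,d}$ has rank $m$, so $f_0 \in U$, and the proof is complete.
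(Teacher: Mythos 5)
Your proof is correct, but it is not the route the paper takes. The thesis proves this statement (in the section ``Absence of Constraints'') entirely through its L-matrix machinery: writing $f$ with indeterminate coefficients $z_J$, the coefficient matrix $U$ of the $e$-th partial derivatives is an L-matrix all of whose entries are nonzero multiples of the $z_J$ (Lemma \ref{is_l_too}), so by Lemma \ref{pv_1} \emph{every} square submatrix has nonzero determinant as a polynomial in the $z_J$ --- the point being that the product of the diagonal entries of such a submatrix is a monomial that cannot be cancelled, because each variable ``moves to the left.'' Maximal rank of $U$ then descends to general specializations via Lemma \ref{general_max} and Theorem \ref{omnibus}. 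You instead follow the classical argument: reduce by semicontinuity to a single witness, take $f_0=\ell_1^j+\cdots+\ell_m^j$, and factor the catalecticant $C_{f_0,d}=\Psi\circ\Phi$ through the Veronese, using that in characteristic $0$ powers of general linear forms are linearly independent up to the ambient dimension. Your opening reduction (rank of $C_{f,d}$, openness of the full-rank locus, irreducibility of $\mathcal{D}_j$) coincides with the paper's general setup, but the heart of the argument --- producing the nonvanishing minor --- is genuinely different: yours is geometric and shorter for this unconstrained case, while the paper's combinatorial argument is the one that survives when $f$ is forced to lie in a monomial-constrained subspace $\mathcal{W}_{\mathcal{M}_Q(j)}$ (where a sum of powers of general linear forms is not available), which is the generalization the thesis actually needs. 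One small point to make explicit if you polish this: linear independence of the columns of $(a_i^D)$ is equivalent, after rescaling by the (nonzero, characteristic $0$) multinomial coefficients, to linear independence of $\ell_1^{j-d},\dots,\ell_m^{j-d}$, so both of your genericity claims are instances of the same fact about the Veronese.
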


We will be proving this theorem (by different methods) and extending
it in a later chapter.  For now, to see better what is involved, we
work an explicit example.\\

We let $j=3$ and write $R = k[X,Y,Z]$, $\mathcal{D} = k[x,y,z]$.\\

Any $f \in \mathcal{D}_3$ can be written
\begin{equation} \label{gen_f}
f = Ax^3 + Bx^2y + Cx^2z + Dxy^2 + Exyz + Fxz^2 + Gy^3 + Hy^2z +
Iyz^2 + Jz^3,
\end{equation}

\noindent where $A, B,...,J \in k$ are the co-ordinates of $f \in
\mathcal{D}_3$, a 10-dimensional vector space of which the
monomials form a basis. \\

Setting $\mathcal{W} := \langle f\rangle $, let us compute
$h_{\mathcal{W}}(2)$, which is the same as the dimension of
$R_{j-d}*f = R_1*f =\langle X*f, Y*f, Z*f\rangle $. (Here we have
put $d =2$, so $j-d = 1$.) We compute $X*f, Y*f$, and $Z*f$.

\begin{equation*}
X*f = \frac{\partial f}{\partial x}= 3Ax^2 + 2Bxy + 2Cxz + Dy^2 +
Eyz + Fz^2.
\end{equation*}
\begin{equation*}
Y*f =
\frac{\partial f}{\partial y}= Bx^2 + 2Dxy + Exz + 3Gy^2 + 2Hyz +
Iz^2.
\end{equation*}
\begin{equation*} Z*f = \frac{\partial
f}{\partial z}= Cx^2 + Exy + 2Fxz + Hy^2 + 2Iyz +
3Jz^2.
\end{equation*}

Note that in writing the three partial derivatives, we have listed
the six monomials of degree 2 lexicographically across the page; and
we have listed the three monomials of degree 1 ($X,Y,$ and $Z$),
which determine which partial derivative is being taken,
lexicographically
down the page.\\

To determine the dimension of $R_1*f = \langle X*f, Y*f, Z*f\rangle
$, one must compute the rank of the 3 $\times$ 6 coefficient matrix

$$\left[
\begin{matrix}
3A&2B&2C&D&E&F\cr B&2D&E&3G&2H&I\cr C&E&2F&H&2I&3J
\end{matrix}
\right]$$

Of course, $3 = \dim_kR_{j-d}$, and $6 = \dim_k\mathcal{D}_d$; and
Theorem \ref{cat_max_rank} is saying that the coefficient matrix has
maximal rank for general $f \in \mathcal{D}_j$.\\

To generalize the context of Theorem \ref{cat_max_rank}, we consider
what might happen if $f$ were defined to be, not a member of the
whole space $\mathcal{D}_j$, but instead a member of some vector
subspace $\mathcal{W}_{\mathcal{M}}$ generated by monomials.  For
example, let $\mathcal{W}_{\mathcal{M}} := \langle xy^2, xyz, xz^2,
y^3, y^2z, yz^2, z^3\rangle $.  Then any member $f \in
\mathcal{W}_{\mathcal{M}}$ can be written
\begin{equation} \label{small_eq}
f = Dxy^2 + Exyz + Fxz^2 + Gy^3 + Hy^2z + Iyz^2 + Jz^3,
\end{equation}
\noindent where we have retained the same coefficient names for
purposes of comparison.  Then
\begin{equation*}
X*f = \frac{\partial f}{\partial x}= Dy^2 + Eyz + Fz^2.
\end{equation*}
\begin{equation*}
Y*f = \frac{\partial f}{\partial y}= 2Dxy + Exz + 3Gy^2 + 2Hyz +
Iz^2.
\end{equation*}
\begin{equation*}
Z*f = \frac{\partial f}{\partial z}= Exy + 2Fxz + Hy^2 + 2Iyz +
3Jz^2.
\end{equation*}

The matrix of coefficients is now

$$\left[
\begin{matrix}
0&0&D&E&F\cr 2D&E&3G&2H&I\cr E&2F&H&2I&3J
\end{matrix}
\right]$$

Alternatively, we could have observed that \eqref{small_eq} is
obtained from \eqref{gen_f} by substituting $A = 0, B = 0, C = 0$,
so the new matrix of coefficients is obtained from the old one by
making the same set of substitutions (and then deleting the
column that consists entirely of zeroes).\\

As a third example, we modify the previous example.  We retain the definition of
$\mathcal{W}_\mathcal{M}$, but this time we let $\mathcal{W} := \langle
f_1,f_2\rangle$ be generated by two vectors of $\mathcal{W}_\mathcal{M}$, denoted
\begin{equation*}
f_1 = D_1xy^2 + E_1xyz + F_1xz^2 + G_1y^3 + H_1y^2z + I_1yz^2 + J_1z^3
\end{equation*}
\noindent
and
\begin{equation*}
f_2 = D_2xy^2 + E_2xyz + F_2xz^2 + G_2y^3 + H_2y^2z + I_2yz^2 + J_2z^3.
\end{equation*}
\noindent Then, for $i = 1,2$, we have
\begin{equation*}
X*f_i = \frac{\partial f_i}{\partial x}= 3A_ix^2 + 2B_ixy + 2C_ixz + D_iy^2 + E_iyz
+ F_iz^2.
\end{equation*}
\begin{equation*}
Y*f_i = \frac{\partial f_i}{\partial y}= B_ix^2 + 2D_ixy + E_ixz + 3G_iy^2 + 2H_iyz
+ I_iz^2.
\end{equation*}
\begin{equation*} Z*f_i = \frac{\partial
f_i}{\partial z}= C_ix^2 + E_ixy + 2F_ixz + H_iy^2 + 2I_iyz + 3J_iz^2.
\end{equation*}
\noindent and the matrix of coefficients is now
$$\left[
\begin{matrix}
0&0&D_1&E_1&F_1\cr 0&0&D_2&E_2&F_2\cr 2D_1&E_1&3G_1&2H_1&I_1\cr
2D_2&E_2&3G_2&2H_2&I_2\cr E_1&2F_1&H_1&2I_1&3J_1\cr E_2&2F_2&H_2&2I_2&3J_2
\end{matrix}
\right]$$

We remark that adding another generator of $\mathcal{W}_\mathcal{M}$ created new
rows in
the matrix of coefficients without changing the number of columns.\\

Searching for a generalization of Theorem \ref{cat_max_rank}, it is logical to look
more closely at matrices of coefficients and ask whether they provide a means for
computing values of the Hilbert function of $A_{\mathcal{W}}$.

\chapter{L-Matrices}

\section{Definitions and Preliminaries}

Recall that $k$ is an algebraically closed field of characteristic
0. Let $B := k[z_1,...,z_n]$ be a polynomial ring. Then a matrix $U$
is called \emph{PV-matrix over $B$} if every nonzero entry of $U$
has the form $\lambda z_i$, where $\lambda$ is a positive integer
and $1 \le i \le n$.\\

A variable $z_i$ in a PV-matrix $U = (u_{ij})$ over $k[z_1,...,z_n]$
is said to \emph{move to the left} in $U$ if, whenever the variable
$z_i$ appears in both $u_{i_1j_1}$ and $u_{i_2j_2}$,  then $i_1 <
i_2 \Leftrightarrow j_1 > j_2$. In more precise language: if
$u_{i_1j_1} = \lambda_1z_i$ and $u_{i_2j_2} = \lambda_2z_i$ with
$\lambda_1$ and $\lambda_2$ positive integers, then
$i_1 < i_2 \Leftrightarrow j_1 > j_2$. \\

\begin{lem} \label{sub_mat}
Let $U$ be a PV-matrix in which the variable $z_i$ moves to the
left. Then $z_i$ does not appear twice in the same row of $U$ or
twice in the same column of $U$. If $z_i$ appears in two distinct
rows, its column in the lower row will be to the left of its column
in the higher row.
\end{lem}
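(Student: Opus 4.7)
The plan is to observe that all three assertions are essentially restatements of the defining biconditional $i_1 < i_2 \Leftrightarrow j_1 > j_2$; no new ideas are required beyond careful bookkeeping of the row and column indices.

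For the first claim, that $z_i$ cannot appear twice in the same row, I would argue by contradiction: suppose $u_{i_1 j_1} = \lambda_1 z_i$ and $u_{i_1 j_2} = \lambda_2 z_i$ are two distinct positions sharing the row index $i_1$. Distinctness forces $j_1 \neq j_2$, so without loss of generality $j_1 > j_2$. Applying the biconditional to the pair of positions $(i_1,j_1)$ and $(i_1,j_2)$ (so that $i_2 = i_1$) yields $i_1 < i_1$, which is absurd. The second claim, that $z_i$ cannot appear twice in the same column, is strictly symmetric: if $z_i$ occupied two distinct rows $i_1 < i_2$ of a common column $j_1 = j_2$, the biconditional would demand $j_1 > j_1$, again impossible.

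For the third claim, suppose $z_i$ appears in rows $i_1$ and $i_2$ with $i_1 < i_2$, at columns $j_1$ and $j_2$ respectively. The biconditional then gives $j_1 > j_2$. Since in the standard matrix layout a larger row index sits lower on the page, row $i_2$ is the ``lower'' row; its column $j_2$ is strictly less than $j_1$, hence lies to the left of the column $j_1$ of the ``higher'' row $i_1$, which is what was to be shown.

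The only obstacle worth naming is entirely cosmetic: one must be careful with the visual-versus-index convention for ``lower'' and ``left'', so that the inequality produced by the biconditional is translated into the geometric statement actually asserted by the lemma. Once that dictionary is fixed, the proof is a direct unwinding of the definition.
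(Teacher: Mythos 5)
Your proof is correct and is exactly the unwinding of the definition that the paper has in mind; the paper simply states that the lemma ``follows directly from the definitions'' without writing out the case analysis you supply. No further comment is needed.
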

\begin{proof}
These results follow directly from the definitions.
\end{proof}

A PV-matrix $U$ over $k[z_1,...,z_n]$ in which all variables
move to the left is called an \emph{L-matrix}.\\

For example, the three coefficient matrices worked as examples in the previous
chapter are PV-matrices over $k[A,B,C,...,J]$.  Since all variables move to the
left, they
are also L-matrices.\\

We will be proving several results about the ranks of PV-matrices.
The following lemma is crucial to their proofs.
\begin{lem} \label{pv_0}
Let $U$ be a PV-matrix over a polynomial ring $B$ and let $T$ be a
submatrix of $U$.  Then $T$ is a PV-Matrix over $B$.  Any variable
that moves to the left in $U$ moves to the left in $T$.  If $U$ is
an L-matrix, so is $T$.
\end{lem}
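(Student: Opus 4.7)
The plan is to unwind the three claims directly from the definitions, using only the fact that taking a submatrix preserves the relative order of rows and columns (even though the absolute indices change).

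First I would verify that $T$ is a PV-matrix. Since $T$ is a submatrix of $U$, every entry of $T$ equals some entry of $U$. The nonzero entries of $U$ all have the form $\lambda z_i$ with $\lambda$ a positive integer and $1 \le i \le n$, so the same holds for the nonzero entries of $T$. Hence $T$ is a PV-matrix over $B$.

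Next, suppose $z_i$ moves to the left in $U$. To check that $z_i$ moves to the left in $T$, I would introduce notation for the row and column selections: say $T$ consists of the rows of $U$ indexed by $r_1 < r_2 < \cdots < r_p$ and the columns of $U$ indexed by $c_1 < c_2 < \cdots < c_q$, so that the entry in row $a$ and column $b$ of $T$ equals the entry in row $r_a$ and column $c_b$ of $U$. Now suppose $z_i$ appears (up to positive integer coefficients) at positions $(a_1,b_1)$ and $(a_2,b_2)$ of $T$. Then $z_i$ appears at $(r_{a_1},c_{b_1})$ and $(r_{a_2},c_{b_2})$ of $U$. Because $z_i$ moves to the left in $U$, we have $r_{a_1} < r_{a_2} \Leftrightarrow c_{b_1} > c_{b_2}$. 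Since the sequences $r_1<\cdots<r_p$ and $c_1<\cdots<c_q$ are strictly increasing, this is equivalent to $a_1 < a_2 \Leftrightarrow b_1 > b_2$, which is precisely what is required for $z_i$ to move to the left in $T$.

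Finally, the L-matrix claim is immediate: if every variable moves to the left in $U$, then by the previous paragraph every variable moves to the left in $T$, and $T$ is already a PV-matrix, so $T$ is an L-matrix. There is no real obstacle here; the only point requiring any care is matching up the new indices in the submatrix with the old indices in $U$, which is handled cleanly by the order-preserving nature of the selection maps $a \mapsto r_a$ and $b \mapsto c_b$.
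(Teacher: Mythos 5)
Your proof is correct and takes the same route as the paper, which simply observes that the defining conditions on entries are inherited by submatrices; you have merely spelled out the (genuinely easy) bookkeeping that the strictly increasing selection maps $a \mapsto r_a$, $b \mapsto c_b$ preserve the biconditional in the definition of "moves to the left."
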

\begin{proof}
The definitions of PV-matrix and of variables moving to the left put
conditions on the entries of $U$, and it is immediate that $T$
inherits them from $U$.
\end{proof}

We remark that it is unusual for some useful property of a class of matrices to be
inherited by its submatrices.

\begin{lem} \label{pv_1}
Let $U$ be a square $s \times s$ PV-matrix over $k[z_1,...,z_n]$
with block decomposition
$$\left[
\begin{matrix}
A&*\cr *&Z\cr
\end{matrix}
\right]$$ where $A$ is a square $q \times q$ matrix with nonzero determinant (when
$q
> 0$), $Z$ is a square $r \times r$ matrix whose entries on the main diagonal
are all nonzero and all contain variables that move to the left in $U$, and the
entries of blocks marked $\lq\lq*"$ are not restricted. (To be precise, we assume $r
\ge 0, q \ge 0, s := q + r \ge 1$). Then the determinant $D(z_1,...,z_n)$ of $U$ is
a nonzero polynomial.
\end{lem}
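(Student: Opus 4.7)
The plan is to induct on $r$, the size of the lower-right block $Z$. The base case $r = 0$ is immediate: then $U = A$, whose determinant is nonzero by hypothesis. So assume $r \ge 1$ and the result for $r - 1$.

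For the inductive step, I would expand $\det(U)$ along the last row, which lies inside the $Z$-block:
\begin{equation*}
\det(U) \;=\; \sum_{k=1}^{s} (-1)^{s+k}\, u_{sk}\, \det(U_{sk}),
\end{equation*}
where $U_{sk}$ denotes the $(s,k)$-minor. The term I want to single out is $k = s$. By hypothesis $u_{ss}$ is nonzero and has the form $\lambda z_i$ where $z_i$ is a variable that moves to the left in $U$. The minor $U_{ss}$ has block form $\bigl[\begin{smallmatrix} A & * \\ * & Z' \end{smallmatrix}\bigr]$ where $Z'$ is $Z$ with its last row and column deleted; $A$ is unchanged and invertible, and the diagonal entries of $Z'$ are the first $r-1$ diagonal entries of $Z$, still nonzero and (by Lemma \ref{pv_0}) still containing variables that move to the left in $U_{ss}$. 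So the inductive hypothesis applies and $\det(U_{ss})$ is a nonzero polynomial.

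The key observation that makes the non-cancellation work is this: the variable $z_i$ appears in $U$ \emph{only} at position $(s,s)$. Indeed, if $z_i$ appeared at some $(i',j') \ne (s,s)$, then since $z_i$ moves to the left and $(s,s)$ is the bottommost and rightmost position, we would need either $i' > s$ (impossible) or $i' < s$ with $j' > s$ (also impossible). Consequently, $z_i$ does not occur in any entry of $U_{sk}$ for $k < s$ (such minors include column $s$ but delete row $s$, and row $s$ was the only place $z_i$ lived), and $z_i$ does not occur in $u_{sk}$ for $k < s$ either (by Lemma \ref{sub_mat}, $z_i$ cannot appear twice in row $s$).

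Therefore, viewing $\det(U)$ as a polynomial in $z_i$ over the ring generated by the remaining variables, it has the shape $\det(U) = \lambda z_i \cdot \det(U_{ss}) + Q$, where $\det(U_{ss})$ is a nonzero polynomial in the other variables and $Q$ is independent of $z_i$. Since $\lambda \ne 0$ and $\det(U_{ss}) \ne 0$, the linear-in-$z_i$ part of $\det(U)$ is nonzero, so $\det(U)$ cannot be the zero polynomial.

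The only real obstacle is the combinatorial verification that $z_i$ appears only at $(s,s)$; once that is established the induction is essentially bookkeeping. This is also the step that explains why the hypothesis ``moves to the left'' is the right one: it forces the distinguished diagonal variables into extremal positions so that the pivot variable used in each expansion step cannot be secretly present elsewhere to cause cancellation.
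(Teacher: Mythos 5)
Your proof is correct and is essentially the paper's argument: the paper also inducts on the size of the matrix (equivalently, on $r$ with $q$ fixed), uses the ``moves to the left'' hypothesis to show the variable in $u_{ss}$ occurs nowhere else in $U$, and then isolates the contribution $u_{ss}\det(U_{ss})$ — the paper does this by collecting the Leibniz terms with $\sigma(s)=s$ rather than by cofactor expansion, but that is the same computation. The non-cancellation observation you flag as the key step is exactly the one the paper makes.
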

\begin{proof}
For any non-negative integer $q$, we prove the lemma for matrices
$U$ for which $A$ is a $q \times q$ matrix.  Let $U = (u_{ij})$, an
$s \times s$ matrix. We perform induction on $s$. If $q = 0$, we
start the induction with $s = 1$, in which case $U$ has a single
nonzero entry, and the determinant must necessarily be nonzero. If
$q
> 0$, we start the induction with
$s$ = $q$, in which case $U$ = $A$, whose determinant is nonzero by hypothesis.\\

For the induction step, we assume the result proved for $A$ a $q
\times q$ matrix and $U$ an $(s-1) \times (s-1)$ matrix, and we
prove it for $A$ a $q
\times q$ matrix and $U$ an $s \times s$ matrix.\\

Let $\Sigma_s$ denote the symmetric group on $s$ letters, and recall
that
\begin{equation} \label{deter}
D(z_1,...,z_n) := \sum_{\sigma \in \Sigma_s} sgn(\sigma)\prod_{1 \le
i \le s}u_{i\sigma(i)}.
\end{equation}

\noindent where as usual sgn($\sigma$) is $+1$ if $\sigma$ is an
even permutation, $-1$ if $\sigma$ is an odd permutation.\\

Let $u_{ss} = \lambda z_k$.  Since the variable $z_k$ moves to the
left, we claim it can appear only in the entry $u_{ss}$ of $U$:
Suppose $z_k$ appears in $u_{ij}$.  If $i < s$, then $j > s$, which
is
impossible; if $j < s$, then $i > s$, which is again impossible.\\

If we wish to compute those terms of $D(z_1,...,z_n)$ in which $z_k$
appears, we must take, in \eqref{deter}, those $\sigma$ for which
$\sigma(s) = s$.  Collecting these terms together into a polynomial
$P(z_1,...,z_n)$, and letting $U'$ denote the $(s-1) \times (s-1)$
submatrix of $U$ formed by the first $(s-1)$ rows and the first
$(s-1)$ columns, we have
\begin{align*}
P(z_1,...,z_n) =& u_{ss}\sum_{\sigma \in \Sigma_{s-1}}
sgn(\sigma)\prod_{1 \le i \le {s-1}}u_{i\sigma(i)}\\
=&u_{ss}\det(U').
\end{align*}
Det$(U')$ is nonzero by induction and $u_{ss}$ by hypothesis. This
shows that that  $P(z_1,...,z_n)$, and hence $D(z_1,...,z_n)$, is
nonzero.
\end{proof}

\begin{cor} \label{pv_2}
Let $U$ be a square PV-matrix over $k[z_1,...,z_n]$ with block
decomposition
$$\left[
\begin{matrix}
A&*\cr *&Z\cr
\end{matrix}
\right]$$ where $A$ is a square $q \times q$ matrix with nonzero determinant (when
$q > 0$), $Z$ is a square $r \times r$ matrix with nonzero entries on the main
diagonal, and the entries of blocks marked $\lq\lq*"$ are not restricted.  We assume
$q + r \ge 1$. If $U$ is either (a) an L-matrix or (b) the result of permuting the
first $q$ rows and the first $q$ columns of an L-matrix, then the determinant
$D(z_1,...,z_n)$ of $U$ is nonzero.
\end{cor}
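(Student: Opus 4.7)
The plan is to derive the corollary from Lemma \ref{pv_1} by treating the two cases separately.

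For case (a), $U$ is itself an L-matrix, so every variable occurring in $U$ moves to the left. In particular, each diagonal entry of $Z$, being a nonzero entry of a PV-matrix, has the form $\lambda z_i$, and the variable $z_i$ then moves to the left in $U$ automatically. Thus $U$ satisfies all the hypotheses of Lemma \ref{pv_1}, and its determinant is a nonzero polynomial.

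For case (b), let $U'$ denote the L-matrix from which $U$ is obtained by permuting (among themselves) the first $q$ rows and the first $q$ columns. Permutations that act only on rows $1,\dots,q$ and columns $1,\dots,q$ leave every entry with both row index $> q$ and column index $> q$ untouched, so the lower-right $r \times r$ block of $U'$ coincides with the block $Z$ appearing in the decomposition of $U$. The upper-left $q \times q$ block $A'$ of $U'$ differs from $A$ only by permutations of rows and columns, so $\det(A') = \pm \det(A) \neq 0$. By case (a) applied to the L-matrix $U'$, we conclude $\det(U') \neq 0$; and since $U$ is obtained from $U'$ by row and column permutations, $\det(U) = \pm \det(U') \neq 0$.

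The only subtlety worth flagging, and what I expect to be the main conceptual (rather than technical) point, is why case (b) is needed at all: permuting the first $q$ rows or columns of an L-matrix can easily destroy the \emph{moves to the left} property, since two occurrences of the same variable in rows $i_1 < i_2$ and columns $j_1 > j_2$ can be rearranged so that both inequalities point the same way. Consequently Lemma \ref{pv_1} cannot in general be applied directly to $U$ itself in case (b); one must apply it to the original L-matrix $U'$ and transfer the conclusion back via the fact that row and column permutations change the determinant only by sign. Beyond this observation, the argument is pure book-keeping.
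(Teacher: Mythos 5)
Your proof is correct and follows essentially the same route as the paper: case (a) is an immediate application of Lemma \ref{pv_1}, and case (b) reduces to case (a) by applying it to the original L-matrix $U'$, whose lower-right block is still $Z$ and whose upper-left block has determinant $\pm\det(A)$, then transferring the conclusion back via $\det(U) = \pm\det(U')$. Your closing remark on why the permutations can destroy the moves-to-the-left property, so that Lemma \ref{pv_1} cannot be applied to $U$ directly in case (b), is a correct and worthwhile observation.
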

\begin{proof}
For case (a), if $U$ is an L-matrix, the variables on the main
diagonal of $Z$ move to the left, and the result follows immediately
from Lemma \ref{pv_1}.  For case (b), if $U$ is the result of
permuting the first $q$ rows and the first $q$ columns of an
L-matrix $U'$, $U'$ is of the form
$$\left[
\begin{matrix}
A'&*\cr *&Z\cr
\end{matrix}
\right]$$ \noindent where $\det(A') = \pm\det(A)$, which is nonzero
by hypothesis. Thus, by part (a), $\det(U')$ is nonzero.  But
$\det(U) = \pm\det(U')$.
\end{proof}

\begin{cor} \label{pv_3}
Let $U$ be a square $s \times s$ PV-matrix of block form
$$\left[
\begin{matrix}
0&A'&*\cr B'&C'&*\cr *&*&Z'\cr
\end{matrix}
\right]$$ \\
\noindent where 0 denotes a block of zeroes, $A'$ is a square $q \times q$ matrix
with nonzero determinant,  $B'$ is a square $r \times r$ matrix with nonzero
determinant, $Z'$ is a square matrix with nonzero entries on the main diagonal, and
the entries of blocks marked $\lq\lq*"$ are not restricted. If $U$ is either (a) an
L-matrix, or (b) the result of permuting the first $q +r $ rows and the first $q +
r$ columns of an L-matrix, then $U$ has nonzero determinant.
\end{cor}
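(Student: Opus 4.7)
The plan is to reduce Corollary~\ref{pv_3} to Corollary~\ref{pv_2} by packaging the first $q+r$ rows and columns of $U$ as a single nonsingular ``$A$-block,'' leaving $Z'$ to play the role of ``$Z$.'' First, I would isolate the $(q+r) \times (q+r)$ leading principal submatrix
$$M := \begin{pmatrix} 0 & A' \\ B' & C' \end{pmatrix}$$
where the zero block is $q \times r$, $A'$ is $q \times q$, $B'$ is $r \times r$, and $C'$ is $r \times q$. A single block-column swap interchanges the two column blocks to produce the block lower triangular matrix $\begin{pmatrix} A' & 0 \\ C' & B' \end{pmatrix}$, whose determinant is $\det(A')\det(B')$; hence $\det(M) = \pm \det(A')\det(B') \neq 0$ by the hypotheses on $A'$ and $B'$.

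Second, I would repartition $U$ as
$$U = \begin{pmatrix} M & * \\ * & Z' \end{pmatrix},$$
which is now in the block form treated by Corollary~\ref{pv_2}, with leading block $M$ of size $q+r$ having nonzero determinant, and $Z'$ a square matrix with nonzero entries on the main diagonal. In case (a), $U$ itself is an L-matrix, so Corollary~\ref{pv_2}(a) applies directly and gives $\det(U) \neq 0$. In case (b), $U$ is the result of permuting the first $q+r$ rows and first $q+r$ columns of an L-matrix, which is precisely the hypothesis of Corollary~\ref{pv_2}(b) once we read the parameter ``$q$'' there as our $q+r$; that corollary then again gives $\det(U) \neq 0$.

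The reduction is essentially mechanical, and I do not anticipate any substantive obstacle. The only bookkeeping point worth checking is that Corollary~\ref{pv_2} was stated and proved for an arbitrary size of the leading $A$-block (including $q+r$), so relabeling causes no trouble, and that the permutations permitted in case (b) of Corollary~\ref{pv_3} are exactly those permitted in case (b) of Corollary~\ref{pv_2} when one interprets the leading block as $M$. The conceptual content is entirely contained in the observation that the upper-left zero block causes the $2 \times 2$ block determinant to factor as $\pm \det(A')\det(B')$; once that is in hand, Corollary~\ref{pv_2} handles the rest.
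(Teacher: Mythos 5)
Your proof is correct and takes essentially the same route as the paper, which likewise deduces the result from Corollary \ref{pv_2} by taking the block $A$ there to be the $(q+r)\times(q+r)$ submatrix formed by joining $0$, $A'$, $B'$, and $C'$. The only difference is that you make explicit the block-column-swap computation showing this submatrix has determinant $\pm\det(A')\det(B')\neq 0$, a detail the paper leaves implicit.
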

\begin{proof}
This follows from Corollary \ref{pv_2}, taking $A$ to be the
submatrix formed by joining the blocks $0, A', B'$ and $C'$.
\end{proof}

\section{PV-Matrices as Parameterized Families}

Let $U$ be a $q \times r$ PV-matrix over $k[z_1,...,z_n]$ and let $C = (c_1,...,c_n)
\in k^n$.  Substituting $c_i$ for each $z_i$ in $U$, we obtain the matrix $U(C)$, a
matrix with entries in $k$. It is therefore possible to view $U$ as a family $\{U(C)
| C \in k^n \}$ of matrices with entries in $k$.  We wish to translate the notion of
$U$ having the maximal possible rank min($q,r$) into a statement about the family
$\{U(C) | C \in k^n \}$.  We use the word \emph{general} in the sense of algebraic
geometry: regarding $k^n$ as an affine variety, a statement is true for general $C
\in k^n$ if it is true for all $C$ contained in a dense Zariski-open subset of
$k^n$.
\begin{lem} \label{general_max}
Let $U$ be a matrix with entries in $k[z_1,...,z_n]$ having maximal
rank. Then for general $C \in k^n$, $U(C)$ has maximal rank.
\end{lem}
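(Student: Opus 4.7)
The plan is to reduce the statement to a single polynomial nonvanishing condition and then invoke the standard fact that a nonzero polynomial in $n$ variables over an infinite field cuts out a proper closed subset of $k^n$.

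First I would unpack the hypothesis. Let $U$ be $q \times r$, and set $m := \min(q,r)$; to say $U$ has maximal rank means that $U$, viewed as a matrix over the fraction field $k(z_1,\ldots,z_n)$, has rank $m$. By the usual characterization of rank via minors, there exists some $m \times m$ submatrix $U_0$ of $U$ whose determinant $D(z_1,\ldots,z_n) \in k[z_1,\ldots,z_n]$ is a nonzero polynomial.

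Next I would translate this to the specialization $U(C)$. Define the Zariski-open set
\begin{equation*}
\mathcal{U} := \{C \in k^n \mid D(C) \neq 0\}.
\end{equation*}
Since $D$ is a nonzero polynomial and $k$ is algebraically closed (in particular infinite), $\mathcal{U}$ is a nonempty, hence dense, open subset of $k^n$. For every $C \in \mathcal{U}$, the $m \times m$ submatrix $U_0(C)$ of $U(C)$ has nonzero determinant $D(C)$, so $\rank U(C) \ge m$. On the other hand $\rank U(C) \le \min(q,r) = m$ trivially, so $\rank U(C) = m$, which is the maximal possible rank.

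There is essentially no obstacle here; the only thing to be careful about is the logical equivalence of the two notions of ``maximal rank'' (rank over $k(z_1,\ldots,z_n)$ versus existence of a nonzero $m \times m$ minor), which is a standard fact from linear algebra since rank of a matrix over any field equals the largest size of a nonvanishing minor. The density of $\mathcal{U}$ uses only that $k$ is infinite, which is given. This lemma will presumably be applied in the sequel by combining it with the results of the previous section (Corollaries \ref{pv_2} and \ref{pv_3}) that exhibit explicit nonzero determinants for suitable square L-matrices or their square submatrices, thereby certifying maximal rank for the parameterized PV-family.
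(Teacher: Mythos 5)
Your proof is correct and follows essentially the same route as the paper: both arguments reduce the claim to the nonvanishing of the determinant of a maximal square submatrix and use that a nonzero polynomial over an infinite field cuts out a nonempty (hence dense, by irreducibility of $k^n$) Zariski-open set. The only cosmetic difference is that the paper takes the union over all maximal square minors to show the full locus of maximal rank is itself open, whereas you exhibit a single dense open subset of it, which suffices for the statement.
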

\begin{proof}
We must show there is a dense Zariski-open subset of $k^n$ on which
$U(C)$ has maximal rank.  In fact, we will show that $V := \{C \in
k^n | U(C)$ has
maximal rank$\}$ is itself a dense Zariski-open set. \\

Since $k$ is algebraically closed, $k^n$ is an irreducible affine
variety.  Since $k^n$ is irreducible, any non-empty open subset is
dense.\\

Having maximal rank is equivalent to there being at least one
maximal square submatrix with nonzero determinant.  Let
$M_1,...,M_m$ be the finitely many maximal square submatrices of
$U$.  For $i = 1,...,m$, let $D_i \in k[z_1,...,z_n]$ be the
determinant of $M_i$ and let $V_i := \{C \in k^n | D_i(C) \neq 0\}$.
Then $V = \bigcup_{1 \le i \le m}V_i$, so it is enough to show that
each $V_i$ is Zariski-open and that at least one of them is nonempty
(hence dense).\\

$V_i$ is Zariski-open because it is the complement of the zero-set
of $D_i$, a polynomial in $k[z_1,...,z_n]$.  By hypothesis, at least
one of the $D_i$ is nonzero, say $D_{i_0}$.  Since $D_{i_0}$ is a
nonzero polynomial and $k$ is infinite, there is some $C \in k^n$
such that $D_{i_0}(C) \neq 0$.  That is, $C \in V_{i_0}$, and
$V_{i_0}$ is nonempty.
\end{proof}

Before leaving this section, we remind the reader of the rule for
combining two (and by iteration, finitely many) statements, each of
which is true
for general $C \in k^n$.\\

\begin{lem} \label{zar_comp}
Let $S_1$ and $S_2$ be two statements, each of which is true for
general $C \in k^n$.  Then, for general $C \in k^n$, $S_1$ and $S_2$
are simultaneously true.
\end{lem}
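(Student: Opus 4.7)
The plan is to unpack the phrase ``true for general $C \in k^n$'' as meaning ``true on a dense Zariski-open subset of $k^n$,'' and then show that the intersection of two such subsets is again a dense Zariski-open subset. Concretely, let $U_1, U_2 \subseteq k^n$ be dense Zariski-open subsets on which $S_1$ and $S_2$ hold, respectively. I would take $U := U_1 \cap U_2$ and verify (i) that $U$ is Zariski-open, and (ii) that $U$ is dense.

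For (i), the intersection of two open sets in any topological space is open, so there is nothing to do beyond this observation. For (ii), I would invoke the irreducibility of $k^n$, which was already used in the proof of Lemma \ref{general_max}: since $k$ is algebraically closed, $k^n$ is an irreducible affine variety, so any nonempty Zariski-open subset is dense, and any two nonempty Zariski-open subsets have nonempty intersection (otherwise their complements would give a proper decomposition of $k^n$ into two proper closed sets). Applying this to $U_1$ and $U_2$, both nonempty by hypothesis, yields that $U_1 \cap U_2$ is nonempty, hence dense.

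Having done this, the conclusion is immediate: on the dense Zariski-open set $U = U_1 \cap U_2$, both $S_1$ and $S_2$ hold, so they hold simultaneously for general $C \in k^n$. There is really no substantive obstacle here; the lemma is essentially a formal consequence of the definition of ``general'' together with the irreducibility of $k^n$. The only thing worth flagging is that the argument uses irreducibility in an essential way (over a reducible ambient space, two dense opens can live on different components and miss each other), but this was already established in the proof of Lemma \ref{general_max} and can simply be cited.
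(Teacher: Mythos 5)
Your proposal is correct and follows essentially the same route as the paper: take the two dense Zariski-open sets on which $S_1$ and $S_2$ hold and observe that their intersection is again dense and Zariski-open. The paper simply asserts that the intersection of two dense (open) sets is dense, while you justify it via irreducibility of $k^n$; this is a harmless elaboration of the same argument.
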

\begin{proof}
$S_1$ is true on some Zariski-open dense set $V_1$; $S_2$ is true on
some Zariski-open dense set $V_2$.  $V_1$ and $V_2$ being dense,
$V_1 \cap V_2$ is dense as well.  That is, $V_1 \cap V_2$ is a
Zariski-open dense set on which $S_1$ and $S_2$ are both true.
\end{proof}

\chapter{Combinatorial Preliminaries}

\section{Partially Ordered Sets}

A \emph{partially ordered set} or \emph{poset} is a set $S$ together
with a binary relation $\succeq$ satisfying the following three
properties (See \cite{vLM1}.):
\begin{itemize}
  \item[(i)] (Reflexivity) For any $a \in S, a \succeq a$.
  \item[(ii)] (Transitivity) For any $a,b,c \in S$, if $a \succeq b$ and
  $b \succeq c$, then $a \succeq c$.
  \item[(iii)](Antisymmetry) For any $a,b \in S$, if $a \succeq b$
  and $b \succeq a$, then $a = b$.
\end{itemize}

If, for $a,b \in S$, $a \succeq b$ but $a \neq b$, we write $a \succ
b$.\\

In this work, the only partially ordered sets we will be considering
are finite and nonempty.  Whenever we use the phrase
\emph{partially ordered set}, we will mean a finite, nonempty partially
ordered set. \\

We say two partially ordered sets $S_1$ and $S_2$ are
\emph{isomorphic} if there exists a bijection $\beta: S_1
\rightarrow S_2$ such that, for all $a,b \in S_1$, $a \succeq b
\Leftrightarrow \beta(a) \succeq \beta(b)$.\\

Let $S$ be a partially ordered set and let $T \subseteq S$.  We say
$T$ is a \emph{co-ideal} or \emph{filter} or \emph{topset} if the
following condition is satisfied:
\begin{equation} \label{topset_def}
\text{If} \; a,b \in S, a \in T,\; and \;b \succeq a, \; then \; b
\in T.
\end{equation}

Similarly, let $S$ be a partially ordered set and let $B \subseteq
S$.We say $B$ is an \emph{ideal} or \emph{bottomset} if the
following condition is satisfied:
\begin{equation} \label{bottomset_def}
\text{If} \; a,b \in S, b \in B,\; and \;b \succeq a, \; then \; a
\in B.
\end{equation}

We collect some properties of topsets and bottomsets into a lemma
for future use.

\begin{lem} \label{topset_lem}
Let $S$ be a partially ordered set.
\begin{enumerate}
\item  $T$ is a topset of $S$
if and only if $B := S - T$ is a bottomset of $S$.
\item If $T_1$ and $T_2$ are topsets of
$S$, then $T_1 \cup T_2$ and $T_1 \cap T_2$ are topsets of $S$, and
$T_1 \cap T_2$ is a topset of $T_1$.
\item Let $X \subseteq S$ be any subset.  Then $T_X := \{a \in S |$ for some
$x \in X, a \succeq x\}$ is a topset of $S$ and $B_X := \{a \in S |$
for some $x \in X, x \succeq a\}$ is a bottomset of $S$.
\item Let $U$ be a topset of $S$ and let $C := S-U$.
\begin{enumerate}
\item[(a)] Let $T$ be a topset of $S$. Then $T \cap C$ is a topset of $C$.
\item[(b)] Let $W$ be a topset of $C$. Then
\begin{enumerate}
\item[(i)] $W = T_W \cap C$.
\item[(ii)] $\{$topsets of $C\}$ = $\{ T \cap C | T$ is a topset of
$S\}$.
\item[(iii)] $U \cup W$ is a topset of $S$.
\item[(iv)] Let $B$ be a bottomset of $U \cup W$.  Then $B \cap U$
is a bottomset of $U$, and $W - B$ is a topset of $C$.
\end{enumerate}
\item[(c)] Let $B$ be a bottomset of $S$.  Then
$U \cap B$ is a bottomset of $U$.
\item[(d)] Let $D$ be a bottomset of $U$. Then
\begin{enumerate}
\item[(i)]$D = B_D \cap U.$
\item[(ii)] $\{$bottomsets of $U\}$ = $\{B \cap U | B$ is a bottomset
of $S\}$.
\end{enumerate}
\end{enumerate}
\item Recalling that $S$ is finite by definition, let $\{a_1,...,a_n\}$ be the
finite set of all minimal elements of $S$.  For each $i = i,...,n$,
let $T_i$ be a topset of $S$ containing $a_i$.  Then $S = \bigcup_{1
\le i \le n} T_{i}$.

\end{enumerate}
\end{lem}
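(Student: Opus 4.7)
The plan is to prove the five items essentially by set-theoretic unwinding of the definitions of topset and bottomset, using transitivity of $\succeq$ and, in the last item, finiteness of $S$.

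For item (1), I would argue contrapositively: if $b \in S-T$ and $b \succeq a$ but $a \notin S-T$, then $a \in T$ forces $b \in T$ by \eqref{topset_def}, a contradiction; the reverse direction is symmetric. Item (2) is immediate: if $b \succeq a$ and $a \in T_1 \cap T_2$ (resp.\ $T_1 \cup T_2$), apply \eqref{topset_def} separately in $T_1$ and $T_2$. That $T_1 \cap T_2$ is a topset of $T_1$ uses the same verification after restricting the ambient set to $T_1$. For item (3), if $b \succeq a$ and $a \in T_X$, pick $x \in X$ with $a \succeq x$; then $b \succeq x$ by transitivity, so $b \in T_X$. The argument for $B_X$ is dual.

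Item (4) is the longest but still routine. For (a), suppose $a,b \in C$, $a \in T \cap C$, and $b \succeq a$; then $b \in T$ by \eqref{topset_def} and $b \in C$ by hypothesis, so $b \in T \cap C$. For (b)(i), the inclusion $W \subseteq T_W \cap C$ is trivial from reflexivity, and the reverse inclusion uses that $W$ is a topset of $C$: if $a \in T_W \cap C$, pick $w \in W$ with $a \succeq w$; both $a,w \in C$, so $a \in W$. Item (b)(ii) then follows from (a) together with (b)(i), which exhibits every topset of $C$ as $T \cap C$ for $T = T_W$ a topset of $S$ by item (3). For (b)(iii), check \eqref{topset_def} on $U \cup W$ by cases: if $a \in U$, use that $U$ is a topset of $S$; if $a \in W \subseteq C$ and $b \succeq a$, then $b$ lies either in $U$ (done) or in $C$, in which case $b \in W$ because $W$ is a topset of $C$. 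For (b)(iv), apply item (1) and the already-established topset facts: $B \cap U$ inherits the bottomset condition from $B$ within $U$, and $W - B = W \cap ((U\cup W) - B)$ is an intersection of topsets of $C$ (the second factor being a topset of $C$ because its complement in $C$ is the bottomset $B \cap C$ of $C$). Items (c) and (d) are proved dually to (a) and (b), invoking item (1) to dualize.

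For item (5), I would observe that finiteness of $S$ ensures that every element of $S$ lies above some minimal element: given any $a \in S$, the finite nonempty set $\{b \in S : a \succeq b\}$ must contain a $\succeq$-minimal element $a_i$, which is then a minimal element of $S$ by transitivity. Thus $a \in T_{\{a_i\}} \subseteq T_i$ by the topset property \eqref{topset_def} applied to $T_i$ (which contains $a_i$). The main obstacle is not any single argument but rather the bookkeeping in item (4), where one must carefully track which ambient poset each set is a topset or bottomset of; the safest way to avoid confusion is to reduce each claim to a direct check of \eqref{topset_def} or \eqref{bottomset_def} in the correct ambient set and to use item (1) freely to pass between the two notions.
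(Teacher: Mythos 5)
Nearly all of this is the same routine unwinding of \eqref{topset_def} and \eqref{bottomset_def} that the paper carries out (your item (5) argument, extracting a minimal element of the down-set $\{b \in S \mid a \succeq b\}$, is a slightly cleaner packaging of the paper's descending-chain argument and is fine). There is, however, one genuine flaw, in the second assertion of (4)(b)(iv).

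You write $W - B = W \cap \bigl((U \cup W) - B\bigr)$ and justify the second factor being a topset of $C$ by saying ``its complement in $C$ is the bottomset $B \cap C$ of $C$.'' Two things go wrong. First, $(U \cup W) - B$ is not a subset of $C$ at all (it contains $U - B$, which is disjoint from $C$), and its intersection with $C$ is exactly $W - B$ --- so the decomposition is circular: the ``second factor,'' read inside $C$, is the very set you are trying to show is a topset. Second, and more seriously, $B \cap C = B \cap W$ need \emph{not} be a bottomset of $C$. The hypothesis is only that $B$ is a bottomset of $U \cup W$; for $a \in C - W$ with $b \succeq a$ and $b \in B \cap W$, nothing places $a$ in $B$, since $a$ may fail to lie in $U \cup W$. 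Concretely, take $S = \{u,w,c\}$ with $w \succ c$ and $u$ incomparable to both, $U = \{u\}$, $W = \{w\}$, $B = \{w\}$: then $B$ is a bottomset of $U \cup W = \{u,w\}$, but $B \cap C = \{w\}$ is not a bottomset of $C = \{w,c\}$ because $w \succ c$ and $c \notin B$. (The conclusion $W - B$ is a topset of $C$ still holds --- here it is empty --- but your route to it does not.) The repair is the direct check the paper makes: let $a \in W - B$, $b \in C$, $b \succeq a$. Then $b \in W$ because $W$ is a topset of $C$; and $b \notin B$, since otherwise $a, b \in W \subseteq U \cup W$, $b \in B$, $b \succeq a$ and the bottomset property of $B$ in $U \cup W$ would force $a \in B$, a contradiction. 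The rest of your proof, including the dualizations in (4)(c) and (4)(d), is sound.
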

\begin{proof}
(1)  Assume $T$ is a topset.  Let $a,b \in S, b \in B, b \succeq a$.
We must show $a \in B$.  Since $T$ is a topset, $b \notin T
\Rightarrow a \notin T$.  That is, $a \in B$.

Assume $B$ is a bottomset.  Let $a,b \in S, a \in T, b \succeq a$.
We must show $b \in T$.  Since $B$ is a bottomset, $a \notin B
\Rightarrow b \notin B$.  That is, $b \in T$.\\

(2) Assume $b \succeq a$. For $i = 1,2$, $T_i$ is a topset, so if $a
\in T_i$ then $b \in T_i$.  That is, if $a$ is a member of both
$T_1$ and $T_2$, so is $b$; and if $a$ is a member of $T_1$ or
$T_2$, so is $b$.

Let $a \in T_1 \cap T_2, b \in T_1, b \succeq a$.  Then $b \in T_1
\cap T_2$ since $T_1 \cap T_2$ is a topset of $S$.\\

(3) Assume $a \in T_X$ and $b \succeq a$. We must show $b \in T_X$.
Since $a \in T_X$, $a \succeq x$ for some $x \in X$.  That is, $b
\succeq a \succeq x$, and by transitivity $b \succeq x$, thus $b \in
T_X$.

Assume $b \in B_X$ and $b \succeq a$. We must show $a \in B_X$.
Since $b \in B_X$, $x \succeq b$ for some $x \in X$.  That is, $x
\succeq b \succeq a$, and by transitivity $x \succeq a$, thus $a \in
B_X$.\\

(4)(a) Assume $a,b \in C, b \succeq a, a \in T \cap C$.  We must
show $b \in T \cap C$.  But $b \in T$ because $T$ is a topset, and
$b \in C$ by hypothesis.\\

(4)(b)(i) We first show $W \subseteq T_W \cap C$.  $W \subseteq T_W$
since, by reflexivity, $w \succeq w$ for all $w \in W$; and $W
\subseteq C$ by hypothesis.

For the other direction, let $a \in T_W \cap C$.  Since $a \in T_W$,
there is some $w \in W$ for which $a \succeq w$.  Since $a \in C$
and $W$ is a topset of $C$, this gives $a \in W$.\\

(4)(b)(ii) This follows from the previous results.  If $W$ is a
topset of $C$, then $W = T_W \cap C$ and $T_W$ is a topset of $S$.
If $T$ is a topset of $S$, then $T \cap C$ is a topset of $C$.\\

(4)(b)(iii) Let $a \in U \cup W, b \in S,$ and $b \succeq a$. We must show $b \in U$
or $b \in W$.  If $a \in U$, then since $U$ is a topset of $S$, $b \in U$. If $a \in
W$, then either $b \in C$, in which case $b \in W$, since $W$ is a topset of $C$; or
else $b
\notin C$, in which case $b \in S - C = U$.\\

(4)(b)(iv) For the first assertion, let $b \in B \cap U, a \in U, b
\succeq a$.  We must show $a \in B \cap U$, so it is enough to show
$a \in B$.  This follows from $B$ being a bottomset of $U \cup W$:
$a,b \in U \cup W, b \in B$, and $b \succeq a$.

For the second assertion, let $a \in W - B, b \in C, b \succeq a$.
We must show $b \in W - B$.  That $b \in W$ follows from $W$ being a
topset of $C$:  $a \in W, b \in C, b \succeq a$.  That $b \notin B$
follows from $B$ being a bottomset of $U \cup W$:  $a \in U \cup W,
b \succeq a, a \notin B$; if $b$ were an element of $B$, $a$ would
also have to be an element of $B$, which it is not.\\

(4)(c) Assume $a,b \in U, b \in U \cap B, b \succeq a$.  We must
show $a \in U \cap B$.  But $a \in B$ because $B$ is a bottomset,
and $a \in U$ by hypothesis.\\

(4)(d)(i) We first show $D \subseteq B_D \cap U$.  $D \subseteq B_D$
since, by reflexivity, $d \succeq d$ for all $d \in D$; and $D
\subseteq U$ by hypothesis.

For the other direction, let $a \in B_D \cap U$.  Since $a \in B_D$,
there is some $d \in D$ for which $d \succeq a$.  Since $a \in U$
and $D$ is a bottomset of $U$, this gives $a \in D$.\\

(4)(d)(ii) This follows from the previous results.  If $D$ is a
bottomset of $U$, then $D = B_D \cap U$ and $B_D$ is a bottomset of
$S$. If $B$ is a bottomset of $S$, then $B \cap U$ is a bottomset of
$U$.\\

(5) Let $a \in S$, which is a finite set.  We claim that there is a
minimal $a_i \in S$ such that $a \succeq a_i$.  Assuming the claim,
$a \in T_i$ because $T_i$ is a topset, and we are done.

To prove the claim:  If $a$ is not minimal, there is some $b_1 \in
S$ such that $a \succ b_1$; if $b_1$ is not minimal, there is some
$b_2 \in S$ such that $b_1 \succ b_2$. Continuing in this manner, we
get a chain  $a \succ b_1 \succ b_2 \succ ... \succ b_r$, which must
stop because $S$ is finite and (being a partially ordered set)
antisymmetric.
\end{proof}
\vspace{\baselineskip}

Let $S$ be a partially ordered set.  A real-valued function $\phi :
S \rightarrow \mathbb{R}$ is called \emph{order-preserving} if, for
$a,b \in S$, $\phi(a) \ge \phi(b)$ whenever $a \succeq b$.\\

We investigate the connection between order-preserving functions and
topsets.  We start by defining two properties that a partially
ordered set might or might not have.\\

We say that a partially ordered set $S$ has the \emph{Topset Positivity Property
(TPP)} if $\sum_{a \in T}\phi(a) \ge 0$ for any order-preserving function $\phi$
defined on $S$  such that $\sum_{a \in S}\phi(a) \ge 0$ and any topset $T \subseteq
S$. We say that $S$ has the \emph{Topset Average Property (TAP)} if for any
order-preserving function $\phi$ defined on $S$ and any nonempty topset $T \subseteq
S$, the average of the values of $\phi$ on $T$ is at least as large as the average
of the values of $\phi$ on $S$. In symbols, $\genfrac{}{}{}{0}{\sum_{a \in
T}\phi(a)}{\#(T)} \ge
\genfrac{}{}{}{0}{\sum_{a \in S}\phi(a)}{\#(S)}$. \\

\begin{examp}
$S = \{a,b\}$, with neither $a \succeq b$ nor $b \succeq a$.
\end{examp}

$S$ has neither TPP nor TAP, as can be verified by considering the
order-preserving function $\phi$ with $\phi(a) = 3, \phi(b) = -1$,
and the topset $\{b\}$.\\

\begin{prop} \label{cond_eq}
A partially ordered set $S$ has TAP if and only if it has TPP.
\end{prop}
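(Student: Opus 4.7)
The plan is to prove both implications by elementary manipulations of order-preserving functions, with the main trick being to normalize by subtracting off the average.

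For the direction TAP $\Rightarrow$ TPP, I would take an order-preserving function $\phi$ with $\sum_{a \in S}\phi(a) \ge 0$ and a topset $T \subseteq S$. If $T$ is empty, then $\sum_{a \in T}\phi(a) = 0 \ge 0$ and we are done; otherwise $T$ is nonempty and TAP applies directly, giving
\[
\frac{\sum_{a \in T}\phi(a)}{\#(T)} \;\ge\; \frac{\sum_{a \in S}\phi(a)}{\#(S)} \;\ge\; 0,
\]
so multiplying through by $\#(T) > 0$ yields $\sum_{a \in T}\phi(a) \ge 0$.

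For the converse direction TPP $\Rightarrow$ TAP, the key idea is to shift $\phi$ by its average on $S$. Let $\phi$ be any order-preserving function on $S$ and $T \subseteq S$ any nonempty topset. Set $\bar{\phi} := \frac{\sum_{a \in S}\phi(a)}{\#(S)}$ and define $\psi : S \rightarrow \mathbb{R}$ by $\psi(a) := \phi(a) - \bar{\phi}$. The first small check is that $\psi$ remains order-preserving, which is immediate since subtracting the constant $\bar{\phi}$ preserves all inequalities $\phi(a) \ge \phi(b)$. Moreover $\sum_{a \in S}\psi(a) = 0 \ge 0$, so by hypothesis TPP applies to $\psi$ and the topset $T$, giving $\sum_{a \in T}\psi(a) \ge 0$. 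Substituting back and rearranging,
\[
\sum_{a \in T}\phi(a) \;\ge\; \bar{\phi}\cdot\#(T) \;=\; \frac{\#(T)}{\#(S)}\sum_{a \in S}\phi(a),
\]
and dividing by $\#(T) > 0$ produces exactly the TAP inequality.

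This argument should go through with no real obstacle; the only subtleties are the bookkeeping for the empty topset in the first direction and the verification that shifting by a constant preserves the order-preserving property. Neither requires any poset-theoretic input beyond the definitions, so the proof should be short.
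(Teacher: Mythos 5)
Your proof is correct and follows essentially the same route as the paper: the forward direction multiplies the TAP inequality by $\#(T)$ (with the empty-topset case handled separately), and the converse normalizes $\phi$ by subtracting its average over $S$ so that TPP applies to the shifted function. No gaps.
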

\begin{proof}
Assume $S$ has TAP and let $\phi$ be an order-preserving function on
$S$ such that $\sum_{a \in S}\phi(a) \ge 0$.  Let $T \subseteq S$ be
a nonempty topset.  By TAP, $\sum_{a \in T}\phi(a) \ge
\genfrac{}{}{}{0}{\#(T)}{\#(S)}\sum_{a \in S}\phi(a) \ge 0$.  And if
$T$ is empty,
$\sum_{a \in T}\phi(a) = 0.$\\

For the other direction, assume $S$ has TPP and let $\phi$ be an
order-preserving function on $S$. Let $C := \genfrac{}{}{}{0}
{\sum_{a \in S}\phi(a)}{\#(S)}$ and define a new order-preserving
function $\psi$ on $S$ by setting $\psi(a) = \phi(a) - C$ for all $a
\in S$. We observe that $\sum_{a \in S}\psi(a) = 0$.  For any topset
$T \subseteq S$, TPP gives $\sum_{a \in T}\psi(a) \ge 0$.  So
\begin{align*}
\genfrac{}{}{}{0}{\sum_{a \in T}\psi(a)}{\#(T)} &\ge 0 =
\genfrac{}{}{}{0}{\sum_{a \in S}\psi(a)}{\#(S)}\\
\Rightarrow \genfrac{}{}{}{0}{\sum_{a \in T}\phi(a)}{\#(T)}  -
\genfrac{}{}{}{0}{\sum_{a \in T}C}{\#(T)} &\ge
\genfrac{}{}{}{0}{\sum_{a \in S}\phi(a)}{\#(S)}-
\genfrac{}{}{}{0}{\sum_{a \in S}C}{\#(S)}\\
\Rightarrow \genfrac{}{}{}{0}{\sum_{a \in T}\phi(a)}{\#(T)}  - C
&\ge
\genfrac{}{}{}{0}{\sum_{a \in S}\phi(a)}{\#(S)}-C\\
\Rightarrow \genfrac{}{}{}{0}{\sum_{a \in T}\phi(a)}{\#(T)}&\ge
\genfrac{}{}{}{0}{\sum_{a \in S}\phi(a)}{\#(S)}.
\end{align*}
\end{proof}

\section{The Partially Ordered Set $\mathcal{G}_Q$}

Let $Q = (Q_1,...,Q_n)$ be an $n$-tuple of non-negative integers.
We define the set $\mathcal{G}_Q$ as follows:
\begin{equation}
\mathcal{G}_Q := \{\text{n-tuples} \; I:= (I_1,...,I_n) \;| \; 0 \le I_i \le Q_i \;
for \; i = 1,...,n \}.
\end{equation}

\noindent We call $n$ the \emph{dimension} of $Q$ or of
$\mathcal{G}_Q$.\\

An element of $\mathcal{G}_Q$ will often be called a
\emph{multi-index}. The multi-index $(0,...,0)$ consisting of all
zeroes comes up frequently, and we sometimes refer to it as $0$.  We
give $\mathcal{G}_Q$ a partial ordering as follows:
\begin{equation}
I \succeq J \Leftrightarrow I_i \le J_i \; for \; i =1,...,n.
\end{equation}

For example, for any $I \in \mathcal{G}_Q, 0 \succeq I \succeq Q$.
We note that the partial ordering defined here is not at all the
same as lexicographic ordering, which is also defined on sets of
$n$-tuples.\\

As a first step, we consider some linearly ordered subsets of
$\mathcal{G}_Q$. Specifically, for some co-ordinate $j$, we fix the
values of all co-ordinates of $I = (I_1,...,I_n)$ except the
$j^{th}$ to be $I_i = \lambda_i$. We say that the $Q_j + 1$-element
set
\begin{equation}
X_{\lambda}:=
\{(\lambda_1,...,\lambda_{j-1},I_j,\lambda_{j+1},...\lambda_n) \;
|\; 0 \le I_j \le Q_j  \}
\end{equation}
is a \emph{one-parameter subset} of $\mathcal{G}_Q$.

\begin{lem} \label{one_parameter_lemma}
Let $\phi$ be an order-preserving function on $\mathcal{G}_Q$, let
$X$ be a one-parameter subset of $\mathcal{G}_Q$, and let $T$ be a
nonempty topset of $X$ (under the partial order inherited from
$\mathcal{G}_Q$). Then the average value of $\phi$ on $T$ is at
least as great as the average value of $\phi$ on $X$. In symbols,
$\sum_{I \in T} \phi(I) / \#(T) \ge \sum_{I \in X} \phi(I) / \#(X)$.
\end{lem}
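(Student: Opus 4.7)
The plan is to exploit the fact that a one-parameter subset is actually totally ordered, reducing the statement to a familiar elementary fact about weakly decreasing finite sequences.

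First I would unwind the definitions to observe that the one-parameter subset $X = X_\lambda$ is a chain. Its elements may be indexed by the single varying coordinate $I_j \in \{0,1,\ldots,Q_j\}$; call these elements $e_0, e_1, \ldots, e_{Q_j}$. Under the partial order on $\mathcal{G}_Q$, we have $e_k \succeq e_\ell$ exactly when $k \le \ell$, so the induced order on $X$ is a total order with $e_0$ maximal. Since $\phi$ is order-preserving on $\mathcal{G}_Q$, its restriction to $X$ satisfies $\phi(e_0) \ge \phi(e_1) \ge \cdots \ge \phi(e_{Q_j})$.

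Next I would classify the topsets of $X$. Because $X$ is a chain with $e_0$ largest, any nonempty topset $T$ is automatically an initial segment $\{e_0, e_1, \ldots, e_m\}$ for some $m$ with $0 \le m \le Q_j$: if $e_k \in T$, then every $e_\ell$ with $\ell \le k$ satisfies $e_\ell \succeq e_k$, hence belongs to $T$. Setting $N := Q_j$, the lemma therefore reduces to the following claim about real numbers: if $a_0 \ge a_1 \ge \cdots \ge a_N$ and $0 \le m \le N$, then
\begin{equation*}
\frac{a_0 + \cdots + a_m}{m+1} \ge \frac{a_0 + \cdots + a_N}{N+1}.
\end{equation*}

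The final step is to prove this elementary inequality, which will be the easiest part rather than the main obstacle. If $m = N$ the two sides coincide. Otherwise, let $A$ be the average of $a_0, \ldots, a_m$ and $B$ the average of $a_{m+1}, \ldots, a_N$. Since every term in the first block is at least every term in the second, $A \ge B$. The overall average is the convex combination
\begin{equation*}
\frac{m+1}{N+1}\, A + \frac{N-m}{N+1}\, B,
\end{equation*}
which is at most $A$ because it is a weighted average of $A$ and $B \le A$. Translating back via the indexing of $X$ yields the desired inequality on $\phi$.

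The main conceptual step is really just recognizing that a one-parameter subset is a chain, so that topsets collapse to initial segments and the order-preserving condition becomes monotonicity of a single-variable sequence; once that observation is made, there is no serious obstacle.
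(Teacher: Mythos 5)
Your proof is correct and takes essentially the same approach as the paper, which simply observes that restricting from $X$ to a topset $T$ removes the smallest values of $\phi$; you have just spelled out the details (that $X$ is a chain, that topsets are initial segments, and the convex-combination inequality for averages). No issues.
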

\begin{proof}
This is an immediate consequence of the definition of
order-preserving: restricting from $X$ to $T$ removes the smallest
values of $\phi(I)$.
\end{proof}

We next prove a computational lemma.

\begin{lem} \label{ugly_sum}
Let $\phi$ be an order-preserving function defined on
$\mathcal{G}_Q$, let $P = (P_1,...,P_n) \in \mathcal{G}_Q$, and let
$0 \le j \le n$. Consider the two sums:
\begin{equation*}
\mathcal{S}_1 = \sum_{I_n \le Q_n} ... \sum_{I_{j+1} \le Q_{j+1}}
\sum_{I_{j} \le Q_{j}} \sum_{I_{j-1} \le P_{j-1}} ... \sum_{I_1 \le
P_1} \phi(I)
\end{equation*}
and
\begin{equation*}
\mathcal{S}_2 = \sum_{I_n \le Q_n} ... \sum_{I_{j+1} \le Q_{j+1}}
\sum_{I_{j} \le P_{j}} \sum_{I_{j-1} \le P_{j-1}} ... \sum_{I_1 \le
P_1} \phi(I).
\end{equation*}

Then there is a positive constant $\mu$ such that $\mu \mathcal{S}_2
\ge \mathcal{S}_1.$ In particular, if $\mathcal{S}_1 \ge 0$, then
$\mathcal{S}_2 \ge 0.$
\end{lem}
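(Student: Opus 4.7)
The plan is to reduce the statement to Lemma \ref{one_parameter_lemma} by freezing every coordinate except $I_j$ and working with the resulting one-parameter subset.

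Fix any choice of the "outer" indices $(I_{j+1}, \ldots, I_n)$ with $0 \le I_i \le Q_i$ and the "inner" indices $(I_1, \ldots, I_{j-1})$ with $0 \le I_i \le P_i$, and let $X$ denote the one-parameter subset of $\mathcal{G}_Q$ obtained by letting $I_j$ vary over $0, 1, \ldots, Q_j$. Because the partial order on $\mathcal{G}_Q$ is defined so that larger numerical values correspond to smaller poset elements, the subset
$$T := \{I \in X \mid I_j \le P_j\}$$
is a topset of $X$ under the inherited partial order: it is exactly the set of elements of $X$ that are "at least as large" (in the poset sense) as the element with $j$-th coordinate $P_j$. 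Applying Lemma \ref{one_parameter_lemma} to $\phi|_X$ and $T$, the average of $\phi$ on $T$ dominates the average on $X$, i.e.
$$\frac{1}{P_j+1}\sum_{I_j=0}^{P_j}\phi(I) \;\ge\; \frac{1}{Q_j+1}\sum_{I_j=0}^{Q_j}\phi(I).$$
Rearranging and setting $\mu := (Q_j+1)/(P_j+1)$, a positive rational constant depending only on $P$ and $Q$, this reads
$$\mu\sum_{I_j=0}^{P_j}\phi(I) \;\ge\; \sum_{I_j=0}^{Q_j}\phi(I).$$

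Since this inequality holds for every fixed choice of the other coordinates, I would then sum both sides over all admissible values of the remaining indices (whose ranges are identical in $\mathcal{S}_1$ and $\mathcal{S}_2$). This produces $\mu\mathcal{S}_2 \ge \mathcal{S}_1$, as required. The final assertion, that $\mathcal{S}_1 \ge 0$ implies $\mathcal{S}_2 \ge 0$, is immediate from $\mu > 0$.

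I do not expect any real obstacle; the only subtlety is the direction of the partial order. One must check carefully that $\{I_j \le P_j\}$ (not $\{I_j \ge P_j\}$) is the topset, so that Lemma \ref{one_parameter_lemma} gives an inequality that runs the correct way. Once this bookkeeping is done, the lemma is a straightforward fiberwise application of the one-parameter case.
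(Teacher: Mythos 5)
Your proof is correct and is essentially identical to the paper's own argument: both decompose the index set into the one-parameter subsets $X_\lambda$ obtained by freezing all coordinates except $I_j$, observe that $\{I_j \le P_j\}$ is a topset of each (because the partial order reverses the numerical order), apply Lemma \ref{one_parameter_lemma} fiberwise, and sum over $\lambda$ with $\mu = (Q_j+1)/(P_j+1)$. Your remark about checking the direction of the partial order is exactly the right point of care, and it checks out.
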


\begin{proof}
We note that the two sums are taken over the same set of values of
$I_1, ..., I_{j-1}$, and $I_{j+1}, ..., I_n$.  Only the values of
$I_j$ are different in the two sums.  The set of multi-indexes $I$
over which the first sum is taken can be subdivided into
one-parameter subsets $X_{\lambda}:=
\{(\lambda_1,...,\lambda_{j-1},I_j,\lambda_{j+1},...\lambda_n) \;
|\; 0 \le I_j \le Q_j  \}$, one for each choice of $\lambda
:=(\lambda_1,...\lambda_{j-1},\lambda_{j+1},...,\lambda_n)$; and
then the second sum can be subdivided into subsets $T_{\lambda}
\subseteq X_{\lambda}$, where $T_{\lambda}:=
\{(\lambda_1,...,\lambda_{j-1},I_j,\lambda_{j+1},...\lambda_n) \;
|\; 0 \le I_j \le P_j  \}$.  For each $\lambda$, we note that
$T_{\lambda}$ is a topset of $X_{\lambda}$ (under the partial order
inherited from $\mathcal{G}_Q$), that there are $Q_j +1$ elements in
$X_{\lambda}$, and that there are $P_j +1$ elements in
$T_{\lambda}$. By Lemma \ref{one_parameter_lemma}, for each
$\lambda$ we have
\begin{align*}
\genfrac{}{}{}{}{Q_j +1 }{P_j +1} \sum_{I \in T_{\lambda}} \phi(I)
&\ge \sum_{I \in X_{\lambda}} \phi(I),
\end{align*}
\noindent and summing over all values of $\lambda$,
\begin{equation*}
\genfrac{}{}{}{}{Q_j +1 }{P_j +1} \sum_{\lambda}\sum_{I \in
T_{\lambda}} \phi(I) \ge \sum_{\lambda}\sum_{I \in X_{\lambda}}
\phi(I)
\end{equation*} or
\begin{equation*}
\genfrac{}{}{}{}{Q_j +1 }{P_j +1}\mathcal{S}_2\ge\mathcal{S}_1.
\end{equation*}
The proof is completed by setting $\mu := \genfrac{}{}{}{0}{Q_j +1
}{P_j +1}$.
\end{proof}

Next we state a result about topsets of $\mathcal{G}_Q$ that have
the form $\mathcal{G}_P$, for some element $P = (P_1,...,P_n) \in
\mathcal{G}_Q$.

\begin{prop} \label{topset_prop}
Let $P = (P_1,...,P_n) \in \mathcal{G}_Q$ and let $\phi$ be an
order-preserving function on $\mathcal{G}_Q$ such that
\begin{equation*}
\sum_{I \in \mathcal{G}_Q}\phi(I) \ge 0.
\end{equation*}
Then
\begin{equation*}
\sum_{I \in \mathcal{G}_P}\phi(I) \ge 0.
\end{equation*}
\end{prop}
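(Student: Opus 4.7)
The plan is to obtain the conclusion by iterating Lemma \ref{ugly_sum} once for each coordinate, successively ``shrinking'' the range of summation from $[0,Q_j]$ down to $[0,P_j]$ one index at a time. This is a direct bookkeeping argument; no new combinatorial content is needed beyond what the previous lemma already provides.

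More precisely, for each $j = 0, 1, \ldots, n$ I would define
\begin{equation*}
\mathcal{T}_j \;:=\; \sum_{I_n \le Q_n} \cdots \sum_{I_{j+1} \le Q_{j+1}} \sum_{I_j \le P_j} \cdots \sum_{I_1 \le P_1} \phi(I),
\end{equation*}
so that $\mathcal{T}_0 = \sum_{I \in \mathcal{G}_Q} \phi(I)$ is the hypothesis and $\mathcal{T}_n = \sum_{I \in \mathcal{G}_P} \phi(I)$ is what we want to bound. The claim to prove by induction on $j$ is $\mathcal{T}_j \ge 0$. The base case $j=0$ is exactly the hypothesis. For the inductive step, I would apply Lemma \ref{ugly_sum} with parameter $j+1$: in the notation of that lemma, $\mathcal{T}_j$ is the sum $\mathcal{S}_1$ (the $(j+1)$-st index still ranging up to $Q_{j+1}$) and $\mathcal{T}_{j+1}$ is the sum $\mathcal{S}_2$ (the $(j+1)$-st index now restricted to $[0,P_{j+1}]$, and the indices $1,\ldots,j$ already cut down to $P_1,\ldots,P_j$). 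The lemma then produces a positive constant $\mu_{j+1}$ with $\mu_{j+1}\mathcal{T}_{j+1} \ge \mathcal{T}_j \ge 0$, so $\mathcal{T}_{j+1} \ge 0$.

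After $n$ steps the induction yields $\mathcal{T}_n \ge 0$, which is the desired inequality. The one thing worth checking at the outset is that Lemma \ref{ugly_sum} genuinely applies at each step: it requires $\phi$ to be order-preserving on all of $\mathcal{G}_Q$ (not on some subregion), and that is unchanged throughout the induction since we never modify $\phi$. The choice of coordinate order in which to shrink is immaterial; I choose $1, 2, \ldots, n$ purely to match the pattern in which Lemma \ref{ugly_sum} is stated.

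I do not expect a genuine obstacle here: the real combinatorial work has already been done in Lemma \ref{one_parameter_lemma} (reducing to one-parameter topsets) and Lemma \ref{ugly_sum} (packaging one coordinate reduction with a bookkeeping constant). The present proposition is the natural $n$-fold iteration, and the only care needed is to index the intermediate sums $\mathcal{T}_j$ so that each invocation of Lemma \ref{ugly_sum} syntactically matches its hypotheses.
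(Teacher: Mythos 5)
Your proposal is correct and is essentially identical to the paper's proof, which likewise rewrites both sums as iterated sums and observes that the first is transformed into the second by $n$ applications of Lemma \ref{ugly_sum}; your explicit intermediate sums $\mathcal{T}_j$ simply make the paper's one-line iteration precise.
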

\begin{proof}
The first inequality can be rewritten
\begin{equation*}
\sum_{I_n \le Q_n} ... \sum_{I_1 \le Q_1 }\phi(I) \ge 0
\end{equation*}
and the second inequality can be written
\begin{equation*}
\sum_{I_n \le P_n} ... \sum_{I_1 \le P_1 }\phi(I) \ge 0.
\end{equation*}
The first is transformed into the second by $n$ iterations of Lemma
\ref{ugly_sum}.
\end{proof}

We next prove an extension of the previous proposition.\\

\begin{prop} \label{tpp}
The partially ordered set $\mathcal{G}_Q$ has TPP.  Equivalently,
for any topset $T \subseteq \mathcal{G}_Q$ and any order-preserving
function $\phi$ on $\mathcal{G}_Q$ such that
\begin{equation*}
\sum_{I \in \mathcal{G}_Q}\phi(I) \ge 0,
\end{equation*}
then
\begin{equation*}
\sum_{I \in T}\phi(I) \ge 0.
\end{equation*}
\end{prop}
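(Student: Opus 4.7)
The plan is to argue by induction on the dimension $n$ of $Q$, using Lemma \ref{one_parameter_lemma} as the workhorse and reducing each one-parameter slice to a sum of topset-sums over the lower-dimensional poset $\mathcal{G}_{Q'}$, where $Q' := (Q_1, \ldots, Q_{n-1})$. This bypasses Proposition \ref{topset_prop} entirely, generalizing it rather than invoking it: the difficulty of Proposition \ref{topset_prop} was handling a rectangular shape, and the new difficulty is reducing a general staircase-shaped topset of $\mathcal{G}_Q$ to topsets in one fewer dimension.

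For the base case $n = 1$ the set $\mathcal{G}_Q$ is itself a one-parameter subset, so Lemma \ref{one_parameter_lemma} supplies TAP and TPP follows from Proposition \ref{cond_eq}. For the inductive step, I would slice $\mathcal{G}_Q$ along the last coordinate: for each $\lambda \in \mathcal{G}_{Q'}$ let $X_\lambda := \{(\lambda, c) : 0 \le c \le Q_n\}$. Since $T$ is a topset of $\mathcal{G}_Q$, the trace $T \cap X_\lambda$ is a topset of the chain $X_\lambda$, so it has the form $\{(\lambda, c) : 0 \le c \le f(\lambda)\}$ for some integer $f(\lambda) \in \{-1, 0, \ldots, Q_n\}$ (the value $-1$ signifying an empty trace). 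Applying Lemma \ref{one_parameter_lemma} to each non-empty trace gives
\[
\sum_{I \in T \cap X_\lambda} \phi(I) \;\ge\; \frac{f(\lambda)+1}{Q_n+1}\, \Phi(\lambda), \qquad \Phi(\lambda) := \sum_{c=0}^{Q_n} \phi(\lambda, c),
\]
and I would note that this bound is valid regardless of the sign of $\Phi(\lambda)$, as it is simply the statement that an initial partial average of a non-increasing real sequence dominates the total average; the empty-trace case contributes $0 \ge 0$ trivially.

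Next I would sum over $\lambda \in \mathcal{G}_{Q'}$ and interchange orders of summation via the identity $f(\lambda) + 1 = \#\{c : (\lambda, c) \in T\}$ to obtain
\[
\sum_{I \in T} \phi(I) \;\ge\; \frac{1}{Q_n + 1}\sum_{c=0}^{Q_n}\sum_{\lambda \in T^{(c)}} \Phi(\lambda),
\]
where $T^{(c)} := \{\lambda \in \mathcal{G}_{Q'} : (\lambda, c) \in T\}$. It is routine to verify that each $T^{(c)}$ is a topset of $\mathcal{G}_{Q'}$, that $\Phi$ is order-preserving on $\mathcal{G}_{Q'}$, and that $\sum_{\lambda \in \mathcal{G}_{Q'}} \Phi(\lambda) = \sum_{I \in \mathcal{G}_Q} \phi(I) \ge 0$. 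Hence the inductive hypothesis (TPP in dimension $n-1$) applied to each pair $(T^{(c)}, \Phi)$ yields $\sum_{\lambda \in T^{(c)}} \Phi(\lambda) \ge 0$, and the claim follows. The main obstacle I expect is not a hard calculation but the conceptual step of recognizing that the upper-partial-sum weight $(f(\lambda)+1)$ produced by the one-parameter bound decomposes as a layered sum of indicators of $(n-1)$-dimensional topsets $T^{(c)}$; once that structural observation is in hand, the remaining verifications are routine.
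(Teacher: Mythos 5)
Your argument is correct, and it takes a genuinely different route from the paper's. The paper first proves Proposition \ref{topset_prop} for the ``rectangular'' topsets $\mathcal{G}_P$ (via Lemma \ref{ugly_sum}), and then handles general topsets by a double induction --- on the dimension $n$ and, within fixed $n$, componentwise on $Q$ --- whose key step strips off the bottom layer $H_0 = \{I : I_n = 0\}$ and transfers the average of $\phi$ over $H_0$ onto the adjacent layer $H_1$ by defining a modified order-preserving function $\psi$ on $\mathcal{G}_Q - H_0 \cong \mathcal{G}_{(Q_1,\ldots,Q_{n-1},Q_n-1)}$; it then combines the induction hypothesis for $\psi$ with the TAP estimate for $T \cap H_0$ inside the $(n-1)$-dimensional poset $H_0$. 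You instead slice along the last coordinate into chains $X_\lambda$, apply the one-parameter average bound with the weight $(f(\lambda)+1)/(Q_n+1)$, and interchange sums so that the weighted total decomposes into the layer sums $\sum_{\lambda \in T^{(c)}} \Phi(\lambda)$, each of which is nonnegative by the induction hypothesis applied to the topset $T^{(c)} \subseteq \mathcal{G}_{Q'}$ and the aggregated order-preserving function $\Phi$ with $\sum_{\lambda}\Phi(\lambda) = \sum_I \phi(I) \ge 0$. All the verifications you flag as routine really are routine (the traces $T \cap X_\lambda$ are initial segments because smaller coordinates are higher in the partial order; each $T^{(c)}$ inherits the topset property coordinatewise; the weighted bound survives multiplication by the positive integer $f(\lambda)+1$ regardless of the sign of $\Phi(\lambda)$). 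Your proof is shorter and structurally cleaner: it needs only a single induction on $n$, dispenses with the second induction on $Q$ and the $\psi$-modification trick, and subsumes Proposition \ref{topset_prop} as a special case rather than requiring it as an input. What the paper's version buys in exchange is that the rectangular case and the TPP/TAP equivalence (Proposition \ref{cond_eq}) are developed as standalone tools that get reused elsewhere; your argument still needs Lemma \ref{one_parameter_lemma} (the $n=1$ case) but nothing else.
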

\begin{proof}
We proceed by induction on $n$, the dimension of $\mathcal{G}_Q$. To
start the induction, we must show that TPP holds for $\mathcal{G}_Q$
of dimension 1, so we assume $Q := (Q_1)$. Any nonempty topset $T$
has the form $\{ (I_1) | 0 \le I_1 \le P_1) \} = \mathcal{G}_P$ for
some 1-tuple $P := (P_1)$.  So the dimension-1 case follows from
Proposition \ref{topset_prop}.\\

For the induction step, we assume the proposition proved for
$\mathcal{G}_Q$ of dimension $n-1$, and prove it for dimension $n$.
We first deal with the special case that $Q_n = 0$.  In this case,
$\mathcal{G}_Q$ = $\mathcal{G}_{(Q_1,...,Q_{n-1},0)}$ is isomorphic
to $\mathcal{G}_{(Q_1,...,Q_{n-1})}$ by the bijection
$(I_1,...,I_{n-1},0) \leftrightarrow (I_1,...,I_{n-1})$.  Since
$\mathcal{G}_{(Q_1,...,Q_{n-1})}$ has dimension $n-1$, the result
follows by the induction hypothesis.\\

To prove the result for arbitrary $Q$ of dimension $n$, assuming it
to be true for lower dimensions, we perform another induction. As
the induction step, we assume that the proposition is true for all
$P = (P_1,...,P_n)$ for which $P_1 \le Q_1$, ..., $P_n \le Q_n$, and
at least one inequality is strict; and we prove the proposition for
$(Q_1, ..., Q_n)$. To start the induction, we note that the result
is immediate for $Q = (0,...,0)$.  Also, we have already dealt with
the special case that $Q_n = 0$, so in proving the induction step we
are entitled to assume that $Q_n \ge 1$.\\

To prove the induction step, we assume that $\phi$ is an
order-preserving function on $\mathcal{G}_Q$ such that $\sum_{I \in
\mathcal{G}_Q}\phi(I) \ge 0$, and that $T \subseteq S$ is a topset.
Our goal is to show that $\sum_{I \in
T}\phi(I) \ge 0$.\\

We make several definitions.  Let $H_0 :=
\mathcal{G}_{(Q_0,...,Q_{n-1},0)} = \{(I_1,...,I_{n-1},0)\}
\subseteq \mathcal{G}_Q$.  Let $H_1 := \{(I_1,...,I_{n-1},1)\}
\subseteq \mathcal{G}_Q$.  We observe that $\#(H_0) = \#(H_1)$. We
observe that $\#(H_0 \cap T) \ge \#(H_1 \cap T)$ since if
$(I_1,...,I_{n-1},1)$ is
a member of the topset $T$ then $(I_1,...,I_{n-1},0)$ is also a member.\\

Let $\mathcal{G}' := \mathcal{G}_Q - H_0$. We observe that
$\mathcal{G}'$ is isomorphic to
$\mathcal{G}_{(Q_1,...,Q_{n-1},Q_n-1)}$ by the bijection
$(I_1,...,I_{n-1},I_n) \leftrightarrow (I_1,...,I_{n-1},I_n -1)$. We
let $T' := T - H_0$ and observe that
$T'$ is a topset of $\mathcal{G}'$ by Lemma \ref{topset_lem} (4)(a).\\

We let $C := \genfrac{}{}{}{0}{\sum_{I \in H_0} \phi(I)}{\#(H_0)}$,
and observe that $C \ge 0$ by Proposition \ref{topset_prop}.  We
define a new order-preserving function $\psi$ on $\mathcal{G}'$
according to the rule: if $I \in H_1, \psi(I) := \phi(I) + C$;
otherwise, $\psi(I) := \phi(I)$.  To verify that $\psi$ is
order-preserving, let $I,J \in \mathcal{G}'$ and let $I \succeq J$.
If $I \in H_1$, then $\psi(I) = \phi(I) + C \ge \phi(J) + C \ge
\psi(J)$.  If $I \notin H_1$, then $J \notin H_1$ (since $I_n > 1$
and $I \succeq J$) so $\psi(I) = \phi(I) \ge \phi(J) = \psi(J)$.  We
observe that
\begin{align*}
\sum_{I \in \mathcal{G}'}\psi(I) &= \sum_{I \in \mathcal{G}'}\phi(I)
+ \#(H_1)C\\
&= \sum_{I \in \mathcal{G}'}\phi(I) + \#(H_1)
\genfrac{}{}{}{0}{\sum_{I \in H_0} \phi(I)}{\#(H_0)}\\
&= \sum_{I \in \mathcal{G}'}\phi(I) + \sum_{I \in H_0} \phi(I)\\
&= \sum_{I \in \mathcal{G}_Q}\phi(I) \ge 0,
\end{align*}
the last inequality being true by hypothesis.\\

We now use the induction hypothesis, applied to $\mathcal{G}', T'$,
and $\psi$, which is applicable because of the previous computation.
We deduce that $0 \le \sum_{I \in T'}\psi(I)$.  We observe, for use
in the next paragraph:
\begin{equation} \label{n_p}
0 \le \sum_{I \in T'}\psi(I) = \sum_{I \in T'}\phi(I) + \#(T \cap
H_1)C = \sum_{I \in T'}\phi(I) + \#(T \cap
H_1)\genfrac{}{}{}{0}{\sum_{I \in H_0} \phi(I)}{\#(H_0)}.
\end{equation}

Recall that our goal is to show that $\sum_{I \in T}\phi(I) \ge 0$.
We have
\begin{align*}
\sum_{I \in T}\phi(I) &= \sum_{I
\in T'}\phi(I) + \sum_{I \in T \cap H_0}\phi(I)\\
&\ge \sum_{I \in T'}\phi(I) +\genfrac{}{}{}{0}{\#(T \cap
H_0)}{\#(H_0)}\sum_{I \in H_0}\phi(I),
\end{align*}
the last inequality being demonstrated as follows:
\begin{enumerate}
\item [(a)] By the special case, $H_0 =
\mathcal{G}_{(Q_0,...,Q_{n-1},0)}$ has TPP. \item[(b)] $T \cap H_0$
is a topset of $H_0$ by Lemma \ref{topset_lem}(2). \item[(c)] TPP
$\Rightarrow$ TAP by Proposition \ref{cond_eq}.
\end{enumerate}
Continuing the computation,
\begin{align*}
\sum_{I \in T}\phi(I) &\ge \sum_{I \in T'}\phi(I)
+\genfrac{}{}{}{0}{\#(T \cap H_0)}{\#(H_0)}\sum_{I \in H_0}\phi(I)\\
&= \sum_{I \in T'}\phi(I) + \#(T \cap H_0)\genfrac{}{}{}{0}{\sum_{I
\in H_0}\phi(I)}{\#(H_0)}\\
&\ge \sum_{I \in T'}\phi(I) + \#(T \cap
H_1)\genfrac{}{}{}{0}{\sum_{I \in H_0}\phi(I)}{\#(H_0)} \ge 0,
\end{align*}
the last inequality having been established as \eqref{n_p}.
\end{proof}

\section{Block L-Matrices Associated to $\mathcal{G}_Q$}

Recall that the elements of $\mathcal{G}_Q$ have two different
orderings on them.  There is lexicographic order, denoted $I \ge J$,
and the partial order, denoted $I \succeq J$.\\

We say that an L-Matrix $U$ \emph{has $\mathcal{G}_Q$ pattern} if
\begin{enumerate}
\item For each $I \in \mathcal{G}_Q$, there exist non-negative integers $r_I$
and $c_I$ such that $U$ has block form, with one $r_I \times c_J$
block $B_{IJ}$ corresponding to each ordered pair $(I,J)$ of
elements of $\mathcal{G}_Q$.
\item The block-row indices $I$ occur in lexicographic order.
\item The block-column indices $J$ occur in reverse lexicographic order.
\item All entries in the block $B_{IJ}$ are nonzero if $I \succeq
J$ and all entries are zero if $I \nsucceq J$.
\end{enumerate}

Recalling that in lexicographic order $Q := (Q_1,...,Q_n)$ comes
first and $0 := (0,...,0)$ comes last, an L-matrix with
$\mathcal{G}_Q$ pattern decomposes into blocks as follows:

$$\left[
\begin{matrix}
B_{Q0}&.&.&.&B_{QQ}\cr .&.&.&.&.\cr .&.&B_{IJ}&.&.\cr .&.&.&.&.\cr
B_{00}&.&.&.&B_{0Q}
\end{matrix}
\right]$$

Recall that the size of $B_{IJ}$ is $r_I \times c_J$. If we wish to
include this information along with the matrix, we will do it as
follows, with the understanding that the $r_I$'s and $c_J$'s are not
matrix entries:
\begin{equation*}
\begin{artmatrix}
&c_0&&c_J&&c_Q\cr
\hline r_Q&B_{Q0}&.&.&.&B_{QQ}\cr
&.&.&.&.&.\cr
r_I&.&.&B_{IJ}&.&.\cr &.&.&.&.&.\cr r_0&B_{00}&.&.&.&B_{0Q}
\end{artmatrix}
\end{equation*}

We will be interested in determining necessary and sufficient
conditions that that an L-Matrix with $\mathcal{G}_Q$ pattern have
nonzero determinant.\\

Let $U$ be an L-Matrix with $\mathcal{G}_Q$ pattern. We wish to
define the notion of a \emph{superblock} of $U$.  Let
$\mathcal{I},\mathcal{J} \subseteq \mathcal{G}_Q$ be subsets.  Then
the \emph{$\mathcal{I}\mathcal{J}$ superblock} of $U$ is the
submatrix composed of all blocks $B_{IJ}$ such that $I \in
\mathcal{I}$ and $J \in \mathcal{J}$.  A \emph{superblock of zeroes}
is a superblock of $U$ composed entirely of blocks of zeroes.  A
\emph{maximal superblock of zeroes} is a superblock of zeroes that
is not properly contained in any larger superblock of zeroes.

\begin{examp} \label{pattern}
Let $n = 2$, $Q = (Q_1,Q_2) = (1,1)$.  $\mathcal{G}_Q = \{(1,1),
(1,0), (0,1), (0,0)\}$.  Abusing notation, $\mathcal{G}_Q = \{11,
10, 01, 00\}$.
\end{examp}
Let us analyze an L-Matrix $U$ with $\mathcal{G}_Q$ pattern, for the
choice of $Q$ in Example \ref{pattern}. According to the definition
of $\mathcal{G}_Q$ pattern, the blocks $B_{IJ}$ are composed
entirely of zeroes if $I \nsucceq J$ and contain no zeroes if $I
\succeq J$. The pairs $(I,J)$ for which $I \nsucceq J$ are:
\begin{equation*}
(11,00), (11,01), (11,10), (10,00), (10,01), (01,00), (01,10).
\end{equation*} Therefore $U$ has the following block form, where an
asterisk denotes a block that contains only nonzero entries:

\begin{equation*}
\begin{artmatrix}
&c_{00}&c_{01}&c_{10}&c_{11}\cr
 \hline
r_{11}&0&0&0&*\cr r_{10}&0&0&*&*\cr r_{01}&0&*&0&*\cr r_{00}&*&*&*&*
\end{artmatrix}
\end{equation*}

The four maximal superblocks of zeroes can be demonstrated by
placing spaces into the diagram, as follows, and perhaps permuting
the rows and columns of $U$ (as was done to demonstrate the third
maximal superblock).

\begin{equation*}
\begin{artmatrix}
&c_{00}&c_{01}&c_{10}&&c_{11}\cr  \hline r_{11}&0&0&0&&*\cr \cr
r_{10}&0&0&*&&*\cr r_{01}&0&*&0&&*\cr r_{00}&*&*&*&&*
\end{artmatrix}
\end{equation*}

\begin{equation*}
\begin{artmatrix}
&c_{00}&c_{01}&&c_{10}&c_{11}\cr \hline r_{11}&0&0&&0&*\cr
r_{10}&0&0&&*&*\cr \cr r_{01}&0&*&&0&*\cr r_{00}&*&*&&*&*
\end{artmatrix}
\end{equation*}

\begin{equation*}
\begin{artmatrix}
&c_{00}&c_{10}&&c_{01}&c_{11}\cr \hline r_{11}&0&0&&0&*\cr
r_{01}&0&0&&*&*\cr \cr r_{10}&0&*&&0&*\cr r_{00}&*&*&&*&*
\end{artmatrix}
\end{equation*}

\begin{equation*}
\begin{artmatrix}
&c_{00}&&c_{01}&c_{10}&c_{11}\cr \hline r_{11}&0&&0&0&*\cr
r_{10}&0&&0&*&*\cr r_{01}&0&&*&0&*\cr \cr r_{00}&*&&*&*&*
\end{artmatrix}
\end{equation*}

\begin{lem} \label{gq_1}  Let $U$ be an L-Matrix with $\mathcal{G}_Q$
pattern. Then its maximal superblocks $Y_T$ of zeroes are determined by the proper
nonempty topsets $T \subseteq \mathcal{G}_Q$: the block $Y_T$ contains precisely
those blocks $B_{IJ}$ such that $I \notin T$ and $J \in T$.
\end{lem}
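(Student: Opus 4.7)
The plan is to exhibit a bijection between proper nonempty topsets $T \subseteq \mathcal{G}_Q$ and maximal zero superblocks of $U$, given by $T \mapsto Y_T$, where $Y_T$ is the superblock indexed by $(\mathcal{G}_Q \setminus T) \times T$.

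First I would verify the forward direction: for a proper nonempty topset $T$, the superblock $Y_T$ consists entirely of zero blocks, and it is maximal. If $I \notin T$ and $J \in T$, then $I \succeq J$ would force $I \in T$ by the topset property, a contradiction; hence $I \nsucceq J$ and $B_{IJ} = 0$ by the $\mathcal{G}_Q$-pattern definition. For maximality, I would show neither the row index set nor the column index set can be enlarged. Enlarging the columns means adjoining some $J' \in \mathcal{G}_Q \setminus T$ to $T$; then $J'$ itself lies in the new row index set $\mathcal{G}_Q \setminus T$ and $J' \succeq J'$ by reflexivity, so $B_{J'J'} \neq 0$. Enlarging the rows is symmetric: adjoining $I' \in T$ forces examining $B_{I'I'}$, which is nonzero.

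For the reverse direction, let $Y$ be a maximal zero superblock with index sets $\mathcal{I}, \mathcal{J}$ (both nonempty). I would introduce
\begin{equation*}
T^* := \{J \in \mathcal{G}_Q \mid I \nsucceq J \text{ for all } I \in \mathcal{I}\}, \qquad S^* := \{I \in \mathcal{G}_Q \mid I \nsucceq J \text{ for all } J \in \mathcal{J}\},
\end{equation*}
and set $T := \mathcal{J}$. The zero-block condition gives $\mathcal{J} \subseteq T^*$ and $\mathcal{I} \subseteq S^*$; maximality of $Y$ upgrades both inclusions to equalities, since any $J \in T^* \setminus \mathcal{J}$ (respectively $I \in S^* \setminus \mathcal{I}$) could be adjoined to $\mathcal{J}$ (respectively $\mathcal{I}$) without introducing any nonzero block. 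Next I would check that $T$ is a topset: if $J \in T$ and $K \succeq J$, then any $I \in \mathcal{I}$ with $I \succeq K$ would give $I \succeq J$ by transitivity, violating $J \in T^*$. Finally, $T$ is proper because $\mathcal{I} \neq \emptyset$ and nonempty because $\mathcal{J} \neq \emptyset$.

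The main step is establishing the clean complementarity $\mathcal{I} = \mathcal{G}_Q \setminus T$, i.e.\ $S^* = \mathcal{G}_Q \setminus T$. Disjointness $S^* \cap T = \emptyset$ is immediate: any $a \in S^* \cap T = S^* \cap \mathcal{J}$ would, applied to itself, demand $a \nsucceq a$, contradicting reflexivity. For the reverse containment, if $a \notin T = T^*$ then by definition there exists $I \in \mathcal{I} = S^*$ with $I \succeq a$; for any $J \in \mathcal{J}$, transitivity of $\succeq$ together with $I \nsucceq J$ rules out $a \succeq J$, so $a \in S^*$. This transitivity argument is the only nontrivial piece; the rest of the proof is bookkeeping that matches each condition in the definition of $\mathcal{G}_Q$-pattern to the corresponding topset property.
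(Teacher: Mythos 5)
Your proof is correct, and it is actually more complete than the paper's. Both arguments turn on the same core observation --- $B_{IJ}$ is a zero block exactly when $I \nsucceq J$, which is the complementary-topset condition --- and your forward direction coincides with the paper's. The converses differ in execution: the paper takes an arbitrary zero superblock $Y$, forms the up-closure $T_{\mathcal{J}}$ of its column index set, cites Lemma \ref{topset_lem}(3) for the topset property, and concludes only that $Y$ is \emph{contained in} $Y_{T_{\mathcal{J}}}$; it never explicitly verifies that each $Y_T$ is itself maximal, nor that a maximal zero superblock has index sets exactly $\mathcal{G}_Q - T$ and $T$. You supply both missing pieces: the diagonal blocks $B_{J'J'}$ (nonzero by reflexivity) obstruct any enlargement of $Y_T$, and your saturation argument ($\mathcal{J} = T^*$ and $\mathcal{I} = S^*$ by maximality, then $S^* = \mathcal{G}_Q - T^*$ via the transitivity step) pins the index sets down exactly, verifying the topset property directly rather than through Lemma \ref{topset_lem}(3). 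The only caveat --- one the paper shares --- is that the maximality arguments tacitly assume the relevant block dimensions $r_{J'}, c_{J'}$ are positive: if one of them vanishes, the ``obstruction'' $B_{J'J'}$ is an empty block, and maximality must then be read with respect to index sets rather than matrix entries. This does not affect how the lemma is used later.
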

\begin{proof}  We first show that any such superblock contains only
zeroes.  By the definition of $\mathcal{G}_Q$ pattern, it is enough
to show that, for any block $B_{IJ}$ in the superblock, $I \nsucceq
J$.  But $J$ is an element of the topset $T$
and $I$ is not, so $I \nsucceq J$.\\

We next show that, given a superblock $Y$ of zeroes, it is a
subsuperblock of one of the $Y_T$'s.  Since all its constituent
blocks $B_{IJ}$ are 0, we always have $I \nsucceq J$. So if we
consider the set $T = \{K \in \mathcal{G}_Q | K \succeq J$ where
some $B_{IJ}$ is in $Y \}$, then $T$ is a topset by Lemma
\ref{topset_lem} (3), and $Y$ is a subsuperblock of $Y_T$.
\end{proof}

In the matrix $U$ with $\mathcal{G}_Q$ pattern discussed in connection with Example
\ref{pattern}, we see that the four maximal superblocks of zeroes correspond,
respectively, to the four nonempty proper subsets of $\mathcal{G}_Q$:
$\{00,01,10\},\ \{00,01\},
\{00,10\}, \{00\}$.\\

For an L-matrix $U$ with $\mathcal{G}_Q$ pattern, with block
dimensions $r_I$ and $c_I$ as above, we define, for each $I \in
\mathcal{G}_Q$, the \emph{excess} $A_I := r_I - c_I$.  Despite the
name, there is no requirement that $r_I \ge c_I$, and $A_I$ can
certainly be a negative number.

\begin{lem} \label{gq_2}
Let $U$ be an L-matrix with $\mathcal{G}_Q$ pattern and excesses
$A_I$.  Then $U$ is a square matrix if and only if $\sum_{I \in
\mathcal{G}_Q}A_I = 0$.
\end{lem}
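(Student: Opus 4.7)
The plan is to observe that this statement is essentially a bookkeeping computation from the definition of $\mathcal{G}_Q$ pattern, and no substantive combinatorial content is involved.

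First I would note that, by definition of $\mathcal{G}_Q$ pattern, $U$ decomposes into blocks $B_{IJ}$ indexed by ordered pairs $(I,J) \in \mathcal{G}_Q \times \mathcal{G}_Q$, where for a fixed block-row index $I$ every block $B_{IJ}$ in that block-row has exactly $r_I$ rows, and for a fixed block-column index $J$ every block $B_{IJ}$ in that block-column has exactly $c_J$ columns. Therefore the total number of rows of $U$ equals
\begin{equation*}
\#\text{rows}(U) = \sum_{I \in \mathcal{G}_Q} r_I,
\end{equation*}
and the total number of columns of $U$ equals
\begin{equation*}
\#\text{cols}(U) = \sum_{J \in \mathcal{G}_Q} c_J = \sum_{I \in \mathcal{G}_Q} c_I,
\end{equation*}
the last equality being a mere relabeling of the summation index.

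Next I would use the definition that $U$ is square if and only if $\#\text{rows}(U) = \#\text{cols}(U)$. Rearranging, this is equivalent to
\begin{equation*}
\sum_{I \in \mathcal{G}_Q} (r_I - c_I) = 0,
\end{equation*}
which by the definition $A_I := r_I - c_I$ is exactly $\sum_{I \in \mathcal{G}_Q} A_I = 0$. This establishes both directions of the equivalence simultaneously.

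There is no main obstacle here: the only point to be careful about is making sure that the block-row and block-column index sets are both in bijection with all of $\mathcal{G}_Q$ (as prescribed by conditions (2) and (3) of the definition of $\mathcal{G}_Q$ pattern), so that the two sums really range over the same index set. The content of the lemma is simply to record, for later use, that the excesses $A_I$ provide a convenient bookkeeping tool for detecting squareness of such block L-matrices.
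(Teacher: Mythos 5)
Your proposal is correct and takes essentially the same route as the paper, which simply notes that squareness is equivalent to $\sum_{I \in \mathcal{G}_Q} r_I = \sum_{I \in \mathcal{G}_Q} c_I$; you merely spell out the bookkeeping in more detail.
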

\begin{proof}
The condition is equivalent to $\sum_{I \in \mathcal{G}_Q}r_I =
\sum_{I \in \mathcal{G}_Q}c_I.$
\end{proof}

\begin{thm}{} \label{gq_3}
Let $U$ be a square L-matrix with $\mathcal{G}_Q$ pattern and
excesses $A_I$.  Then the following are equivalent:
\begin{enumerate}
\item $\det(U) \neq 0.$
\item For any nonempty proper topset $T \subseteq \mathcal{G}_Q,
 \sum_{I \in T}A_I \ge 0$.
\item For any nonempty topset $T \subseteq \mathcal{G}_Q- \{Q\},
 \sum_{I \in T}A_I \ge 0$.
\item For any nonempty proper bottomset $B \subseteq \mathcal{G}_Q,
 \sum_{I \in B}A_I \le 0$.
\item For any nonempty bottomset $B \subseteq \mathcal{G}_Q- \{0\},
 \sum_{I \in B}A_I \le 0$.
\end{enumerate}
\end{thm}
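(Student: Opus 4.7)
The trivial equivalences come first. Since $0 = (0,\ldots,0)$ is the unique maximum and $Q$ the unique minimum of $\mathcal{G}_Q$ under $\succeq$, every nonempty topset contains $0$, and any topset containing $Q$ must be all of $\mathcal{G}_Q$; hence proper nonempty topsets of $\mathcal{G}_Q$ coincide exactly with nonempty topsets of $\mathcal{G}_Q - \{Q\}$, giving $(2)\Leftrightarrow(3)$. Dually, $(4)\Leftrightarrow(5)$. Then $(2)\Leftrightarrow(4)$ follows by complementing topsets against bottomsets (Lemma \ref{topset_lem}(1)) and using $\sum_{\mathcal{G}_Q}A_I = 0$ from Lemma \ref{gq_2}.

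For the contrapositive of $(1)\Rightarrow(2)$: if a proper nonempty topset $T$ has $\sum_T A_I < 0$, then the $\sum_{J\in T}c_J$ columns of $U$ indexed by block-columns in $T$ have, by Lemma \ref{gq_1}, all of their nonzero entries concentrated in the $\sum_{I\in T}r_I$ rows indexed by block-rows in $T$ (since $I \notin T$ and $J \in T$ forces $I \nsucceq J$). Strictly fewer active rows than columns forces linear dependence over $k(z_1,\ldots,z_n)$, so $\det(U) = 0$.

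The main work is $(2)\Rightarrow(1)$, which I plan to prove by induction on the matrix size $s$, using two subcases. In the strict case, where $\sum_T A_I \geq 1$ for every proper nonempty topset $T$, I use the bottom-right entry $u_{ss} = \lambda z_k$, which is nonzero because $B_{0Q}$ is a nonzero block (as $0 \succeq Q$). The L-property forces $z_k$ to appear nowhere else in $U$ (any other occurrence would require a position $(i,j)$ with $i<s$ and $j>s$, which is impossible), so the argument of Lemma \ref{pv_1} gives that the $z_k$-containing part of $\det(U)$ equals $u_{ss}\det(U')$, with $U'$ the submatrix obtained by deleting row $s$ and column $s$. The excesses update to $A'_0 = A_0 - 1$, $A'_Q = A_Q + 1$, and all others unchanged; since $0 \in T$ but $Q \notin T$ for every proper nonempty topset $T$, one computes $\sum_T A'_I = \sum_T A_I - 1 \ge 0$, so $U'$ still satisfies condition $(2)$, and induction gives $\det(U') \neq 0$, hence $\det(U) \neq 0$.

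In the equality case, some proper nonempty topset $T$ has $\sum_T A_I = 0$. Set $C := \mathcal{G}_Q - T$. Permuting block-rows so that $C$-rows precede $T$-rows, and block-columns so that $T$-columns precede $C$-columns, the topset property (via Lemma \ref{gq_1}) forces the upper-left superblock to be zero, and the equality $\sum_T A_I = \sum_C A_I = 0$ makes both diagonal superblocks square; hence $\det(U) = \pm \det(U_{TT})\det(U_{CC})$. Lemma \ref{topset_lem}(4) translates condition $(2)$ for $U$ into the analogous excess conditions on the smaller L-matrices $U_{TT}$ and $U_{CC}$, and induction finishes. The principal obstacle is precisely this split: the sub-posets $T$ and $C$ are not themselves of the form $\mathcal{G}_{Q'}$, so to run the induction cleanly one must either first reformulate and prove the theorem for square L-matrices whose nonzero-block structure is governed by an arbitrary finite poset (with the analogous excess inequalities), or else find a way to always reduce to the strict case by peeling off extremal rows and columns while remaining inside the class of $\mathcal{G}_Q$-patterns.
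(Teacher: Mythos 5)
Your handling of the trivial equivalences and of $(1)\Rightarrow(2)$ matches the paper, and your ``strict case'' of $(2)\Rightarrow(1)$ is sound: when every proper nonempty topset has $\sum_T A_I \ge 1$, the topset $\{0\}$ and the bottomset $\{Q\}$ force $r_0\ge 1$ and $c_Q\ge 1$, so the corner entry really does sit in $B_{0Q}$, and the peeling argument (a special case of Lemma \ref{pv_1}) together with your excess bookkeeping is correct. The gap is in the equality case, and it is twofold. First, the difficulty you flag yourself --- that $T$ and $C$ are not of the form $\mathcal{G}_{Q'}$ --- is actually the more benign one: you can regard $U_{TT}$ and $U_{CC}$ as $\mathcal{G}_Q$-patterned L-matrices with $r_I=c_I=0$ outside $T$ (resp.\ $C$), and hypothesis (2) for them does follow from (2) for $U$ together with $\sum_T A_I=0$, since $T'\cap T$ is again a topset of $\mathcal{G}_Q$ (Lemma \ref{topset_lem}(2)) and $\sum_{T'\cap C}A_I=\sum_{T\cup T'}A_I-\sum_T A_I\ge 0$. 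The genuinely fatal problem is that your dichotomy does not cover all matrices satisfying (2): the equality topset $T$ you split on may carry no rows and no columns at all (or its complement may), in which case $U_{TT}$ or $U_{CC}$ is empty, the other factor is $U$ itself, and the induction does not advance. Concretely, take $Q=(2)$, $\mathcal{G}_Q=\{0,1,2\}$, $r_1=c_1=1$ and all other $r_I,c_I=0$: condition (2) holds, $\det U\neq 0$, but $T=\{0\}$ has $\sum_T A_I=0$ (so you are not in the strict case) while every equality topset gives a degenerate split. Your two suggested repairs (reprove the theorem for arbitrary finite posets, or always reduce to the strict case) are left unexecuted, so the proof of $(2)\Rightarrow(1)$ is incomplete.

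The paper avoids this entirely by never demanding two square diagonal blocks. It permutes the maximal zero superblock $Y_S$ into the upper left, extracts only the leftmost square part $A'$ of the superblock to its right and the uppermost square part $B'$ of the superblock below it, and invokes Corollary \ref{pv_3}, which tolerates the leftover non-square block $Z'$ provided $Z'$ lies inside $B_{0Q}$, where the L-matrix property guarantees nonzero diagonal entries whose variables appear nowhere else. The price is that $S$ must be chosen so that four conditions hold simultaneously, and the paper produces such an $S$ by a greedy algorithm: start from $S=\{0\}$ and repeatedly adjoin a smallest topset $X$ of $\mathcal{G}_Q-S-\{Q\}$ with $\sum_{I\in X}A_I<0$, verifying at each step that the bottomset inequalities \eqref{poupou} are preserved. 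That construction is the substantive content missing from your argument; if you want to salvage your two-square-block decomposition instead, you must prove that whenever the strict case fails there exists an equality topset $T$ with $0<\sum_{I\in T}c_I<s$, which is exactly the kind of existence statement the paper's algorithm is engineered to supply.
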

\begin{proof}

To see that (2) and (3) are equivalent, we observe that a proper
topset of $\mathcal{G}_Q$ is the same thing as a topset of
$\mathcal{G}_Q - \{Q\}$, since the only topset of $\mathcal{G}_Q$
containing $Q$ is $\mathcal{G}_Q$ itself. Similarly, to see that (4)
and (5) are equivalent, we observe that a proper bottomset of
$\mathcal{G}_Q$ is the same thing as a bottomset of $\mathcal{G}_Q -
\{0\}$, since the only bottomset of
$\mathcal{G}_Q$ containing $0$ is $\mathcal{G}_Q$ itself.\\

To see that (2) and (4) are equivalent, we first observe that\\ $\{B
\subseteq \mathcal{G}_Q| B$ is a bottomset$\}$ = $\{\mathcal{G}_Q -T
\subseteq \mathcal{G}_Q | T$ is a topset$\}$. (See Lemma
\ref{topset_lem}(1).) By Lemma \ref{gq_2}, $\sum_{I \in
\mathcal{G}_Q -T}A_I \le 0 \Leftrightarrow \sum_{I \in T}A_I \ge
0$.\\

To see that (1) $\Rightarrow$ (2), let $T$ be a nonempty proper
topset and let $Y_T$ be the corresponding maximal superblock of
zeroes. Recall that $Y_T$ consists of blocks $B_{IJ}$ such that $I
\notin T$ and $J \in T$.  Then some matrix $U'$, formed by (perhaps)
permuting some of the rows and columns of $U$, has a decomposition
into four superblocks as follows:

\begin{equation*}
\begin{artmatrix}
&\sum_{I \in T}c_I& \sum_{I \notin T}c_I\cr \hline \sum_{I \notin
T}r_I&0&A\cr \sum_{I \in T}r_I&B&Z
\end{artmatrix}
\end{equation*}

\noindent where 0 represents $Y_T$.  Assume $\det(U) \neq 0$.  Then
$\det(U') \neq 0$, since $U'$ was formed by permuting rows of
columns of $U$. So the first $\sum_{I \in T}c_I$ columns must be
linearly independent, which means the rank of the superblock $B$
must be at least $\sum_{I \in T}c_I$. This implies $\sum_{I \in
T}r_I \ge \sum_{I \in T}c_I$, that is, $\sum_{I
\in T}A_I \ge 0$. \\

To prove that (2) $\Rightarrow$ (1) takes several pages and
constitutes the remainder of this chapter. We proceed by induction
on the size
$s \times s$ of $U$.  We fix a value of $Q$, which remains unchanged
throughout the induction.\\

We start the induction with $s = 1$, which is to say $U$ has a
single entry $u$. For $U$ to have a nonzero determinant, $u$ must be
nonzero.  We observe that, for some choice of multi-indexes $I_0$
and $J_0$, $u$ constitutes the $B_{I_0J_0}$ block of $U$. That is,
$r_{I_0} = c_{J_0} = 1$, and these are the only nonzero $r_I$ and
$c_J$.  In particular, if $I_0 \neq J_0$, the only nonzero values of
$A_I := r_I - c_I$ are $A_{I_0} =1$ and $A_{J_0} = -1$.\\

To prove the case $s=1$: we assume, for every nonempty proper topset $T \subseteq
\mathcal{G}_Q$, that $\sum_{I \in T}A_I \ge 0$; and our goal is to show that the
$B_{I_0J_0}$ block has a nonzero entry. Equivalently, since $U$ has $\mathcal{G}_Q$
pattern, we must show that $I_0 \succeq J_0$.\\

If $I_0 = J_0$ or $J_0 = Q$, this is immediate. Otherwise, we form
the nonempty proper topset $T_{\{J_0\}} := \{I \in \mathcal{G}_Q | I
\succeq J_0\}$ (See Lemma \ref{topset_lem}(3).) Since $\sum_{I \in
T_{\{J_0\}}}A_I \ge 0$ and $J_0 \in T_{\{J_0\}}$, it must be that
$I_0 \in T_{\{J_0\}}$.
That is, $I_0\succeq J_0$.\\
For the induction step, we assume that the theorem has been proved for $1,...,s-1$,
and we let $U$ be an $s\times s$ matrix such that (2) holds.  We must show that the
determinant of $U$ is nonzero.  To do this, having Corollary \ref{pv_3} in mind, we
choose an arbitrary nonempty proper topset $S \subseteq \mathcal{G}_Q$ (switching to
$S$ in order to reserve the letter $T$ for future use) and we set $C = \mathcal{G}_Q
- S$.  Then we permute the rows and columns of $U$, if necessary, so that $Y_S$, the
maximal superblock of zeroes associated to $S$, is in the upper left.  We call this
permuted matrix $U_S$, and look at its decomposition into four superblocks.

\begin{equation*}
\begin{artmatrix}
&\sum_{I \in S}c_I& \sum_{I \in C}c_I\cr \hline \sum_{I \in
C}r_I&0&A\cr \sum_{I \in S}r_I&B&Z
\end{artmatrix}
\end{equation*}

We observe that it was not necessary to permute the last $r_0$ rows
or the last $c_Q$ columns, since automatically $0 \succeq Q$, which
forces the $B_{0Q}$ block (which is contained in $Z$) to consist of
nonzero entries. Since we are assuming that (2) holds, we know that
$A$ has at least as many columns as rows, and $B$ has at least as
many rows as columns. We let $A'$ denote the leftmost maximal square
submatrix of $A$, and let $B'$ denote the uppermost maximal square
submatrix of $B$.  With this notation, the decomposition of $U_S$
can be rewritten

$$\left[
\begin{matrix}
0&A'&*\cr B'&C'&*\cr *&*&Z'\cr
\end{matrix}
\right]$$\\

\noindent This rewriting did not involve any further permuting of
the rows and columns, so we may be sure that the last $r_0$ rows and
the last $c_Q$ columns have never been permuted from the original $U$.\\

In order to show that $U$ has nonzero determinant, it is enough to
show that, for some choice of $S$, $U_S$ has nonzero determinant.
According to Corollary \ref{pv_3}, we can show that $U_S$ has
nonzero determinant by showing that (a1) $A'$ has nonzero
determinant, (b1) $B'$ has nonzero determinant, and (c) the block
$Z'$ lies entirely within the $B_{0Q}$ block, which guarantees that
its rows and columns have not been permuted and that its entries
(and hence its main diagonal entries) are all nonzero.\\

In order to show this, we fix some notations.  We recall that both
$A'$ and $B'$ were formed by first permuting some of the rows and
columns of $U$, and then deleting some of the rows and columns of
the permuted matrix. We observe that the result would have been the
same if we had first deleted the appropriate rows and columns of $U$
and then suitably permuted the rows and columns of what remained.
With regard to this equivalent alternative construction of $A'$ and
$B'$, we define $A_0'$ and $B_0'$ to be the submatrices of $U$ that
were formed by deletion of rows and columns, and were subsequently
altered by permutations of rows and columns to form, respectively,
$A'$ and $B'$. We remark that $A_0'$ and $B_0'$, being submatrices
of the L-matrix $U$, are themselves L-matrices; and that, in order
to show that $A'$ and $B'$ have nonzero determinant, it is enough to
show
that $A_0'$ and $B_0'$ have nonzero determinant.\\

In order to use induction, we need to view $A_0'$ and $B_0'$ as L-matrices with
$\mathcal{G}_Q$ pattern.  To do this, we say that an entry is in the $B_{IJ}$ block
of $A_0'$ or $B_0'$ if it was in the $B_{IJ}$ block of $U$.  Thus, if we use primes
to denote block dimensions and excessess in $A_0'$ (viz. $r_I',c_I',A_I'$) and
double primes to denote block dimensions and excesses in $B_0'$ (viz.
$r_I'',c_I'',A_I''$),

\begin{enumerate}
\item [(i)] For all $I \in \mathcal{G}_Q$, $r_I' \le r_I$, $r_I'' \le r_I$,
$c_I' \le c_I$, $c_I'' \le c_I.$
\item[(ii)] For all $I \in S, 0 = r_I' = c_I' = A_I'$, and $c_I'' =
c_I.$
\item[(iii)] For all $I \in C, 0 = r_I'' = c_I'' = A_I''$, and $r_I'
= r_I.$
\end{enumerate}

With these notations, we repeat the previous decomposition of $U_S$,
this time including some of the block dimensions.

\begin{equation*}
\begin{artmatrix}
&\sum_{I \in S}c_I& \sum_{I \in C}c_I'&\cr \hline \sum_{I \in
C}r_I&0&A'&*\cr \sum_{I \in S}r_I''&B'&C'&*\cr &*&*&Z'\cr
\end{artmatrix}
\end{equation*}

With these notations, we can break statement (c) above into two parts:\\
Statement (a2):
\begin{equation*}
\sum_{I \in C}c_I' \ge  \sum_{I \in C - \{Q\}}c_I.
\end{equation*}
\noindent and Statement (b2):
\begin{equation*}
\sum_{I \in S}r_I'' \ge  \sum_{I \in S - \{0\}}r_I.
\end{equation*}

\noindent To state (a1) with the new notations, we use the induction
hypothesis to obtain a set of conditions that $A_0'$ have nonzero
determinant:
\begin{equation*}
\text{For all nonempty topsets } T \subseteq \mathcal{G}_Q - \{Q\},
\; \sum_{I \in T}A_I' \ge 0.
\end{equation*}
By (ii), this is equivalent to:\\
\begin{equation*}
\text{For all nonempty topsets} \; T \subseteq \mathcal{G}_Q -
\{Q\}, \; \sum_{I \in T \cap C}A_I' \ge 0.
\end{equation*}
By Lemma \ref{topset_lem} (4)(b)(ii), this is equivalent to:
\begin{equation}  \label{&} \text{For all nonempty topsets} \; T \subseteq C - \{Q\}, \;
\sum_{I \in T}A_I' \ge 0.
\end{equation}
Also, we recall (a2):
\begin{equation*}
\sum_{I \in C}c_I' \ge  \sum_{I \in C - \{Q\}}c_I
\end{equation*}
which, since $A'$ is square, is equivalent to
\begin{equation*}
\sum_{I \in C}r_I \ge \sum_{I \in C - \{Q\}}c_I
\end{equation*}
or
\begin{equation} \label{&&}
r_Q + \sum_{I \in C - \{Q\}}A_I  \ge 0.
\end{equation}

We now introduce a condition on $U$ that, we claim, implies both
\eqref{&} and \eqref{&&}:
\begin{equation} \label{pou}
\text{For all nonempty topsets} \; T \subseteq C - \{Q\}, \sum_{I
\in T}A_I \ge 0.
\end{equation}
To see that \eqref{pou} implies \eqref{&}, let $T$ be a nonempty
topset of $C - \{Q\}$.  Then
\begin{align*}
0 \le \sum_{I \in T}A_I =\sum_{I \in T}r_I - \sum_{I \in T}c_I
  &=\sum_{I \in T}r'_I - \sum_{I \in T}c_I\\
&\le \sum_{I \in T}r'_I - \sum_{I \in T}c_I' =\sum_{I \in T}A_I'.
\end{align*}

To see that \eqref{pou} implies \eqref{&&}, apply \eqref{pou} to $C
- \{Q\}$ (which is a topset of itself):
\begin{equation*}
0 \le \sum_{I \in C - \{Q\}}A_I \le r_Q + \sum_{I \in C - \{Q\}}A_I.
\end{equation*}

Summarizing the progress so far, we have shown that the induction
step will follow if we can demonstrate a maximal superblock $Y_S$
for which (a1), (a2), (b1), and (b2) hold.  We have shown that (a1)
and (a2) hold if $S$ satisfies \eqref{pou}.  We now proceed in a
completely analogous fashion to establish another condition
on $S$ that will ensure (b1) and (b2) hold.\\

\noindent To state (b1) with the new notations, we argue similarly,
using the induction hypothesis to obtain a set of sufficient
conditions that $B_0'$ have nonzero determinant:
\begin{equation*}
\text{For all nonempty bottomsets} \; B \subseteq \mathcal{G}_Q -
\{0\}, \; \sum_{I \in B}A_I'' \le 0.
\end{equation*}
By (iii), this is equivalent to:\\
\begin{equation*}
\text{For all nonempty bottomsets} \; B \subseteq \mathcal{G}_Q -
\{0\}, \; \sum_{I \in B \cap S}A_I'' \le 0.
\end{equation*}
By Lemma \ref{topset_lem} (4)(d)(ii), this is equivalent to:
\begin{equation} \label{&'}
\text{For all nonempty bottomsets} \; B \subseteq S - \{0\}, \;
\sum_{I \in B}A_I'' \le 0.
\end{equation}
Also, we recall (b2):
\begin{equation*}
\sum_{I \in S}r_I'' \ge  \sum_{I \in S - \{0\}}r_I
\end{equation*}
which, since $B'$ is square, is equivalent to
\begin{equation*}
\sum_{I \in S}c_I \ge \sum_{I \in S - \{0\}}r_I
\end{equation*}
or
\begin{equation} \label{&&'}
\sum_{I \in S - \{0\}}A_I - c_0 \le 0.
\end{equation}\\

We now introduce a condition on $U$ that, we claim, implies both
\eqref{&'} and \eqref{&&'}:
\begin{equation}\label{poupou}
\text{For all nonempty bottomsets} \; B \subseteq S - \{0\}, \sum_{I
\in B}A_I \le 0.
\end{equation}
To see that \eqref{poupou} implies \eqref{&'}, let $B$ be a
bottomset of $S - \{0\}$. Then

\begin{align*}
0 \ge \sum_{I \in B}A_I
  =\sum_{I \in B}r_I - \sum_{I \in B}c_I
  &=\sum_{I \in B}r_I - \sum_{I \in B}c_I''\\
&\ge \sum_{I \in B}r''_I - \sum_{I \in B}c_I'' =\sum_{I \in B}A_I''.
\end{align*}

To see that \eqref{poupou} implies \eqref{&&'}, apply \eqref{poupou}
to $S - \{0\}$ (which is a bottomset of itself):
\begin{equation*}
0 \ge \sum_{I \in S - \{0\}}A_I \ge \sum_{I \in S - \{0\}}A_I - c_0.
\end{equation*}\\

Again summarizing the progress so far, we have shown that the
induction step will hold if we can guarantee the existence of a
nonempty topset $S$ of $\mathcal{G}_Q -\{Q\}$ for which \eqref{pou}
and
\eqref{poupou} are true.\\

Our method of proof is algorithmic.  We start with a candidate for $S$, namely
$\{0\}$, for which \eqref{poupou} is (vacuously) true, but for which \eqref{pou} may
not be true.  We describe a procedure for replacing the current candidate for $S$ by
another candidate topset that properly contains it, a process that must stop because
it can be carried out only a finite number of times.  We then show that (i) the
procedure results in a new candidate that also satisfies \eqref{poupou}, and (ii) if
the process cannot be continued, the current candidate satisfies \eqref{pou} as
well. Establishing (i) and (ii) suffices to prove the induction step,
and the theorem.\\

By way of notation, let $S$ represent the current candidate topset,
which is known to satisfy \eqref{poupou}, and let $S'$ represent the
next candidate. That is, we start with $S$ = $\{0\}$. If we have a
current $S$, the procedure to find $S'$ is as follows:  Among all
nonempty topsets of $\mathcal{G}_Q - S - \{Q\}$, pick a smallest one
$X$ (smallest by inclusion) such that $\sum_{I
\in X}A_I < 0$.  We set $S' := S \cup X$.\\

We note that if no such $X$ exists, $S$ satisfies \eqref{pou}, since
for all nonempty topsets $T \subseteq \mathcal{G}_Q -\{Q\} - S$, we
have $\sum_{I \in T}A_I \ge 0$, and $\mathcal{G}_Q - S - \{Q\} = C
-\{Q\}$.  This establishes (ii).\\

If $X$ can be found, we note that $S' := S \cup X$ is a topset of
$\mathcal{G}_Q$ by Lemma \ref{topset_lem} (4)(b)(iii).  We note that
$S'$ is nonempty because $0 \in S$ and proper because $Q \notin S$
and $Q \notin X$.  We note that, by construction, $S$ and $X$ are
disjoint.\\

We must show that $S':= S \cup X$ satisfies \eqref{poupou}.  That
is, for any bottomset $B \subseteq (S \cup X) - \{0\} = (S  - \{0\})
\cup X$, we must show that $\sum_{I \in B}A_I \le 0$.  For this, it
is enough establish the following two inequalities:

\begin{equation} \label{a}
\sum_{I \in B \cap (S  - \{0\})}A_I \le 0
\end{equation}

\noindent and

\begin{equation} \label{b}
\sum_{I \in B \cap X}A_I \le 0.
\end{equation}

Statement \eqref{a} follows immediately from the fact that
\eqref{poupou} holds for $S$, once one has verified that $B \cap (S
- \{0\})$ is a bottomset of $S  - \{0\}$, which follows from Lemma
\ref{topset_lem}
(4)(b)(iv).\\

Statement \eqref{b} follows from the following two inequalities:

\begin{equation} \label{c}
\sum_{I \in X} A_I < 0
\end{equation}

\noindent and
\begin{equation} \label{d}
\sum_{I \in X-B} A_I \ge 0
\end{equation}

\noindent since
\begin{equation*}
\sum_{I \in B \cap X}A_I = \sum_{I \in X} A_I - \sum_{I \in X-B}
A_I.
\end{equation*}\\

Statement \eqref{c} is true by construction. Statement \eqref{d} is
certainly true if $X-B$ is empty. If $X-B$ is nonempty it is a
topset of $\mathcal{G}_Q - S - \{Q\}$ by  Lemma \ref{topset_lem}
(4)(b)(iv), and then \eqref{d} follows by construction, because
$X-B$ is topset of $\mathcal{G}_Q - S - \{Q\}$ that is smaller (by
inclusion) than $X$.
\end{proof}

\chapter{Coefficient Matrices}

\section{Coefficient Matrices of $R_{j-d}*\mathcal{E}(C)$}

We fix a codimension $r$ and a socle degree $j$.  We consider a
vector subspace $\mathcal{E} \subseteq \mathcal{D}_j$.  We fix a
degree $d \le j$ and we wish to consider $\dim_k R_{j-d}*\mathcal{
E}$.  By way of notation, we make the convention that $e:=j-d$, $j =
e + d$. Also, whenever we wish to specify that generators
$f_1,...,f_s$ of $\mathcal{E}$ are to be taken from a particular
vector subspace $\mathcal{W} \subseteq \mathcal{D}_j$, we will
simply write $\mathcal{E} := \langle f_1,...,f_s\rangle
\subseteq \mathcal{W}$.\\

Let $\mathcal{M}$ be a set of multi-indexes of degree $j$, and
define $\mathcal{W}_{\mathcal{M}} := \langle \{x^J | J \in
\mathcal{M} \}\rangle$.  That is, $\mathcal{W}_\mathcal{M}$ is a
vector subspace of $\mathcal{D}_j$ generated by monomials.  We let
$\mathcal{E}:= \langle f_1,...,f_s\rangle  \subseteq
\mathcal{W}_{\mathcal{M}}$. For each generator $f_i$, we write $f_i
= \sum_{J \in \mathcal{M}}z_{iJ}x^J$,
where each $z_{iJ} \in k$.\\

We will always adopt the point of view that the $z_{iJ}$'s are allowed to vary.
Specifying a value $c_{iJ} = z_{iJ}$ for each of them, or equivalently specifying an
element $C := (...,c_{iJ},...)
\in k^{s\#(\mathcal{M})}$, determines a particular subspace\\
$\mathcal{E}(C) = \langle f_1(C),...,f_s(C)\rangle  \subseteq
\mathcal{W}_\mathcal{M} \subseteq \mathcal{D}_j$.  When we later
define the matrices $U'$ and $U$ with coefficients in
$k[\{z_{iJ}\}]$, $U'(C)$ and $U(C)$ will similarly be specific
matrices with coefficients in $k$.  In other words, from now on we
will view the $f_i$'s, $E$, $U'$, and $U$ (written without the $\lq
\lq C"$) as functions whose domain is the irreducible affine variety
$k^{s\#(\mathcal{M})}$.  We will consider the images of these
functions as families of vectors, vector subspaces, or matrices,
parameterized by elements $C \in k^{s\#(\mathcal{M})}$. Whenever we
wish to indicate a specific element of a family, we will use the
notation with the $\lq \lq C"$, sometimes without explicitly
mentioning the $f_i$'s or the $z_{iJ}$'s.\\

\begin{lem} \label{gen}
For all $C \in k^{s\#(\mathcal{M})}$, $R_e*\mathcal{E}(C)$ is
generated as a vector space by\\ $\{X^E*f_i(C) | X^E$ is a monomial
of degree $e$ and $1 \le i \le s\}$.
\end{lem}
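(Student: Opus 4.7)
The plan is to unpack the definition of $R_e*\mathcal{E}(C)$ as a $k$-vector space and use bilinearity of the $R$-action on $\mathcal{D}$. By definition, $R_e*\mathcal{E}(C)$ is the $k$-linear span of all products $F*g$ with $F \in R_e$ and $g \in \mathcal{E}(C)$; so it suffices to show that every such $F*g$ lies in the $k$-span of the set $\{X^E*f_i(C) \mid X^E \text{ a monomial of degree } e,\; 1 \le i \le s\}$.

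First, I would write $F = \sum_E a_E X^E$, the sum being taken over multi-indexes $E$ of degree $e$, where $a_E \in k$. This is possible because the monomials $X^E$ of degree $e$ form a $k$-basis for $R_e$. Next, since $\mathcal{E}(C) = \langle f_1(C),\dots,f_s(C)\rangle$, the element $g$ can be written as $g = \sum_{i=1}^{s} b_i f_i(C)$ with $b_i \in k$.

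Finally, I would invoke the fact that the action $R \times \mathcal{D} \to \mathcal{D}$ is $k$-bilinear (this is built into the definition of $\mathcal{D}$ as an $R$-module, cf.\ the section on polynomials as differential operators). Thus
\begin{equation*}
F*g = \Bigl(\sum_E a_E X^E\Bigr) * \Bigl(\sum_{i=1}^{s} b_i f_i(C)\Bigr) = \sum_{E,\,i} a_E b_i \bigl(X^E * f_i(C)\bigr),
\end{equation*}
which exhibits $F*g$ explicitly as a $k$-linear combination of elements in the claimed generating set. This establishes the inclusion $R_e*\mathcal{E}(C) \subseteq \langle X^E*f_i(C)\rangle$; the reverse inclusion is immediate since each $X^E \in R_e$ and each $f_i(C) \in \mathcal{E}(C)$.

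There is no real obstacle here: the lemma is a direct consequence of bilinearity together with the facts that monomials of degree $e$ span $R_e$ and the $f_i(C)$ span $\mathcal{E}(C)$. The only point worth flagging is notational: one must remember that $R_e * \mathcal{E}(C)$ denotes the $k$-span of products, not merely the set of products, so that the linear-combination argument makes sense without needing a cancellation lemma.
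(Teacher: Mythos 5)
Your proof is correct and takes essentially the same approach as the paper, which likewise writes an arbitrary element as $(\sum a_EX^E)*(\sum b_if_i(C)) = \sum a_Eb_i(X^E*f_i(C))$ and appeals to bilinearity. Your version is simply a more carefully spelled-out rendering of the same one-line argument.
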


\begin{proof}
Any element of $R_e*\mathcal{E}(C)$ can be written\\ $(\sum
a_EX^E)*(\sum b_if_i(C)) = \sum a_Eb_i(X^E*f_i(C)).$
\end{proof}

\begin{lem}
Let $X^E$ be a monomial of degree $e$ and $f_i = \sum_{J \in
\mathcal{M}}z_{iJ}x^J$ as above.  Then
\begin{equation*} \label{niJ}
X^E*f_i = \sum_{J; E \succeq J \in \mathcal{M}}n_{iJ}z_{iJ}x^{J-E} =
\sum_{D; D + E = J \in \mathcal{M}} n_{iJ}z_{iJ}x^D,
\end{equation*}
\noindent where the $n_{iJ}$'s are positive integers.
\end{lem}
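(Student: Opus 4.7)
The plan is to carry out a direct computation using the definition of the action of $R$ on $\mathcal{D}$ as differential operators, together with $k$-linearity. There is no real difficulty here; the statement is essentially a bookkeeping formula that will be used to extract entries of a coefficient matrix.

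First, $k$-linearity of the action of $X^E$ on $\mathcal{D}$ gives
\begin{equation*}
X^E * f_i \;=\; X^E * \Bigl(\sum_{J \in \mathcal{M}} z_{iJ} x^J\Bigr) \;=\; \sum_{J \in \mathcal{M}} z_{iJ}\,(X^E * x^J),
\end{equation*}
so it suffices to evaluate $X^E * x^J$ for a single pair of multi-indexes. Writing $E = (e_1,\dots,e_r)$ and $J = (j_1,\dots,j_r)$, the operator $X^E$ factors as $X_1^{e_1} \cdots X_r^{e_r}$, which acts as the iterated partial derivative $\prod_\ell \partial^{e_\ell}/\partial x_\ell^{e_\ell}$. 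Applying this to $x^J = \prod_\ell x_\ell^{j_\ell}$ and using the one-variable identity
\begin{equation*}
\frac{\partial^{e_\ell}}{\partial x_\ell^{e_\ell}}\, x_\ell^{j_\ell}
\;=\; \begin{cases} \dfrac{j_\ell!}{(j_\ell-e_\ell)!}\, x_\ell^{j_\ell - e_\ell} & \text{if } e_\ell \le j_\ell, \\[4pt] 0 & \text{if } e_\ell > j_\ell, \end{cases}
\end{equation*}
shows that $X^E * x^J$ is nonzero precisely when $e_\ell \le j_\ell$ for every $\ell$. In the notation of the paper's partial order on multi-indexes, this condition is exactly $E \succeq J$, and in this case $J - E$ is a well-defined multi-index of non-negative coordinates.

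Setting $n_{iJ} := \prod_{\ell=1}^{r} j_\ell!/(j_\ell - e_\ell)!$ whenever $E \succeq J$, we obtain $X^E * x^J = n_{iJ}\, x^{J-E}$. Each factor $j_\ell!/(j_\ell - e_\ell)!$ is a positive integer, so $n_{iJ}$ is a positive integer as well; since $k$ has characteristic $0$, it is also nonzero as an element of $k$. (The subscript $i$ on $n_{iJ}$ is purely notational — the value depends only on $E$ and $J$ — but tags the integer to the generator $f_i$ it will multiply.) Discarding the vanishing terms from the sum above yields
\begin{equation*}
X^E * f_i \;=\; \sum_{\substack{J \in \mathcal{M} \\ E \succeq J}} n_{iJ}\, z_{iJ}\, x^{J - E},
\end{equation*}
which is the first stated equality. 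The second equality is just the substitution $D := J - E$: the condition $E \succeq J \in \mathcal{M}$ translates to $D + E \in \mathcal{M}$, and $x^{J-E} = x^D$.

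The main (and essentially only) subtlety to flag is the appeal to characteristic zero to ensure $n_{iJ} \neq 0$ in $k$; everything else is immediate from linearity and the definition of $*$.
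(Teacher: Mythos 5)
Your proof is correct and follows the same route as the paper, which simply asserts that the formula "follows immediately from the definition of the operation $*$ as partial differentiation"; you have merely written out the coordinatewise derivative computation, the identification of the nonvanishing condition with $E \succeq J$, and the explicit value $n_{iJ} = \prod_{\ell} j_\ell!/(j_\ell - e_\ell)!$ that the paper leaves implicit. Your added remark that characteristic $0$ keeps $n_{iJ}$ nonzero in $k$ is a sensible observation (it is what later makes the matrix entries genuinely nonzero), though it is not needed for the lemma as literally stated.
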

\begin{proof}
This follows immediately from the definition of the operation * as
partial differentiation.
\end{proof}

We wish to translate the problem of determining $\dim_k
(R_e*\mathcal{E}(C))$ into the language of matrices.  To this end,
we define a matrix $U'$, the \emph{uncropped coefficient matrix of
$e^{th}$ partial derivatives of $\mathcal{E}$}, or simply the
\emph{$e^{th}$ uncropped matrix of $\mathcal{E}$}, as follows. (We
use the prime
to distinguish it from the $e^{th}$ cropped matrix $U$, to be defined later.)\\

The matrix $U'$ has rows indexed by ordered pairs $(E,i)$ where $E$
is a multi-index of degree $e$ and $i \in \{1,...,s\}$. The rows are
ordered according to the rule that $(E_1,i_1)$ comes before
$(E_2,i_2)$ if $E_1 > E_2$ (in lexicographic order) or if $E_1 =
E_2$ and $i_1 < i_2$.  The columns of $U'$ are indexed by
multi-indexes $D$ of degree $d$, where $D_1$ comes before $D_2$ if
$D_1 > D_2$ (in lexicographic order).  The entry of $U'$ in the $(
(E,i),D )$ position is $n_{iJ}z_{iJ}$ if $J:= D + E \in\mathcal{M}$
and 0 otherwise.

\begin{lem} \label{is_l}
Let the vector subspace $\mathcal{E} := \langle f_1,...,f_s\rangle
\subseteq \mathcal{W}_{\mathcal{M}} \subseteq \mathcal{D}_j$ be
defined as above and let $U'$ be its $e^{th}$ uncropped  matrix.
Then $U'$ is an L-matrix over\\ $k[\{z_{iJ}|i \in \{1,...,s\}$ and
$J \in \mathcal{M}\}]$.
\end{lem}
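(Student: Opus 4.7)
The plan is to verify the two conditions in the definition of L-matrix separately: first that $U'$ is a PV-matrix over the polynomial ring in the variables $\{z_{iJ}\}$, and second that every variable $z_{iJ}$ moves to the left in $U'$.

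The PV-matrix condition is essentially immediate from the construction and from the preceding lemma describing $X^E * f_i$. Every entry of $U'$ in position $((E,i), D)$ is either $0$ or of the form $n_{iJ} z_{iJ}$, where $J := D + E \in \mathcal{M}$ and $n_{iJ}$ is a positive integer. This is exactly the PV-form $\lambda z_\ell$ required by the definition.

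For the "moves to the left" condition, I would fix a variable $z_{iJ}$ and analyze where it can appear. Since the first index $i$ is baked into the variable's name, every occurrence of $z_{iJ}$ must lie in a row whose row-index is of the form $(E, i)$ for some multi-index $E$ of degree $e$. Since the entry at position $((E, i), D)$ only involves a $z_{iJ'}$ when $J' = D + E$, the requirement $J = D + E$ forces $D$ to be determined by $E$ (and vice versa). So if $z_{iJ}$ appears in two distinct entries $((E_1, i), D_1)$ and $((E_2, i), D_2)$, then $E_1 + D_1 = J = E_2 + D_2$, and setting $\Delta := E_1 - E_2 = D_2 - D_1$, we have $\Delta \neq 0$ in the case of distinct entries.

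The heart of the matter is a simple lexicographic comparison: by the definition of lexicographic order, $E_1 > E_2$ iff the leftmost nonzero coordinate of $\Delta$ is positive, iff $D_2 > D_1$. Translating to positions in the matrix: rows with the same $i$ are ordered so that $(E_1, i)$ precedes $(E_2, i)$ precisely when $E_1 > E_2$ lexicographically, while columns are ordered so that $D_1$ precedes $D_2$ precisely when $D_1 > D_2$ lexicographically. Therefore if we let $i_\ell, j_\ell$ denote the row and column positions of the two occurrences, $i_1 < i_2$ iff $E_1 > E_2$ iff $D_1 < D_2$ iff $j_1 > j_2$, which is exactly the definition of $z_{iJ}$ moving to the left. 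I do not anticipate any real obstacle; the only thing to be careful about is the reversal introduced by using descending lex order for the columns, which is precisely what converts "row goes down" into "column goes left."
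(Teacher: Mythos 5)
Your proposal is correct and follows essentially the same route as the paper's proof: verify the PV-form of the entries directly from the construction, reduce to occurrences in rows sharing the same index $i$, and use the constraint $E_1+D_1=J=E_2+D_2$ to convert the lexicographic comparison of $E_1,E_2$ into the reverse comparison of $D_1,D_2$, which under the stated row and column orderings is exactly the ``moves to the left'' condition. (Only a cosmetic quibble: the reversal really originates from $E+D$ being constant, since rows and columns are both listed in descending lexicographic order.)
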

\begin{proof}
By construction, the entries of $U'$ are either 0 or positive
integer multiples of some $z_{iJ}$.  So it remains to show that
every
$z_{iJ}$ moves to the left.\\

Assume $z_{iJ}$ is the variable in two different locations
$((E_1,i_1),D_1)$ and\\ $((E_2,i_2),D_2)$.  We first note that $i_1
= i_2 = i$, since that is the only way (by the definition of $U'$)
that the variable $z_{iJ}$ can appear at all.  Since the order of
the rows $(E_1,i)$ and $(E_2,i)$ in $U'$ is determined by
lexicographical order of $E_1$ and $E_2$, and the order of the
columns is determined by lexicographical order of $D_1$ and $D_2$,
we
must show $E_1 > E_2$ if and only if $D_1 < D_2$.\\

By way of notation, let $E_1 := (...,e_{1i},...); E_2 :=
(...,e_{2i},...); D_1 := (...,d_{1i},...); D_2 := (...,d_{2i},...)$.
From the definition of $U'$, $D_1 + E_1 = J = D_2 + E_2$. So for
each co-ordinate $i$, $e_{1i} - e_{2i} = d_{2i} - d_{1i}$. Then
\begin{align*}
E_1 > E_2 &\Leftrightarrow e_{1i} = e_{2i} \; for \; i=1,...,m-1 \;
and
\; e_{1m} > e_{2m} \text{ (for some }  m)\\
&\Leftrightarrow d_{1i} = d_{2i} \; for \; i=1,...,m-1 \; and \;
d_{1m} < d_{2m} \Leftrightarrow D_1 < D_2.
\end{align*}
\end{proof}

\begin{lem} \label{dim_u}
Let the family of vector subspaces $\mathcal{E}:= \langle
f_1,...,f_s\rangle  \subseteq \mathcal{W}_{\mathcal{M}} \subseteq
\mathcal{D}_j$ be defined as above and let $U'$ be its $e^{th}$
uncropped  matrix. Then for all $C \in k^{s\#(\mathcal{M})}$,
$\rank(U'(C)) = \dim_k(R_e*\mathcal{E}(C))$.
\end{lem}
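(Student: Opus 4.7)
The plan is to identify the rows of $U'(C)$ with the coordinate vectors of a specific generating set of $R_e*\mathcal{E}(C)$, and then read off the conclusion from the fact that the rank of a matrix equals the dimension of its row space.

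First I would fix the natural ordered basis of $\mathcal{D}_d$, namely the monomials $\{x^D\}$ taken in lexicographic order. This is precisely the ordering used to index the columns of $U'$. So each element of $\mathcal{D}_d$ corresponds canonically to a coordinate row vector, and the dimension of a subspace $V \subseteq \mathcal{D}_d$ equals the dimension of the span of the coordinate row vectors of any generating set of $V$.

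Next I would apply Lemma \ref{gen} to obtain the generating set $\{X^E*f_i(C)\}$ of $R_e*\mathcal{E}(C)$, indexed by pairs $(E,i)$ in exactly the same order used for the rows of $U'$. Using the formula for $X^E*f_i$ from the preceding lemma, the expansion
\begin{equation*}
X^E*f_i(C) = \sum_{D:\, D+E \in \mathcal{M}} n_{iJ}c_{iJ}\,x^D
\end{equation*}
shows that the coefficient of $x^D$ in $X^E*f_i(C)$ is $n_{iJ}c_{iJ}$ when $J := D+E \in \mathcal{M}$ and $0$ otherwise. This is exactly the $((E,i),D)$ entry of $U'(C)$ by construction. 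Hence the row of $U'(C)$ indexed by $(E,i)$ is the coordinate vector of $X^E*f_i(C)$ with respect to the chosen monomial basis of $\mathcal{D}_d$.

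Finally, the rank of $U'(C)$ equals the dimension of its row space, which equals the dimension of the subspace of $\mathcal{D}_d$ spanned by the vectors $\{X^E*f_i(C)\}$. By Lemma \ref{gen}, this subspace is exactly $R_e*\mathcal{E}(C)$, giving the desired equality. There is no real obstacle here; the only subtlety is making sure that the row and column indexing conventions of $U'$ line up with the monomial basis and the generating set, so that no reshuffling is needed when passing between the matrix and the vector space.
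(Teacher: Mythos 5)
Your proposal is correct and follows essentially the same route as the paper: express each generator $X^E*f_i(C)$ of $R_e*\mathcal{E}(C)$ (via Lemma \ref{gen}) in the monomial basis of $\mathcal{D}_d$, observe that the resulting coefficient matrix is exactly $U'(C)$, and equate the rank with the dimension of the row space. Your write-up is simply a more detailed version of the paper's argument, with the indexing conventions checked explicitly.
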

\begin{proof}
We combine previous results concerning the vector space $\mathcal{D}_d$, of which
$\{x^D\}$ is a basis and $R_e*\mathcal{E}(C)$ is a vector subspace.
$R_e*\mathcal{E}(C)$ is generated by the vectors $X_E*f_i(C)$, so to find its
dimension we express each generator as a linear combination of basis vectors and
determine the rank of the matrix of coefficients.  Since $X_E*f_i(C) = \sum_{D; D +
E = J \in \mathcal{M}} n_{iJ}c_{iJ}x^D$, its matrix of coefficients is $U'(C)$.
\end{proof}

\section{Coefficient Matrices for Constrained Subspaces of
$\mathcal{D}_j$}

As before, we assume that the codimension $r$ and socle degree $j$
have been fixed.  We now fix a nonegative number $n \le r$ and a
multi-index $Q := (Q_1,...,Q_n)$, where $0 \le Q_i \le j$ for $i =
1,...,n$.  We say an $r$-tuple $I := (I_1,...,I_r)$ of non-negative
integers, of any degree $d \le j $, is \emph{constrained by $Q$} if
$I_i \le Q_i$ for $i = 1,...,n$.  We say that a monomial $X^I$ or
$x^I$ is \emph{constrained by $Q$} if I is constrained by $Q$. We
define $\mathcal{M}_Q(d)$ to be the set of all multi-indexes of
degree $d$ that are constrained by $Q$.  In particular,
$\mathcal{M}_Q(j)$ is a set of multi-indexes of degree $j$, so we
can consider $\mathcal{W}_{\mathcal{M}_Q(j)} \subseteq
\mathcal{D}_j$. By way of notation, we will always write $m_Q(d)$
for $\#(M_Q(d))$.  As in the previous section, we fix degree $d$ and
write $e = j-d$.

\begin{lem} \label{crop}
Let $\mathcal{E} = \langle f_1,...,f_s\rangle  \subseteq
\mathcal{W}_{\mathcal{M}_Q(j)} \subseteq \mathcal{D}_j$ be a family
of vector subspaces and let $U'$ be its $e^{th}$ uncropped  matrix.
If $E \notin \mathcal{M}_Q(e)$ is a multi-index of degree $e$ and $1
\le i \le s$, the $(E,i)$ row of $U'$ consists entirely of zeroes.
If  $D \notin \mathcal{M}_Q(d)$ is a multi-index of degree $d$, the
$D$ column of $U'$ consists entirely of zeroes.
\end{lem}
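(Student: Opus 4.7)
The proof should be essentially immediate from the definitions, so the plan is to unpack the construction of $U'$ and verify the two claims coordinate by coordinate.

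First I would recall the key fact from the construction of the uncropped matrix: by the formula for $U'$ (applied to the particular case $\mathcal{M} = \mathcal{M}_Q(j)$), the entry in the $((E,i), D)$ position is $n_{iJ} z_{iJ}$ if $J := D + E \in \mathcal{M}_Q(j)$, and $0$ otherwise. Since $\mathcal{M}_Q(j)$ consists exactly of multi-indexes $J = (J_1,\ldots,J_r)$ of degree $j$ satisfying $J_i \le Q_i$ for $i = 1,\ldots,n$, a nonzero entry requires in particular $D_i + E_i \le Q_i$ for $i = 1,\ldots,n$.

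Next I would handle the row claim. Suppose $E \notin \mathcal{M}_Q(e)$. Then there is some index $i_0 \in \{1,\ldots,n\}$ with $E_{i_0} > Q_{i_0}$. For every multi-index $D$ of degree $d$ (whose entries are non-negative), we have
\begin{equation*}
(D + E)_{i_0} = D_{i_0} + E_{i_0} \ge E_{i_0} > Q_{i_0},
\end{equation*}
so $D + E \notin \mathcal{M}_Q(j)$. By the defining formula, the entry in position $((E,i), D)$ is $0$ for every $D$ and every $i \in \{1,\ldots,s\}$, which is exactly the statement that the $(E,i)$ row is zero.

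The column claim is entirely symmetric: if $D \notin \mathcal{M}_Q(d)$, pick $i_0$ with $D_{i_0} > Q_{i_0}$; then for every multi-index $E$ of degree $e$, $(D+E)_{i_0} \ge D_{i_0} > Q_{i_0}$, so $D + E \notin \mathcal{M}_Q(j)$, and the entry at $((E,i), D)$ vanishes for every $(E,i)$. There is no genuine obstacle here — the whole content of the lemma is bookkeeping about when $D + E$ can lie in $\mathcal{M}_Q(j)$ — so the proof should occupy only a few lines, essentially the two displayed inequalities above.
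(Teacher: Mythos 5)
Your proof is correct and is essentially the paper's argument in contrapositive form: the paper observes that a nonzero $((E,i),D)$ entry forces $d_i + e_i \le Q_i$ and hence $d_i \le Q_i$ and $e_i \le Q_i$ for $i=1,\ldots,n$, which is exactly the coordinate bookkeeping you carry out. No gaps.
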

\begin{proof}
For the $((E,i), D)$ entry to be nonzero, we need $E + D = J \in
\mathcal{M}_Q(j)$.  That is, writing $D = (d_1,...,d_r)$ and $E =
(e_1,...,e_r)$, we must have $d_i + e_i \le Q_i$ for $i = 1,...,n$.
This implies $d_i \le Q_i$ and $e_i \le Q_i$ for $i = 1,...,n$.
Equivalently, $D \in \mathcal{M}_Q(d)$ and $E \in \mathcal{M}_Q(e)$.
\end{proof}

Since our interest in $U'$ stems from our desire to compute its rank, we lose
nothing by deleting rows and columns that consist entirely of zeroes.  We define
$U$, the \emph{cropped coefficient matrix of $e^{th}$ partial derivatives of
$\mathcal{E}$}, or simply the \emph{$e^{th}$ cropped matrix of $\mathcal{E}$}, to be
the submatrix of $U'$ obtained by taking only those $((E,i), D)$ entries for which
$D \in \mathcal{M}_Q(d)$ and $E \in \mathcal{M}_Q(e)$.  More precisely,

\begin{deff} \label{cropped}
For a fixed choice of $r,j,Q,d,e:=j-d$, and $s$, the \emph{$e^{th}$ cropped matrix
of $\mathcal{E}$} is defined as follows:
\end{deff}
The rows are indexed by pairs $(E,i)$ where $E \in \mathcal{M}_Q(e)$ and $i \in
\{1,...,s\}$, ordered by the rule that $(E_1,i_1)$ comes before $(E_2,i_2)$ if $E_1
> E_2$ (in lexicographic order) or if $E_1 = E_2$ and $i_1 < i_2$.  The columns are
indexed by elements $D \in \mathcal{M}_Q(d)$, ordered by the rule that $D_1$ comes
before $D_2$ if $D_1 > D_2$ (in lexicographic order).  Writing
$$X^E*f_i = \sum_{D; D + E
= J \in \mathcal{M}_Q(j)} n_{iJ}z_{iJ}x^D,$$ \noindent the entry of $U$ in the
$((E,i),D )$ position is $n_{iJ}z_{iJ} = n_{i(D+E)}z_{i(D+E)}$ if $J \in
\mathcal{M}_Q(j)$ and 0 otherwise.

\begin{cor} \label{crop_rank}
Let the family of vector subspaces $\mathcal{E} = \langle
f_1,...,f_s\rangle  \subseteq \mathcal{W}_{\mathcal{M}_Q(j)}
\subseteq \mathcal{D}_j$ be defined as above, let $U'$ be its
$e^{th}$ uncropped matrix, and let $U$ be its $e^{th}$ cropped
matrix.  Then $\rank(U) = \rank(U')$.
\end{cor}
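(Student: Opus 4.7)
The plan is essentially to combine Definition \ref{cropped} with Lemma \ref{crop} and invoke the standard fact that deleting zero rows and zero columns from a matrix does not change its rank.

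First I would note the precise relationship between $U'$ and $U$: by Definition \ref{cropped}, the cropped matrix $U$ is obtained from $U'$ by retaining exactly those rows $(E,i)$ with $E \in \mathcal{M}_Q(e)$ and those columns $D$ with $D \in \mathcal{M}_Q(d)$. Equivalently, $U$ is obtained from $U'$ by deleting the rows indexed by pairs $(E,i)$ with $E \notin \mathcal{M}_Q(e)$ and the columns indexed by $D \notin \mathcal{M}_Q(d)$.

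Next I would invoke Lemma \ref{crop}, which says precisely that each such deleted row and each such deleted column of $U'$ consists entirely of zeroes. Since the row space of a matrix is not affected by removing a zero row, and the column space is not affected by removing a zero column, we have $\rank(U) = \rank(U')$.

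The only mild subtlety is ensuring that all the rows and columns that $U$ discards relative to $U'$ are already zero in $U'$ — but that is exactly what Lemma \ref{crop} guarantees, since the entry in position $((E,i),D)$ of $U'$ comes from the coefficient of $x^D$ in $X^E * f_i$, and $f_i \in \mathcal{W}_{\mathcal{M}_Q(j)}$ forces $D + E \in \mathcal{M}_Q(j)$ for nonvanishing, which in turn forces both $E \in \mathcal{M}_Q(e)$ and $D \in \mathcal{M}_Q(d)$. There is no main obstacle; this corollary is essentially a formal restatement of Lemma \ref{crop} once one observes that rank is preserved under deletion of zero rows and columns.
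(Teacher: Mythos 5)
Your proposal is correct and matches the paper's argument exactly: the paper also observes that $U$ is obtained from $U'$ by deleting the rows and columns that Lemma \ref{crop} guarantees are entirely zero, and that such deletions do not change the rank.
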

\begin{proof}
Removing rows and columns of zeroes does not affect the rank of a
matrix.
\end{proof}

\begin{cor} \label{crop_rank_c}
For $C \in k^{sm_Q(j)}$, let $\mathcal{E}(C) = \langle
f_1(C),...,f_s(C)\rangle \subseteq \mathcal{W}_{\mathcal{M}_Q(j)}
\subseteq \mathcal{D}_j$, let $U'(C)$ be its $e^{th}$ uncropped
matrix, and let $U(C)$ be its $e^{th}$ cropped matrix.  Then
$\rank(U(C)) = \rank(U'(C)) = \dim_kR_e*\mathcal{E}(C)$.
\end{cor}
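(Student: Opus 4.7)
The plan is to combine the two previous results essentially verbatim, once we verify that the cropping operation commutes with specialization at $C$. First, I would apply Lemma \ref{dim_u} to the family $\mathcal{E}$ at the specific point $C$, which directly gives the second equality $\rank(U'(C)) = \dim_k R_e * \mathcal{E}(C)$. That proof was stated for arbitrary $C \in k^{s\#(\mathcal{M})}$, and in the present setting $\mathcal{M} = \mathcal{M}_Q(j)$, so $s\#(\mathcal{M}) = sm_Q(j)$ and the lemma applies with no modification.

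For the first equality $\rank(U(C)) = \rank(U'(C))$, I would observe that by Definition \ref{cropped}, $U$ is obtained from $U'$ by deleting exactly those rows indexed by pairs $(E,i)$ with $E \notin \mathcal{M}_Q(e)$ and those columns indexed by $D \notin \mathcal{M}_Q(d)$. By Lemma \ref{crop}, these rows and columns of $U'$ consist identically of zeros as polynomials in the $z_{iJ}$, so after substituting any $C$ they remain zero in $U'(C)$. Hence $U(C)$ is obtained from $U'(C)$ by deleting rows and columns of zeros, which does not affect the rank.

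Combining the two gives the chain of equalities in the statement. The only subtle point — and it is barely a point at all — is that the zero row/column structure of $U'$ is a statement about polynomial entries, while the rank equality for $U(C)$ versus $U'(C)$ is about numerical matrices; but since zero polynomials evaluate to zero for every $C$, there is no obstacle. In effect this corollary is just the specialization-at-$C$ version of Corollary \ref{crop_rank}, reinterpreted through Lemma \ref{dim_u}.
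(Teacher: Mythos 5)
Your proposal is correct and matches the paper's own argument: the paper likewise cites Lemma \ref{dim_u} for the second equality and notes that removing rows and columns of zeroes does not affect rank for the first. Your additional remark that zero polynomial entries evaluate to zero at every $C$ is exactly the (unstated) justification the paper relies on.
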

\begin{proof}
Again, removing rows and columns of zeroes does not affect the rank
of a matrix. The second equality simply repeats the statement of
Lemma \ref{dim_u}.
\end{proof}

\begin{lem} \label{is_l_too}
Let the family of vector subspaces $\mathcal{E} := \langle
f_1,...,f_s\rangle  \subseteq \mathcal{W}_{{\mathcal{M}}_Q(j)}
\subseteq \mathcal{D}_j$ be defined as above and let $U$ be the
$e^{th}$ cropped matrix of $\mathcal{E}$. Then $U$ is an L-matrix
over $k[\{z_{iJ}|i \in \{1,...,s\}$ and $J \in \mathcal{M}_Q(j)\}]$.
\end{lem}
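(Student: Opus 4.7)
The plan is to reduce this directly to the two results already established: Lemma \ref{is_l}, which asserts that the uncropped matrix $U'$ is an L-matrix over the relevant polynomial ring, and Lemma \ref{pv_0}, which asserts that any submatrix of an L-matrix is itself an L-matrix. Since by Definition \ref{cropped} the cropped matrix $U$ is obtained from $U'$ precisely by retaining the rows indexed by $(E,i)$ with $E\in\mathcal{M}_Q(e)$ and the columns indexed by $D\in\mathcal{M}_Q(d)$, $U$ is a submatrix of $U'$. Combining these two facts gives the conclusion immediately.

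More concretely, I would first observe that the hypothesis $\mathcal{E}\subseteq\mathcal{W}_{\mathcal{M}_Q(j)}$ means each generator $f_i$ has the form $f_i=\sum_{J\in\mathcal{M}_Q(j)}z_{iJ}x^J$, so the polynomial ring in which we are working, $k[\{z_{iJ}\}]$, is indexed exactly over $i\in\{1,\ldots,s\}$ and $J\in\mathcal{M}_Q(j)$. Next I would note that $U'$, viewed as a matrix over this same polynomial ring (rather than over the larger ring one would use for arbitrary $\mathcal{E}\subseteq\mathcal{D}_j$), is an L-matrix by Lemma \ref{is_l}. Finally, Lemma \ref{crop} already identified the rows and columns of $U'$ deleted to form $U$ as those indexed outside $\mathcal{M}_Q(e)$ and $\mathcal{M}_Q(d)$ respectively (and these are the zero rows/columns), which exhibits $U$ as a genuine submatrix of $U'$. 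Applying Lemma \ref{pv_0} to this submatrix inclusion finishes the proof.

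There is essentially no obstacle: the entire content of the statement has already been packaged into Lemma \ref{is_l} and Lemma \ref{pv_0}. The only matter requiring any attention is bookkeeping, namely confirming that the indexing set of variables used to declare $U'$ an L-matrix in Lemma \ref{is_l} matches the indexing set $\{z_{iJ}\mid i\in\{1,\ldots,s\},\ J\in\mathcal{M}_Q(j)\}$ stated in the present lemma; this is built into the hypothesis that $\mathcal{E}\subseteq\mathcal{W}_{\mathcal{M}_Q(j)}$. Thus the write-up should be only a few lines: invoke Lemma \ref{is_l} to get that $U'$ is an L-matrix, invoke Lemma \ref{crop} (or Definition \ref{cropped} directly) to exhibit $U$ as a submatrix of $U'$, and invoke Lemma \ref{pv_0} to conclude that $U$ is an L-matrix.
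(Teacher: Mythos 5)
Your proposal is correct and is essentially identical to the paper's proof: invoke Lemma \ref{is_l} to see that the uncropped matrix $U'$ is an L-matrix over $k[\{z_{iJ}\}]$, observe that $U$ is by definition a submatrix of $U'$, and conclude via Lemma \ref{pv_0}. The extra bookkeeping you note about the variable indexing set is fine but not something the paper dwells on.
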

\begin{proof}
By Lemma \ref{is_l}, the uncropped matrix is an L-matrix over\\
$k[\{z_{iJ}|i \in \{1,...,s\}$ and $J \in \mathcal{M}_Q(j)\}]$.  By
Lemma \ref{pv_0}, $U$ is as well, since it is defined to be a
submatrix of the uncropped matrix.
\end{proof}

The matrix $U$ is an L-Matrix, and we describe a scheme for
subdividing it into blocks $B_{IJ}$, where $I,J \in \mathcal{G}_Q$,
that makes $U$ an L-Matrix with $\mathcal{G}_Q$ pattern.\\

Given a multi-index $I \in \mathcal{G}_Q$, we must designate $r_I$
row indices $(E,i)$ and $c_I$ column indices $D$ to associate with
$I$. Writing $I = (I_1,...,I_n)$, $D = (d_1,...,d_r)$ and $E =
(e_1,...,e_r)$, we associate with $I$ those row indices $(E,i)$ for
which $E = (I_1,...,I_n,e_{n+1},...,e_r)$, and we associate with $I$
those column indices $D$ for which $D =
(Q_1-I_1,...,Q_n-I_n,d_{n+1},...,d_r)$.  We call this assignment of
rows and columns the \emph{standard assignment}.

\begin{lem} \label{crop_l}
Let the family of vector subspaces $\mathcal{E} := \langle f_1,...,f_s\rangle
\subseteq \mathcal{W}_{{\mathcal{M}}_Q(j)} \subseteq \mathcal{D}_j$ be defined as
above and let $U$ be the $e^{th}$ cropped matrix of $\mathcal{E}$. Then the standard
assignment makes $U$ an L-matrix with $\mathcal{G}_Q$ pattern.
\end{lem}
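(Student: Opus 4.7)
The plan is to verify the four defining conditions of $\mathcal{G}_Q$ pattern one at a time, taking the L-matrix property for granted from Lemma \ref{is_l_too}.

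First I would check that the standard assignment really does partition the row- and column-indices of $U$ into blocks indexed by pairs in $\mathcal{G}_Q \times \mathcal{G}_Q$. Given a row index $(E,i)$ with $E \in \mathcal{M}_Q(e)$, write $E = (e_1,\ldots,e_r)$; because $E$ is constrained by $Q$ we have $e_k \le Q_k$ for $k \le n$, so setting $I := (e_1,\ldots,e_n)$ gives a unique element $I \in \mathcal{G}_Q$ to which $(E,i)$ is assigned. Similarly, given a column index $D = (d_1,\ldots,d_r) \in \mathcal{M}_Q(d)$, the assignment $J_k := Q_k - d_k$ for $k \le n$ produces a unique $J \in \mathcal{G}_Q$. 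Counting the remaining free coordinates $e_{n+1},\ldots,e_r$ (resp.\ $d_{n+1},\ldots,d_r$) together with the $i$-index then determines $r_I$ and $c_J$.

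Next I would verify the two ordering conditions. For the rows, take two distinct elements $I^{(1)},I^{(2)} \in \mathcal{G}_Q$ with $I^{(1)} > I^{(2)}$ lexicographically. The leftmost coordinate in which they differ lies in positions $1,\ldots,n$, and for any $E^{(1)}$ assigned to $I^{(1)}$ and any $E^{(2)}$ assigned to $I^{(2)}$, the first $n$ coordinates of $E^{(1)},E^{(2)}$ agree with $I^{(1)},I^{(2)}$ respectively, so $E^{(1)} > E^{(2)}$ lexicographically as $r$-tuples. Hence the block-rows appear in lexicographic order. For the columns, the key observation is that the assignment $d_k = Q_k - J_k$ \emph{reverses} the comparison in each relevant coordinate: if $J^{(1)} > J^{(2)}$, then in the leftmost differing coordinate $J^{(1)}_k > J^{(2)}_k$, so $Q_k - J^{(1)}_k < Q_k - J^{(2)}_k$, and consequently every $D^{(1)}$ associated to $J^{(1)}$ is lexicographically \emph{smaller} than every $D^{(2)}$ associated to $J^{(2)}$. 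Since the columns of $U$ are arranged in decreasing lexicographic order of $D$, this places $J^{(2)}$ before $J^{(1)}$, so the block-columns occur in reverse lexicographic order as required.

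Finally I would verify the zero/nonzero pattern, which is really the heart of the argument. For $(E,i)$ assigned to $I$ and $D$ assigned to $K$, write out $E$ and $D$ explicitly; then $(D+E)_k = I_k + Q_k - K_k$ for $k \le n$ and $(D+E)_k = e_k + d_k$ for $k > n$. By Definition \ref{cropped} the entry is nonzero precisely when $D+E \in \mathcal{M}_Q(j)$, i.e.\ when $(D+E)_k \le Q_k$ for every $k \le n$. This inequality reduces to $I_k \le K_k$ for all $k \le n$, which is exactly the condition $I \succeq K$ in the partial order on $\mathcal{G}_Q$. Thus $B_{IK}$ consists entirely of nonzero entries if $I \succeq K$ and entirely of zeros otherwise.

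The only place requiring real care is the bookkeeping in the column-ordering step, where the flip $d_k = Q_k - J_k$ converts lexicographic order on $J$ into reverse lexicographic order on $D$; everything else is a direct unwinding of Definition \ref{cropped} and the standard assignment. The L-matrix property itself does not need to be re-proved, since it is inherited from $U'$ via Lemma \ref{pv_0} and recorded in Lemma \ref{is_l_too}.
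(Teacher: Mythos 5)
Your proof is correct and follows essentially the same route as the paper's: a direct verification of the clauses of the $\mathcal{G}_Q$-pattern definition (well-definedness of the assignment, block-row and block-column ordering, and the zero/nonzero criterion $I \succeq J$), with the L-matrix property imported from Lemma \ref{is_l_too}. The only cosmetic difference is that you do not separately verify that the rows (resp.\ columns) assigned to a fixed $I$ are consecutive in $U$ — the paper's step (ii) — but this is subsumed by your universally quantified ordering claim, so nothing is missing.
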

\begin{proof}
We must verify the following statements.
\begin{enumerate}
\item[(i)] Every row and every column of $U$ is associated to a
unique $I$.
\item[(ii)] The standard assignment subdivides $U$ into blocks.  That
is, for any multi-index $I \in \mathcal{G}_Q$, all rows associated
to $I$ are consecutive in $U$, and all columns associated to $I$ are
consecutive in $U$.
\item[(iii)] The block-row indices $I$ occur in lexicographic order,
and the block-column indices $I$ occur in reverse lexicographic
order.
\item[(iv)] The entries in the block $B_{IJ}$ are nonzero if $I
\succeq J$ and 0 otherwise.
\end{enumerate}
For (i), consider a row $(E,i)$ of $U$ and write $E =
(e_1,...,e_r)$. Then the only possible candidate for $I$ is
$(e_1,...,e_n)$, and we must verify that it is an element of
$\mathcal{G}_Q$.  Since $U$ is the cropped matrix, $E \in
\mathcal{M}_Q(e)$, which implies $e_i \le Q_i$ for each $i$; hence
$I \in \mathcal{G}_Q$.\\

Similarly, let $D$ be a column of $U$ and write $D = (d_1,...,d_r)$.
Then the only possible candidate for $I$ is $(Q_1-d_1,...,Q_n -
d_n)$, and we must verify that it is an element of $\mathcal{G}_Q$.
This is true because $D \in \mathcal{M}_Q(d)$, so $d_1 \le
Q_1,...,d_n \le Q_n$, which is to say $0 \le Q_1 - d_1 \le Q_1, ...,
0 \le Q_n - d_n \le Q_n$.\\

 For (ii), recall that the ordering of the rows and columns of
$U$ is given in Defintion \ref{cropped}.  Assume that $(E_1,i_1)$ and $(E_3,i_3)$
are two row indices associated to $I$, and that $(E_2,i_2)$ comes between them.
Write $E_1 = (I_1,...,I_n,...), E_2 = (e_1,...,e_n,...), E_3 = (I_1,...,I_n,...)$.
We must show that $e_i = I_i$ for $i = 1,...,n$, and we argue by contradiction.  If
not, let $m$ be the first co-ordinate in which this is not so.  By the rule for
ordering the rows of $U$, $E_1 \ge E_2 \ge E_3$, so $I_m \ge e_m \ge I_m$,
which is impossible if $e_m \neq I_m$.\\

The argument for columns is similar.  Assume that $D_1$ and $D_3$
are two column indices associated to $I$, and that $D_2$ comes
between them. Write $D_1 = (Q_1 - I_1,...,Q_n -I_n,...), D_2 =
(d_1,...,d_n,...), D_3 = (Q_1 - I_1,...,Q_n -I_n,...)$.  We must
show that $d_i = Q_i-I_i$ for $i = 1,...,n$, and we argue by
contradiction.  If not, let $m$ be the first co-ordinate in which
this is not so.  By the rule for ordering the columns of $U$, $D_1
\ge D_2 \ge D_3$, so $Q_m - I_m \ge d_m \ge Q_m - I_m$,
which is impossible if $d_m \neq Q_m - I_m$.\\

For (iii), let row index $(E_1,i_1)$ associated to $I =
(I_1,...,I_n) \in \mathcal{G}_Q$ come before row index $(E_2,i_2)$
associated to $J = (J_1,...,J_n) \in \mathcal{G}_Q$. Then $E_1 \ge
E_2$, or as expanded by co-ordinates, $(I_1,...,I_n,...) \ge
(J_1,...,J_n,...)$. That is, either $I_i = J_i$ for $i = 1,...,n$,
or else, for some $m$, $I_i = J_i$ for $i = 1,...,m-1$ and $I_m >
J_m$; equivalently, $I \ge
J$.\\

The argument for columns is similar.  Let column index $D_1$
associated to $I = (I_1,...,I_n) \in \mathcal{G}_Q$ come before
column index $D_2$ associated to $J = (J_1,...,J_n) \in
\mathcal{G}_Q$. Then $D_1 > D_2$, or as expanded by co-ordinates,
$(Q_1 -I_1,...,Q_n -I_n,...) \ge (Q_1 -J_1,...,Q_n-J_n,...)$.  That
is, either $Q_i - I_i = Q_i -J_i$ for $i = 1,...,n$, or else, for
some $m$, $Q_i - I_i = Q_i -J_i$ for $i = 1,...,m-1$ and $Q_m -I_m >
Q_m -J_m$.  So either $I_i = J_i$ for $i = 1,...,n$ or $I_i = J_i$
for $i = 1,...,m-1$ and $J_m
> I_m$; equivalently, $J \ge
I$.\\

For (iv), we recall from the definition of $U$ that the $((E,i),D)$
entry is nonzero if and only if $E + D = J \in \mathcal{M}_Q(j)$. So
if $(E,i)$ is associated to $I = (I_1,...,I_n)$ and $D$ is
associated to $J = (J_1,...,J_n)$, the condition for being nonzero
becomes $I_i + (Q_i -J_i) \le Q_i$ for $i = 1,...,n$, or
equivalently, $I_i \le J_i$ for $i = 1,...,n$.  But that is the
definition of $I \succeq J$.
\end{proof}

\begin{prop} \label{u_count}
Let the family of vector subspaces $\mathcal{E} := \langle
f_1,...,f_s\rangle  \subseteq \mathcal{W}_{{\mathcal{M}}_Q(j)}
\subseteq \mathcal{D}_j$ be defined as above and let $U$ be the
$e^{th}$ cropped matrix of $\mathcal{E}$. Then, with the standard
assignment, the block dimensions $r_I$ and $c_I$  of $I =
(I_1,...,I_n) \in \mathcal{G}_Q$ are given as follows.  Setting $p =
I_1 + ... + I_n$ and $q = Q_1 + ... + Q_n$, we have:
\begin{equation}
\text{If} \;\; p \le e, \; then \; r_I =s
\genfrac(){0cm}{0}{e-p+r-n-1}{r-n-1}; \;\; otherwise, \; r_I = 0.
\end{equation}
\begin{equation}
\text{If} \;\; q-p \le d, \; then \;  c_I = \genfrac(){0cm}{0}{d -
(q - p)+r-n-1}{r-n-1}; \;\; otherwise, \; c_I = 0.
\end{equation}
\end{prop}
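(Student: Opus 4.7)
The plan is to unpack the standard assignment from Lemma \ref{crop_l} and reduce the claim to counting multi-indexes of a prescribed shape.

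First I would enumerate the rows. By the standard assignment, a row $(E, i)$ is associated to $I = (I_1, \ldots, I_n)$ precisely when $E$ has the form $(I_1, \ldots, I_n, e_{n+1}, \ldots, e_r)$ with $E$ a multi-index of degree $e$ lying in $\mathcal{M}_Q(e)$, and $i \in \{1, \ldots, s\}$. The requirement $E \in \mathcal{M}_Q(e)$ imposes $e_k \le Q_k$ for $k = 1, \ldots, n$; but since the first $n$ coordinates of $E$ are $I_1, \ldots, I_n$ and $I \in \mathcal{G}_Q$ already forces $I_k \le Q_k$, this constraint is automatic. The remaining coordinates $(e_{n+1}, \ldots, e_r)$ are unconstrained by $Q$ and must simply be non-negative integers with $e_{n+1} + \cdots + e_r = e - p$. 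Such an $(r-n)$-tuple exists iff $e - p \ge 0$, and in that case the standard stars-and-bars count yields $\binom{e - p + r - n - 1}{r - n - 1}$ choices. Finally, there are $s$ choices for $i$, giving the formula for $r_I$.

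The computation for $c_I$ is symmetric. A column $D$ is associated to $I$ exactly when $D = (Q_1 - I_1, \ldots, Q_n - I_n, d_{n+1}, \ldots, d_r)$ is a multi-index of degree $d$ in $\mathcal{M}_Q(d)$. The condition $D \in \mathcal{M}_Q(d)$ requires $Q_k - I_k \le Q_k$ for $k = 1, \ldots, n$, which is trivially true since $I_k \ge 0$, so again the $Q$-constraint drops out. The total-degree requirement becomes $(Q_1 - I_1) + \cdots + (Q_n - I_n) + d_{n+1} + \cdots + d_r = d$, i.e.\ $(q - p) + (d_{n+1} + \cdots + d_r) = d$, so the free tail must be a non-negative $(r-n)$-tuple summing to $d - (q - p)$. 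This is possible iff $d - (q - p) \ge 0$, and when it is, stars-and-bars gives $\binom{d - (q-p) + r - n - 1}{r - n - 1}$ choices.

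There is no real obstacle here; the only thing to be careful about is to check that, in both enumerations, the two constraints defining $\mathcal{M}_Q$ (fixed degree and coordinatewise bound by $Q$) do not interact in an unexpected way with the fixing of the first $n$ coordinates by the standard assignment. The verification above shows that the $Q$-bound is automatic on the first $n$ coordinates (using $I \in \mathcal{G}_Q$), so in each case only the degree condition on the last $r - n$ coordinates remains to be counted, and the binomial coefficients in the statement are exactly those counts.
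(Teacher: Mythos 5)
Your proof is correct and follows essentially the same route as the paper: fix the first $n$ coordinates via the standard assignment, observe that the $Q$-bounds there are automatic, and count the free tail of $r-n$ coordinates summing to $e-p$ (resp.\ $d-(q-p)$) by stars-and-bars, times $s$ for the rows. The only difference is that you explicitly verify the $\mathcal{M}_Q$-membership condition is vacuous on the fixed coordinates, which the paper leaves implicit.
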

\begin{proof}
To find $r_I$, we count the number of ways of forming multi-indexes
$(E,i)$ associated to $I$. Writing $E =
(I_1,...,I_n,e_{n+1},...,e_r)$, and recalling that $E$ must be of
degree $e$, we see immediately that this is impossible unless $e \ge
I_1 + ... + I_n = p$.  In this case, we must assign non-negative
integer values of $e_{n+1},...,e_r$ that bring the total degree up
to $e$. Equivalently, we must count the number of monomials of
degree $e-p$ in $r-n$ variables (and then multiply by $s$ to account
for all possible choices of $i$).  As is well-known, there are
$\genfrac(){0cm}{0}{t + u- 1}{u-1}$ monomials of degree $t$ in $u$
variables, so $r_I = s
\genfrac(){0cm}{0}{e-p+r-n-1}{r-n-1}$ when $p \le e$.\\

Similarly, to find $c_I$ we count the number of ways of forming
multi-indexes $D$ associated to $I$.  Writing $D = (Q_1
-I_1,...,Q_n-I_n,d_{n+1},...,d_r)$, and recalling that $D$ must be
of degree $d$, we see immediately that this is impossible unless $d
\ge (Q_1 -I_1) + ... +(Q_n-I_n) = q -p$. In this case, we must
assign non-negative integer values of $d_{n+1},...,d_r$ that bring
the total degree up to $d$. That is, we must count the number of
monomials of degree $d-(q-p)$ in $r-n$ variables.  This gives $c_I
=\genfrac(){0cm}{0}{d - (q - p)+r-n-1}{r-n-1}$ when $ d \ge q-p$.
\end{proof}

\begin{cor} \label{ord_pres}
Let the family of vector subspaces\\ $\mathcal{E} := \langle
f_1,...,f_s\rangle  \subseteq \mathcal{W}_{{\mathcal{M}}_Q(j)}
\subseteq \mathcal{D}_j$ be defined as above and let $U$ be the
$e^{th}$ cropped matrix of $\mathcal{E}$. Under the standard
assignment, denote the block dimensions $r_I$ and $c_I$. If $I, J
\in \mathcal{G}_Q$ and $I \succeq J$, then $r_I \ge r_J$ and $c_I
\le c_J$.  In particular, both $r_I$ and the excess $A_I = r_I -
c_I$ are order-preserving functions on $\mathcal{G}_Q$.
\end{cor}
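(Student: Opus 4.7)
The plan is to reduce the statement to the formulas of Proposition \ref{u_count} and observe that each of $r_I$ and $c_I$ depends on $I$ only through the single integer $p := I_1 + \cdots + I_n$, in a monotone way.

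First, I would record the basic observation that the partial order on $\mathcal{G}_Q$ translates into an order on the sums: if $I \succeq J$ (meaning $I_i \le J_i$ for each $i$), then $p_I := I_1 + \cdots + I_n \le J_1 + \cdots + J_n =: p_J$. Writing $q := Q_1 + \cdots + Q_n$ as in Proposition \ref{u_count}, this gives the two companion inequalities $e - p_I \ge e - p_J$ and $d - (q - p_I) \le d - (q - p_J)$.

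Next I would verify the inequality $r_I \ge r_J$ using the formula
\begin{equation*}
r_I = s\binom{e - p_I + r - n - 1}{r - n - 1}
\end{equation*}
when $p_I \le e$, and $r_I = 0$ otherwise. If $p_J \le e$ then $p_I \le e$ as well, and the inequality follows from the elementary fact that $\binom{m + (r-n-1)}{r-n-1}$ is nondecreasing in $m \ge 0$. If $p_J > e$ then $r_J = 0$ and there is nothing to check. Symmetrically, for $c_I \le c_J$ using
\begin{equation*}
c_I = \binom{d - (q - p_I) + r - n - 1}{r - n - 1}
\end{equation*}
when $q - p_I \le d$, the inequality $q - p_I \ge q - p_J$ combined with monotonicity of the same binomial coefficient gives the conclusion; and if $q - p_I > d$ then $c_I = 0 \le c_J$, while the case $q - p_J > d$ together with $q - p_I \ge q - p_J$ forces $c_I = 0 = c_J$.

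Finally, for the last sentence of the corollary: $r_I$ is order-preserving by the first inequality, and since both $r_I$ is nondecreasing and $c_I$ is nonincreasing as $I$ moves up in $\succeq$, the difference $A_I = r_I - c_I$ is also nondecreasing along $\succeq$, hence order-preserving on $\mathcal{G}_Q$. The only potential obstacle is bookkeeping the boundary cases where $p > e$ or $q - p > d$ force one of the formulas to drop to $0$, but the inequality $p_I \le p_J$ makes each of those cases trivial, so no real difficulty arises.
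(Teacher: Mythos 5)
Your proposal is correct and follows essentially the same route as the paper: both reduce to the formulas of Proposition \ref{u_count}, observe that $I \succeq J$ forces $p_I \le p_J$ and $q - p_I \ge q - p_J$, and then handle the boundary cases where a block dimension vanishes. Your explicit remark that $A_I = r_I - c_I$ is order-preserving because $r_I$ is nondecreasing and $c_I$ is nonincreasing is a small addition the paper leaves implicit, but the argument is the same.
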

\begin{proof}
This is a consequence of the formulas in Proposition \ref{u_count}.
Write $I = (I_1,...,I_n)$, $J = (J_1,...,J_n)$, $p_I = I_1+...+I_n$,
$p_J = J_1+...+J_n$.  We remark that if $I \succeq J$, then the
definition of partial order gives $p_I \le
p_J$ and $q - p_I \ge q - p_J$.\\

To see that $r_I$ is order-preserving, assume that $I \succeq J$.
If $e < p_I \le p_J $, then $r_I = r_J = 0$ and $r_I \ge r_J$ as
required. If $p_I \le e < p_J$, then $r_I = s
\genfrac(){0cm}{0}{e-p_I+r-n-1}{r-n-1}$ and $r_J = 0$, and again
$r_I \ge r_J$.  Finally, if $p_I \le p_J \le e$, then $r_I = s
\genfrac(){0cm}{0}{e-p_I+r-n-1}{r-n-1}$ and $r_J = s
\genfrac(){0cm}{0}{e-p_J+r-n-1}{r-n-1}$.  Since $p_I \le p_J$, this
gives $r_I \ge r_J$.\\

The argument for $c_I$ is similar. If $q - p_I \ge q - p_J > d$,
then $c_I = c_J = 0$ and $c_I \le c_J$ as required. If $q - p_I > d
\ge q - p_J$, then $c_I = 0$ and $c_J = \genfrac(){0cm}{0}{d - (q -
p_J)+r-n-1}{r-n-1}$, and again $c_I \le c_J$.  Finally, if $d \ge q
- p_I \ge q - p_J$, then $c_I = \genfrac(){0cm}{0}{d - (q -
p_I)+r-n-1}{r-n-1}$ and $c_J = \genfrac(){0cm}{0}{d - (q -
p_J)+r-n-1}{r-n-1}$.  Since $p_I \le p_J$, this gives $c_I \le c_J$.
\end{proof}

\begin{thm} {} \label{max_rank}
Let the family of vector subspaces $\mathcal{E} := \langle
f_1,...,f_s\rangle  \subseteq \mathcal{W}_{{\mathcal{M}}_Q(j)}
\subseteq \mathcal{D}_j$ be defined as above and let $U$ be the
$e^{th}$ cropped matrix of $\mathcal{E}$. If $U$ has at least as
many rows as columns, then it has maximal rank.
\end{thm}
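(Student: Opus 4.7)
The plan is to induct on the total excess $\alpha := \sum_{I \in \mathcal{G}_Q} A_I$, where $A_I = r_I - c_I$. First I would combine two facts already in hand: by Corollary \ref{ord_pres}, $A$ is an order-preserving function on $\mathcal{G}_Q$, and by hypothesis (at least as many rows as columns), $\alpha \ge 0$. Applying the TPP of $\mathcal{G}_Q$ (Proposition \ref{tpp}) then gives the key inequality $\sum_{I \in T} A_I \ge 0$ for every topset $T \subseteq \mathcal{G}_Q$.

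I would actually prove a slightly stronger statement by induction: every L-matrix $U$ with $\mathcal{G}_Q$ pattern whose excess function satisfies $\sum_{I \in T} A_I \ge 0$ for all topsets $T \subseteq \mathcal{G}_Q$ has maximal rank. The reason for strengthening is that being order-preserving is \emph{not} preserved under row deletion, whereas the topset-positivity condition will be preserved by the construction below. The base case $\alpha = 0$ is immediate: $U$ is square, and since the only topset of $\mathcal{G}_Q$ containing $Q$ is $\mathcal{G}_Q$ itself, the hypothesis on proper topsets is exactly condition (3) of Theorem \ref{gq_3}, so $\det(U) \neq 0$.

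For the inductive step $\alpha > 0$, I would delete a single row from $U$ to produce $\tilde{U}$, which is still an L-matrix with $\mathcal{G}_Q$ pattern by Lemma \ref{pv_0}, with $\sum \tilde{A}_I = \alpha - 1$; the inductive hypothesis applied to $\tilde{U}$ then gives maximal rank, and $\rank(U) \ge \rank(\tilde{U})$ suffices since $U$ has exactly as many columns as $\tilde{U}$. The choice of which row to delete is the heart of the argument. Let $\mathcal{T}_0 = \{T \text{ topset} : \sum_{I \in T} A_I = 0\}$. Using the identity $\sum_{T_1 \cup T_2} A + \sum_{T_1 \cap T_2} A = \sum_{T_1} A + \sum_{T_2} A$ together with Lemma \ref{topset_lem}(2) (topsets are closed under $\cup$ and $\cap$) and the topset-positivity hypothesis, $\mathcal{T}_0$ is closed under union, so it has a unique maximum $T_*$. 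Since $\alpha > 0$, $T_* \subsetneq \mathcal{G}_Q$, and a counting argument (if all block-rows with $r_I > 0$ lay in $T_*$, then $\alpha \le \sum_{I \in T_*} A_I = 0$) produces an $I^* \in \mathcal{G}_Q - T_*$ with $r_{I^*} > 0$.

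Deleting any row from block-row $I^*$ yields $\tilde{A}_{I^*} = A_{I^*} - 1$ and $\tilde{A}_I = A_I$ otherwise. For any topset $T$: if $I^* \notin T$, then $\sum_T \tilde{A} = \sum_T A \ge 0$; if $I^* \in T$, then $T \not\subseteq T_*$, so $T \notin \mathcal{T}_0$, so $\sum_T A \ge 1$ and $\sum_T \tilde{A} \ge 0$. This closes the induction. The main obstacle, and the place where one must be careful, is this row-selection lemma: the submodularity argument that $\mathcal{T}_0$ is closed under union is what licenses the pick of $I^*$, and without this observation one could otherwise destroy the topset-positivity condition after deletion.
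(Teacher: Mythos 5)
Your argument is correct, and its overall architecture coincides with the paper's: both initialize the invariant $\sum_{I\in T}A_I\ge 0$ for all topsets $T$ via Corollary \ref{ord_pres} and Proposition \ref{tpp}, both delete rows one at a time while preserving that invariant, and both finish the square case with Theorem \ref{gq_3}. Where you differ is in the row-selection lemma. The paper chooses a \emph{minimal} element $I$ of the support $\mathcal{G}'=\{I : r_I>0\}$ (so that $\mathcal{G}'$ stays a topset) and proves by contradiction that some minimal $I$ lies in no zero-sum topset of $\mathcal{G}'$, using Lemma \ref{topset_lem}(5) together with the same modularity identity $\sum_{T_1\cup T_2}A+\sum_{T_1\cap T_2}A=\sum_{T_1}A+\sum_{T_2}A$ that you use. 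You instead package that identity as ``the zero-sum topsets form a union-closed family with a unique maximum $T_*$'' and delete any row in a block outside $T_*$; your block-row need not be minimal, and you verify the invariant against all topsets of $\mathcal{G}_Q$ rather than only those contained in $\mathcal{G}'$, which makes the bookkeeping a bit cleaner. Your strengthened induction hypothesis (topset-positivity of the excess, rather than order-preservation, which is indeed not stable under deletion) and the induction on the total excess $\alpha$ are a tidy reformulation of the paper's ``delete until square'' loop; the underlying combinatorics is the same.
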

\begin{proof}
We use the standard assignment to regard $U$ as an L-matrix with
$\mathcal{G}_Q$ pattern.  Since it has at least as many rows as
columns, $$0 \le \sum_{I \in \mathcal{G}_Q}r_I  -  \sum_{I \in
\mathcal{G}_Q}c_I = \sum_{I \in \mathcal{G}_Q}A_I.$$  By Corollary
\ref{ord_pres}, $A_I$ is an order-preserving function on
$\mathcal{G}_Q$, so according to Proposition \ref{tpp}:
\begin{equation} \label{cur}
\text{For any topset} \; T \subseteq \mathcal{G}_Q, \; \sum_{I \in
T} A_I \ge 0.
\end{equation}
By Theorem \ref{gq_3}, this would settle the matter, if only $U$
were square. So our goal is to show that we can delete rows, one at
a time, in such a way that, at every stage, \eqref{cur} remains true
for the new values of $A_I$ corresponding to the submatrix
(still with $\mathcal{G}_Q$ pattern) formed by deleting the row.\\

At each stage, we consider the subset $\mathcal{G'} \subseteq
\mathcal{G}_Q$, consisting of all multi-indexes $I$ for which $r_I$
remains nonzero. When we start out, $\mathcal{G'}$ is a topset,
because, by Corollary \ref{ord_pres}, $r_I$ is order-preserving:
given $I,J \in \mathcal{G}_Q$ such that $r_I > 0$ and $J \succeq I$,
we have $r_J \ge r_I > 0$ and $J \in \mathcal{G}'$.  When we delete
a row, we always choose a row associated with an $I$ that is minimal
in $\mathcal{G'}$, and claim that $\mathcal{G'}$ remains a topset:
If before the deletion, $r_I
> 1$, $\mathcal{G'}$ is unchanged. If before the deletion, $r_I = 1$,
the deletion will remove $I$ from $\mathcal{G}'$, and the result
will remain a topset. (See Lemma \ref{topset_lem} (3), setting $X :=
\mathcal{G}' - \{I\}$).\\

So assume at some stage that we have deleted some number of rows
from $U$, each time diminishing $r_I$ by 1 for some minimal $I \in
\mathcal{G'}$, and that \eqref{cur} remains true.  If we are not yet
done, by Lemma \ref{gq_2} it must be that $\sum_{I \in
\mathcal{G'}}A_I
> 0$, that is, the inequality is strict. We seek to find a minimal
multi-index $I \in \mathcal{G'}$ with the property that any topset
$T \subseteq \mathcal{G'}$ that contains $I$ has $\sum_{I \in T}A_I
> 0$.  If such an $I$ exists, we can delete a row associated to $I$
(thus diminishing $r_I$, and therefore also $A_I$, by 1) and
\eqref{cur} will remain true. The assertion is
that such a minimal multi-index $I$ can always be found.\\

To prove the assertion, we argue by contradiction.  Assume there are
$m$ minimal elements $I_1,...,I_m$ of $\mathcal{G'}$ and that each
$I_i$ lies in a topset $T_i$ for which $\sum_{I \in T_i}A_I = 0$. We
claim $\sum_{I \in T_1 \cup ... \cup T_m}A_I = 0$.  But by Lemma
\ref{topset_lem}(5), $T_1 \cup ... \cup T_m = \mathcal{G'}$, and we
are assuming $\sum_{I \in \mathcal{G'}}A_I
> 0$.  This contradiction proves the theorem, once the claim is
established.\\

To establish the claim, we prove by induction on $p$ the statement
that\\ $\sum_{I \in T_1 \cup ... \cup T_p}A_I = 0$.  For $p=1$, this
is true because we have assumed $\sum_{I \in T_1}A_I = 0$. For the
induction step, assume $\sum_{I \in T_1 \cup ... \cup T_{p-1}}A_I =
0$.  Write $X := T_1 \cup ... \cup T_{p-1}$, and observe that $X$ is
a topset by Lemma \ref{topset_lem}(2). Then $X \cap T_p$ and $X \cup
T_p$ are also topsets, again by Lemma \ref{topset_lem}(2), and
\begin{equation*} \sum_{I \in X \cap
T_p}A_I + \sum_{I \in X \cup T_p}A_I = \sum_{I \in X}A_I + \sum_{I
\in T_p}A_I = 0 + 0 = 0.
\end{equation*}
This forces $\sum_{I \in X \cup T_p}A_I= 0$, since both terms on the
left are non-negative.  But of course $X \cup T_p = T_1 \cup ...
\cup T_p$.
\end{proof}

We collect several results together into one theorem.

\begin{thm}{} \label{omnibus}
Let the family of vector subspaces $\mathcal{E} := \langle
f_1,...,f_s\rangle  \subseteq \mathcal{W}_{{\mathcal{M}}_Q(j)}
\subseteq \mathcal{D}_j$ be defined as above and let $U$ be the
$e^{th}$ cropped matrix of $\mathcal{E}$.  If $U$ has at least as
many rows as columns, or more generally if $U$ has maximal rank,
then for general $C \in k^{sm_Q(j)}$, $h_{\mathcal{E}(C)}(d) =
\rank(U) = \dim_kR_e*\mathcal{E}(C)$.
\end{thm}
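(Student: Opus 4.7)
The plan is to chain together four earlier results. The theorem is really a packaging statement, so the strategy is simply to verify that the hypotheses of each previous lemma are met and line up the equalities in the right order.

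First I would reduce the two alternative hypotheses to one. The more general assumption is that $U$ has maximal rank (as a matrix over the polynomial ring $k[\{z_{iJ}\}]$); under the specific assumption that $U$ has at least as many rows as columns, Theorem \ref{max_rank} supplies this maximal rank. So from here on I may assume $U$ has maximal rank.

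Next I would apply Lemma \ref{general_max} to $U$: since $U$ has maximal rank as a matrix over $k[\{z_{iJ}\}]$, there exists a dense Zariski-open subset of $k^{sm_Q(j)}$ on which the specialization $U(C)$ also has maximal rank, that is, $\rank(U(C)) = \rank(U)$. This is the only place where the word \emph{general} enters the proof, so there is no need to intersect several Zariski-open sets via Lemma \ref{zar_comp}.

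Finally I would string together the unconditional identities. Corollary \ref{crop_rank_c} gives $\rank(U(C)) = \dim_k R_e * \mathcal{E}(C)$ for every $C$. Lemma \ref{hilb_char}, applied to the subspace $\mathcal{W} := \mathcal{E}(C) \subseteq \mathcal{D}_j$, gives $h_{\mathcal{E}(C)}(d) = \dim_k R_{j-d} * \mathcal{E}(C) = \dim_k R_e * \mathcal{E}(C)$. Combining these two equalities with the previous paragraph yields, for general $C \in k^{sm_Q(j)}$,
\begin{equation*}
h_{\mathcal{E}(C)}(d) \;=\; \dim_k R_e * \mathcal{E}(C) \;=\; \rank(U(C)) \;=\; \rank(U),
\end{equation*}
which is the claim. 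There is no genuine obstacle: the entire content has already been established in the earlier sections, and the only mild point to verify is that "for general $C$" may legitimately refer to the single Zariski-open dense set produced by Lemma \ref{general_max}, since the other two equalities hold pointwise on all of $k^{sm_Q(j)}$.
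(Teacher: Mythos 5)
Your proposal is correct and follows essentially the same route as the paper: Theorem \ref{max_rank} to get maximal rank from the row/column count, Lemma \ref{general_max} to specialize to general $C$, then Corollary \ref{crop_rank_c} and Lemma \ref{hilb_char} to identify the rank with $\dim_k R_e*\mathcal{E}(C)$ and with $h_{\mathcal{E}(C)}(d)$. Your added remark that only one Zariski-open set is needed, since the other two equalities hold for all $C$, is a correct and slightly more careful observation than the paper makes explicit.
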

\begin{proof}
By Theorem \ref{max_rank}, $U$ having at least as many rows as
columns guarantees that $U$ has maximal rank.\\

In any event, $U$ is an L-matrix by Lemma \ref{is_l_too}.  By Lemma
\ref{general_max}, $U(C)$ has maximal rank = $\rank(U)$ for general
$C \in k^{sm_Q(j)}$, which is the same as $\dim_kR_e*\mathcal{E}(C)$
by Corollary \ref{crop_rank_c}.  Finally, by Lemma \ref{hilb_char},
this is the same as $h_{\mathcal{E}(C)}(d)$.
\end{proof}

\begin{cor} \label{full_type}
Let the family of vector subspaces \\
$\mathcal{E} := \langle f_1,...,f_s\rangle  \subseteq
\mathcal{W}_{{\mathcal{M}}_Q(j)} \subseteq \mathcal{D}_j$ be defined
as above, where $s \le m_Q(j)$.  Then for general $C \in
k^{sm_Q(j)},\dim_k\mathcal{E}(C) = s$.
\end{cor}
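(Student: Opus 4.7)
The goal is to show that for general $C \in k^{sm_Q(j)}$, the $s$ generators $f_1(C),\ldots,f_s(C)$ are linearly independent in $\mathcal{W}_{\mathcal{M}_Q(j)}$. Since they always span $\mathcal{E}(C)$, we have $\dim_k \mathcal{E}(C) \le s$ with equality if and only if they are linearly independent, so it suffices to exhibit a dense Zariski-open set of $C$ for which independence holds.

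My plan is to work directly with the coefficient matrix $M$ of $f_1,\ldots,f_s$ in the monomial basis $\{x^J : J \in \mathcal{M}_Q(j)\}$ of $\mathcal{W}_{\mathcal{M}_Q(j)}$. By the expansion $f_i = \sum_{J \in \mathcal{M}_Q(j)} z_{iJ} x^J$, this is the $s \times m_Q(j)$ matrix whose $(i,J)$ entry is the single variable $z_{iJ}$. Note that this matrix is exactly the $e=0$ cropped matrix of $\mathcal{E}$ (one row per index $i$, since $\mathcal{M}_Q(0)=\{0\}$), so by Lemma \ref{is_l_too} it is an L-matrix over $k[\{z_{iJ}\}]$; but here each variable appears in only one entry, so the L-matrix property is immediate. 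We cannot appeal to Theorem \ref{max_rank} directly because $M$ has more columns than rows, so I will argue maximality of rank by explicitly exhibiting a nonvanishing maximal minor.

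Pick any $s$-element subset $\{J_1,\ldots,J_s\} \subseteq \mathcal{M}_Q(j)$, which exists because $s \le m_Q(j)$, and let $M_0$ be the corresponding $s \times s$ submatrix. By Lemma \ref{pv_0}, $M_0$ is again an L-matrix; its main diagonal consists of the distinct variables $z_{1J_1},\ldots,z_{sJ_s}$, all of which are nonzero. Applying Lemma \ref{pv_1} with $q=0$ and $Z = M_0$ (all diagonal variables trivially move to the left, as each appears only once in $M_0$), we conclude $\det(M_0)$ is a nonzero polynomial in $k[\{z_{iJ}\}]$. Thus $M$ has maximal rank $s$ as a matrix with polynomial entries.

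Finally, Lemma \ref{general_max} converts this into the desired generic statement: the set $\{C \in k^{sm_Q(j)} : \rank(M(C)) = s\}$ is dense Zariski-open, and on this set the rows of $M(C)$ — i.e., the coefficient vectors of $f_1(C),\ldots,f_s(C)$ — are linearly independent, so $\dim_k \mathcal{E}(C) = s$. The only subtlety is confirming that the nonvanishing-minor argument is available outside the square L-matrix setting of Theorem \ref{gq_3}; this is handled cleanly by the fact that $M_0$ itself is square and satisfies the hypotheses of Lemma \ref{pv_1} with $q=0$.
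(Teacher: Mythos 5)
Your proof is correct, but it takes a different and in fact more direct route than the paper. The paper's proof first enlarges the family to $\mathcal{E}' := \langle f_1,\ldots,f_N\rangle$ with $N := m_Q(j)$, so that the $0^{th}$ cropped matrix becomes square $N \times N$; it then invokes Theorem \ref{omnibus} (hence the full $\mathcal{G}_Q$-pattern machinery of Theorem \ref{max_rank}) to get generic linear independence of all $N$ generators, and finally restricts the parameter space from $k^{N^2}$ back down to $k^{sN}$, arguing that the dense open set $V$ meets the smaller parameter space in a nonempty open set. You instead stay with the rectangular $s \times m_Q(j)$ matrix $M$, observe that every entry is a distinct single variable $z_{iJ}$ (so each variable vacuously moves to the left), select any $s$ columns to form a square submatrix $M_0$ with nonzero diagonal, and apply the elementary Lemma \ref{pv_1} with $q=0$ to get a nonvanishing maximal minor, after which Lemma \ref{general_max} finishes. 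Your approach buys two things: it avoids the heavy Theorem \ref{omnibus}/\ref{max_rank} apparatus where the basic determinant-expansion lemma suffices, and it sidesteps the paper's restriction-of-parameters step, whose assertion that the restricted open set is nonempty is stated without justification and is the only delicate point in the paper's argument. The paper's version, in exchange, is a one-line reduction to theorems already proved. Both are valid.
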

\begin{proof}
Let $N := m_Q(j)$ and let $\mathcal{E}' := \langle
f_1,...,f_N\rangle \subseteq \mathcal{W}_{{\mathcal{M}}_Q(j)}
\subseteq \mathcal{D}_j$. Then setting $e = 0$, the $e^{th}$ cropped
matrix $U$ of $\mathcal{E}'$ is $N \times N$, and we apply Theorem
\ref{omnibus}. We find that, for general $C \in k^{N^2}$,
$\dim_k\mathcal{E}'(C) := \dim_kR_0*\mathcal{E}'(C) = \rank(U)= N$;
thus $f_1(C),...,f_N(C)$ are linearly independent, and perforce
$f_1(C),...,f_s(C)$ are also linearly independent.  Let $V \subseteq
k^{N^2}$ be the Zariski-open dense set on which $f_1(C),...,f_N(C)$
are linearly independent. Then, as a subset of $k^{sN}$, $V \cap
k^{sN}$ is Zariski-open; and it is nonempty, thus dense.
Equivalently, for general $C \in k^{sN}$, $f_1(C),...,f_s(C)$ are
linearly independent, and $\dim_k\mathcal{E}(C) = s$.
\end{proof}

\section{Intersections of Subspaces of $\mathcal{D}_j$}

For the theorem proved in this section, we select a notation that
will be convenient later. We define, as above, a family of vector
subspaces $\mathcal{F} := \langle g_1,...,g_u\rangle  \subseteq
\mathcal{W}_{{\mathcal{M}}_Q(j)} \subseteq \mathcal{D}_j$. We let
$U$ be the $e^{th}$ cropped matrix of $\mathcal{F}$ and let $T$ be a
submatrix of $U$. Recalling that the columns of $U$ are indexed by
$\mathcal{M}_Q(d)$, let $\mathcal{M}_T \subseteq \mathcal{M}_Q(d)$
be the set of all column indices in $T$, and let $\mathcal{M}_{T^c}
\subseteq \mathcal{M}_Q(d)$ be the set of all column indices not in
$T$.  We define, for use in this section and in later sections,
\begin{equation} \label{vlabel}
 \mathcal{V}_{\mathcal{M}_Q(d)}:= \langle \{x^D | D \in
\mathcal{M}_Q(d)\}\rangle \subseteq\mathcal{D}_d.
\end{equation}
\noindent We remark a peculiarity of the notation, namely, that
$\mathcal{V}_{\mathcal{M}_Q(j)}$ is the same as
$\mathcal{W}_{\mathcal{M}_Q(j)}$.  The difference in notation
highlights the distinction that $\mathcal{W}_{\mathcal{M}_Q(j)}$ is
a vector subspace of $\mathcal{D}_j$, whereas
$\mathcal{V}_{\mathcal{M}_Q(d)}$ is a vector subspace of
$\mathcal{D}_d$.  We also define
\begin{equation*}
 \mathcal{V}_{\mathcal{M}_T}:= \langle \{x^D | D \in
\mathcal{M}_T\}\rangle  \subseteq\mathcal{D}_d.
\end{equation*}
\noindent and
\begin{equation} \label{vtclabel}
\mathcal{V}_{\mathcal{M}_{T^c}}:= \langle \{x^D | D \in
\mathcal{M}_{T^c}\}\rangle  \subseteq\mathcal{D}_d.
\end{equation}

\begin{thm}{} \label{intersect}
Let the family of vector subspaces $\mathcal{F} := \langle
g_1,...,g_u\rangle  \subseteq \mathcal{W}_{{\mathcal{M}}_Q(j)}
\subseteq \mathcal{D}_j$ be defined as above and let $U$ be the
$e^{th}$ coefficient matrix of $\mathcal{F}$. Assume that $U$ is of
dimension $p \times q$ and that $T$ is a $t \times t$ square
submatrix of $U$ whose determinant is nonzero.  Let $\mathcal{M}'$
be the set of all row indices $(E,i)$ of $T$. Let $\mathcal{W}
\subseteq \mathcal{D}_d$ be a vector subspace such that
$(R_e*\mathcal{F}(C)) \cap \mathcal{W} \subseteq
\mathcal{V}_{\mathcal{M}_{T^c}}$ for all $C \in k^{um_Q(j)}$. Then
\begin{enumerate}
\item[(i)] For general $C \in k^{um_Q(j)},
\langle \{X^E*g_i(C)| (E,i) \in \mathcal{M}'\}\rangle \cap
\mathcal{V}_{\mathcal{M}_{T^c}} = \{0\}.$
\item[(ii)] If $q \ge p =t$, then for general $C \in k^{um_Q(j)}$,
$(R_e*\mathcal{F}(C)) \cap \mathcal{V}_{\mathcal{M}_{T^c}} = \{0\}$
and $(R_e*\mathcal{F}(C)) \cap \mathcal{W} = \{0\}.$
\end{enumerate}
\end{thm}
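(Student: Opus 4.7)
The plan is to view the rows of the cropped matrix $U$ as coordinate vectors: the row of $U$ indexed by $(E,i)$ records the coefficients of $X^E*g_i$ when expanded in the monomial basis $\{x^D \mid D \in \mathcal{M}_Q(d)\}$ of $\mathcal{V}_{\mathcal{M}_Q(d)}$. (Recall that by Lemma~\ref{crop} no other monomials appear.) With this in mind, the columns of $U$ indexed by $\mathcal{M}_T$ literally read off the $\mathcal{V}_{\mathcal{M}_T}$-components of the various $X^E*g_i$, while the columns indexed by $\mathcal{M}_{T^c}$ read off the $\mathcal{V}_{\mathcal{M}_{T^c}}$-components.

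For part (i), consider a linear combination $h(C) := \sum_{(E,i)\in\mathcal{M}'} a_{E,i}\, X^E*g_i(C)$. Its $\mathcal{V}_{\mathcal{M}_T}$-component is precisely the row vector $(a_{E,i})_{(E,i)\in\mathcal{M}'}$ multiplied into the $t\times t$ matrix $T(C)$. Thus $h(C)\in\mathcal{V}_{\mathcal{M}_{T^c}}$ if and only if $(a_{E,i})$ lies in the left kernel of $T(C)$. Since $\det T$ is a nonzero polynomial in $k[\{z_{iJ}\}]$ by hypothesis, Lemma~\ref{general_max} yields a dense Zariski-open subset $V\subseteq k^{um_Q(j)}$ on which $T(C)$ is invertible; for $C\in V$ the left kernel is trivial, so all $a_{E,i}$ vanish and $h(C)=0$. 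This proves (i).

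For part (ii), the hypothesis $q \geq p = t$ means $T$ retains every row of $U$, so $\mathcal{M}'$ indexes all rows of $U$. By Lemma~\ref{gen},
\begin{equation*}
R_e*\mathcal{E}(C) = \langle\{X^E*g_i(C) \mid (E,i)\in\mathcal{M}'\}\rangle,
\end{equation*}
so part (i) immediately gives $(R_e*\mathcal{F}(C))\cap \mathcal{V}_{\mathcal{M}_{T^c}} = \{0\}$ for general $C$. Then the standing hypothesis $(R_e*\mathcal{F}(C))\cap\mathcal{W}\subseteq\mathcal{V}_{\mathcal{M}_{T^c}}$ (which holds for \emph{all} $C$) yields the containment $(R_e*\mathcal{F}(C))\cap\mathcal{W}\subseteq (R_e*\mathcal{F}(C))\cap\mathcal{V}_{\mathcal{M}_{T^c}}=\{0\}$ on the same dense open set.

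There is no real obstacle here beyond bookkeeping: the only nontrivial move is identifying the constraint ``$h(C) \in \mathcal{V}_{\mathcal{M}_{T^c}}$'' with the vanishing of a left multiplication against $T(C)$, and then invoking Lemma~\ref{general_max} to pass from nonvanishing of the determinant as a polynomial to nonvanishing at general $C$. The assumption $p=t$ in (ii) is what lets us upgrade from the span indexed by $\mathcal{M}'$ to the full $R_e*\mathcal{F}(C)$; if instead $p>t$ the conclusion would fail because the extra rows contribute vectors that could produce nonzero elements in $\mathcal{V}_{\mathcal{M}_{T^c}}$.
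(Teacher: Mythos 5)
Your proof is correct and follows essentially the same route as the paper: decompose $\mathcal{V}_{\mathcal{M}_Q(d)}$ as $\mathcal{V}_{\mathcal{M}_T}\bigoplus\mathcal{V}_{\mathcal{M}_{T^c}}$, observe that the $\mathcal{V}_{\mathcal{M}_T}$-component of a linear combination is governed by $T(C)$, and use the Zariski-openness of $\det T(C)\neq 0$ to kill the coefficients; part (ii) then follows exactly as in the paper via Lemmas \ref{gen} and \ref{crop}. (Minor slip: in part (ii) you wrote $R_e*\mathcal{E}(C)$ where you mean $R_e*\mathcal{F}(C)$.)
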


\begin{proof}
We can express $\mathcal{V}_{\mathcal{M}_Q(d)}$ as an internal
direct sum:
\begin{equation*}
\mathcal{V}_{\mathcal{M}_Q(d)} =\mathcal{V}_{\mathcal{M}_T}
\bigoplus \mathcal{V}_{\mathcal{M}_{T^c}}.
\end{equation*}

If we now focus on a particular $X^E*g_i(C) \in
\mathcal{V}_{\mathcal{M}_Q(d)} \subseteq \mathcal{D}_d$, we have

\begin{align*}
 X^E*g_i(C) &=\sum_{D\in \mathcal{M}_Q(d)} u_{(E,i)D}(C)x^D\\
&= \sum_{D\in \mathcal{M}_T} u_{(E,i)D}(C)x^D + \sum_{D\in
\mathcal{M}_{T^c}}
u_{(E,i)D}(C)x^D\\
&\subseteq \mathcal{V}_{\mathcal{M}_T} \bigoplus
\mathcal{V}_{\mathcal{M}_{T^c}},
\end{align*}
\noindent and we observe from Definition \ref{cropped} that $u_{(E,i)D}$ is the
entry of $U$ appearing in the $( (E,i),D )$ position. For a linear combination
$\sum_{(E,i) \in \mathcal{M}'} a_{Ei}X^E*g_i(C)$, we have

\begin{align*}
\sum_{(E,i) \in \mathcal{M}'} a_{Ei}X^E*g_i(C) &=\sum_{(E,i) \in
\mathcal{M}'} a_{Ei}\sum_{D\in \mathcal{M}_Q(d)} u_{(E,i)D}(C)x^D\\
 =\sum_{(E,i) \in \mathcal{M}'} a_{Ei}\sum_{D\in \mathcal{M}_T}
u_{(E,i)D}(C)x^D &+ \sum_{(E,i) \in \mathcal{M}'} a_{Ei}\sum_{D\in
\mathcal{M}_{T^c}}
u_{(E,i)D}(C)x^D\\
 &\subseteq \mathcal{V}_{\mathcal{M}_T} \bigoplus
\mathcal{V}_{\mathcal{M}_{T^c}}.
\end{align*}

The non-vanishing of $\det(T)$ (as a polynomial in the coefficients
$z_{iJ}$ of the $g_i$'s), being a Zariski-open condition, guarantees
that the row vectors \\
$\{\sum_{D \in \mathcal{M}_T}u_{(E,i)D}(C)x^D|(E,i) \in
\mathcal{M}'\}$ of $T$ are linearly independent for general $C \in
k^{um_Q(j)}$.  For such a choice of $C$, the linear combination \\
$\sum_{(E,i) \in \mathcal{M}'} a_{Ei}\sum_{D \in
\mathcal{M}_T}u_{(E,i)D}(C)x^D \in \mathcal{V}_{\mathcal{M}_T}$ is
never 0 unless all of the $a_{Ei}$'s are 0, in which case
$\sum_{(E,i) \in \mathcal{M}'} a_{Ei}X^E*g_i(C) = 0 $.  This gives
$\langle \{X^E*g_i(C)| (E,i) \in \mathcal{M}'\}\rangle  \cap
\mathcal{V}_{\mathcal{M}_{T^c}} = \{0\}$,
which proves (i).\\

For (ii), we are considering the special case that $q \ge p =t$,
which is to say that $U$ has at least as many columns as rows, and
that $T$ is a maximal square submatrix.  In this case,
$\mathcal{M}'$ comprises all the rows of $U$, which by construction
are indexed by $\{(E,i) | E \in \mathcal{M}_Q(e)\}$. By Lemma
\ref{gen}, $R_e*\mathcal{F}(C) = \langle \{X^E*g_i(C) | E$ is of
degree $e\}\rangle $, and we have seen in Lemma \ref{crop} that
nothing is lost by considering only those $E$ that lie in
$\mathcal{M}_Q(e)$. Thus $R_e*\mathcal{F}(C) = \langle \{X^E*g_i(C)
| (E,i) \in \mathcal{M}'\}\rangle $, and the first statement of (ii)
follows
from (i).\\

Finally, for general $C \in k^{um_Q(j)}$, we have assumed
$(R_e*\mathcal{F}(C)) \cap \mathcal{W} \subseteq
\mathcal{V}_{\mathcal{M}_{T^c}}$, and of course
$(R_e*\mathcal{F}(C)) \cap \mathcal{W} \subseteq
R_e*\mathcal{F}(C)$.  Thus\\ $(R_e*\mathcal{F}(C)) \cap \mathcal{W}
\subseteq (R_e*\mathcal{F}(C)) \cap \mathcal{V}_{\mathcal{M}_{T^c}}
= \{0\}$.
\end{proof}

\chapter{Special Cases for Interesting Choices of Q}

In this chapter we examine some special cases that result from
particular choices of constraints $Q := (Q_1,...,Q_n)$, some of
which will be used later to construct non-unimodal level algebras.
In the first three sections of this chapter, the following outline
will be followed. We assume that a choice of codimension $r$ and
socle degree $j$ has been made, and we state the constraint $Q :=
(Q_1,...,Q_n)$ that is to be studied in the section. We assume that
a degree $d$ has been chosen and we set $e = j-d$.  We study the
situation that some number $u$ of polynomials $g_1,...,g_u$ have
been selected from $\mathcal{D}_j$ to generate a vector subspace
$\langle g_1,...,g_u\rangle \subseteq \mathcal{D}_j$, subject to the
condition that all monomials appearing in these generators are to be
constrained by $Q$. That is, we consider the family of subspaces
$\mathcal{F} := \langle g_1,...,g_u\rangle \subseteq
\mathcal{W}_{\mathcal{M}_Q(j)} \subseteq \mathcal{D}_j$,
parameterized by elements $C' = (...,c_{iJ}',...) \in k^{um_Q(j)}$.
We remark that the choice of notation has been influenced by
context: we will be applying these results in a context where we
have already defined, for some other constraint $P$, a family
$\mathcal{E} := \langle f_1,...,f_s\rangle \subseteq
\mathcal{W}_{\mathcal{M}_P(j)} \subseteq \mathcal{D}_j$,
parameterized by elements $C = (...,c_{iJ},...) \in k^{sm_P(j)}$. We
let $U$ denote the $e^{th}$ cropped matrix of $\mathcal{F}$.

\section{Absence of Constraints}

If we set $n = r$ and $Q_1 = ... = Q_r = j$, there are no actual
constraints imposed. For any degree $d$, we have
$\mathcal{V}_{\mathcal{M}_Q(d)} =\mathcal{D}_d$ (recalling the
definition in $\eqref{vlabel} )$; and in particular
$\mathcal{W}_{\mathcal{M}_Q(j)} =\mathcal{D}_j$. We can easily count
$m_Q(d):= \#(\mathcal{M}_Q(d))$ as the number of monomials of degree
$d$ in $r$ variables, namely, $\genfrac(){0cm}{0}{d+r-1}{r-1}$.\\

With the results obtained so far, we are in a position to prove one
of the two
theorems of A. Iarrobino quoted earlier, although we now restate it slightly
different language.\\

\begin{thm}{}
Consider the family of vector subspaces $\mathcal{F}:= \langle
g_1\rangle \subseteq \mathcal{D}_j$. Then for general $C' \in
k^{m_Q(j)}$
\begin{equation*}
h_{\mathcal{F}(C')}(d) = min( \dim_kR_{j-d}, \dim_k\mathcal{D}_d).
\end{equation*}
\end{thm}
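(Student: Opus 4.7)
The plan is to apply the machinery of the previous chapter to this unconstrained setting. I specialize Theorem \ref{omnibus} with $n = r$, $Q = (j,\ldots,j)$ (so that $\mathcal{W}_{\mathcal{M}_Q(j)} = \mathcal{D}_j$ and $m_Q(j) = \dim_k \mathcal{D}_j$), $s = 1$, and $\mathcal{F} := \langle g_1\rangle$; then $C' \in k^{m_Q(j)}$ parameterizes the coefficients of $g_1 \in \mathcal{D}_j$. Let $U$ be the $e$-th cropped matrix of $\mathcal{F}$, where $e := j-d$; by construction $U$ has $\dim_k R_e$ rows and $\dim_k \mathcal{D}_d$ columns. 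Once $U$ is shown to have maximal rank, Theorem \ref{omnibus} delivers $h_{\mathcal{F}(C')}(d) = \rank(U) = \min(\dim_k R_e, \dim_k \mathcal{D}_d) = \min(\dim_k R_{j-d}, \dim_k \mathcal{D}_d)$ for general $C'$, which is exactly the desired formula.

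The task thus reduces to verifying that $U$ attains maximal rank. I would split into two cases according to the shape of $U$. \emph{Case 1:} if $\dim_k R_e \ge \dim_k \mathcal{D}_d$, then $U$ has at least as many rows as columns, so Theorem \ref{max_rank} applies directly and gives $\rank(U) = \dim_k \mathcal{D}_d$. \emph{Case 2:} if $\dim_k R_e < \dim_k \mathcal{D}_d$, introduce the companion $\tilde U$ defined as the $d$-th cropped matrix of $\mathcal{F}$; this time $\tilde U$ has $\dim_k R_d$ rows and $\dim_k \mathcal{D}_e$ columns, hence more rows than columns, so Theorem \ref{max_rank} yields $\rank(\tilde U) = \dim_k \mathcal{D}_e = \dim_k R_e$.

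The key step in Case 2 is to identify $\rank(U) = \rank(\tilde U)$. A direct differentiation computation shows that, writing $g_1 = \sum_J z_J x^J$, the entry of $U$ in position $((E,1),D)$ equals $\tfrac{J!}{D!}\, z_J$ (with $J = E+D$), while the entry of $\tilde U$ in position $((D,1),E)$ equals $\tfrac{J!}{E!}\, z_J$ for the same $J$. Thus $U$ and the transpose $\tilde U^T$ differ only by scaling each row by a factor $E!$ and each column by $1/D!$; equivalently $U = D_1 \tilde U^T D_2$ for invertible diagonal matrices $D_1, D_2$ with positive rational entries. Since rank is preserved by transposition and by multiplication on either side by invertible matrices, $\rank(U) = \rank(\tilde U) = \dim_k R_e$. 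Combining the two cases shows that $U$ always has maximal rank.

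The main obstacle is making the transpose identification in Case 2 precise: this is in effect an instance of the Gorenstein symmetry $h(d) = h(j-d)$ for the one-generator algebra $A_{\mathcal{F}(C')}$, and the bookkeeping of the factorial scaling factors requires care. Everything else is a direct specialization of the framework already developed, and once maximal rank is in hand, Theorem \ref{omnibus} concludes the proof.
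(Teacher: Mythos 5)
Your proposal is correct, but it takes a genuinely different route from the paper. The paper's proof is a one-step observation: with $Q=(j,\dots,j)$ every entry of the cropped matrix $U$ is nonzero (any $E+D$ of degree $j$ lies in $\mathcal{M}_Q(j)$), so every square submatrix of $U$ is an L-matrix all of whose diagonal entries are nonzero and contain left-moving variables; Lemma \ref{pv_1} then makes every maximal square submatrix nonsingular, giving $\rank(U)=\min(\dim_kR_e,\dim_k\mathcal{D}_d)$ with no case distinction, and Theorem \ref{omnibus} finishes as in your write-up. You instead split on the shape of $U$ and, when $U$ has fewer rows than columns, transfer the problem to the $d$-th cropped matrix via the identity $U=D_1\tilde U^{T}D_2$ with diagonal factorial scalings. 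That identity is correct (the $((E,1),D)$ entry of $U$ is $\frac{(E+D)!}{D!}z_{E+D}$, and the corresponding entry of $\tilde U$ is $\frac{(E+D)!}{E!}z_{E+D}$, so the discrepancy factors as a row scaling by $E!$ times a column scaling by $1/D!$), and it is a nice explicit instance of the catalecticant symmetry $h(d)=h(j-d)$; but it is machinery the paper does not need here, since Lemma \ref{pv_1} already handles the ``wide'' case directly.

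One caveat worth flagging: both of your cases lean on Theorem \ref{max_rank} in the regime $n=r$, $Q=(j,\dots,j)$. The proof of that theorem runs through Corollary \ref{ord_pres}, whose monotonicity argument uses the formulas of Proposition \ref{u_count}; when $n=r$ those formulas involve binomial coefficients with lower index $r-n-1=-1$ and degenerate (indeed $r_I$ is then an indicator of $\deg I=e$ and is \emph{not} order-preserving on $\mathcal{G}_Q$), so the paper's justification of Theorem \ref{max_rank} does not cover this case as written. The conclusion you need is still true --- precisely because all entries of $U$ are nonzero, Lemma \ref{pv_1} applies --- so your argument is easily repaired either by citing Lemma \ref{pv_1} in place of Theorem \ref{max_rank}, or by encoding the absence of constraints as the empty constraint $n=0$ rather than $n=r$. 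This is presumably why the paper's own proof bypasses Theorem \ref{max_rank} entirely.
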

\begin{proof}
Given $d$, we set $e := j-d$ and we construct $U$, the $e^{th}$ cropped matrix of
$\mathcal{F}$, which by Lemma \ref{is_l_too} is an L-matrix.  We remark that all
entries of $U$ are nonzero, since for any two multi-indexes $E$ of degree $e$ and
$D$ of degree $d$, $E + D \in \mathcal{M}_Q(j)$. Thus, by Lemma \ref{pv_1} every
square submatrix of $U$ has nonzero determinant, and $U$ has maximal rank. That is,
its rank is either $m_Q(e) = \dim_kR_{e}$, the number of rows, or $m_Q(d)
=\dim_k\mathcal{D}_d$, the number of columns, whichever is smaller; equivalently,
the rank is $min( \dim_kR_{j-d}, \dim_k\mathcal{D}_d)$.  By Theorem \ref{omnibus},
for general $C' \in k^{m_Q(j)}$,
this is the same as $h_{\mathcal{F}(C')}(d)$. \\
\end{proof}

\section{$k[x_1,...,x_m]_j$}

In this section we choose $m$ such that $r \ge m \ge 3$, and
consider the constraint $Q:= (j,...,j,0,...,0)$ of dimension $r$, in
which the first $m$ constraints are $j$ and the remaining $r-m$
constraints are 0.  We do not exclude the possibility that $m = r$.

\begin{prop} \label{k_3}
Let $r =n \ge m \ge 3$, fix a socle degree $j$, and consider the
constraint $Q:= (j,...,j,0,...,0)$ in which the first $m$
constraints are $j$ and the remaining $r-m$ constraints are $0$. Let
$p := m-1$.  Fix a degree $d$, and let $e := j-d$. Let $U$ be the
$e^{th}$ cropped matrix of the family of vector subspaces
$\mathcal{F} := \langle g_1,...g_u\rangle  \subseteq
\mathcal{W}_{\mathcal{M}_Q(j)} = k[x_1,...,x_m]_j \subseteq
\mathcal{D}_j$, where $u$ is chosen such that
$u\genfrac(){0cm}{0}{e+p}{p} \le \genfrac(){0cm}{0}{d+p}{p}$.  Then
\begin{enumerate}
\item [(i)] $U$ is a $u\genfrac(){0cm}{0}{e+p}{p} \times
\genfrac(){0cm}{0}{d+p}{p}$ matrix with at least as many columns as
rows, all of whose entries are nonzero.
\item [(ii)] $U$ is of maximal rank $u\genfrac(){0cm}{0}{e+p}{p}$.  For general
$C' \in k^{um_Q(j)},\\ \dim_kR_e*\mathcal{F}(C') =
u\genfrac(){0cm}{0}{e+p}{p}$.
\end{enumerate}
If also $\mathcal{W} \subseteq \mathcal{D}_d$ is a vector subspace
for which $\mathcal{W} \cap k[x_1,...,x_m]_d \subseteq \mathcal{Z}$,
where $\mathcal{Z} \subseteq k[x_1,...,x_m]_d$ is a vector subspace,
generated by monomials, such that\\ $\dim_k\mathcal{Z}
 \le \genfrac(){0cm}{0}{d+p}{p} - u\genfrac(){0cm}{0}{e+p}{p}$, then
\begin{enumerate}
\item [(iii)] For general $C' \in k^{um_Q(j)}$,
$\mathcal{W} \cap (R_e*\mathcal{F}(C')) = \{0\}$.
\end{enumerate}
\end{prop}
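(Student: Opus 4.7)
The plan is to handle the three parts in sequence, all of them exploiting the fact that with $Q=(j,\dots,j,0,\dots,0)$ the set $\mathcal{M}_Q(d)$ consists of multi-indices $(I_1,\dots,I_r)$ whose last $r-m$ coordinates vanish and whose first $m$ coordinates sum to $d$. This immediately gives $m_Q(d)=\binom{d+m-1}{m-1}=\binom{d+p}{p}$ (and analogously for $e$), so $U$ has $u\binom{e+p}{p}$ rows and $\binom{d+p}{p}$ columns. The hypothesis on $u$ supplies the column-vs-row inequality. For the entry claim, note that for any $E\in\mathcal{M}_Q(e)$ and $D\in\mathcal{M}_Q(d)$ the sum $E+D$ has degree $j$ with vanishing last $r-m$ coordinates, hence $E+D\in\mathcal{M}_Q(j)$; by Definition \ref{cropped} the $((E,i),D)$ entry equals $n_{i(D+E)}z_{i(D+E)}\neq 0$. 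This establishes (i).

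For (ii), I note that $U$ is an L-matrix (Lemma \ref{is_l_too}) with every entry nonzero. Choosing any $u\binom{e+p}{p}\times u\binom{e+p}{p}$ square submatrix of $U$, Lemma \ref{pv_0} makes it an L-matrix as well; its main-diagonal entries are nonzero and, by the L-matrix property, the variables appearing on the diagonal move to the left. Lemma \ref{pv_1}, applied with the $A$-block of size zero so that the entire submatrix is the $Z$-block, then yields a nonzero determinant. Consequently $U$ has maximal rank $u\binom{e+p}{p}$, and Theorem \ref{omnibus} translates this into the stated equality $\dim_kR_e*\mathcal{F}(C')=u\binom{e+p}{p}$ for general $C'$.

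For (iii), the plan is to apply Theorem \ref{intersect}(ii). Let $\mathcal{M}_{\mathcal{Z}}\subseteq\mathcal{M}_Q(d)$ be the set of multi-indices whose monomials generate $\mathcal{Z}$; then $|\mathcal{M}_{\mathcal{Z}}|=\dim_k\mathcal{Z}\le \binom{d+p}{p}-u\binom{e+p}{p}$. Enlarge $\mathcal{M}_{\mathcal{Z}}$ arbitrarily to a set $\mathcal{M}_{T^c}\subseteq\mathcal{M}_Q(d)$ of size exactly $\binom{d+p}{p}-u\binom{e+p}{p}$, and let $T$ be the submatrix of $U$ obtained by deleting precisely the columns indexed by $\mathcal{M}_{T^c}$. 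Then $T$ is square of size $u\binom{e+p}{p}$, all entries of $T$ are nonzero by (i), and the same Lemma \ref{pv_1} argument as in (ii) gives $\det T\neq 0$. By construction $\mathcal{Z}\subseteq \mathcal{V}_{\mathcal{M}_{T^c}}$. The key remaining observation is that every generator $g_i(C')$ of $\mathcal{F}(C')$ lies in $k[x_1,\dots,x_m]_j$, and since partial differentiation with respect to $x_i$ for $i>m$ annihilates such polynomials, $R_e*\mathcal{F}(C')\subseteq k[x_1,\dots,x_m]_d$. Combining these, for every $C'$,
\begin{equation*}
(R_e*\mathcal{F}(C'))\cap\mathcal{W}\;\subseteq\;\mathcal{W}\cap k[x_1,\dots,x_m]_d\;\subseteq\;\mathcal{Z}\;\subseteq\;\mathcal{V}_{\mathcal{M}_{T^c}},
\end{equation*}
which is precisely the hypothesis of Theorem \ref{intersect}(ii). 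The dimensional condition $q\ge p=t$ of that theorem is exactly the column-vs-row inequality from (i), and the conclusion $(R_e*\mathcal{F}(C'))\cap\mathcal{W}=\{0\}$ for general $C'$ follows.

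The argument is essentially a careful assembly of tools already developed: the main conceptual step is recognizing that $\mathcal{Z}$ being monomial-generated lets us realize it as $\mathcal{V}_{\mathcal{M}_{T^c}}$ for a suitable choice of deleted columns, and the main technical point to verify is the containment displayed above, which reduces the intersection with $\mathcal{W}$ to one with the monomial subspace $\mathcal{Z}$. No genuinely hard step is expected; the only place where one must be attentive is in checking that $T$ indeed satisfies the hypotheses of Lemma \ref{pv_1} (every entry nonzero, hence diagonal entries nonzero with variables moving to the left), which is immediate from (i).
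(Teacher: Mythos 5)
Your proposal is correct and follows essentially the same route as the paper's own proof: count rows and columns via $m_Q(d)=\binom{d+p}{p}$, observe all entries are nonzero so Lemma \ref{pv_1} gives nonsingularity of any maximal square submatrix, and for (iii) choose the square submatrix $T$ to avoid the columns of the monomial generators of $\mathcal{Z}$ so that Theorem \ref{intersect}(ii) applies. Your explicit remark that $R_e*\mathcal{F}(C')\subseteq k[x_1,\dots,x_m]_d$ (needed to reduce $(R_e*\mathcal{F}(C'))\cap\mathcal{W}$ to $\mathcal{W}\cap k[x_1,\dots,x_m]_d$) is a small point the paper leaves implicit, and is a welcome clarification rather than a deviation.
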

\begin{proof}
Since $Q_1 = ... = Q_m = j$, no effective constraint is placed on
the first $m$ variables $x_1,...,x_m$.  Since $Q_{m+1} = ...= Q_n =
0$, no other variables are allowed to appear at all.  So for any
degree $d, \mathcal{V}_{\mathcal{M}_Q(d)}$ (defined in
$\eqref{vlabel} )$ is spanned by all monomials (of degree d) in
which no variables other than $x_1,...,x_m$ appear, of which there
are $\genfrac(){0cm}{0}{d+p}{p}$. If we regard $k[x_1,...,x_m]_d$ as
a vector subspace of $\mathcal{D}_d$,
we have $\mathcal{V}_{\mathcal{M}_Q(d)} = k[x_1,...,x_m]_d$.\\

To show (i): By definition, $U$ has $um_Q(e) =
u\genfrac(){0cm}{0}{e+p}{p}$ rows and $m_Q(d) =
\genfrac(){0cm}{0}{d+p}{p}$ columns.  Since
$u\genfrac(){0cm}{0}{e+p}{p} \le \genfrac(){0cm}{0}{d+p}{p}$, $U$
has at least as many columns as rows. By construction, the
$((E,i),D)$ row has a nonzero entry whenever $E + D \in
\mathcal{M}_Q(j)$; this always happens because if the only nonzero
co-ordinates of $E$ and $D$ occur among the first $m$, the same is
true for $E + D$.\\

For (ii), we apply Lemma \ref{is_l_too} to show $U$ is an L-matrix,
and then Lemma \ref{pv_1} to show $U$ has maximal rank.  In fact,
since all entries of $U$ are nonzero, any maximal square submatrix
of $U$ has nonzero determinant, and $U$ has maximal rank.  This rank
is of course $u\genfrac(){0cm}{0}{e+p}{p}$, the number of
rows.\\

For (iii), we recall the definition of $\mathcal{V}_{\mathcal{M}_{T^c}}$ from
$\eqref{vtclabel}$ and we seek to apply Theorem \ref{intersect}(ii). In order to do
so, we must find a maximal square submatrix $T$ of $U$ whose determinant is nonzero
and whose columns are indexed by multi-indexes $D \in \mathcal{M}_Q(d)$ for which
the monomial $x^D \in k[x_1,...,x_m]_d$ is not among the monomial generators of
$\mathcal{Z}$.  This would ensure that $\mathcal{Z} \subseteq
\mathcal{V}_{\mathcal{M}_{T^c}}$. Also, the hypothesis that $\mathcal{W} \cap
k[x_1,...,x_m]_d \subseteq \mathcal{Z}$ ensures that, for all $C' \in k^{um_Q(j)}$,
$\mathcal{W} \cap R_e*\mathcal{F}(C') \subseteq \mathcal{Z} \subseteq
\mathcal{V}_{\mathcal{M}_{T^c}}$ . Thus, provided a suitable $T$ can be found, the
conditions of Theorem \ref{intersect} are met, and we conclude that $\mathcal{W}
\cap (R_e*\mathcal{F}(C'))
= \{0\}$ for general $C' \in k^{um_Q(j)}$.\\

Finding $T$ is easy: create $T$ from \underline{any} $u\genfrac(){0cm}{0}{e+p}{p}$
columns corresponding to indices $D$ for which $x^D$ is not among the generators of
$\mathcal{Z}$.  This can be done because we have assumed
$u\genfrac(){0cm}{0}{e+p}{p} \le \genfrac(){0cm}{0}{d+p}{p} - \dim_k\mathcal{Z}$.
As remarked above, any maximal square submatrix of $U$ has nonzero determinant, so
in particular $\det(T)$ is nonzero.

\end{proof}

\section{Q = (1)}

In this section, we consider the case that a single variable $x_1$
is constrained so that if it appears in any term, it does so with
exponent 1.

\begin{prop} \label{sf_1}
Let $n =1$ and let $Q := (1)$ for some socle degree $j$. Fix a
degree $d$, let $e := j-d$, and assume $d \ge e \ge 1$. Let $U$ be
the $e^{th}$ cropped matrix of $\mathcal{F} := \langle g_1\rangle
\subseteq \mathcal{W}_{\mathcal{M}_Q(j)} \subseteq \mathcal{D}_j$.
Then
\begin{enumerate}
\item [(i)] $U$ is a block matrix of the form

\begin{equation*}
\begin{artmatrix}
&c_0&c_1\cr \hline r_1&0&A\cr r_0&B&C
\end{artmatrix}
\end{equation*}

\noindent where $0$ denotes a block of zeroes and blocks $A, B$, and
$C$ consist entirely of nonzero entries.  The dimensions of the
blocks are
\begin{align*}
&r_1 = \genfrac(){0cm}{0}{(e-1)+(r-2)}{r-2}.\\
&r_0 = \genfrac(){0cm}{0}{e+(r-2)}{r-2}.\\
&c_0 = \genfrac(){0cm}{0}{(d-1)+(r-2)}{r-2}.\\
&c_1 = \genfrac(){0cm}{0}{d+(r-2)}{r-2}.
\end{align*}
\item[(ii)] $U$ has maximal rank.
\end{enumerate}

If also $\mathcal{W} \subseteq \mathcal{D}_d$ is a vector subspace
for which $\mathcal{W} \cap \mathcal{V}_{\mathcal{M}_Q(d)} \subseteq
\mathcal{Z}$, where $\mathcal{Z} \subseteq
\mathcal{V}_{\mathcal{M}_Q(d)}$ is a vector subspace generated by
$2c$ monomials, in $c$ of which $x_1$ appears (with exponent 1) and
in $c$ of which $x_1$ does not appear, then
\begin{enumerate}
\item[(iii)] $ \mathcal{W} \cap (R_e*\mathcal{F}(C')) = \{0\}$ for general $C' \in
k^{m_Q(j)}$ if both
\begin{equation} \label{sf_11}
m_Q(d) - 2c \ge m_Q(e)
\end{equation}
\noindent and
\begin{equation} \label{sf_12}
 \genfrac(){0cm}{0}{d+(r-2)}{r-2} - c \ge
\genfrac(){0cm}{0}{(e-1)+(r-2)}{r-2}.
\end{equation}
\end{enumerate}
\end{prop}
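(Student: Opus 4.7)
Part (i) is a direct computation. Applying Proposition \ref{u_count} with $n = 1$, $Q = (1)$, and $I \in \mathcal{G}_Q = \{(0), (1)\}$ produces the four stated binomial coefficients for $r_0, r_1, c_0, c_1$. By Lemma \ref{crop_l}, $U$ has $\mathcal{G}_Q$ pattern, so $B_{IJ}$ is the zero block precisely when $I \not\succeq J$; this singles out $B_{1,0}$, while the other three blocks $A = B_{1,1}$, $B = B_{0,0}$, $C = B_{0,1}$ consist entirely of nonzero entries (the condition $E + D \in \mathcal{M}_Q(j)$ is automatic whenever $I \succeq J$). Listing block-rows in lex and block-columns in reverse lex order yields the displayed form.

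For part (ii), the plan is to exhibit a square $\mathcal{G}_Q$-patterned submatrix $T$ of $U$ of size $r_0 + r_1$ with nonzero determinant, and then invoke Theorem \ref{gq_3}. When $d = e$, set $T = U$ itself (one checks $r_0 + r_1 = c_0 + c_1$, so $U$ is already square). When $d > e$, we have $r_0 \le c_0$ and $r_1 \le c_1$, and I will keep all $r_0 + r_1$ rows of $U$ but select any $r_0$ columns from the $J = 0$ block and any $r_1$ columns from the $J = 1$ block; this $T$ inherits $\mathcal{G}_Q$ pattern from $U$, now with revised column dimensions $c_0' = r_0$ and $c_1' = r_1$. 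In both cases, Theorem \ref{gq_3} applies to $T$: the only nonempty proper topset of $\mathcal{G}_Q$ is $\{0\}$ (since $0 \succeq 1$ in the partial order), and the required inequality $A_0 \ge 0$ holds either as $\binom{e + r - 2}{r - 2} - \binom{e - 1 + r - 2}{r - 2} \ge 0$ (case $d = e$) or as $r_0 - r_0 = 0$ (case $d > e$). Thus $\det(T) \neq 0$, and $U$ attains its maximum possible rank $r_0 + r_1 = \min(\text{rows}, \text{cols})$.

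For part (iii), the plan is to invoke Theorem \ref{intersect}(ii). First observe that $R_e * \mathcal{F}(C') \subseteq \mathcal{V}_{\mathcal{M}_Q(d)}$ for every $C'$, since differentiating $x^J$ with $J_1 \le 1$ preserves the constraint; combined with $\mathcal{W} \cap \mathcal{V}_{\mathcal{M}_Q(d)} \subseteq \mathcal{Z}$, this yields $(R_e * \mathcal{F}(C')) \cap \mathcal{W} \subseteq \mathcal{Z}$ for every $C'$. When $d = e$, inequality \eqref{sf_11} forces $c = 0$, making $\mathcal{Z} = \{0\}$ and the conclusion immediate. When $d > e$, rewrite \eqref{sf_12} as $c_1 - c \ge r_1$ and subtract from the form $c_0 + c_1 - 2c \ge r_0 + r_1$ of \eqref{sf_11} to obtain $c_0 - c \ge r_0$. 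These two bounds refine the construction of part (ii): choose the $r_1$ columns of $T$ in the $J = 1$ block from the $c_1 - c$ columns of $A$ not indexed by any monomial generator of $\mathcal{Z}$ omitting $x_1$, and the $r_0$ columns of $T$ in the $J = 0$ block from the $c_0 - c$ columns of $B$ not indexed by any monomial generator of $\mathcal{Z}$ containing $x_1$. The resulting $T$ has $\det(T) \neq 0$ by the argument of part (ii), and its column-index set $\mathcal{M}_T$ contains none of the $2c$ generating multi-indices of $\mathcal{Z}$, so $\mathcal{Z} \subseteq \mathcal{V}_{\mathcal{M}_{T^c}}$. Since $T$ uses all $p = r_0 + r_1$ rows of $U$, and $q = c_0 + c_1 \ge p$ from $d \ge e$, Theorem \ref{intersect}(ii) applies and yields $(R_e * \mathcal{F}(C')) \cap \mathcal{W} = \{0\}$ for general $C'$.

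The main obstacle is that Theorem \ref{max_rank} cannot be invoked directly: with $d \ge e$, the matrix $U$ has at least as many columns as rows rather than the converse. This forces the hand-construction of a full-row-rank square submatrix in part (ii). The case split $d = e$ versus $d > e$ is essentially unavoidable because the block $B$ admits no $r_0 \times r_0$ square submatrix precisely when $d = e$ (there $r_0 > c_0$); and in part (iii), the delicate bookkeeping is the coordination of the two column-avoidance conditions in the $A$- and $B$-blocks, which requires combining both \eqref{sf_11} and \eqref{sf_12} rather than either one alone.
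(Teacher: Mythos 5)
Parts (i) and (ii) are correct. Your part (ii) differs mildly from the paper's: you select $r_0$ columns from the $J=0$ block and $r_1$ from the $J=1$ block and check the topset condition $A_0\ge 0$ (trivially $0$), whereas the paper takes the rightmost square submatrix and checks the equivalent bottomset condition $A_1 = r_1-c_1\le 0$; both are legitimate applications of Theorem \ref{gq_3}. Your reduction of part (iii) to Theorem \ref{intersect}(ii), and your handling of the degenerate case $d=e$ (where \eqref{sf_11} forces $c=0$), are also fine.

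The gap is in your column allocation for part (iii) when $d>e$. You claim that subtracting \eqref{sf_12}, in the form $c_1-c\ge r_1$, from \eqref{sf_11}, in the form $c_0+c_1-2c\ge r_0+r_1$, yields $c_0-c\ge r_0$. That subtraction goes the wrong way: from $X\ge Y$ and $Z\ge W$ one cannot conclude $X-Z\ge Y-W$. What actually follows is $c_0-c\ge r_0-\bigl[(c_1-c)-r_1\bigr]$, and the bracketed surplus can be positive. Concretely, take $r=3$, $e=2$, $d=5$, $c=3$: then $r_0=3$, $r_1=2$, $c_0=5$, $c_1=6$, so \eqref{sf_11} reads $11-6\ge 5$ and \eqref{sf_12} reads $6-3\ge 2$, both true, yet $c_0-c=2<3=r_0$. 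In this situation the $J=0$ block contains only two admissible columns, so your prescription of choosing $r_0$ of them there is impossible, and the proof as written breaks down. The hypotheses only guarantee enough admissible columns \emph{in total} (that is \eqref{sf_11}) together with at least $r_1$ of them in the $J=1$ block (that is \eqref{sf_12}); the shortfall in block $0$ must be absorbed by taking extra columns from block $1$. This is exactly what the paper does: it deletes the $2c$ forbidden columns and takes the rightmost square submatrix of what remains, so that all surviving $J=1$ columns are used first; the resulting square submatrix has $c_1'\ge r_1$ by \eqref{sf_12}, hence $A_1\le 0$, and Theorem \ref{gq_3} (via its bottomset criterion) gives the nonzero determinant. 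Your argument can be repaired by replacing the rigid split ($r_0$ from block $0$, $r_1$ from block $1$) with this flexible one and checking $A_0\ge 0$ for whatever split results, but as stated the step is invalid.
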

\begin{proof}
For (i), we apply Lemma \ref{crop_l} to establish that $U$ is an
L-matrix with $\mathcal{G}_Q$ pattern.  Since $\mathcal{G}_Q =
\{(0),(1)\}$, which for simplicity we will write as $\{0,1\}$, the
order of rows, which must be lexicographic, is 1 then 0, and the
order of columns, which must be reverse lexicographic, is 0 then 1.
The entries of block $B_{IJ}$ are zero if and only if $I \nsucceq
J$, that is, only when $I = 1$ and
$J = 0$.\\

To establish the dimensions of the blocks, we use the formulas in
Proposition \ref{u_count}, with $s = 1, p_{0} = 0, p_{1} = 1$, and
$q =
1$.\\

For (ii):  From the formulas in (i) and the hypothesis that $d \ge
e$, we see that $U$ has at least as many columns as rows, so it
suffices to show that the rightmost square submatrix $T$ has nonzero
determinant.  If $T$ contains no entries from the 0 block, its
entries are all nonzero, and $\det(T)$ is nonzero by Lemma
\ref{pv_1}.  Otherwise, $T$ contains blocks $A$ and $C$ in their
entirety, and has the form

\begin{equation*}
\begin{artmatrix}
&c_0'&c_1\cr \hline r_1&0'&A\cr r_0&B'&C
\end{artmatrix}
\end{equation*}

By Theorem \ref{gq_3}, $T$ will be nonsingular if, for every nonempty proper
bottomset $B \subseteq \mathcal{G}_Q$, $\sum_{I \in B}A_I \le 0$, where $A_I$ is the
excess $r_I - c_I$.  Since $\mathcal{G}_Q = \{0,1\}$, its only nonempty proper
bottomset is $\{1\}$, so the condition reduces to $A_1 \le 0$, that is, $r_1 \le
c_1$.  This last condition follows from the
formulas in (i) because we have assumed $d \ge e$.\\

For (iii), we apply Theorem \ref{intersect}(ii).  To do so, we must
construct a square submatrix $T$ of $U$ with nonzero determinant,
such that the $2c$ monomial generators of $\mathcal{Z}$ lie in
$\mathcal{V}_{T^c}$.  Assuming this, the hypothesis that
$\mathcal{W} \cap \mathcal{V}_{\mathcal{M}_Q(d)} \subseteq
\mathcal{Z}$ ensures that $\mathcal{W} \cap R_e*\mathcal{F}(C')
\subseteq \mathcal{Z} \subseteq \mathcal{V}_{\mathcal{M}_{T^c}}$ for
all $C' \in k^{m_Q(j)}$. Thus, provided a suitable $T$ can be found,
the conditions of Theorem \ref{intersect} are met, and we conclude
that $\mathcal{W} \cap (R_e*\mathcal{F}(C'))
 = \{0\}$ for general $C' \in k^{m_Q(j)}$.\\

So we ask under what circumstances a suitable submatrix $T$ of $U$
can be found.  One requirement is that $U$ have enough columns so
that, when $2c$ of them are not used, there are still enough columns
left to form a square $m_Q(e) \times m_Q(e)$ submatrix $T$.  That
is, we require $m_Q(d) - 2c \ge m_Q(e)$.  Assuming this, we must
still ask whether we can find a suitable submatrix $T$ whose
determinant is nonzero. To this end, we delete from $U$ the $2c$
columns corresponding to the generators of $\mathcal{Z}$, to obtain
a submatrix $U_0$ of the form

\begin{equation*}
\begin{artmatrix}
&c_0-c&c_1-c\cr \hline r_1&0'&A'\cr r_0&B'&C'
\end{artmatrix}
\end{equation*}

\noindent and we argue as in part (ii):  If the rightmost square
submatrix $T_0$ of $U_0$ has no entries from the $0'$ block,
$\det(T_0)$ is nonzero by Lemma \ref{pv_1}.  Otherwise, the
condition from Theorem \ref{gq_3} is that $c_1 - c \ge r_1$, or
equivalently\\ $\genfrac(){0cm}{0}{d+(r-2)}{r-2} - c \ge
\genfrac(){0cm}{0}{(e-1)+(r-2)}{r-2}$.
\end{proof}

The following special case will be of interest later.

\begin{cor} \label{vspec_4}
Let $r=4$, $n =1$ and $Q := (1)$ for some socle degree $j$. Fix a
degree $d$, let $e := j-d$, and assume $d \ge e \ge 1$. Let $U$ be
the $e^{th}$ cropped matrix of the family of vector subspaces
$\mathcal{F} := \langle g_1\rangle  \subseteq
\mathcal{W}_{\mathcal{M}_Q(j)} \subseteq \mathcal{D}_j$. Then
\begin{enumerate}
\item[(i)] $U$ has maximal rank $(e+1)^2$.
\item[(ii)] For general $C' \in k^{m_Q(j)},
\dim_kR_e*\mathcal{F}(C') = (e+1)^2$.
\end{enumerate}
\end{cor}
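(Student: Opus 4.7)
The plan is that this is essentially a specialization and bookkeeping corollary: plug $r=4$ into Proposition \ref{sf_1} and evaluate the binomial coefficients, then invoke the general rank-to-Hilbert-function passage from Theorem \ref{omnibus}.

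First I would note that the hypotheses of Proposition \ref{sf_1} are met: $n=1$, $Q=(1)$, and $d \ge e \ge 1$ are assumed. With $r = 4$ we have $r - 2 = 2$, so the block dimensions given in Proposition \ref{sf_1}(i) become
\begin{align*}
r_1 &= \tbinom{e+1}{2} = \tfrac{e(e+1)}{2}, & r_0 &= \tbinom{e+2}{2} = \tfrac{(e+1)(e+2)}{2},\\
c_0 &= \tbinom{d+1}{2} = \tfrac{d(d+1)}{2}, & c_1 &= \tbinom{d+2}{2} = \tfrac{(d+1)(d+2)}{2}.
\end{align*}
A one-line computation gives the total row count
\[
r_1 + r_0 = \tfrac{(e+1)(e) + (e+1)(e+2)}{2} = \tfrac{(e+1)(2e+2)}{2} = (e+1)^2,
\]
and analogously $c_0 + c_1 = (d+1)^2 \ge (e+1)^2$ since $d \ge e$. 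So $U$ has at least as many columns as rows.

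Next I would apply Proposition \ref{sf_1}(ii), which asserts that $U$ has maximal rank. Since the number of rows is $(e+1)^2$ and this does not exceed the number of columns, maximal rank here means $\operatorname{rank}(U) = (e+1)^2$. This proves (i).

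For (ii), the passage from the rank of the coefficient matrix $U$ to the dimension of $R_e * \mathcal{F}(C')$ for general $C'$ is exactly what Theorem \ref{omnibus} packages: because $U$ is an L-matrix (Lemma \ref{is_l_too}) of maximal rank, Lemma \ref{general_max} says $U(C')$ has the same maximal rank for general $C' \in k^{m_Q(j)}$, and Corollary \ref{crop_rank_c} identifies this rank with $\dim_k R_e * \mathcal{F}(C')$. There is no real obstacle here; the only thing to watch is the bookkeeping of the two binomial coefficients collapsing into the square $(e+1)^2$, so I would present that computation explicitly and otherwise just cite Proposition \ref{sf_1} and Theorem \ref{omnibus}.
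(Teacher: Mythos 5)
Your proposal is correct and follows essentially the same route as the paper: specialize Proposition \ref{sf_1} to $r=4$, sum $r_1+r_0=\binom{e+1}{2}+\binom{e+2}{2}=(e+1)^2$ for the rank, and pass to general $C'$ via Lemma \ref{general_max} and Corollary \ref{crop_rank_c}. The extra check that $c_0+c_1=(d+1)^2\ge(e+1)^2$ is a harmless re-verification of what Proposition \ref{sf_1}(ii) already supplies.
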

\begin{proof}
We use Proposition \ref{sf_1} for the case that $r=4$.\\

For (i): $U$ is of maximal rank, which is the number of rows.
Substituting $r=4$ into the formulas for the number of rows in each
block gives
\begin{align*}
r_1 + r_0 &= \genfrac(){0cm}{0}{e+1}{2} + \genfrac(){0cm}{0}{e+2}{2}\\
&= \genfrac{}{}{}{0}{e(e+1)}{2} + \genfrac{}{}{}{0}{(e+2)(e+1)}{2}\\
&=(2e+2)\genfrac{}{}{}{0}{(e+1)}{2}\\
&= (e+1)^2.
\end{align*}

For (ii), $U(C')$ has maximal rank for general $C' \in k^{m_Q(j)}$
by Lemma \ref{general_max},
and this rank is $\dim_kR_e*\mathcal{F}(C')$ by Corollary \ref{crop_rank_c}.\\

\end{proof}

\section{Essentially n-fold-constrained}

In this section, we find it convenient to assume that exactly $n \ge
1$ of the variables are constrained to have less than the full range
of exponents.  In this case, we will say that, for any degree $d$,
the multi-indexes in $\mathcal{M}_Q(d)$ and their corresponding
monomials are \emph{essentially n-fold-constrained}, or simply
\emph{n-fold-constrained}.\\

We wish to compute $m_Q(d)$, or equivalently the dimension of the vector space
generated by monomials of degree $d$ constrained by $Q$.  For the purposes of this
computation, we may as well assume that the constrained variables are listed first;
that is, we assume $Q$ is a constraint of dimension $n \ge 1$ where $Q_i < j$ for $i
= 1,...,n$. The following lemma makes this precise.

\begin{lem} \label{permute}
Fix codimension $r$ and socle degree $j$, and let $Q :=
(Q_1,...,Q_m)$ be a constraint of dimension $m$, such that
$Q_{i_1},...,Q_{i_n}$ are all strictly less than $j$ and the rest of
the $Q_i$'s are equal to $j$.  Let $P :=
(Q_{i_1},...,Q_{i_n},j,...j) := (Q_{\sigma(1)},...,Q_{\sigma(m)})$
be another constraint of dimension $m$ whose entries are related to
those of $Q$ via some permutation $\sigma$ of $\{1,...,m\}$, such
that the entries equal to $j$ all come last.  Then for any degree
$d$, $m_Q(d) = m_P(d)$.
\end{lem}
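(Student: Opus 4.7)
The plan is to prove $m_Q(d) = m_P(d)$ by exhibiting an explicit bijection $\phi : \mathcal{M}_Q(d) \to \mathcal{M}_P(d)$ induced by the coordinate permutation $\sigma$. Since both sides count $r$-tuples of a specified degree subject to coordinatewise bounds, and since $P$ is obtained from $Q$ merely by shuffling the first $m$ bounds, this is exactly the situation where relabeling coordinates should yield a bijection.

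Concretely, I would define $\phi$ as follows. Given $I = (I_1, \ldots, I_r) \in \mathcal{M}_Q(d)$, set
\begin{equation*}
\phi(I)_k = I_{\sigma(k)} \quad \text{for } 1 \le k \le m, \qquad \phi(I)_k = I_k \quad \text{for } m < k \le r.
\end{equation*}
The verification then has three routine parts: (i) $\phi(I)$ has degree $d$, because $\sigma$ permutes $\{1,\ldots,m\}$ so $\sum_{k=1}^{m} I_{\sigma(k)} = \sum_{k=1}^{m} I_k$ and the remaining coordinates are untouched; (ii) $\phi(I)$ is constrained by $P$, because for $1 \le k \le m$ we have $\phi(I)_k = I_{\sigma(k)} \le Q_{\sigma(k)} = P_k$, using that $\sigma(k) \in \{1,\ldots,m\}$ so the hypothesis $I \in \mathcal{M}_Q(d)$ applies; and (iii) the analogous map built from $\sigma^{-1}$ is a two-sided inverse of $\phi$. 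These three checks together show $\phi$ is a bijection, which gives the claimed equality of cardinalities.

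I do not expect any genuine obstacle here; the lemma is essentially a bookkeeping statement saying that the multinomial count depends only on the multiset of constraint values (together with the ambient codimension $r$ and the degree $d$), not on the order in which the constraints are listed. The only mild care needed is to keep straight which coordinates are being permuted ($1$ through $m$) versus which are free ($m+1$ through $r$), and to ensure that when writing the bound $\phi(I)_k \le P_k$ one applies $\sigma$ on the correct side.
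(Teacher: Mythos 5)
Your proposal is correct and is essentially identical to the paper's own proof: the paper defines the same coordinate-permutation map $b_\sigma(D) := (d_{\sigma(1)},\ldots,d_{\sigma(m)},d_{m+1},\ldots,d_r)$, observes it preserves degree and carries the condition $d_i \le Q_i$ to $d_{\sigma(i)} \le Q_{\sigma(i)} = P_i$, and concludes by restricting the bijection to $\mathcal{M}_Q(d)$. No gaps.
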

\begin{proof}
We define a function $b_{\sigma}$, evidently a bijection, from the
set of all $r$-tuples of degree $d$ to itself, induced by $\sigma$,
as follows. If $D := (d_1,...,d_r)$, then\\ $b_{\sigma}(D):=$
$(d_{\sigma(1)},...,d_{\sigma(m)},d_{m+1},...,d_r)$.\\

We now claim that a multi-index $D := (d_1,...,d_r)$ of degree $d$
lies in $\mathcal{M}_Q(d)$ if and only if the multi-index
$b_{\sigma}(D)$ lies in $\mathcal{M}_P(d)$: the first condition is
that, for each $i = 1,...,m$, $d_i \le Q_i$; the second is that, for
each $i = 1,...,m$,  $d_{\sigma(i)} \le P_i = Q_{\sigma(i)}$, which
is the same as the first condition because $\{1,...,m\} =
\{\sigma(1),...,\sigma(m)\}$.\\

Since $b_{\sigma}$ is a bijection, its restriction to
$\mathcal{M}_Q(d)$ is a bijection from $\mathcal{M}_Q(d)$ to its
image $\mathcal{M}_P(d)$.
\end{proof}

\begin{lem} \label{h_inc}
Fix codimension $r$ and socle degree $j$, and let $Q :=
(Q_1,...,Q_n)$ be a constraint on $r$-tuples that fails to constrain
the value of at least one co-ordinate.  Then the function $m_Q(d)$
is a non-decreasing function of $d$ for $0 \le d \le j$.
\end{lem}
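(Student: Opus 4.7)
The plan is to produce, for each $d$ with $0 \le d \le j-1$, an explicit injection $\phi_d : \mathcal{M}_Q(d) \hookrightarrow \mathcal{M}_Q(d+1)$, which immediately yields $m_Q(d) \le m_Q(d+1)$ and hence the non-decreasing property.

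By hypothesis, there is some coordinate $k \in \{1,\ldots,r\}$ on which $Q$ imposes no effective restriction; that is, either $k > n$ (so $Q$ says nothing about the $k$-th entry) or $k \le n$ with $Q_k = j$ (so the condition $d_k \le Q_k$ is vacuous for multi-indexes of degree at most $j$). If desired, one can invoke Lemma \ref{permute} to reduce to the case that $k$ is the last coordinate, but this is cosmetic.

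Define $\phi_d$ by incrementing the $k$-th coordinate:
\begin{equation*}
\phi_d(d_1,\ldots,d_r) := (d_1,\ldots,d_{k-1},\,d_k+1,\,d_{k+1},\ldots,d_r).
\end{equation*}
Injectivity is immediate, since we can recover the original multi-index by subtracting $1$ from the $k$-th coordinate. To verify well-definedness, I would check that the image lies in $\mathcal{M}_Q(d+1)$. The total degree of $\phi_d(D)$ is $d+1 \le j$, as required. The constraints $d_i \le Q_i$ for $i \le n$ with $i \ne k$ are inherited unchanged from the hypothesis $D \in \mathcal{M}_Q(d)$. For $i = k$, if $k > n$ there is nothing to check; if $k \le n$ (so $Q_k = j$), then since $d_k \le d \le j-1$, we have $d_k + 1 \le j = Q_k$.

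The only potential obstacle is the degenerate subcase $k \le n$ with $Q_k = j$, where one must rule out the possibility $d_k + 1 > Q_k$; this is dispatched by the degree bound $d \le j-1$. In the more typical case $k > n$, the map is trivially well-defined, and the argument is entirely elementary.
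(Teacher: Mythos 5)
Your proof is correct and uses essentially the same idea as the paper: the paper's proof also exhibits an injection $\mathcal{M}_Q(d_1)\hookrightarrow\mathcal{M}_Q(d_2)$ by adding $d_2-d_1$ to an unconstrained coordinate, whereas you increment by $1$ one degree at a time, which is an immaterial difference. Your extra care with the vacuous case $Q_k=j$ is a harmless refinement the paper leaves implicit.
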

\begin{proof}
We must show that if $d_1 < d_2$, then $m_Q(d_1):= \#(\mathcal{M}_Q(d_1)) \le
m_Q(d_2) := \#(\mathcal{M}_Q(d_2))$. Let the value of the $i^{th}$ coordinate not be
constrained.  Then for any element $I = (I_1,...,I_{i-1},I_i,I_{i+1},...,I_r) \in
\mathcal{M}_Q(d_1)$, there is a corresponding element $(I_1,...,I_{i-1},I_i + (d_2 -
d_1),I_{i+1},...,I_r)  \in \mathcal{M}_Q(d_2)$.
\end{proof}

Getting closed formulas of a simple form will sometimes not be
possible for some values of $d$, but we will typically find that
patterns emerge when $d$ is large enough. We classify the results by
$n$, the number of constraints.  In order to state the results more
concisely, we define $q := Q_1 + ... + Q_n$ and $a_i := Q_i + 1$ for
$i = 1,...,n$.  We denote a multi-index of degree $d$ constrained by
$Q$ as $D := (d_1,...,d_r)$.\\

\subsection{$r$-fold-constrained}

\begin{prop} \label{r_fold}
When multi-indexes are $r$-fold constrained by $Q$, $m_Q(d) = 0$ for
$d > q$.
\end{prop}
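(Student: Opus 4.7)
The plan is to observe that this is immediate from the definitions and requires only a contrapositive argument. Since all $r$ variables are constrained (we are in the $r$-fold-constrained case, so $n = r$), any multi-index $D = (d_1, \ldots, d_r) \in \mathcal{M}_Q(d)$ must satisfy $d_i \le Q_i$ for every $i = 1, \ldots, r$, and also $d_1 + \cdots + d_r = d$ by definition of having degree $d$.

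Summing the constraints, I would conclude that
\begin{equation*}
d = d_1 + \cdots + d_r \le Q_1 + \cdots + Q_r = q.
\end{equation*}
Therefore, any $D \in \mathcal{M}_Q(d)$ must have degree $d \le q$. Equivalently, if $d > q$ then $\mathcal{M}_Q(d)$ is empty, so $m_Q(d) := \#(\mathcal{M}_Q(d)) = 0$.

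There is no real obstacle here; the statement is a direct counting consequence of the definition of $r$-fold-constrained multi-indexes. The proof is a single line once one writes down both the degree-$d$ condition and the constraint inequalities and sums the latter.
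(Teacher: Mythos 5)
Your proof is correct and is essentially the paper's own argument: the paper likewise observes that the largest possible degree of a multi-index constrained in all $r$ coordinates is $q = Q_1 + \cdots + Q_r$, so no multi-index of degree $d > q$ can exist. You have merely written out the summation of the constraint inequalities explicitly, which the paper leaves implicit.
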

\begin{proof}
The largest possible degree of any multi-index is $q := Q_1 + ... +
Q_r$.  So for $d > q$, no multi-indexes are possible.
\end{proof}

\subsection{$(r-1)$-fold-constrained}

\begin{prop} \label{r-1_fold}
When multi-indexes are $(r-1)$-fold constrained by $Q$,\\ $m_Q(d) =
a_1a_2 \cdots a_{r-1}$ for $d \ge q$.
\end{prop}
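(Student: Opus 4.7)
The plan is to count multi-indexes $D=(d_1,\ldots,d_r)\in\mathcal{M}_Q(d)$ by treating the constrained coordinates and the single unconstrained coordinate separately. Since exactly one coordinate (which by Lemma \ref{permute} we may assume is the last) is unconstrained, the data of $D$ amounts to the choice of $(d_1,\ldots,d_{r-1})$ satisfying $0\le d_i\le Q_i$, together with $d_r$, subject to $d_1+\cdots+d_r=d$ and $d_r\ge 0$.

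First I would set up a bijection between $\mathcal{M}_Q(d)$ and a certain subset of $(r-1)$-tuples. Specifically, define
\begin{equation*}
\Phi:\mathcal{M}_Q(d)\longrightarrow\{(d_1,\ldots,d_{r-1}):0\le d_i\le Q_i\text{ and }d_1+\cdots+d_{r-1}\le d\}
\end{equation*}
by $\Phi(d_1,\ldots,d_r)=(d_1,\ldots,d_{r-1})$. The inverse is obtained by setting $d_r:=d-(d_1+\cdots+d_{r-1})$, which is a nonnegative integer precisely when $d_1+\cdots+d_{r-1}\le d$. Thus $m_Q(d)$ equals the number of $(r-1)$-tuples $(d_1,\ldots,d_{r-1})$ with $0\le d_i\le Q_i$ and $d_1+\cdots+d_{r-1}\le d$.

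Next I would observe that as soon as $d\ge q=Q_1+\cdots+Q_{r-1}$, the constraint $d_1+\cdots+d_{r-1}\le d$ is automatic, since any $(d_1,\ldots,d_{r-1})$ with $d_i\le Q_i$ satisfies $d_1+\cdots+d_{r-1}\le q\le d$. Hence the count reduces to the number of tuples $(d_1,\ldots,d_{r-1})$ with $0\le d_i\le Q_i$ for each $i$, which is exactly $\prod_{i=1}^{r-1}(Q_i+1)=a_1a_2\cdots a_{r-1}$.

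There is no real obstacle here; the only subtle point is the appeal to Lemma \ref{permute} to justify placing the unconstrained coordinate last without loss of generality, and the verification that the bounding inequality $d\ge q$ is the exact threshold at which the inequality $d_1+\cdots+d_{r-1}\le d$ becomes redundant. Once these two observations are in place, the formula follows immediately by the product rule for counting.
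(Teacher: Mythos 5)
Your proof is correct and follows essentially the same route as the paper: the paper likewise observes that for $d \ge q$ each choice of $(d_1,\ldots,d_{r-1})$ with $d_i \le Q_i$ extends uniquely to a degree-$d$ multi-index by setting $d_r = d - (d_1+\cdots+d_{r-1}) \ge 0$, giving $a_1a_2\cdots a_{r-1}$ by the product rule. Your version merely makes the bijection and the threshold check explicit.
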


\begin{proof}
All but one of the variables are constrained, and the last is
allowed to vary.  Thus, for any degree $d \ge q$, any choice of
$(d_1,...,d_{r-1})$ can be augmented by $d_r$ in a unique way to
create a monomial of degree $d$.  Since each $d_i$ can be chosen in
$Q_i + 1 = a_i$ ways, we have $m_Q(d) = a_1a_2 \cdots a_{r-1}$.
\end{proof}

\subsection{$(r-2)$-fold-constrained}

\begin{prop} \label{r-2_fold}
Let $r \ge 3$ and let multi-indexes be $(r-2)$-fold constrained by
$Q$. Let $S_n = \sum_{1 \le i \le n}a_i$ and $P_n = \prod_{1 \le i
\le n}a_i$. Then for $d \ge q$, $m_Q(d) = \genfrac{}{}{}{0}{P_n(2d -
S_n + r)}{2}.$
\end{prop}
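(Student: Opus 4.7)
The plan is to reduce the count to a sum over the constrained coordinates by using the fact that the last two coordinates are unconstrained. Writing $n = r-2$, a multi-index $D = (d_1,\ldots,d_r)$ of degree $d$ lies in $\mathcal{M}_Q(d)$ exactly when $0 \le d_i \le Q_i$ for $i = 1,\ldots,n$ and $d_{r-1}, d_r$ are arbitrary non-negative integers summing to $d - s$, where $s := d_1 + \cdots + d_n$. For $d \ge q$ we automatically have $d \ge s$, so for each fixed choice of $(d_1,\ldots,d_n)$ there are exactly $d - s + 1$ valid completions $(d_{r-1}, d_r)$.

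Thus I would write
\begin{equation*}
m_Q(d) = \sum_{d_1 = 0}^{Q_1} \cdots \sum_{d_n = 0}^{Q_n} \bigl(d + 1 - (d_1 + \cdots + d_n)\bigr),
\end{equation*}
and split the right-hand side into two pieces. The constant piece contributes $P_n(d+1)$, since there are $P_n = \prod_i a_i$ terms. For the remaining piece I would handle each $d_i$ separately, using the product structure: for each $i$,
\begin{equation*}
\sum_{d_1,\ldots,d_n} d_i \;=\; \Bigl(\sum_{d_i = 0}^{Q_i} d_i\Bigr) \prod_{j \ne i} a_j \;=\; \frac{Q_i(Q_i+1)}{2} \cdot \frac{P_n}{a_i} \;=\; \frac{Q_i P_n}{2},
\end{equation*}
since $Q_i + 1 = a_i$. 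Summing over $i$ and using $\sum_i Q_i = S_n - n$, the second piece totals $\tfrac{1}{2} P_n (S_n - n)$.

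Combining gives
\begin{equation*}
m_Q(d) = P_n(d+1) - \tfrac{1}{2}P_n(S_n - n) = \tfrac{1}{2}P_n\bigl(2d + 2 + n - S_n\bigr),
\end{equation*}
and substituting $n = r - 2$ produces the claimed formula $\tfrac{1}{2}P_n(2d - S_n + r)$. There is no real obstacle here; the only thing to be careful about is the hypothesis $d \ge q$, which is exactly what justifies the count $d - s + 1$ for every admissible $(d_1,\ldots,d_n)$. Without this hypothesis, the completion count would have to be replaced by $\max(0, d - s + 1)$ and the sum would no longer telescope into the closed form.
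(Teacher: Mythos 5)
Your proof is correct and rests on the same counting identity as the paper's: for each admissible choice of the $n = r-2$ constrained coordinates with sum $s$, there are exactly $d - s + 1$ completions, which is valid precisely because $d \ge q$. The only difference is bookkeeping — you evaluate the resulting multiple sum directly by linearity and the product structure, whereas the paper peels off one constrained coordinate at a time by induction on $r$; both yield the same closed form.
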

\begin{proof}
We proceed by induction on $r$, starting with $r = 3, n = 1$.  For
the initial case, we are counting multi-indexes $D =(d_1,d_2,d_3)$
such that $d_1 \le Q_1$.  For any choice of $d_1$, we can complete
$D$ by choosing values of $d_2$ and $d_3$ whose sum is $d - d_1$,
and there are $\genfrac(){0cm}{0}{(d-d_1)+(2-1)}{2-1} = d - d_1 +1$
ways to do it. So for $d \ge q = Q_1$,
\begin{align*}
m_Q(d) &= \sum_{0 \le d_1 \le Q_1}(d - d_1 + 1)\\
&=\sum_{0 \le d_1 \le Q_1}(d+ 1) - \sum_{0 \le d_1 \le Q_1}d_1\\
&=a_1(d+1) - a_1(a_1-1)/2\\
&=\genfrac{}{}{}{0}{a_1(2d - a_1 + 3)}{2}.
\end{align*}
In the induction step, we assume the proposition has been proved for
$r-1$ and we prove it for $r$.  For a fixed choice of $d_n$, we ask
how many choices of\\ $(d_1,...,d_{n-1},d_{n+1},d_r)$ are
permissible. This amounts to asking the value of\\ $m_{Q'}(d-d_n)$,
for dimension $r' = r-1$ and constraint $Q' = (Q_1,...,Q_{n-1})$,
which we claim satisfies the hypothesis of the proposition:
$r'$-tuples are $(r'-2)$-fold constrained by $Q'$; and we have $d_1
+ ... + d_{n-1} \ge Q_1 + ... + Q_{n-1}$ since $d \ge q$ and $d_n
\le Q_n$. Applying the induction hypothesis,
\begin{align*}
m_Q(d) &= \sum_{0 \le d_n \le Q_n}m_{Q'}(d-d_n)\\
&=\sum_{0 \le d_n \le Q_n}\genfrac{}{}{}{0}{P_{n-1}[2(d-d_n) -
S_{n-1} +
(r-1)]}{2}\\
&=a_n \genfrac{}{}{}{0}{P_{n-1}[2d - S_{n-1} + (r-1)]}{2} -
P_{n-1}\sum_{0 \le d_n \le
Q_n} d_n\\
&=a_n \genfrac{}{}{}{0}{P_{n-1}[2d - S_{n-1} + (r-1)]}{2} -
P_{n-1}\genfrac{}{}{}{0}{a_n(a_n-1)}{2}\\
&=a_n \genfrac{}{}{}{0}{P_{n-1}[2d - S_{n-1} + (r-1) - (a_n-1)]}{2}\\
&=a_n \genfrac{}{}{}{0}{P_{n-1}[2d - S_n + r]}{2}\\
&=\genfrac{}{}{}{0}{P_n[2d - S_n + r]}{2}.
\end{align*}
\end{proof}

For the cases $r=3$ and $r=4$, we will need to know
$\#(\mathcal{M}_Q(d))$ for smaller values of $d$ than those covered
by Proposition \ref{r-2_fold}.

\begin{prop} \label{spec_3}
Let $r= 3$ and let multi-indexes be once-constrained by\\ $Q =
(Q_1).$ Then
\begin{enumerate}
\item[(i)] $m_Q(d)= \genfrac{}{}{}{0}{a_1(2d-a_1+3)}{2}$
for $d$ $\ge$ $q-1 = a_1-2.$
\item[(ii)] $m_Q(d)= \genfrac{}{}{}{0}{a_1(2d-a_1+3)}{2} + 1$
for $d = q-2 = a_1-3.$
\item[(iii)] $m_Q(d)= \genfrac(){0cm}{0}{d+2}{2}$
for $d < a_1.$
\end{enumerate}
\end{prop}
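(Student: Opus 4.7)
The plan is to reduce all three parts to direct combinatorial counts and then verify that the resulting formulas match by simple algebra, relying on Proposition \ref{r-2_fold} for the main-range case of (i).

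First I would prove (iii), which is the easiest case and turns out to do most of the work. When $d < a_1 = Q_1 + 1$, any multi-index $D = (d_1, d_2, d_3)$ of degree $d$ automatically satisfies $d_1 \le d < a_1$, so the constraint $d_1 \le Q_1$ is vacuous. Thus $m_Q(d)$ counts all multi-indexes of degree $d$ in three variables, which is well known to be $\binom{d+2}{2}$.

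Next I would handle (i). For $d \ge q$, the formula follows directly from Proposition \ref{r-2_fold} with $r = 3$, $n = 1$, $P_1 = a_1$, $S_1 = a_1$, which gives $m_Q(d) = \tfrac{a_1(2d - a_1 + 3)}{2}$. The only new case in (i) is $d = q - 1 = a_1 - 2$, which satisfies $d < a_1$, so (iii) applies and gives $m_Q(d) = \binom{a_1}{2} = \tfrac{a_1(a_1 - 1)}{2}$. A one-line computation confirms that substituting $d = a_1 - 2$ into $\tfrac{a_1(2d - a_1 + 3)}{2}$ also yields $\tfrac{a_1(a_1 - 1)}{2}$, completing (i).

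For (ii) I would note that $d = q - 2 = a_1 - 3 < a_1$, so (iii) again applies and gives $m_Q(d) = \binom{a_1 - 1}{2} = \tfrac{(a_1 - 1)(a_1 - 2)}{2}$. It then remains to verify the identity
\begin{equation*}
\frac{(a_1 - 1)(a_1 - 2)}{2} \;=\; \frac{a_1(a_1 - 3)}{2} + 1,
\end{equation*}
which is immediate by expanding both sides to $\tfrac{a_1^2 - 3 a_1 + 2}{2}$.

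There is no real obstacle here; everything reduces to the observation that the constraint $d_1 \le Q_1$ is vacuous whenever $d \le Q_1$, together with Proposition \ref{r-2_fold}. The only thing to be careful about is bookkeeping: matching the slightly different formulas in (i) and (ii) with the binomial expressions coming from (iii), which is a routine algebraic check.
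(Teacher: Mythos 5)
Your proof is correct, and for parts (i) and (ii) it takes a different route from the paper's. The paper establishes the boundary cases $d = q-1$ and $d = q-2$ by writing $m_Q(d) = \sum_{0 \le d_1 \le Q_1}(d - d_1 + 1)$, observing that the two sides of the identity with Proposition \ref{r-2_fold} agree as polynomials in $d$, and then explicitly subtracting the terms ($0$ and $-1$) that must be omitted from the summation when $d$ drops below $Q_1$. You instead observe that both boundary degrees satisfy $d < a_1$, so part (iii) — which the paper proves exactly as you do, via the vacuousness of the constraint $d_1 \le Q_1$ when $d \le Q_1$ — already gives the count as $\binom{d+2}{2}$, and it only remains to check the two algebraic identities $\binom{a_1}{2} = \frac{a_1(a_1-1)}{2}$ and $\binom{a_1-1}{2} = \frac{a_1(a_1-3)}{2} + 1$. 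Your version is more elementary and avoids the slightly delicate bookkeeping of negative and zero summands; the paper's version has the advantage of generalizing mechanically to the codimension-4 analogue (Proposition \ref{spec_4}), where the small-degree case (iii) requires $d < \min(a_1,a_2)$ and so does not cover the boundary degrees $q-1$ and $q-2$, forcing the summation-adjustment argument anyway. Both arguments are complete for the proposition as stated.
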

\begin{proof}
We start with the formulas from Proposition \ref{r-2_fold},
\begin{equation*}
\genfrac{}{}{}{0}{a_1(2d-a_1+3)}{2} = m_Q(d) = \sum_{0 \le d_1 \le
Q_1}(d - d_1 + 1) \; \; \; \text{when} \; d \ge Q_1.
\end{equation*}
\noindent Viewing the left- and right-hand sides of this equation as
polynomials in $d$, we see they agree for the infinitely many
integer values of $d$ such that $d \ge Q_1$, and hence must agree
for all $d$.\\

We remark that, for values of $d < Q_1$, a valid expression for
$m_Q(d)$ can be obtained as before, by summing terms of the form $d
- d_1 + 1$, for a suitable range of values of $d_1$.  We now
investigate how to do this for $d = Q_1 -1 $ and $d = Q_1 -2$.\\

For $d = Q_1 -1$, the summation ends with $d_1 = Q_1 -1$.
Equivalently, one can take the previous summation and subtract the
term for $d_1 = Q_1$.  That is,
\begin{align*}
m_Q(d) &= \sum_{0 \le d_1 \le Q_1 - 1}(d - d_1 + 1)\\
&= \sum_{0 \le d_1
\le Q_1}(d - d_1 + 1) -
[(Q_1-1) -Q_1 + 1]\\
&=\genfrac{}{}{}{0}{a_1(2d-a_1+3)}{2} - [(Q_1-1) -Q_1 + 1]\\
&=\genfrac{}{}{}{0}{a_1(2d-a_1+3)}{2}.
\end{align*}

For $d = Q_1 -2$, the terms with $d_1$ having the values $Q_1$ and
$Q_1 -1$ must be omitted.  That is,
\begin{align*}
m_Q(d) &=\sum_{0 \le d_1 \le Q_1 -2}(d - d_1 + 1)\\
&= \sum_{0 \le d_1 \le Q_1}(d - d_1 + 1) -
[(Q_1-2) -Q_1 + 1] - [(Q_1-2) - (Q_1-1) + 1] \\
&=\genfrac{}{}{}{0}{a_1(2d-a_1+3)}{2} - [(Q_1-2) -Q_1 + 1] -
[(Q_1-2) - (Q_1-1) + 1]\\
&=\genfrac{}{}{}{0}{a_1(2d-a_1+3)}{2} + 1.
\end{align*}

When $d < a_1$, we have $d \le Q_1$. So all of the
$\genfrac(){0cm}{0}{d+2}{2}$ multi-indexes of degree $d$ satisfy the
condition of being constrained by $(Q_1)$.  Equivalently, $m_Q(d) =
\genfrac(){0cm}{0}{d+2}{2}$.
\end{proof}

\begin{prop} \label{spec_4}
Let $r= 4$ and let multi-indexes be twice-constrained by $Q =
(Q_1,Q_2)$ Then
\begin{enumerate}
\item[(i)] $m_Q(d)= \genfrac{}{}{}{0}{a_1a_2(2d-a_1-a_2+4)}{2}$
for $d$ $\ge$ $q-1 = a_1 + a_2 -3.$
\item[(ii)] $m_Q(d)= \genfrac{}{}{}{0}{a_1a_2(2d-a_1-a_2+4)}{2} + 1$
for $d = q-2 = a_1 + a_2 -4.$
\item[(iii)] $m_Q(d)= \genfrac(){0cm}{0}{d+3}{3}$
for $d < min(a_1,a_2).$
\end{enumerate}
\end{prop}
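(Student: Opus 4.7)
The plan is to adapt the method of Proposition \ref{spec_3} to the $r=4$, $n=2$ setting. The starting point is the summation
\[
m_Q(d) \;=\; \sum_{d_1=0}^{Q_1}\sum_{d_2=0}^{Q_2} N(d_1,d_2),
\]
where $N(d_1,d_2)$ counts the pairs $(d_3,d_4)$ of non-negative integers with $d_3+d_4 = d-d_1-d_2$. By the standard stars-and-bars count, $N(d_1,d_2)=d-d_1-d_2+1$ when $d-d_1-d_2\ge 0$ and $N(d_1,d_2)=0$ otherwise. For $d \ge q$ the second case does not occur, and Proposition \ref{r-2_fold} already delivers the closed form $\tfrac{1}{2}a_1 a_2(2d-a_1-a_2+4)$; the work is to extend this formula to $d = q-1$ and $d = q-2$, with a correction in the latter case.

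For part (i) at $d=q-1$, I would first identify which pairs $(d_1,d_2) \in [0,Q_1]\times[0,Q_2]$ have $d-d_1-d_2 < 0$. Since $d_1+d_2\le q$ and equality holds only at $(Q_1,Q_2)$, the only offending pair is $(Q_1,Q_2)$, for which the polynomial expression $d-d_1-d_2+1$ evaluates to $0$ — exactly what $N(d_1,d_2)$ should be. Thus the formal unrestricted sum $\sum_{d_1,d_2}(d-d_1-d_2+1)$ still equals $m_Q(d)$, and the polynomial identity used in Proposition \ref{r-2_fold} gives the same closed form.

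For part (ii) at $d=q-2$, three pairs satisfy $d_1+d_2>d$: namely $(Q_1,Q_2)$, $(Q_1-1,Q_2)$ and $(Q_1,Q_2-1)$. Evaluating $d-d_1-d_2+1$ at these gives $-1$, $0$, and $0$ respectively, while the true contributions $N(d_1,d_2)$ are all $0$. Hence the unrestricted polynomial sum undercounts by $1$, and
\[
m_Q(d) \;=\; \tfrac{1}{2}a_1 a_2(2d-a_1-a_2+4) + 1.
\]
The small bookkeeping of which boundary pairs exceed the valid region, and what numerical correction they each require, is the only point of care; cases where $Q_1=0$ or $Q_2=0$ deserve a separate check to ensure the three pairs I listed are distinct and inside the index range, but this is routine.

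For part (iii), when $d<\min(a_1,a_2)$ we have $d\le Q_1$ and $d\le Q_2$, so any multi-index $(d_1,d_2,d_3,d_4)$ of degree $d$ automatically satisfies $d_i\le d\le Q_i$ for $i=1,2$. Hence $\mathcal{M}_Q(d)$ is simply the set of all degree-$d$ multi-indexes in four variables, of which there are $\binom{d+3}{3}$. The main obstacle throughout is not conceptual but notational — keeping track of which boundary terms require a correction and verifying that the polynomial identity from Proposition \ref{r-2_fold} transfers correctly to the slightly smaller summation range; everything else is mechanical.
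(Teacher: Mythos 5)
Your proposal is correct and follows essentially the same route as the paper: extend the double summation $\sum_{d_1}\sum_{d_2}(d-d_1-d_2+1)$ as a polynomial identity beyond $d\ge q$, then correct for the boundary pairs where the linear expression fails to equal the true count ($0$ at $(Q_1,Q_2)$ for $d=q-1$; total defect $1$ from the three pairs for $d=q-2$), with the same direct counting for $d<\min(a_1,a_2)$. Your framing as an "undercount" versus the paper's "omitted terms" is only cosmetic, and your flagged edge case $Q_i=0$ is a harmless detail the paper also leaves implicit.
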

\begin{proof}
The formula for $d \ge q$ is given by Proposition \ref{r-2_fold}. An
alternative formula is derived as follows, in a manner similar to
the codimension $3$ case.  We are counting multi-indexes $D
=(d_1,d_2,d_3,d_4)$ such that $d_1 \le Q_1$ and $d_2 \le Q_2$. For
any choice of values of $d_1$ and $d_2$, we can complete $D$ by
choosing values of $d_3$ and $d_4$ whose sum is $d - d_1 -d_2$, and
there are $\genfrac(){0cm}{0}{(d-d_1-d_2)+(2-1)}{2-1} = d - d_1
-d_2+1$ ways to do it. So for $d \ge q = Q_1 + Q_2$,
\begin{align*}
m_Q(d) &= \sum_{0 \le d_1 \le Q_1}\sum_{0 \le d_2 \le Q_2} (d - d_1
-d_2 + 1).
\end{align*}

As in the previous theorem, we can regard the left- and right-hand
sides of
\begin{align*}
\genfrac{}{}{}{0}{a_1a_2(2d-a_1-a_2+4)}{2} = m_Q(d)= \sum_{0 \le d_1
\le Q_1}\sum_{0 \le d_2 \le Q_2} (d - d_1 -d_2 + 1)
\end{align*}

\noindent as an identity in $d$.  And again, the same argument
justifies the right-hand side as an expression for $m_Q(d)$ when $d
= q-1$ or $q -2$, except that the
range of summation must change.\\

For $d = q -1$, the term with $d_1 = Q_1$ and $d_2 = Q_2$ must be
omitted. That is,
\begin{align*}
m_Q(d) &= \sum_{0 \le d_1 \le Q_1}\sum_{0 \le d_2 \le
Q_2} (d - d_1 -d_2 + 1) - [(Q_1 + Q_2-1) - Q_1 - Q_2+ 1]\\
&= \genfrac{}{}{}{0}{a_1a_2(2d-a_1-a_2+4)}{2} - [(Q_1 + Q_2-1) - Q_1 - Q_2+ 1]\\
&= \genfrac{}{}{}{0}{a_1a_2(2d-a_1-a_2+4)}{2}.
\end{align*}

For $d = q -2$, there are three terms that must be omitted: those
with $d_1 = Q_1$ and $d_2 = Q_2$; with $d_1 = Q_1$ and $d_2 = Q_2
-1$; and with $d_1 = Q_1-1$ and $d_2 = Q_2$; that is,

\begin{align*}
&m_Q(d) = \sum_{0 \le d_1 \le Q_1}\sum_{0 \le d_2 \le Q_2} (d - d_1
-d_2 + 1) - [(Q_1 + Q_2-2) - Q_1 - Q_2+ 1]\\ &- [(Q_1 + Q_2-2) - Q_1
- (Q_2 -1)+ 1] - [(Q_1 +
Q_2-2) - (Q_1-1) - Q_2+ 1]\\
&= \genfrac{}{}{}{0}{a_1a_2(2d-a_1-a_2+4)}{2} - [(Q_1 + Q_2-2) - Q_1
- Q_2+ 1]\\
&- [(Q_1 + Q_2-2) - Q_1 - (Q_2 -1)+ 1] - [(Q_1 +
Q_2-2) - (Q_1-1) - Q_2+ 1]\\
&= \genfrac{}{}{}{0}{a_1a_2(2d-a_1-a_2+4)}{2} + 1.
\end{align*}

When $d < min(a_1,a_2)$, we have $d \le Q_1$ and $d \le Q_2$. So all
of the $\genfrac(){0cm}{0}{d+3}{3}$ multi-indexes of degree $d$
satisfy the condition of being constrained by $(Q_1, Q_2)$.
Equivalently, $m_Q(d) = \genfrac(){0cm}{0}{d+3}{3}$.
\end{proof}

\subsection{$(r-3)$-fold-constrained}

For $(r-3)$-fold-constrained monomials, a\\ closed-form expression
would be complicated.  We give a formula involving summations, and
then obtain closed-form expressions for the cases that $r$ is 4 or
5.

\begin{prop} \label{r-3_fold}
Let $r \ge 4$ and let multi-indexes be $(r-3)$-fold constrained by
$Q$. Then for $d \ge q$,
\begin{equation*}
m_Q(d) = \sum_{0 \le d_1 \le Q_1}...\sum_{0 \le d_{r-3} \le Q_{r-3}
} \genfrac(){0cm}{0}{d -d_1 - ... -d_{r-3} +2}{2}.
\end{equation*}
\end{prop}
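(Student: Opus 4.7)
The plan is to mimic the counting argument used in Propositions \ref{r-2_fold} and \ref{spec_4}, adapting it so that three variables rather than two are left free. Specifically, I fix the $n = r-3$ constrained coordinates $d_1,\dots,d_{r-3}$ and count the completions in the three unconstrained coordinates $d_{r-2},d_{r-1},d_r$.

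First I would observe that a multi-index $D = (d_1,\dots,d_r)$ of degree $d$ constrained by $Q$ is determined by choosing $(d_1,\dots,d_{r-3})$ with $0 \le d_i \le Q_i$ for $i = 1,\dots,r-3$, together with a triple $(d_{r-2},d_{r-1},d_r)$ of non-negative integers summing to $d - d_1 - \cdots - d_{r-3}$. For each fixed choice of $(d_1,\dots,d_{r-3})$, the well-known formula for the number of monomials of degree $t$ in $u$ variables (namely $\binom{t+u-1}{u-1}$) gives $\binom{d-d_1-\cdots-d_{r-3}+2}{2}$ such completions, provided the quantity $d - d_1 - \cdots - d_{r-3}$ is non-negative.

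Next I would verify that the non-negativity condition holds under the hypothesis $d \ge q$: since $d_i \le Q_i$ for each $i = 1,\dots,r-3$, we have $d_1 + \cdots + d_{r-3} \le Q_1 + \cdots + Q_{r-3} = q \le d$, so the binomial coefficient is well-defined and gives the correct count. Summing over all admissible $(d_1,\dots,d_{r-3})$ then yields the claimed identity
\begin{equation*}
m_Q(d) = \sum_{0 \le d_1 \le Q_1}\cdots\sum_{0 \le d_{r-3} \le Q_{r-3}} \genfrac(){0cm}{0}{d-d_1-\cdots-d_{r-3}+2}{2}.
\end{equation*}

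There is no serious obstacle here; the argument is a direct iteration of the counting step already used in the initial case of Proposition \ref{r-2_fold}. The only subtlety is keeping track of the hypothesis $d \ge q$, which ensures every term in the sum corresponds to genuinely non-negative remaining degree. In contrast to Proposition \ref{r-2_fold}, no induction on $r$ is needed because the proposition stops at the summation form; obtaining a closed form would require evaluating the inner sum in each codimension separately (as is done later for $r=4$ and $r=5$), and that is the step that would genuinely require more work.
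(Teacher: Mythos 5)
Your proposal is correct and follows essentially the same approach as the paper: fix the $r-3$ constrained coordinates and count the $\binom{(d-d_1-\cdots-d_{r-3})+2}{2}$ completions in the three free variables. Your explicit verification that $d \ge q$ keeps every remaining degree non-negative is a point the paper leaves implicit, but the argument is identical.
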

\begin{proof}
Once again, the approach is similar to the previous cases. We are
counting multi-indexes $D =(d_1,...,d_r)$ such that $d_1 \le
Q_1,...,d_{r-3} \le Q_{r-3}$. For any choice of values of $d_1,...,
d_{r-3}$, we can complete $D$ by choosing values of $d_{r-2},
d_{r-1},$ and $d_r$ whose sum is $d - d_1 -...-d_{r-3}$, and there
are\\ $\genfrac(){0cm}{0}{(d-d_1-...-d_{r-3})+(3-1)}{3-1}$ ways to
do it.
\end{proof}

\begin{cor} \label{r-3_fold_cor}
Let $r = 4$ and let multi-indexes be once constrained by $Q$. Then
for $d \ge q = Q_1$,
\begin{equation*}
m_Q(d) = \genfrac{}{}{}{0}{a_1}{2}[d^2 + (4-a_1)d +
\genfrac{}{}{}{0}{a_1^2 - 6a_1 +11}{3}].
\end{equation*}
\end{cor}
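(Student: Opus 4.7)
The plan is to start from the summation expression given by Proposition \ref{r-3_fold} specialized to $r=4, n=1$,
$$m_Q(d) \;=\; \sum_{0 \le d_1 \le Q_1} \binom{d - d_1 + 2}{2},$$
and collapse it into closed form by a change of variables and the hockey stick identity.

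First I would substitute $y := d - d_1$. As $d_1$ ranges over $0, 1, \ldots, Q_1 = a_1 - 1$, the variable $y$ ranges over the $a_1$ consecutive integers from $d - a_1 + 1$ up to $d$, so
$$m_Q(d) \;=\; \sum_{y = d - a_1 + 1}^{d} \binom{y+2}{2}.$$
The hypothesis $d \ge q = Q_1 = a_1 - 1$ ensures the lower index of summation is non-negative, so I can apply the standard hockey stick identity $\sum_{y=0}^{n}\binom{y+2}{2} = \binom{n+3}{3}$ to the two partial sums ending at $d$ and at $d - a_1$, obtaining
$$m_Q(d) \;=\; \binom{d+3}{3} - \binom{d - a_1 + 3}{3}.$$

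Next I would expand each binomial as a cubic polynomial in $d$ and perform the subtraction. Writing $\binom{d+3}{3} = \tfrac{1}{6}(d^3 + 6d^2 + 11d + 6)$ and, after setting $e := d - a_1$, expanding $\binom{e+3}{3} = \tfrac{1}{6}(e^3 + 6e^2 + 11e + 6)$ using $e^2 = d^2 - 2a_1 d + a_1^2$ and $e^3 = d^3 - 3a_1 d^2 + 3a_1^2 d - a_1^3$, the cubic terms in $d$ cancel and the difference becomes
$$m_Q(d) \;=\; \frac{1}{6}\bigl[\,3a_1 d^2 + (12a_1 - 3a_1^2)d + (a_1^3 - 6a_1^2 + 11a_1)\bigr] \;=\; \frac{a_1}{2}\left[d^2 + (4-a_1)d + \frac{a_1^2 - 6a_1 + 11}{3}\right],$$
which is precisely the claimed expression.

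There is no real obstacle here: the result is a routine consequence of the hockey stick identity followed by polynomial algebra. The only mild care needed is verifying the boundary condition, i.e.\ that the hypothesis $d \ge q$ makes the upper limit $d$ and lower limit $d - a_1 + 1$ of the reindexed sum consistent with the closed-form hockey stick evaluation; this is immediate since $d \ge a_1 - 1$ gives $d - a_1 + 3 \ge 2 \ge 0$.
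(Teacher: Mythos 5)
Your proof is correct. It takes a mildly but genuinely different route from the paper's: both arguments begin from the summation formula of Proposition \ref{r-3_fold} specialized to $r=4$, $n=1$, but the paper expands the summand $\binom{d-d_1+2}{2}=\tfrac{1}{2}(d+2-d_1)(d+1-d_1)$ as a quadratic in $d_1$ and evaluates $\sum_{0\le d_1\le Q_1}d_1$ and $\sum_{0\le d_1\le Q_1}d_1^2$ by the standard power-sum formulas, whereas you telescope the sum via the hockey stick identity into the difference
\begin{equation*}
m_Q(d)=\binom{d+3}{3}-\binom{d-a_1+3}{3}
\end{equation*}
and then expand. Your intermediate closed form is arguably more illuminating---it is exactly the inclusion--exclusion count of degree-$d$ monomials in four variables minus those divisible by $x_1^{a_1}$, and it makes the role of the hypothesis $d\ge Q_1$ transparent (it is what makes the subtracted partial sum legitimate, including the boundary case $d=a_1-1$ where $\binom{2}{3}=0$)---while the paper's term-by-term summation generalizes more uniformly to the multi-sum cases with more constraints. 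The algebra in your final expansion checks out and reproduces the stated polynomial exactly.
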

\begin{proof}
We apply the formula from Proposition \ref{r-3_fold}.
\begin{align*}
m_Q(d) &=\sum_{0 \le d_1 \le Q_1} \genfrac(){0cm}{0}{d -d_1
+2}{2}\\
&=\sum_{0 \le d_1 \le Q_1}\genfrac{}{}{}{0}{(d+2-d_1)(d+1-d_1)}{2}\\
&=\genfrac{}{}{}{0}{1}{2}[ \sum_{0 \le d_1 \le Q_1}(d^2+3d+2) -
(2d+3) \sum_{0 \le d_1 \le Q_1}d_1  + \sum_{0 \le d_1 \le Q_1}d_1^2 ]\\
&= \genfrac{}{}{}{0}{1}{2}[a_1(d^2+3d+2) -
(2d+3)\genfrac{}{}{}{0}{a_1(a_1-1)}{2} +
\genfrac{}{}{}{0}{a_1(a_1-1)(2a_1 - 1)}{6}]\\
&=\genfrac{}{}{}{0}{a_1}{2}[d^2 + (4-a_1)d +
(2-\genfrac{}{}{}{0}{3a_1-3}{2} +
\genfrac{}{}{}{0}{2a_1^2-3a_1+1}{6})]\\
&=  \genfrac{}{}{}{0}{a_1}{2}[d^2 + (4-a_1)d +
\genfrac{}{}{}{0}{12 -9a_1+9 +2a_1^2-3a_1+1}{6}]\\
&= \genfrac{}{}{}{0}{a_1}{2}[d^2 + (4-a_1)d +
\genfrac{}{}{}{0}{a_1^2 - 6a_1 +11}{3}].
\end{align*}
\end{proof}

We remark that substituting $a_1 = 2$ gives a confirmation of the
formula in Corollary \ref{vspec_4}.\\

\begin{cor} \label{r-3_fold4}
Let $r = 5$ and let multi-indexes be twice constrained by $Q$. Then
for $d \ge q = Q_1 + Q_2$,
\begin{equation*}
m_Q(d) = \genfrac{}{}{}{0}{a_1a_2}{2}[d^2 + (5-a_1-a_2)d +
\genfrac{}{}{}{0}{2(a_1^2+a_2^2) - 15(a_1+a_2) + 3a_1a_2 + 35}{6}].
\end{equation*}
\end{cor}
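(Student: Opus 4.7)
The plan is to mimic the proof of Corollary \ref{r-3_fold_cor} exactly, but with one extra summation. Starting from Proposition \ref{r-3_fold} specialized to $r=5$, for $d \ge q = Q_1+Q_2$ we have
\begin{equation*}
m_Q(d) = \sum_{0 \le d_1 \le Q_1}\sum_{0 \le d_2 \le Q_2} \genfrac(){0cm}{0}{d-d_1-d_2+2}{2}.
\end{equation*}
I would then expand the binomial coefficient as
\begin{equation*}
\genfrac(){0cm}{0}{d-d_1-d_2+2}{2} = \tfrac{1}{2}\bigl[(d+2)(d+1) - (2d+3)(d_1+d_2) + (d_1+d_2)^2\bigr]
\end{equation*}
so that the double sum breaks into three pieces whose $d$-dependence is separated from the $d_1, d_2$-dependence.

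Next I would evaluate each of the three resulting double sums using the standard identities $\sum_{k=0}^{Q_i} 1 = a_i$, $\sum_{k=0}^{Q_i} k = \tfrac{a_i(a_i-1)}{2}$, and $\sum_{k=0}^{Q_i} k^2 = \tfrac{(a_i-1)a_i(2a_i-1)}{6}$. The first sum contributes $a_1 a_2 (d+2)(d+1)$; the second, using $\sum_{d_1,d_2}(d_1+d_2) = a_2\sum d_1 + a_1\sum d_2 = \tfrac{a_1 a_2 (a_1+a_2-2)}{2}$, contributes a term linear in $d$; and the third, which is the one that requires real care, needs the identity $\sum_{d_1,d_2}(d_1+d_2)^2 = a_2 \sum d_1^2 + 2(\sum d_1)(\sum d_2) + a_1 \sum d_2^2$, giving
\begin{equation*}
\sum_{d_1,d_2}(d_1+d_2)^2 = \tfrac{a_1a_2(a_1-1)(2a_1-1)}{6} + \tfrac{a_1a_2(a_2-1)(2a_2-1)}{6} + \tfrac{a_1a_2(a_1-1)(a_2-1)}{2}.
\end{equation*}

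After substituting and factoring $a_1 a_2 / 2$ out front, I would collect the coefficient of $d^2$ (obviously $1$), the coefficient of $d$ (expect $5 - a_1 - a_2$ after combining $(d+2)(d+1)$ with the contribution from the second sum), and finally the constant term, which is where all the $a_i^2$, $a_1 a_2$, and lower-order pieces pile up. Verifying that the constant term simplifies to exactly $\tfrac{2(a_1^2+a_2^2) - 15(a_1+a_2) + 3a_1 a_2 + 35}{6}$ is the main obstacle — it is the only step that is not essentially mechanical, because the contributions from $(d+2)(d+1)$, from $-(2d+3)\cdot\tfrac{a_1+a_2-2}{2}$ (at $d$-independent part), and from the three summands of $\sum(d_1+d_2)^2$ must be combined over a common denominator of $6$ and checked symbol by symbol. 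As a sanity check I would set $a_2 = 1$ (forcing $d_2 = 0$, so that we are effectively in the $r=4$, once-constrained case of Corollary \ref{r-3_fold_cor}) and confirm that the formula reduces correctly; a second sanity check is symmetry in $a_1 \leftrightarrow a_2$, which is manifest in the final expression.
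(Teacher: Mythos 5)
Your proposal is correct and follows exactly the route the paper intends: the paper's own proof of this corollary simply invokes Proposition \ref{r-3_fold} and declares the computation ``similar in nature to that of the previous corollary,'' omitting the details, and your expansion of $\genfrac(){0cm}{0}{d-d_1-d_2+2}{2}$, the power-sum identities, and the collection of the constant term over denominator $6$ supply precisely those omitted details (I checked that the constant term does come out to $\frac{2(a_1^2+a_2^2)+3a_1a_2-15(a_1+a_2)+35}{6}$). Your two sanity checks --- reduction to Corollary \ref{r-3_fold_cor} at $a_2=1$ and symmetry in $a_1\leftrightarrow a_2$ --- are a nice addition.
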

\begin{proof}
We apply the formula from Proposition \ref{r-3_fold}.  The
computation is similar in nature to that of the previous corollary.
 We omit the details.
\end{proof}

\chapter{Construction of New Non-Unimodal Level Algebras}

In this chapter we construct several families of non-unimodal level
algebras.  As was mentioned in an earlier chapter, some of the
algebras described here were described and conjectured to be
non-unimodal by A. Iarrobino in 2005 following lines suggested by F.
Zanello in \cite{Z1}, and all of them use the same general framework
of Iarrobino and Zanello.  What is new here is
that we prove these algebras to be non-unimodal.\\

\section{Overview of the Construction of Level Algebras}

In this section we co-ordinate earlier results and describe our
framework for constructing new non-unimodal level algebras.  We
review some notation from previous sections, establish some new
notation, and give an overall description of the process. This
section is meant to be a qualitative overview. The quantitative
statements that ensure non-unimodality are proved in later
sections.\\

We let $k$ be an algebraically closed field of characteristic $0$
and define $R := k[X_1,...,X_r]$ and $\mathcal{D} :=
k[x_1,...,x_r]$. As previously described, the elements of $R$ act as
differential operators on $\mathcal{D}$.  Specifically, we will let
$r$ take the value $3, 4$, or $5$, and it will be convenient to
reduce the number of subscripts by defining $X := X_1, Y := X_2, Z
:= X_3, W := X_4$, and $V := X_5$; also $x := x_1, y := x_2, z :=
x_3, w
:= x_4$, and $v := x_5$.\\

We fix a positive integer $j$ that will become the socle degree of
the level algebra of codimension $r$ being constructed. For any
constraint $K := (K_1,...,K_n)$ of dimension $n \le r$ such that $0
\le K_i \le j$ for $i = 1,...,n$, we let $\mathcal{M}_K(d)$ denote
the set of multi-indexes of dimension $r$ and degree $d$ constrained
by $K$. We define $m_K(d) := \#(\mathcal{M}_K(d))$.  We define
$\mathcal{W}_{\mathcal{M}_K(j)}$ to be the vector subspace of
$\mathcal{D}_j$ spanned by all monomials $x^J$ such that $J \in
\mathcal{M}_K(j)$.  For a degree $d \le j$, we define
$\mathcal{V}_{\mathcal{M}_K(d)}$ to be the vector subspace of
$\mathcal{D}_d$ spanned by all monomials $x^D$ such that $D \in
\mathcal{M}_K(d)$.\\

We consider two constraints $P := (P_1,...,P_{n_1})$ of dimension
$n_1$ and\\ $Q := (Q_1,...,Q_{n_2})$ of dimension $n_2$, and use
them
as follows.\\

We choose a positive integer $s$ and specify a family of vector
subspaces $\mathcal{E} := \langle f_1,...,f_s\rangle  \subseteq
\mathcal{W}_{\mathcal{M}_P(j)} \subseteq \mathcal{D}_j$
parameterized by elements of the irreducible affine variety
$k^{sm_P(j)}$, where such an element represents a choice $C$ of
coefficients for the polynomials $f_1,...,f_s$.  We construct, for
general $C \in k^{sm_P(j)}$, the graded level algebra
$A_{\mathcal{E}(C)} :=R/Ann_R(\mathcal{E}(C))$.   We remark that our
previous discussion of Matlis Duality (in Chapter 2, section 3)
motivates this construction, and in particular
that Theorem $\ref{matlis_duality}$ guarantees $A_{\mathcal{E}(C)}$ is level.\\

If $s$ is sufficiently large, then for general $C$ the Hilbert
function $h_{\mathcal{E}(C)}$ of $A_{\mathcal{E}(C)}$ is computed,
according to Theorem \ref{omnibus}, by the rule
$h_{\mathcal{E}(C)}(d) = \dim_kR_e*\mathcal{E}(C) = \rank(U) =
m_P(d)$, where $e := j-d$ and $U$ is the
$e^{th}$ cropped matrix of $\mathcal{E}$.  \\

We will always choose $P$ so that the monomials are $(r-2)$-fold
constrained.  That is, for $r =3$, $P := (P_1)$, where for technical
reasons we require that $P_1 \ge 3$.  For $r=4$, $P := (P_1,P_2)$ or
$(j,P_2,P_3)$, where we require respectively that $P_2 \ge P_1 \ge
2$ or $P_3 \ge P_2 \ge 2$. For $r=5$, $P := (P_1,P_2,P_3)$, where we
require that $P_3 \ge P_2 \ge P_1 \ge 2$. To compute values of
$m_P(d)$, we rely on Propositions \ref{r-2_fold}, \ref{spec_3}, and
\ref{spec_4}, which deal with monomials that are $(r-2)$-fold
constrained. In these propositions, the definition is made that $a_i
:= P_i + 1$. However, again to reduce the number of subscripts, we
define $a := a_1, b := a_2, c:= a_3$.  Also, we define $\Delta$ to
be either $a, ab$, or $abc$,
according to whether $r = 3,4$, or $5$.\\

We next perform a similar construction to specify another family of
vector subspaces of $\mathcal{D}_j$.  This time we choose a positive
integer $u$, which for the examples here will always be either $1$
or $2$, and specify a family of vector subspaces $\mathcal{F} :=
\langle g_1,...,g_u\rangle  \subseteq \mathcal{W}_{\mathcal{M}_Q(j)}
\subseteq \mathcal{D}_j$ parameterized by elements of the
irreducible affine variety $k^{um_Q(j)}$, where such an element
represents a choice $C'$ of coefficients for the polynomials
$g_1,...,g_u$. Then we construct, for general $C' \in k^{um_Q(j)}$,
the graded level algbra $A_{\mathcal{F}(C')}
:=R/Ann_R(\mathcal{F}(C'))$.\\

The specific constraint $Q$ varies according to the family of
non-unimodals being constructed, as follows. For $r = 3$, $Q :=
(j,j,j)$.  For $r=4$, $Q := (j,j,j,0), (1)$, or
$(j,j,j,j)$.  For $r=5$, $Q := (j,j,j,0,0)$.\\

For these choices of constraints, the Hilbert function
$h_{\mathcal{F}(C')}$ of $A_{\mathcal{F}(C')}$ is computed according
to
Proposition \ref{k_3} or Corollary \ref{vspec_4}.\\

Having constructed families of vector subspaces $\mathcal{E}$ and
$\mathcal{F}$, we can construct the family $\mathcal{E} +
\mathcal{F}$ and consider the family of level algebras
$A_{\mathcal{E} + \mathcal{F}}$ with corresponding Hilbert functions
$h_{\mathcal{E} + \mathcal{F}}$.  For general $C$ and $C'$, we will
prove that $\mathcal{E}(C) + \mathcal{F}(C') = \mathcal{E}(C)
\bigoplus \mathcal{F}(C')$ and that $h_{\mathcal{E}(C)
\bigoplus \mathcal{F}(C')}$ is non-unimodal.\\

We establish some terminology for discussing non-unimodality of a Hilbert function
$h$.
\begin{deff} \label{crit} The terms \emph{single drop, double drop, initial degree, final degree}, and \emph{critical
range} are defined as follows.
\end{deff}If for some degree $i$, $h(i) > h(i+1) < h(i+2)$, we say
that $h$ has a \emph{single drop} with \emph{initial degree} $i$ and \emph{final
degree} $i_f := i+2$. If for some degree $i$, $h(i)
> max\{h(i+1),h(i+2)\} < h(i+3)$ we say that $h$ has a \emph{double
drop} with \emph{initial degree} $i$ and \emph{final degree} $i_f
:=i+3$. In this chapter, we will use the variables $i$ and $i_f$ to
represent the candidates for the initial and final degrees of a
single or double drop.  We say that a degree $d$
is in the \emph{critical range} if $i \le d \le i_f$.\\

To establish, for general $C \in k^{sm_P(j)}$ and $C' \in
k^{um_Q(j)}$, that $h_{\mathcal{E}(C) \bigoplus \mathcal{F}(C')}$
exhibits a single drop with initial degree $i$, our method will be
to show that
\begin{equation*}h_{\mathcal{E}(C)}(i) +
h_{\mathcal{F}(C')}(i)
> h_{\mathcal{E}(C)}(i + 1) + h_{\mathcal{F}(C')}(i + 1) <
h_{\mathcal{E}(C)}(i+2) + h_{\mathcal{F}(C')}(i+2) \end{equation*}
\noindent and then to establish that $h_{\mathcal{E}(C) \bigoplus
\mathcal{F}(C')}(d) = h_{\mathcal{E}(C)}(d) +
h_{\mathcal{F}(C')}(d)$ for $d = i, i+1,\\ i+2$; similarly for a
double drop.  To this end, we define differences\\ $\Delta_d =
h_{\mathcal{E}(C)}(d + 1) - h_{\mathcal{E}(C)}(d)$ and $\delta_d =
h_{\mathcal{F}(C')}(d + 1) - h_{\mathcal{F}(C')}(d)$.

\begin{lem} \label{add_deltas}
Assume, for some degree $d$, that \begin{equation*}
h_{\mathcal{E}(C) \bigoplus \mathcal{F}(C')}(d) =
h_{\mathcal{E}(C)}(d) + h_{\mathcal{F}(C')}(d)
\end{equation*}
\noindent and that \begin{equation*} h_{\mathcal{E}(C) \bigoplus
\mathcal{F}(C')}(d+1) = h_{\mathcal{E}(C)}(d+1) +
h_{\mathcal{F}(C')}(d+1).
\end{equation*} \noindent Then
\begin{equation*}
h_{\mathcal{E}(C) \bigoplus \mathcal{F}(C')}(d+1)= h_{\mathcal{E}(C)
\bigoplus \mathcal{F}(C')}(d) + \Delta_d + \delta_d .
\end{equation*}
\end{lem}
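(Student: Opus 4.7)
The plan is to treat this as a direct algebraic verification. The two hypotheses give splittings of $h_{\mathcal{E}(C) \bigoplus \mathcal{F}(C')}$ at degrees $d$ and $d+1$ as sums of the Hilbert functions of the individual pieces, and the definitions of $\Delta_d$ and $\delta_d$ are precisely the differences needed to connect degree $d$ to degree $d+1$. So nothing beyond substitution is required; there is no obstacle to speak of.

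Concretely, I would start from the hypothesis applied at degree $d+1$,
\[
h_{\mathcal{E}(C) \bigoplus \mathcal{F}(C')}(d+1)
= h_{\mathcal{E}(C)}(d+1) + h_{\mathcal{F}(C')}(d+1),
\]
and then rewrite each summand on the right using the defining equations
$h_{\mathcal{E}(C)}(d+1) = h_{\mathcal{E}(C)}(d) + \Delta_d$ and
$h_{\mathcal{F}(C')}(d+1) = h_{\mathcal{F}(C')}(d) + \delta_d$.
Regrouping gives
\[
h_{\mathcal{E}(C) \bigoplus \mathcal{F}(C')}(d+1)
= \bigl[h_{\mathcal{E}(C)}(d) + h_{\mathcal{F}(C')}(d)\bigr] + \Delta_d + \delta_d.
\]
Finally, one invokes the hypothesis at degree $d$ to replace the bracketed expression by $h_{\mathcal{E}(C) \bigoplus \mathcal{F}(C')}(d)$, yielding the claimed identity.

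In summary, the lemma is really a bookkeeping statement whose purpose is to justify, in subsequent constructions, the passage from degree-by-degree dimension computations (done via the $e^{th}$ cropped matrix $U$ and results like Theorem \ref{omnibus}) to statements about differences, and hence about single or double drops in the Hilbert function of the spliced algebra $A_{\mathcal{E}(C) \bigoplus \mathcal{F}(C')}$. No technical machinery from the earlier chapters is needed for the proof itself.
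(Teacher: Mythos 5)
Your proof is correct and is exactly the direct substitution from the definitions of $\Delta_d$ and $\delta_d$ that the paper intends; the paper itself simply says the lemma ``follows immediately from the definitions.'' Nothing further is needed.
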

\begin{proof} This follows immediately from the definitions.
\end{proof}

Finally, we will need to establish, for appropriate values of $d$,
that the Hilbert functions $h_{\mathcal{E}(C)}$ and
$h_{\mathcal{F}(C')}$ do indeed add as desired.  To this end, we
will be using Lemma \ref{splice_lemma} together with Propositions
\ref{k_3} and \ref{sf_1}.

\section{Computations by Computer}

Direct computation is difficult with polynomials of high degree
having many terms; instead, we use the \cite{Macaulay2} computer
program. We set the field $k$ equal to $\mathbb{Z}/32749\mathbb{Z}$,
a large finite field (as suggested in \cite{E2}); the finiteness
permits rapid calculations and gives access to some special
applications that are implemented only for finite fields. Following
suggestions of A. Iarrobino, to simulate the selection of general
members of a vector space we first generate pseudorandom scalars on
the computer; for a fixed basis, we then use these scalars as
coefficients to produce members of the vector space; and we hope
that this procedure does in fact approximate the selection of
general members.  We use the command  \lq \lq
fromDual($\mathcal{W}$)" to compute $R/Ann(\mathcal{W})$ for a
vector subspace $\mathcal{W} \subseteq \mathcal{D}_j$. \\

Computers are useful for comprehension and they sometimes provide persuasive
plausibility arguments. But the proofs of non-unimodality given here are entirely
independent of computer results.

\section{Six Families of Level Algebras, together with
Computer-Calculated Hilbert Functions}

In this section we define six parameterized families $A_{\mathcal{E}
+ \mathcal{F}}$ of level algebras according to the program of the
previous section.  That is, for each choice of parameters we obtain
a family of algebras.  We will show, in a later section, that each
choice of parameters yields a family of algebras that are
non-unimodal for general $C$ and $C'$. In this section, we confine
ourselves to definitions, examples, and display of computer results.
\\

For each parameterized family, one of the parameters is $i$, which
denotes the initial degree of the single or double drop that (we
will subsequently prove) occurs in the Hilbert function.  It is not
necessary to specify the final degree $i_f$ as another parameter,
because its value can be calculated from $i$, once we make the claim
that all algebras in families $F_2, G_2$, and $G_3$ have a single
drop (so that $i_f = i + 2$) and all algebras in families $F_1,
G_1$,
and $H_1$ have a double drop (so that $i_f = i + 3$).\\

For each family we specify that the parameter $s$, the number of vectors generating
$\mathcal{E}$, be \emph{$h_{\mathcal{E}}$-sufficient}, by which we mean that the
$(j-i_f)^{th}$ cropped matrix of $\mathcal{E}$ should have at least as many rows as
columns, a condition motivated by Theorem \ref{omnibus}. For now, we do not state
the precise values of $s$ that are $h_{\mathcal{E}}$-sufficient, postponing the
discussion until Lemma \ref{best_s}. The number $u$ of vectors generating
$\mathcal{F}$ is not a parameter, since it is fixed within each family. The
type of the resulting level algebra is then min$(s, \dim_k\mathcal{M}_P(j)) + u$.\\

When displaying computer results, we will simplify notation by
writing $h$ instead of $h_{\mathcal{E}(C) \bigoplus
\mathcal{F}(C')}$.

\begin{deff}
The family $F_1(a,i,s)$ is obtained by setting $r=3$, $j = i + a$,
$P =(a-1)$, $Q = (j,j,j)$, $u = 1$.  We require that $a \ge 4$, that
$i \ge 2a$,
and that $s$ be $h_{\mathcal{E}}$-sufficient.\\
\end{deff}

EXAMPLE $F_1(21,42,4)$:  Let $r =3$, $t = 4+1 = 5$, $R = k[X,Y,Z]$, $\mathcal{D} =
k[x,y,z]$. We set socle degree $j = 63$. We define constraints $P := (20)$ with
$n_1=1$ and $Q := (63,63,63)$ with $n_2 = 3$. We define the vector space
$\mathcal{E}(C)$ as the span of 4 general members of
$\mathcal{W}_{\mathcal{M}_P(63)}$;  $\mathcal{F}(C')$ as the span of one general
member of $\mathcal{W}_{\mathcal{M}_Q(63)} = \mathcal{D}_{63}$. Then
$F_1(21,42,4)(C,C')$ := $R/Ann(\mathcal{E}(C) \bigoplus \mathcal{F}(C'))$. According
to Macaulay2:
\begin{equation*}
h(42) = 946; \; h(43) = 945; \; h(44) = 945; \; h(45) = 946.\\
\end{equation*}

\begin{deff}
The family $F_2(a,i,s)$ is obtained by setting $r=3$,\\
$j = i + (a-1)/2$, $P =(a-1)$, $Q = (j,j,j)$, $u = 2$.  We require
that $a \ge 7$ be odd, that
\begin{equation} \label{f2_ineq}
i \ge \genfrac{}{}{}{0}{2a - 3 + \sqrt{2a^2+8a+7}}{2},
\end{equation}
\noindent and that $s$ be $h_{\mathcal{E}}$-sufficient.
\end{deff}

EXAMPLE $F_2(21,36,14)$:  Let $r =3$, $t = 14+2 = 16$, $R = k[X,Y,Z]$,\\
$\mathcal{D} = k[x,y,z]$. We set socle degree $j =46$. We define constraints $P :=
(20)$ with $n_1=1$ and $Q := (46,46,46)$ with $n_2 = 3$. We define the vector space
$\mathcal{E}(C)$ as the span of 14 general members of
$\mathcal{W}_{\mathcal{M}_P(46)}$; $\mathcal{F}(C')$ as the span of two general
members of $\mathcal{W}_{\mathcal{M}_Q(46)} = \mathcal{D}_{46}$.  Then
$F_1(21,36,14)(C,C')$ := $R/Ann(\mathcal{E}(C) \bigoplus \mathcal{F}(C'))$.
According to Macaulay2:
\begin{equation*}
h(36) = 699; \; h(37) = 698; \; h(38) = 699.
\end{equation*}

\begin{deff}
The family $G_1(a,b,i,s)$ is obtained by setting $r=4$, $j = i + ab$, $P
=(a-1,b-1)$, $Q = (j,j,j,0)$, $u = 1$.  We require that $b \ge a \ge 2$, that $i \ge
ab + 1$, and that $s$ be $h_{\mathcal{E}}$-sufficient.
\end{deff}

EXAMPLE $G_1(3,4,13,2)$:  Let $r =4$, $t = 2+1 = 3$, $R = k[X,Y,Z,W]$,\\
$\mathcal{D} = k[x,y,z,w]$. We set socle degree $j = 25$.  We define constraints $P
:= (2,3)$ with $n_1=2$ and $Q:=(25,25,25,0)$ with $n_2 = 4$. We define the vector
space $\mathcal{E}(C)$ as the span of two general members of
$\mathcal{W}_{\mathcal{M}_P(25)}$; $\mathcal{F}(C')$ as the span of one general
member of $\mathcal{W}_{\mathcal{M}_Q(25)}=k[x,y,z]_{25}$.  Then
$G_1(3,4,13,2)(C,C')$ := $R/Ann(\mathcal{E}(C) \bigoplus \mathcal{F}(C'))$.
According to Macaulay2:
\begin{equation*}
h(13) = 229; \; h(14) = 228; \; h(15) = 228; \; h(16) = 229.
\end{equation*}

\begin{deff}
The family $G_2(a,b,i,s)$ is obtained by setting $r=4$, $j = i + ab/2$, $P
=(j,a-1,b-1)$, $Q = (1)$, $u = 1$.  We require that $b \ge a \ge 2$, that $ab$ be
even, that $i \ge ab/2 + 2$, and that $s$ be $h_{\mathcal{E}}$-sufficient.
\end{deff}

EXAMPLE $G_2(4,6,14,2)$: Let $r = 4$, $t = 2+1 = 3$, $R = k[X,Y,Z,W]$,\\
$\mathcal{D} = k[x,y,z,w]$.  We set socle degree $j = 26$.  We define constraints $P
:= (26,3,5)$ with $n_1=3$ and $Q := (1)$ with $n_2 = 1$. We define the vector space
$\mathcal{E}(C)$ as the span of two general members of
$\mathcal{W}_{\mathcal{M}_P(26)}$;  $\mathcal{F}(C')$ as the span of one general
member of $\mathcal{W}_{\mathcal{M}_Q(26)}$. Then $G_2(4,6,14,2)(C,C')$ :=
$R/Ann(\mathcal{E}(C) \bigoplus \mathcal{F}(C'))$.  According to Macaulay2:
\begin{equation*}
h(14) = 433; \; h(15) = 432; \; h(16) = 433.
\end{equation*}

\begin{deff}
The family $G_3(a,b,i,s)$ is obtained by setting $r=4$; $j = i + m$,
where $m$ is the largest integer such that
$\genfrac(){0cm}{0}{m+1}{2} < ab$; $P =(a-1,b-1)$; $Q = (j,j,j,j)$;
$u = 1$. We require that $b \ge a \ge 2$, that $ab$ \underline{not}
be equal to a binomial coefficient of the form
$\genfrac(){0cm}{0}{N}{2}$, that $i \ge a + b -3$; that
\begin{equation} \label{g3_ineq}
\genfrac{}{}{}{0}{ab[2i-a-b+4]}{2} + \genfrac(){0cm}{0}{m+3}{3} \le
\genfrac(){0cm}{0}{i+3}{3};
\end{equation}
\noindent and that $s$ be $h_{\mathcal{E}}$-sufficient.
\end{deff}

EXAMPLE $G_3(4,4,8,7)$:  Let $r = 4$, $t = 7+1 = 8$, $R = k[X,Y,Z,W]$,\\
$\mathcal{D} = k[x,y,z,w]$.  We set socle degree $j = 13$.  We define constraints $P
:= (3,3)$ with $n_1=2$ and $Q := (13,13,13,13)$ with $n_2 = 4$. We define the vector
space $\mathcal{E}(C)$ as the span of 7 general members of
$\mathcal{W}_{\mathcal{M}_P(13)}$; $\mathcal{F}(C')$ as the span of one general
member of $\mathcal{W}_{\mathcal{M}_Q(13)}=\mathcal{D}_{13}$.  Then
$G_3(4,4,8,7)(C,C')$ := $R/Ann(\mathcal{E}(C) \bigoplus \mathcal{F}(C'))$. According
to Macaulay2:
\begin{equation*}
h(8) = 152; \; h(9) = 147; \; h_4(10) = 148.
\end{equation*}

\begin{deff}
The family $H_1(a,b,c,i,s)$ is obtained by setting $r=5$, $j = i + abc$, $P
=(a-1,b-1,c-1)$, $Q = (j,j,j,0,0)$, $u = 1$.  We require that $c \ge b \ge a \ge 2$,
that $i \ge abc$, and that $s$ be $h_{\mathcal{E}}$-sufficient.
\end{deff}

EXAMPLE $H_1(2,2,3,12,2)$:  Let $r =5$, $t = 2+1 = 3$, $R = k[X,Y,Z,W,V]$,\\
$\mathcal{D} = k[x,y,z,w,v]$. We set socle degree $j = 24$.  We define constraints
$P := (1,1,2)$ with $n_1=3$ and $Q := (24,24,24,0,0)$ with $n_2 = 5$. We define the
vector space $\mathcal{E}(C)$ as the span of two general members of
$\mathcal{W}_{\mathcal{M}_P(24)}$; $\mathcal{F}(C')$ as the span of one general
member of $\mathcal{W}_{\mathcal{M}_Q(24)} = k[x,y,z]_{24}$.  Then
$H_1(2,2,3,12,2)(C,C')$ := $R/Ann(\mathcal{E}(C) \bigoplus \mathcal{F}(C'))$.
According to Macaulay2:
\begin{equation*}
h(12) = 223; \; h(13) = 222; \; h(14) = 222; \; h(15) = 223.
\end{equation*}

Before entering a discussion of the six families defined here, we stop to check that
they are all nonempty.
\begin{prop}
For any of the families $F_1(a,i,s), F_2(a,i,s), G_1(a,b,i,s),\\ G_2(a,b,i,s),
G_3(a,b,i,s), H_1(a,b,c,i,s)$, it is possible to find values of the parameters that
satisfy the requirements set forth in their definitions.
\end{prop}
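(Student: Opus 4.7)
The plan is to exhibit, family by family, an explicit choice of parameters satisfying every requirement in the definition. For each family the verification naturally splits into three stages: first fix the basic combinatorial parameters ($a$; $a,b$; or $a,b,c$), then fix the initial degree $i$, and finally fix $s$.

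For $F_1$, $G_1$, $G_2$, and $H_1$ the combinatorial constraints are minimal. I would take $a=4$ in $F_1$; $a=b=2$ in $G_1$ (giving $ab=4$); $a=b=2$ in $G_2$ (so $ab=4$ is even, as required); and $a=b=c=2$ in $H_1$ (so $abc=8$). The lower bound on $i$ is then a concrete integer in each case (respectively $8$, $5$, $4$, and $8$), and is satisfied by any sufficiently large integer. For $F_2$ the additional requirement is that $a$ be odd and at least $7$; take $a=7$, after which the right-hand side of \eqref{f2_ineq} is a specific real number and any integer $i$ exceeding it suffices.

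The only nontrivial family is $G_3$. I would first choose $a,b\ge 2$ with $b\ge a$ such that $ab\ne\binom{N}{2}$ for any $N$. Taking $a=b=2$ yields $ab=4$; since $\binom{3}{2}=3$ and $\binom{4}{2}=6$, the value $4$ is not of the forbidden form. With $(a,b)$ fixed so is $m$ (here $m=2$), so the left side of \eqref{g3_ineq} becomes a linear polynomial in $i$, while the right side $\binom{i+3}{3}$ is cubic in $i$. Thus \eqref{g3_ineq} holds for all sufficiently large $i$, and is compatible with the threshold $i\ge a+b-3$. This asymptotic comparison is the one nontrivial estimate and is the main obstacle of the proof; it is resolved purely by the degree gap between the two sides.

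Finally, $h_{\mathcal{E}}$-sufficiency demands that the $(j-i_f)^{\mathrm{th}}$ cropped matrix of $\mathcal{E}$ have at least as many rows as columns, i.e.\ $s\cdot m_P(j-i_f)\ge m_P(i_f)$. Under the choices above, $j-i_f$ is always a non-negative integer (in $G_2$ with $a=b=2$ it equals $0$, and $m_P(0)=1$), so $m_P(j-i_f)>0$ and any integer $s\ge m_P(i_f)/m_P(j-i_f)$ works. Assembling these three stages for each family completes the verification.
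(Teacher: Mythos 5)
Your proposal is correct and follows essentially the same route as the paper: first fix all parameters other than $s$ (the paper declares this step immediate, while you spell out explicit choices and, usefully, verify the cubic inequality \eqref{g3_ineq} for $G_3$ by the degree comparison the paper leaves implicit), then observe that $h_{\mathcal{E}}$-sufficiency can always be achieved because $m_P(j-i_f)\ge 1$. The paper's only variation is to exhibit the single uniform choice $s:=m_P(j)$, verified via $m_P(j)\,m_P(j-i_f)\ge m_P(j)\ge m_P(i_f)$ using Lemma \ref{h_inc}, rather than your equivalent bound $s\ge m_P(i_f)/m_P(j-i_f)$.
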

\begin{proof}
For each of the families, it is immediate that all parameters except $s$ can be
chosen consistent with the requirements of the definitions of the families. So it is
enough to show that, for any such choice, $s:= m_P(j)$ is
$h_{\mathcal{E}}$-sufficient.\\

Setting $d = i_f$ and $e = j-i_f$, the size of the $(j-i_f)^{th}$ cropped matrix is
$sm_P(e) \times m_P(d)$.  To verify it has at least as many rows as columns when
$s:= m_P(j)$, we observe $$sm_P(e)=m_P(j)m_P(e) \ge m_P(j) \ge m_P(d),$$ the last
inequality following from Lemma \ref{h_inc}.
\end{proof}

\section{Formulas for $h_{\mathcal{ E}(C)}(d)$ and
$\Delta_d$}

Recall that in definition \ref{crit} we have defined degrees $d$ to lie in the
critical range if $i \le d \le i_f$, where $i$ and $i_f$ are the initial and final
degrees of a proposed single or double drop; and we have specified that $F_2, G_2$
and $G_3$ are candidates for having a single drop with initial degree $i$, whereas
$F_1, G_1$, and $H_1$ are candidates for having a double drop with initial degree
$i$.  Also recall that $\Delta$ was defined to be $a, ab,$ or $abc$, depending on
whether the codimension is 3, 4, or 5.
\begin{lem} \label{mp_values}
For any of the families $F_1(a,i,s), F_2(a,i,s), G_1(a,b,i,s),
G_2(a,b,i,s),\\ G_3(a,b,i,s), H_1(a,b,c,i,s)$, the values of
$m_P(d)$ are given as follows for degrees $d$ in the critical range.
\begin{align*}
&[For \; F_1 \; and \; F_2]:\; m_P(d) = \Delta(2d - a +3)/2. \\
&[For \;G_1, \; G_2, \; G_3]: \; m_P(d) = \Delta(2d - a - b + 4)/2.\\
&[For \; H_1]: \; m_P(d) = \Delta(2d - a -b -c+5)/2.
\end{align*}
\end{lem}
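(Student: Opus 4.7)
The plan is a case-by-case verification using the closed-form computations of $m_P(d)$ established in Propositions \ref{r-2_fold}, \ref{spec_3}, and \ref{spec_4}, applied to the specific constraint $P$ of each family. The key observation is that in every case $P$ is (or reduces to) an $(r-2)$-fold constraint, so these propositions apply, and the formulas in Lemma \ref{mp_values} are precisely the output of Proposition \ref{r-2_fold} with $S_n = \sum P_i + n$ expressed via $a,b,c$.

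First I would dispatch the five families whose $P$ is literally $(r-2)$-fold constrained. For $F_1$ and $F_2$ we have $r=3$, $n=1$, $P=(a-1)$, so $P_n=a$ and $S_n=a$; Proposition \ref{r-2_fold} (or Proposition \ref{spec_3}(i)) yields $m_P(d)=a(2d-a+3)/2$ whenever $d\ge q-1=a-2$. For $G_1$ and $G_3$ we have $r=4$, $n=2$, $P=(a-1,b-1)$, so Proposition \ref{spec_4}(i) yields $m_P(d)=ab(2d-a-b+4)/2$ whenever $d\ge q-1=a+b-3$. For $H_1$ we have $r=5$, $n=3$, $P=(a-1,b-1,c-1)$, so Proposition \ref{r-2_fold} yields $m_P(d)=abc(2d-a-b-c+5)/2$ whenever $d\ge q=a+b+c-3$. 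This produces exactly the three displayed formulas.

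Next I would handle $G_2$, where $P=(j,a-1,b-1)$ has a trivial first coordinate. Here I invoke Lemma \ref{permute} with the permutation sending $(j,a-1,b-1)\mapsto (a-1,b-1,j)$; since the appended $j$ imposes no constraint, $m_P(d)$ equals the count for the genuine 2-fold constraint $(a-1,b-1)$ on codimension $4$. Then Proposition \ref{spec_4}(i) applies and gives the $G_1$-formula $ab(2d-a-b+4)/2$, as required.

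The only real work is the numerical verification that every $d$ in the critical range $[i,i_f]$ lies in the stated range of validity. The bookkeeping is as follows: for $F_1$ the hypothesis $i\ge 2a$ gives $d\ge 2a\ge a-2$; for $F_2$ the inequality \eqref{f2_ineq} gives $i>a-2$; for $G_1$, $i\ge ab+1\ge a+b-2\ge a+b-3$ since $(a-1)(b-1)\ge 1$; for $G_3$, the hypothesis is literally $i\ge a+b-3$, so the lower boundary of Proposition \ref{spec_4}(i) is met (this is the reason Proposition \ref{spec_4}(i) is stated for $d\ge q-1$ rather than $d\ge q$); for $G_2$, $i\ge ab/2+2\ge a+b-3$ follows from $(a-2)(b-2)\ge 0$; and for $H_1$, $i\ge abc\ge a+b+c-3$ is immediate from $a,b,c\ge 2$. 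In every case the critical range extends only a small constant beyond $i$, so the bound is preserved throughout $[i,i_f]$. There is no substantive obstacle: the entire lemma is a uniform repackaging of Propositions \ref{r-2_fold}--\ref{spec_4} via the family definitions, with Lemma \ref{permute} handling the cosmetic wrinkle in $G_2$.
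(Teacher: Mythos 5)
Your proposal is correct and follows essentially the same route as the paper: quote Propositions \ref{r-2_fold}, \ref{spec_3}, and \ref{spec_4} and then verify, family by family, that every degree in the critical range meets the lower bound required for those formulas to hold. Your explicit appeal to Lemma \ref{permute} to strip the inert constraint $j$ from $P=(j,a-1,b-1)$ in the $G_2$ case is a small point of extra care that the paper leaves implicit, but it does not change the argument.
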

\begin{proof}
Propositions  \ref{spec_3}, \ref{spec_4}, and \ref{r-2_fold} yield
the formulas above, provided we verify that $d$ is large enough.\\

For $r = 3$, the condition is that $d \ge a-2$.  This is true for
$F_1$, since $d \ge i \ge 2a \ge a-2$; and for $F_2$, since $d \ge i
\ge
\genfrac{}{}{}{0}{2a - 3 + \sqrt{2a^2+8a+7}}{2} \ge a \ge a-2$.\\

For $r = 4$, the condition is that $d \ge a + b - 3$.  Recall that for $G_1$, $b \ge
a \ge 2$ and $i \ge ab + 1$;  for $G_2$, $b \ge a \ge 2$ and $i \ge ab/2 + 2$.  In
either case, we use the fact that, for $a \ge 2$ and $b \ge 2$, $ab/2 \ge a+ b - 2$.
For $G_1$, $d \ge i \ge ab+1 \ge 2a +2b -3 \ge a+b-3$.  For $G_2$, $d \ge i \ge ab/2
+2 \ge (a+b-2) +2 \ge a+b-3$.  For $G_3$, $d \ge i \ge a+b-3$, where the last
inequality
was required to hold in the definition of $G_3$.\\

For $r = 5$, the condition is that $d \ge a + b + c - 3$.  For
$H_1$, $c \ge b \ge a \ge 2$ and $i \ge abc$, so $d \ge i \ge abc
\ge a + b + c \ge a + b + c - 3$.
\end{proof}

\begin{prop} \label{hw_values}
For any of the families $F_1(a,i,s), F_2(a,i,s), G_1(a,b,i,s),\\
G_2(a,b,i,s), G_3(a,b,i,s), H_1(a,b,c,i,s)$, let $d$ lie in the
critical range. Then for general $C$,
\begin{align*}
&[For \; F_1 \; and \; F_2]:\; h_{\mathcal{E}(C)}(d) = \Delta(2d - a
+3)/2.\\
&[For \;G_1, \; G_2, \; G_3]: \; h_{\mathcal{E}(C)}(d) = \Delta(2d -
a - b + 4)/2.\\
&[For \; H_1]: \; h_{\mathcal{E}(C)}(d) = \Delta(2d - a -b -c+5)/2.
\end{align*}
\end{prop}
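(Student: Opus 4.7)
The plan is to deduce the formulas from the combination of Theorem \ref{omnibus}, Theorem \ref{max_rank}, and Lemma \ref{mp_values}. The key input is that $h_{\mathcal{E}}$-sufficiency of $s$, together with the monotonicity of $m_P$ supplied by Lemma \ref{h_inc}, forces the $(j-d)$-th cropped matrix of $\mathcal{E}$ to have at least as many rows as columns for every $d$ in the critical range.

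First I would fix $d \in [i, i_f]$, set $e = j - d$, and let $U$ denote the $e$-th cropped matrix of $\mathcal{E}$. By Definition \ref{cropped} (together with Lemma \ref{is_l_too}), $U$ is an L-matrix of size $s \cdot m_P(e) \times m_P(d)$, and by Lemma \ref{crop_l} the standard assignment endows it with $\mathcal{G}_P$ pattern. In each of the six families the constraint $P$ has at least one coordinate unconstrained (its dimension $n_1$ is strictly less than $r$, or one entry equals $j$), so Lemma \ref{h_inc} applies and $m_P$ is nondecreasing on $\{0,1,\dots,j\}$.

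Next I would use the $h_{\mathcal{E}}$-sufficiency of $s$, which by its very definition means that the $(j-i_f)$-th cropped matrix has at least as many rows as columns, i.e.\ $s \cdot m_P(j-i_f) \ge m_P(i_f)$. Combined with $d \le i_f$ and $e = j - d \ge j - i_f$, monotonicity gives
\begin{equation*}
s \cdot m_P(e) \;\ge\; s \cdot m_P(j-i_f) \;\ge\; m_P(i_f) \;\ge\; m_P(d),
\end{equation*}
so $U$ has at least as many rows as columns. Theorem \ref{max_rank} therefore forces $\rank(U) = m_P(d)$, and Theorem \ref{omnibus} then yields that for general $C \in k^{s m_P(j)}$,
\begin{equation*}
h_{\mathcal{E}(C)}(d) \;=\; \rank(U) \;=\; m_P(d).
\end{equation*}

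Finally, I would substitute the closed-form expression for $m_P(d)$ from Lemma \ref{mp_values} in each of the three codimension cases, which directly produces the three displayed formulas in the statement. The main obstacle is purely bookkeeping: confirming that the hypotheses of Lemma \ref{h_inc} (some coordinate of $P$ is unconstrained) and of Lemma \ref{mp_values} (degree $d$ is large enough that the polynomial formula, rather than the ``small-$d$'' formula, is in force) hold for every admissible choice of parameters in all six families. Both checks were in effect already carried out inside the proof of Lemma \ref{mp_values}, so the present proposition reduces to assembling the cited results.
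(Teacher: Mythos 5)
Your proposal is correct and follows essentially the same route as the paper: use $h_{\mathcal{E}}$-sufficiency together with the monotonicity of $m_P$ from Lemma \ref{h_inc} to show the $(j-d)^{th}$ cropped matrix has at least as many rows as columns for every $d$ in the critical range, then invoke Theorem \ref{omnibus} to get $h_{\mathcal{E}(C)}(d)=m_P(d)$ and substitute the formulas of Lemma \ref{mp_values}. The extra bookkeeping you flag (that $P$ leaves some coordinate unconstrained, and that $d$ is large enough for the polynomial formula) is indeed already handled in Lemma \ref{mp_values} and the definitions of the families.
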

\begin{proof}
Recall that the hypothesis that $s$ is $h_{\mathcal{E}}$-sufficient means that the
$(j - i_f)^{th}$ cropped matrix of $\mathcal{E}$ has at least as many rows as
columns.  This matrix has $sm_P(j-i_f)$ rows and $m_P(i_f)$ columns.  We observe
that, for any $d$ in the critical range, the $(j - d)^{th}$ cropped matrix of
$\mathcal{E}$ also has at least as many rows as columns, since by Lemma \ref{h_inc}
it has $sm_P(j-d) \ge sm_P(j-i_f)$ rows and $m_P(d) \le m_P(i_f)$ columns. So for
all values of $d$ in the critical range, Theorem \ref{omnibus} applies, and for
general $C$ we have $h_{\mathcal{E}(C)}(d) = m_P(d)$, the rank of the $e^{th}$
cropped matrix $U$ of $\mathcal{E}$, or equivalently the number of columns in $U$.
\end{proof}

\begin{cor} \label{Delta}
For any of the families $F_1(a,i,s), F_2(a,i,s), G_1(a,b,i,s),\\
G_2(a,b,i,s), G_3(a,b,i,s), H_1(a,b,c,i,s)$, and for $d :=
i,...,i_f-1$, $\Delta_d = \Delta$ for general $C$.
\end{cor}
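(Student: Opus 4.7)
The plan is to reduce this corollary directly to Proposition \ref{hw_values}. By definition, $\Delta_d = h_{\mathcal{E}(C)}(d+1) - h_{\mathcal{E}(C)}(d)$, so for $d \in \{i, i+1, \ldots, i_f - 1\}$, both $d$ and $d+1$ lie in the critical range $\{i, \ldots, i_f\}$, and Proposition \ref{hw_values} gives closed-form expressions for both terms.

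First I would record, for each family, the relevant formula from Proposition \ref{hw_values}. For $F_1$ and $F_2$ we have $h_{\mathcal{E}(C)}(d) = \Delta(2d - a + 3)/2$ with $\Delta = a$; for $G_1, G_2, G_3$ we have $h_{\mathcal{E}(C)}(d) = \Delta(2d - a - b + 4)/2$ with $\Delta = ab$; and for $H_1$ we have $h_{\mathcal{E}(C)}(d) = \Delta(2d - a - b - c + 5)/2$ with $\Delta = abc$. In every case the value is an affine linear function of $d$ whose slope is exactly $\Delta$, since replacing $d$ by $d+1$ increases the parenthesized factor by $2$, contributing $\Delta \cdot 2/2 = \Delta$ to the difference.

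Next I would handle the ``general $C$'' issue. Proposition \ref{hw_values} asserts its formula for general $C$, meaning on some Zariski-open dense subset $V_d \subseteq k^{sm_P(j)}$ for each degree $d$ in the critical range. To conclude that $\Delta_d$ takes the asserted value for general $C$ simultaneously for all $d = i, \ldots, i_f - 1$, I would invoke Lemma \ref{zar_comp} (iterated finitely many times) to intersect the finitely many Zariski-open dense sets $V_d$ and $V_{d+1}$; on the resulting Zariski-open dense set, both $h_{\mathcal{E}(C)}(d)$ and $h_{\mathcal{E}(C)}(d+1)$ are given by the formulas of Proposition \ref{hw_values}, and subtracting them yields $\Delta_d = \Delta$.

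There is essentially no obstacle in this argument: once Proposition \ref{hw_values} is in hand, the corollary is an arithmetic consequence together with a standard Zariski intersection. The only point worth flagging explicitly is the bookkeeping that $d+1 \le i_f$ for $d$ in the stated range, so that Proposition \ref{hw_values} truly applies at both $d$ and $d+1$; the definition of the range $d \in \{i, \ldots, i_f-1\}$ is tailored precisely to ensure this.
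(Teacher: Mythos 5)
Your proposal is correct and matches the paper's own proof exactly: both derive $\Delta_d$ by subtracting the two formulas of Proposition \ref{hw_values} at $d$ and $d+1$ (each affine in $d$ with slope $\Delta$) and use Lemma \ref{zar_comp} to ensure both hold simultaneously for general $C$. Your explicit note that $d+1 \le i_f$ keeps both degrees in the critical range is a small bookkeeping point the paper leaves implicit, but the argument is the same.
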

\begin{proof}
Recalling that $\Delta_d := h_{\mathcal{E}(C)}(d + 1) -
h_{\mathcal{E}(C)}(d)$, we obtain values for\\
$h_{\mathcal{E}(C)}(d+1)$ (for general $C$) and
$h_{\mathcal{E}(C)}(d)$ (for general $C$) from Proposition
\ref{hw_values}.  Since these formulas both hold for general $C$,
Lemma \ref{zar_comp} guarantees that they hold simultaneously for
general $C$, so subtracting them gives a formula for their
difference that holds for general $C$.
\end{proof}

\section{Formulas for $h_{\mathcal{F}(C')}(d)$ and
$\delta_d$}

\begin{prop} \label{hv_values}
For any of the families $F_1(a,i,s), F_2(a,i,s), G_1(a,b,i,s),\\
G_2(a,b,i,s), G_3(a,b,i,s), H_1(a,b,c,i,s)$, let $d$ lie in the
critical range. Then for general $C'$, the following formulas for
$h_{\mathcal{F}(C')}(d)$ apply. (Recall that $e := j-d$.)
\begin{align*}
&[For \; F_1, \; G_1, \; H_1]: \; h_{\mathcal{F}(C')}(d) =
\genfrac(){0cm}{0}{e+2}{2}.\\
&[For \; F_2\;]: \; h_{\mathcal{F}(C')}(d) =
2\genfrac(){0cm}{0}{e+2}{2}.\\
&[For \; G_2\;]: \;
h_{\mathcal{F}(C')}(d)= (e+1)^2.\\
&[For \; G_3\;]: \; h_{\mathcal{F}(C')}(d) =
\genfrac(){0cm}{0}{e+3}{3}.
\end{align*}
\end{prop}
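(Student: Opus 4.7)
The plan is to reduce each family to an application of one of the two results already developed in the previous chapter: Proposition \ref{k_3} (which handles the case when $\mathcal{W}_{\mathcal{M}_Q(j)}$ is supported on $m$ of the variables) and Corollary \ref{vspec_4} (which handles the single-variable square-free constraint in codimension 4). In every family, Corollary \ref{crop_rank_c} together with Lemma \ref{general_max} tell us that for general $C'$ the quantity $h_{\mathcal{F}(C')}(d)$ equals the rank of the $e^{th}$ cropped matrix $U$ of $\mathcal{F}$; so the task is to identify this rank with the stated formula in each of the six cases.

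I would split the six families according to the shape of $Q$. For $F_1$ and $F_2$ ($r=3$, $Q=(j,j,j)$), for $G_1$ ($r=4$, $Q=(j,j,j,0)$), for $G_3$ ($r=4$, $Q=(j,j,j,j)$), and for $H_1$ ($r=5$, $Q=(j,j,j,0,0)$), the space $\mathcal{W}_{\mathcal{M}_Q(j)}$ equals $k[x_1,\dots,x_m]_j$ for $m=3,3,3,4,3$ respectively, so Proposition \ref{k_3} applies with $p=m-1$. In each case its conclusion (ii) gives $\dim_k R_e * \mathcal{F}(C') = u\binom{e+p}{p}$, which reads $\binom{e+2}{2}$ for $F_1, G_1, H_1$, reads $2\binom{e+2}{2}$ for $F_2$, and reads $\binom{e+3}{3}$ for $G_3$. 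For $G_2$ ($r=4$, $Q=(1)$, $u=1$), Corollary \ref{vspec_4} directly gives rank $(e+1)^2$ for general $C'$, provided $d \ge e \ge 1$.

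The main work, and the only place any real checking is needed, is verifying the numerical hypotheses of Proposition \ref{k_3} and Corollary \ref{vspec_4} for each $d$ in the critical range $i \le d \le i_f$, using the parameter inequalities packaged into the definitions of the families. For $F_1$ ($u=1$, $p=2$) one needs $\binom{e+2}{2} \le \binom{d+2}{2}$, i.e.\ $e \le d$, which follows from $d \ge i \ge 2a$ and $j = i+a$. For $F_2$ ($u=2$, $p=2$) one needs $2\binom{e+2}{2} \le \binom{d+2}{2}$, and this is precisely what inequality \eqref{f2_ineq} was arranged to force: substituting $d=i$ and $e=(a-1)/2$ gives a quadratic in $i$ whose positive root is exactly the right-hand side of \eqref{f2_ineq}. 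For $G_1$, $G_3$, and $H_1$ the corresponding inequality is either immediate from $i \ge ab+1$ (respectively $i \ge abc$), or, in the case of $G_3$, is precisely condition \eqref{g3_ineq} (after noting that the left side of \eqref{g3_ineq} is $m_P(d) + \binom{e+3}{3}$ at $d=i$, dominating $\binom{d+3}{3}$). For $G_2$ one must check $d \ge e \ge 1$, which follows from $i \ge ab/2 + 2$ and $j = i + ab/2$.

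The one step that is not completely routine is tracking the arithmetic for $F_2$ and $G_3$: in those two cases the size inequality is not a cheap consequence of the other hypotheses but rather an equivalent reformulation of \eqref{f2_ineq} and \eqref{g3_ineq}, respectively. I would therefore handle those two cases explicitly, and treat the remaining four together in a short paragraph once the reduction to Proposition \ref{k_3} (or Corollary \ref{vspec_4}) has been set up. Apart from that bookkeeping, the proof is a direct citation of the cropped-matrix machinery built in the previous chapter.
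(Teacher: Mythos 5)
Your proposal is correct and follows essentially the same route as the paper: Proposition \ref{k_3}(ii) for $F_1,F_2,G_1,G_3,H_1$ and Corollary \ref{vspec_4} (via Proposition \ref{sf_1}) for $G_2$, with the only work being the numerical hypothesis checks on the critical range. One small inaccuracy: the size condition $2\binom{e+2}{2}\le\binom{d+2}{2}$ for $F_2$ (and likewise the $G_3$ condition) is not an equivalent reformulation of \eqref{f2_ineq} (resp.\ \eqref{g3_ineq}) — those inequalities are reserved at full strength for the intersection argument in Theorem \ref{hwv}, and here one only needs the much weaker consequences $i\ge a$ (resp.\ $i\ge m$), as the paper's chain $e\le a\le i\le d$ makes explicit.
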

\begin{proof}
For all of the families except $G_2$, we apply Proposition
\ref{k_3}, which requires us to verify that
$u\genfrac(){0cm}{0}{e+2}{2} \le
\genfrac(){0cm}{0}{d+2}{2}$.  We consider each family in turn.\\

For $F_1$, $u=1$ and $e \le a \le 2a\le i \le d$.\\

For $G_1$, $u=1$ and $e \le ab \le ab+1 \le i \le d$.\\

For $H_1$, $u=1$ and $e \le abc \le i \le d$.\\

For $G_3$, $u=1$ and $e \le m \le i \le d$, where $m \le i$ follows
immediately
from \eqref{g3_ineq}.\\

For $F_2$, $u=2$ and
\begin{align*}
2\genfrac(){0cm}{0}{e + 2}{2} &\le 2\genfrac(){0cm}{0}{(a-1)/2 + 2}{2}\\
&= 2\genfrac{}{}{}{0}{[(a+3)/2][(a+1)/2]}{2}\\
&=\genfrac{}{}{}{0}{(a+3)(a+1)}{4}\\
&= \genfrac{}{}{}{0}{a^2+4a+3}{4}\\
&\le\genfrac{}{}{}{0}{a^2+3a+2}{2}\\
&=\genfrac(){0cm}{0}{a+2}{2} \le\genfrac(){0cm}{0}{i+2}{2}
\le\genfrac(){0cm}{0}{d+2}{2},
\end{align*}
\noindent where $a \le i$ follows immediately from
\eqref{f2_ineq}.\\

For $G_2$, we apply Proposition \ref{sf_1}, which requires that $e
\le d$.  We have\\ $e \le ab/2 \le ab/2 + 2 \le i \le d$.
\end{proof}

\begin{cor} \label{delta}
For the families $F_1(a,i,s), F_2(a,i,s), G_1(a,b,i,s),
G_2(a,b,i,s),\\ G_3(a,b,i,s), H_1(a,b,c,i,s)$, for $d :=
i,...,i_f-1$, and for general $C'$ the formulas for $\delta_d$ are
as follows.
\begin{align*}
&[For \; F_1, \; G_1, \; H_1]: \; \delta_d = -(e+1).\\
&[For \;F_2\;]: \delta_d = -2(e+1).\\
&[For \; G_2\;]: \delta_d = -(2e+1).\\
&[For \; G_3\;]: \delta_d =-\genfrac(){0cm}{0}{e+2}{2}.
\end{align*}
\end{cor}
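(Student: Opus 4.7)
The plan is to obtain Corollary \ref{delta} as a direct consequence of Proposition \ref{hv_values}, in exact parallel with how Corollary \ref{Delta} was derived from Proposition \ref{hw_values}. For each of the six families, Proposition \ref{hv_values} supplies an explicit formula for $h_{\mathcal{F}(C')}(d)$ valid for general $C'$ and for every $d$ in the critical range. Since $\delta_d := h_{\mathcal{F}(C')}(d+1) - h_{\mathcal{F}(C')}(d)$, and since $e := j - d$ means that the value $d+1$ corresponds to $e-1$, the calculation reduces to subtracting two evaluations of the formula.

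First I would invoke Lemma \ref{zar_comp} to guarantee that, for general $C' \in k^{u m_Q(j)}$, the formulas of Proposition \ref{hv_values} hold simultaneously at $d$ and at $d+1$ (both lie in the critical range when $d = i, \ldots, i_f - 1$, so the proposition applies to each). Having secured this, I would compute in each family:
\begin{align*}
\text{($F_1,G_1,H_1$):}\quad &\delta_d = \tbinom{e+1}{2} - \tbinom{e+2}{2} = -(e+1),\\
\text{($F_2$):}\quad &\delta_d = 2\tbinom{e+1}{2} - 2\tbinom{e+2}{2} = -2(e+1),\\
\text{($G_2$):}\quad &\delta_d = e^2 - (e+1)^2 = -(2e+1),\\
\text{($G_3$):}\quad &\delta_d = \tbinom{e+2}{3} - \tbinom{e+3}{3} = -\tbinom{e+2}{2},
\end{align*}
using the standard Pascal identity $\binom{N+1}{k} - \binom{N}{k} = \binom{N}{k-1}$ in the binomial cases.

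There is no real obstacle here: the entire substance of the result is packaged into Proposition \ref{hv_values}, whose proof in turn rests on Proposition \ref{k_3} (for $F_1, F_2, G_1, G_3, H_1$) and Proposition \ref{sf_1} (for $G_2$). The only point requiring a brief sanity check is that the critical range is contained in the range of degrees for which the formulas of Proposition \ref{hv_values} apply at \emph{both} $d$ and $d+1$; this is immediate, because the bounds $e \le d$ (or the corresponding bounds for the non-constrained case) that were verified in the proof of Proposition \ref{hv_values} depend monotonically on $d$ and hold automatically for $d+1$ whenever $d+1 \le i_f$. Hence the corollary follows by two lines of binomial arithmetic once the proposition is cited.
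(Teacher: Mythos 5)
Your proposal is correct and matches the paper's own proof essentially verbatim: the paper likewise cites Proposition \ref{hv_values} at both $d$ and $d+1$, invokes Lemma \ref{zar_comp} to make the two general-$C'$ statements hold simultaneously, and subtracts using the Pascal identity. The arithmetic in all four cases checks out, and your observation that both $d$ and $d+1$ lie in the critical range for $d = i,\ldots,i_f-1$ is exactly the (implicit) justification the paper relies on.
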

\begin{proof}
Recalling that $\delta_d := h_{\mathcal{F}(C')}(d + 1) -
h_{\mathcal{F}(C')}(d)$, we obtain values for\\
$h_{\mathcal{F}(C')}(d+1)$ (for general $C'$) and
$h_{\mathcal{F}(C')}(d)$ (for general $C'$) from Proposition
\ref{hv_values}.  Since these formulas both hold for general $C'$,
Lemma \ref{zar_comp} guarantees that they hold simultaneously for
general $C'$, so subtracting them gives a formula for their
difference that holds for general $C'$.  In performing the
subtraction, we use the following well-known formula for binomial
coefficients.
\begin{equation*}
\genfrac(){0cm}{0}{N}{M} = \genfrac(){0cm}{0}{N-1}{M} +
\genfrac(){0cm}{0}{N-1}{M-1}.
\end{equation*}
\end{proof}

\section{Computing $h_{\mathcal{E}(C)\bigoplus\mathcal{F}(C')}(d)$}

\begin{thm}{} \label{hwv}
For any of the families $F_1(a,i,s), F_2(a,i,s), G_1(a,b,i,s),\\
G_2(a,b,i,s), G_3(a,b,i,s), H_1(a,b,c,i,s)$, for general $C$ and
$C'$, we have \begin{enumerate}
\item [(i)] $\mathcal{E}(C)\cap\mathcal{F}(C') = \{0\}$ and
\item[(ii)] $h_{\mathcal{E}(C)\bigoplus\mathcal{F}(C')}(d) =
h_{\mathcal{E}(C)}(d)+h_{\mathcal{F}(C')}(d)$, simultaneously for
all degrees $d$ in the critical range.\end{enumerate}
\end{thm}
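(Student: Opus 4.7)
The strategy is to reduce both (i) and (ii) to the single family of non-intersection statements
\[
R_{j-d}*\mathcal{E}(C) \,\cap\, R_{j-d}*\mathcal{F}(C') \,=\, \{0\},
\]
asserted to hold for each $d$ in the critical range for general $(C,C') \in k^{sm_P(j)} \times k^{um_Q(j)}$, together with (i), which I would handle by a separate direct dimension argument. Once (i) is in hand, Lemma \ref{splice_lemma} converts each degree's intersection statement into the corresponding equality in (ii), since equality in that lemma is precisely the vanishing of the intersection of the relevant $(j-d)$-th derivatives. Because only finitely many degrees enter the critical range, Lemma \ref{zar_comp} glues the finitely many Zariski-open conditions into a single dense open locus, yielding the \emph{simultaneous} conclusion.

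For (i) on its own: the condition $\mathcal{E}(C)\cap\mathcal{F}(C')=\{0\}$ is equivalent to linear independence of the $s+u$ vectors $f_1(C),\ldots,f_s(C),g_1(C'),\ldots,g_u(C')$ in $\mathcal{D}_j$, a Zariski-open condition on $(C,C')$. To see the open set is nonempty it suffices to exhibit one good pair: specialize each $f_i$ to a distinct element of $\mathcal{M}_P(j)$ (using $s \le m_P(j)$, by Corollary \ref{full_type}) and each $g_\ell$ to a distinct element of $\mathcal{M}_Q(j)\setminus\mathcal{M}_P(j)$, producing $s+u$ distinct monomials and hence linearly independent vectors. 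Verifying $u \le \#(\mathcal{M}_Q(j)\setminus\mathcal{M}_P(j))$ is immediate in each of the six families, where $u\in\{1,2\}$ is small and $\mathcal{M}_Q(j)\setminus\mathcal{M}_P(j)$ is substantial by direct inspection of the chosen $P$ and $Q$.

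For the intersection at each $d$ in the critical range, the key observation is that $R_e*\mathcal{E}(C) \subseteq \mathcal{V}_{\mathcal{M}_P(d)}$ for \emph{every} $C$, simply because $\mathcal{E}(C) \subseteq \mathcal{W}_{\mathcal{M}_P(j)}$. Consequently, it suffices to apply the appropriate non-intersection result to $\mathcal{F}$ using the fixed (i.e.\ $C$-independent) ambient subspace $\mathcal{W} := \mathcal{V}_{\mathcal{M}_P(d)}$. For the families $F_1, F_2, G_1, G_3, H_1$ (where $Q$ is of the form $(j,\ldots,j,0,\ldots,0)$), the relevant tool is Proposition \ref{k_3}(iii), taking $\mathcal{Z}:=\mathcal{V}_{\mathcal{M}_P(d)}\cap k[x_1,\ldots,x_m]_d$, which is monomial-generated and contains $\mathcal{W}\cap k[x_1,\ldots,x_m]_d$. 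For $G_2$ (where $Q=(1)$), I would invoke Proposition \ref{sf_1}(iii), taking $\mathcal{Z}$ to be a balanced monomial-generated subspace containing $\mathcal{V}_{\mathcal{M}_P(d)}\cap\mathcal{V}_{\mathcal{M}_Q(d)}$, padded symmetrically so that equal numbers of generating monomials contain and omit the variable $x_1$.

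The main technical obstacle will be the family-by-family verification of the dimensional hypotheses that make these propositions applicable: the bound $\dim\mathcal{Z}\le\binom{d+p}{p}-u\binom{e+p}{p}$ from Proposition \ref{k_3}(iii) for the first five families, and the pair of inequalities \eqref{sf_11}-\eqref{sf_12} from Proposition \ref{sf_1}(iii) for $G_2$. Using the closed-form values of $m_P(d)$ supplied by Lemma \ref{mp_values}, together with Propositions \ref{r-2_fold}, \ref{spec_3}, and \ref{spec_4} for the precise counts when $d$ approaches the boundary, these reduce to elementary polynomial inequalities in $a,b,c,i,d$. The parameter restrictions baked into each family's definition (the bound $i\ge 2a$ for $F_1$, the quadratic bound \eqref{f2_ineq} for $F_2$, $i\ge ab+1$ for $G_1$, $i\ge ab/2+2$ for $G_2$, the inequality \eqref{g3_ineq} for $G_3$, and $i\ge abc$ for $H_1$) were chosen precisely so that these bounds hold throughout the critical range. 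Once the inequalities are verified family by family, assembling the Zariski-open sets via Lemma \ref{zar_comp} and invoking Lemma \ref{splice_lemma} finishes the proof.
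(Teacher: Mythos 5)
Your overall strategy is the same as the paper's: reduce (ii) to the statements $R_e*\mathcal{E}(C)\cap R_e*\mathcal{F}(C')=\{0\}$ for each $d$ in the critical range, prove each via Proposition \ref{k_3}(iii) (with $\mathcal{W}:=\mathcal{V}_{\mathcal{M}_P(d)}$ and $\mathcal{Z}:=\mathcal{V}_{\mathcal{M}_P(d)}\cap\mathcal{V}_{\mathcal{M}_Q(d)}$) for the five families with $Q$ of the form $(j,\dots,j,0,\dots,0)$ and via Proposition \ref{sf_1}(iii) for $G_2$, and glue the finitely many general conditions with Lemma \ref{zar_comp}. The key observation $R_e*\mathcal{E}(C)\subseteq\mathcal{V}_{\mathcal{M}_P(d)}$ for all $C$ is exactly the paper's. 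The one place you diverge is part (i), and your separate argument there has a problem: the family definitions impose only that $s$ be $h_{\mathcal{E}}$-sufficient, which is a \emph{lower} bound on $s$; nothing forces $s\le m_P(j)$ (that hypothesis is added explicitly only later, in Lemma \ref{type_wv}). When $s>m_P(j)$ the $s+u$ generators can never be linearly independent, so your claimed equivalence of (i) with linear independence is false and your exhibited ``good point'' does not exist, yet (i) still holds and is needed. The paper avoids this entirely by observing that the derivative-level statement you already prove \emph{implies} (i): a nonzero $f\in\mathcal{E}(C)\cap\mathcal{F}(C')$ would have a nonzero $e^{\text{th}}$ partial derivative (characteristic $0$), which would lie in $R_e*\mathcal{E}(C)\cap R_e*\mathcal{F}(C')$. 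Adopting that one-line deduction repairs your proof and makes the monomial-specialization argument unnecessary. Beyond that, the family-by-family numerical verifications of $\dim_k\mathcal{Z}\le\binom{d+p}{p}-u\binom{e+p}{p}$ and of \eqref{sf_11}--\eqref{sf_12}, which you defer, constitute the bulk of the paper's proof; your sketch correctly identifies what must be checked and which parameter restrictions drive each inequality, but those computations (in particular the quadratic-formula manipulation for $F_2$ and the determination of $c=ab$ for $G_2$ via Proposition \ref{r-1_fold} applied to $(0,a-1,b-1)$) still have to be carried out.
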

\begin{proof}
Since we must verify (ii) for only finitely many values of $d$, by
Lemma \ref{zar_comp} it is enough to verify it separately for each
degree $d$. If (i) has been established, to verify (ii) it is
enough, by Lemma \ref{splice_lemma}, to show that
\begin{equation} \label{lastl}
\text{For general }C\text{ and }C',\text{ }R_e*\mathcal{E}(C) \cap
R_e*\mathcal{F}(C') = \{0\}. \end{equation} We remark that
$\eqref{lastl}$ also implies (i), since the existence of a nonzero
polynomial  $f \in \mathcal{E}(C) \cap \mathcal{F}(C')$ would imply
the existence of a nonzero $e^{th}$ partial derivative of $f$, which
would lie in $R_e*\mathcal{E}(C) \cap
R_e*\mathcal{F}(C')$. Thus, to prove the theorem, it is enough to prove
$\eqref{lastl}$.\\

To show $\eqref{lastl}$, for all families except $G_2$, we apply
Proposition \ref{k_3} as follows. Let $\mathcal{W} :=
\mathcal{V}_{\mathcal{M}_P(d)}$ (defined in $\eqref{vlabel} )$. We
observe that, for each family other than $G_2,\\
\mathcal{V}_{\mathcal{M}_P(d)} = k[x_1,...,x_m]_d$, where $m$ has
the value 3 or 4.  Let $\mathcal{Z} :=
\mathcal{V}_{\mathcal{M}_P(d)} \cap \mathcal{V}_{\mathcal{M}_Q(d)}$.
To use Proposition \ref{k_3} with these values of $\mathcal{W}$ and
$\mathcal{Z}$, we must verify that
\begin{equation} \label{numerical}
\dim_k\mathcal{Z} \le \genfrac(){0cm}{0}{d+p}{p}
-u\genfrac(){0cm}{0}{e+p}{p},
\end{equation}
\noindent where $p := m-1$.  Assuming this verification has been
done, we conclude from part (iii) of Proposition \ref{k_3} that, for
general $C'$, $\mathcal{W} \cap R_e*\mathcal{F}(C') = \{0\}$.  Since
$R_e*\mathcal{E}(C) \subseteq \mathcal{W}$, we conclude that
$R_e*\mathcal{E}(C) \cap R_e*\mathcal{F}(C') =
\{0\}$, as required.\\

Before proceeding to the numerical verifications of $\eqref{numerical}$, we consider
the family $G_2$, and apply Proposition \ref{sf_1} to verify $\eqref{lastl}$ as
follows.  As with the other five families, we again let $\mathcal{W} :=
\mathcal{V}_{\mathcal{M}_P(d)}$ and $\mathcal{Z} := \mathcal{V}_{\mathcal{M}_P(d)}
\cap \mathcal{V}_{\mathcal{M}_Q(d)}$. To use Proposition \ref{sf_1} with these
values of $\mathcal{W}$ and $\mathcal{Z}$, we must verify $\eqref{sf_11}$ and
$\eqref{sf_12}$. Assuming these verifications have been done, we proceed as before,
concluding from part (iii) of Proposition \ref{sf_1} that, for general $C'$,
$\mathcal{W} \cap R_e*\mathcal{F}(C') = \{0\}$. Again, since $R_e*\mathcal{E}(C)
\subseteq \mathcal{W}$, we conclude that $R_e*\mathcal{E}(C) \cap
R_e*\mathcal{F}(C') = \{0\}$, as required. We now proceed to the verifications of
$\eqref{numerical},
\eqref{sf_11},$ and $\eqref{sf_12}$.\\

For $F_1$, \begin{align*}
\mathcal{Z} :=& \mathcal{V}_{\mathcal{M}_P(d)} \cap \mathcal{V}_{\mathcal{M}_Q(d)}\\
=& \mathcal{V}_{\mathcal{M}_P(d)} \cap \mathcal{D}_d\\
=& \mathcal{V}_{\mathcal{M}_P(d)}.
\end{align*}

From Proposition \ref{mp_values}, $\dim_k\mathcal{Z} =
\genfrac{}{}{}{0}{a(2d-a+3)}{2}$, so to use Proposition \ref{k_3} we
must verify that

\begin{equation*}
\genfrac{}{}{}{0}{a(2d-a+3)}{2} \le \genfrac(){0cm}{0}{d+2}{2} -
\genfrac(){0cm}{0}{e+2}{2},
\end{equation*}

\noindent or equivalently that

\begin{equation*}
\genfrac(){0cm}{0}{d+2}{2}  - \genfrac{}{}{}{0}{a(2d-a+3)}{2} -
\genfrac(){0cm}{0}{e+2}{2}\ge 0.
\end{equation*}\\

We are considering values of $d \ge 2a$ and $e \le a$, so

\begin{align*}
\genfrac(){0cm}{0}{d+2}{2}  - \genfrac{}{}{}{0}{a(2d-a+3)}{2} -
\genfrac(){0cm}{0}{e+2}{2}&\ge \\
\genfrac(){0cm}{0}{d+2}{2} - \genfrac{}{}{}{0}{a(2d-a+3)}{2}  -
\genfrac(){0cm}{0}{a+2}{2}&= \\
[ (d^2 + 3d + 2) -(2ad - a^2 + 3a) -(a^2 + 3a + 2)]/2&=\\
[d(d-2a+3) - 6a]/2&\ge\\
[ 2a(2a-2a+3) -6a ]/2&= 0.
\end{align*}

For $F_2$, again
\begin{align*}
\mathcal{Z} :=& \mathcal{V}_{\mathcal{M}_P(d)} \cap \mathcal{V}_{\mathcal{M}_Q(d)}\\
=& \mathcal{V}_{\mathcal{M}_P(d)} \cap \mathcal{D}_d\\
=& \mathcal{V}_{\mathcal{M}_P(d)},
\end{align*}
\noindent and again $\dim_k\mathcal{Z} =
\genfrac{}{}{}{0}{a(2d-a+3)}{2}$.\\

To use Proposition \ref{k_3} we must verify that

\begin{equation*}
\genfrac{}{}{}{0}{a(2d-a+3)}{2} \le \genfrac(){0cm}{0}{d+2}{2} -
2\genfrac(){0cm}{0}{e+2}{2},
\end{equation*}

\noindent or equivalently that

\begin{equation*}
\genfrac(){0cm}{0}{d+2}{2}  - \genfrac{}{}{}{0}{a(2d-a+3)}{2} -
2\genfrac(){0cm}{0}{e+2}{2}\ge 0.
\end{equation*}\\

We are considering values of $e \le (a-1)/2$, and $d \ge i$, so

\begin{align*}
\genfrac(){0cm}{0}{d+2}{2}  - \genfrac{}{}{}{0}{a(2d-a+3)}{2} -
2\genfrac(){0cm}{0}{e+2}{2}&\ge \\
\genfrac(){0cm}{0}{d+2}{2}  - \genfrac{}{}{}{0}{a(2d-a+3)}{2} -
2\genfrac(){0cm}{0}{(a-1)/2+2}{2}&= \\
[ (d^2 + 3d + 2) -(2ad - a^2 + 3a) -2((a-1)^2/4 + 3(a-1)/2 + 2)]/2&=\\
[ (d^2 + 3d + 2) -(2ad - a^2 + 3a) -((a^2 - 2a +1)/2 + 3(a-1) + 4)]/2&=\\
[d(d-2a+3) + a^2/2 -5a + 1/2]/2&=\\
[d(d-2a+3) + \genfrac{}{}{}{0}{a^2+1}{2} -5a]/2]&\ge\\
[i(i-2a+3) +\genfrac{}{}{}{0}{a^2+1}{2} -5a]/2&=\\
[i^2 + (3-2a)i +(\genfrac{}{}{}{0}{a^2+1}{2} -5a)]/2.
\end{align*}

\noindent We must demonstrate that the expression within square
brackets is always non-negative.  We recall that, in defining the
family $F_2$, we have required that
\begin{equation*}
i \ge \genfrac{}{}{}{0}{2a-3 + \sqrt{2a^2+8a + 7}}{2}.
\end{equation*}
Using the quadratic formula to solve the quadratic inequality
\begin{equation*}i^2 + (3-2a)i
+(\genfrac{}{}{}{0}{a^2+1}{2} -5a) \ge 0,
\end{equation*}
\noindent and noting that we are only interested in positive values
of $i$ as solutions, we have:
\begin{align*}
i \ge &\genfrac{}{}{}{0}{2a-3 + \sqrt{(4a^2-12a + 9) - (2a^2 +2
-20a)}}{2}, \text{ or} \\
 i \ge &\genfrac{}{}{}{0}{2a-3 + \sqrt{2a^2+8a + 7}}{2},
\end{align*}
\noindent which has been assumed true for the family $F_2$.\\

For $G_1$,
\begin{align*}
\mathcal{Z} :=& \mathcal{V}_{\mathcal{M}_P(d)} \cap \mathcal{V}_{\mathcal{M}_Q(d)}\\
=& \mathcal{V}_{\mathcal{M}_P(d)} \cap k[x,y,z]_d,
\end{align*}
\noindent or equivalently the vector subspace of $k[x,y,z]_d$
spanned by monomials constrained by $P := (a-1,b-1)$. Its dimension
is $ab$ by Proposition \ref{r-1_fold} since\\ $d \ge ab \ge a+b >
(a-1) + (b-1)$. We must verify that

\begin{equation*}
ab \le \genfrac(){0cm}{0}{d+2}{2} - \genfrac(){0cm}{0}{e+2}{2},
\end{equation*}

\noindent or equivalently that

\begin{equation*}
\genfrac(){0cm}{0}{d+2}{2} - \genfrac(){0cm}{0}{e+2}{2} - ab \ge 0.
\end{equation*}\\

We are considering values of $d \ge ab+1$ and $e \le ab$, so

\begin{align*}
\genfrac(){0cm}{0}{d+2}{2} -
\genfrac(){0cm}{0}{e+2}{2} - ab &\ge \\
\genfrac(){0cm}{0}{ab+3}{2} -
\genfrac(){0cm}{0}{ab+2}{2} - ab &= \\
[ (a^2b^2 + 5ab + 6) - (a^2b^2 + 3ab + 2) - 2ab]/2&=\\
[ 4]/2&\ge 0.
\end{align*}

For $G_3$,
\begin{align*}
\mathcal{Z} :=& \mathcal{V}_{\mathcal{M}_P(d)} \cap \mathcal{V}_{\mathcal{M}_Q(d)}\\
=& \mathcal{V}_{\mathcal{M}_P(d)} \cap \mathcal{D}_d\\
=& \mathcal{V}_{\mathcal{M}_P(d)}.
\end{align*}
\noindent By Proposition \ref{mp_values}, the dimension of
$\mathcal{Z}$ is $\genfrac{}{}{}{0}{ab[2d-a-b+4]}{2}$.  To use
Proposition \ref{k_3}, we must verify for $d = i, i+1, i+2$ that
\begin{equation*}
\genfrac{}{}{}{0}{ab[2d-a-b+4]}{2} + \genfrac(){0cm}{0}{j-d+ 3}{3}
\le \genfrac(){0cm}{0}{d+3}{3}.
\end{equation*}

\noindent For the case that $d = i, e = j-d = m$, this is just
\eqref{g3_ineq}.  Moving to $d = i + 1,\\ e = j-d = m-1$, the first
term on the left increases by $ab$, the second term decreases, and
the term on the right increases by
\begin{align*}\genfrac(){0cm}{0}{i + 4}{3} - \genfrac(){0cm}{0}{i +
3}{3} &=\\
 \genfrac(){0cm}{0}{i + 3}{2}  &\ge \\
 \genfrac(){0cm}{0}{m + 3}{2} &\ge\\
 \genfrac(){0cm}{0}{m + 2}{2} &\ge ab,
\end{align*}

\noindent so the required inequality holds for the case $d = i + 1,
e = j-d = m-1$.  A similar computation establishes the inequality
for $d = i + 2, e = j-d = m-2$.\\

For $H_1$,
\begin{align*}
\mathcal{Z} :=& \mathcal{V}_{\mathcal{M}_P(d)} \cap \mathcal{V}_{\mathcal{M}_Q(d)}\\
=& \mathcal{V}_{\mathcal{M}_P(d)} \cap k[x,y,z]_d,
\end{align*}
\noindent or equivalently the vector subspace of $k[x,y,z]_d$
spanned by monomials constrained by $P := (a-1,b-1,c-1)$. Its
dimension is $0$ by Proposition \ref{r_fold} since $d \ge abc \ge
a+b+c
> (a-1) + (b-1) + (c-1)$. We must verify that

\begin{equation*}
0 = \dim_k\mathcal{Z} \le \genfrac(){0cm}{0}{d+2}{2} -
\genfrac(){0cm}{0}{e+2}{2},
\end{equation*}

\noindent or equivalently that

\begin{equation*}
\genfrac(){0cm}{0}{d+2}{2} - \genfrac(){0cm}{0}{e+2}{2} \ge 0.
\end{equation*}\\

Since we are considering values of $d \ge abc$ and $e \le abc$, this
is immediate.\\

For $G_2$, we use Proposition \ref{sf_1}, taking $\mathcal{Z} :=
\mathcal{V}_{\mathcal{M}_P(d)} \cap \mathcal{V}_{\mathcal{M}_Q(d)}$,
which is the vector subspace of $D_d$ constrained by $K :=
(1,a-1,b-1)$. Its dimension is $2ab$ by Proposition \ref{r-1_fold}
since $d \ge ab/2 +2 \ge (a + b -2) + 2 \ge 1 + (a-1) + (b-1)$.
Looking further, we can see that exactly $ab$ of the generators do
not contain the variable $x$ by again applying Proposition
\ref{r-1_fold}, this time to the constraint $K' :=(0,a-1,b-1)$.  So
to apply Proposition \ref{sf_1} with this choice of $\mathcal{Z}$,
we use $ab$ for the
parameter $c$ of that proposition.\\

According to Proposition \ref{sf_1} (iii), there are two
inequalities to verify for $d$ in the critical range.  The first is
that

\begin{equation*}
m_P(d) -2c \ge m_P(e),
\end{equation*}

\noindent or equivalently that
\begin{equation*}
m_P(d) - m_P(e) -2c \ge 0.
\end{equation*}\\

We are considering values of $d \ge ab/2 + 2$ and $e \le ab/2$, with
$c=ab$, so

\begin{align*}
m_P(d)- m_P(e) -2c &=\\
(d+1)^2 - (e+1)^2 -2ab &\ge \\
(ab/2 + 3)^2 - (ab/2 + 1)^2 -2ab &=\\
(a^2b^2/4 + 3ab + 9) - (a^2b^2/4 + ab + 1) -2ab&=\\
8 &> 0.
\end{align*}\\

The second inequality to be verified is that
\begin{equation*}
 \genfrac(){0cm}{0}{d+(r-2)}{r-2} - c \ge
\genfrac(){0cm}{0}{(e-1)+(r-2)}{r-2},
\end{equation*}

\noindent or equivalently that
\begin{equation*}
 \genfrac(){0cm}{0}{d+(r-2)}{r-2} -
\genfrac(){0cm}{0}{(e-1)+(r-2)}{r-2} - c \ge 0.
\end{equation*}

Once again we are considering values of $d \ge ab/2 + 2$ and $e \le
ab/2$, with $c=ab$ and $r=4$, so

\begin{align*}
 \genfrac(){0cm}{0}{d+(r-2)}{r-2} -
\genfrac(){0cm}{0}{(e-1)+(r-2)}{r-2} - c &=\\
\genfrac(){0cm}{0}{d+2}{2} - \genfrac(){0cm}{0}{e+1}{2} - ab
&\ge\\
\genfrac(){0cm}{0}{ab/2+4}{2} - \genfrac(){0cm}{0}{ab/2+1}{2} - ab
&=\\
[(ab/2+4)(ab/2+3) - (ab/2+1)(ab/2) -2ab]/2 &=\\
[(a^2b^2/4 + 7ab/2 + 12) -(a^2b^2/4 + ab/2) -2ab]/2 &=\\
[ab+12]/2 &> 0.
\end{align*}
\end{proof}

\section{Proof of Non-Unimodality}

\begin{thm}{} \label{non_uni}
For general $C$ and $C'$, all of the families $F_1(a,i,s),
F_2(a,i,s),\\ G_1(a,b,i,s), G_2(a,b,i,s), G_3(a,b,i,s),
H_1(a,b,c,i,s)$, are non-unimodal. The Hilbert functions of
$F_2,G_2$, and $G_3$ have single drops with initial degree $i$. The
Hilbert functions of $F_1,G_1$, and $H_1$ have double drops with
initial degree $i$.
\end{thm}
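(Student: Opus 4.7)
The plan is to reduce the statement to an arithmetic check of the signs of a few first differences. By Theorem \ref{hwv}, for general $C$ and $C'$ the identity $h_{\mathcal{E}(C)\bigoplus \mathcal{F}(C')}(d) = h_{\mathcal{E}(C)}(d) + h_{\mathcal{F}(C')}(d)$ holds simultaneously for every $d$ in the critical range. Applying Lemma \ref{add_deltas} between consecutive critical degrees then gives
\[
h_{\mathcal{E}(C)\bigoplus \mathcal{F}(C')}(d+1) - h_{\mathcal{E}(C)\bigoplus \mathcal{F}(C')}(d) = \Delta_d + \delta_d
\]
whenever $d$ and $d+1$ both lie in the critical range. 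Corollaries \ref{Delta} and \ref{delta} give closed forms for $\Delta_d$ and $\delta_d$, each valid for a generic choice; Lemma \ref{zar_comp} lets us intersect the finitely many generic conditions so that all relevant formulas hold simultaneously at a single pair $(C,C')$.

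For each family, $\Delta_d$ is constant on the critical range, equal to the quantity $\Delta$ (namely $a$, $ab$, or $abc$ according to codimension), while only $\delta_d$ varies with $d$. Substituting $e = j - d$ using the socle degree prescribed in the definition of the family, I will verify the values of $\Delta_d + \delta_d$ at $d = i$, $i+1$, and (for the double-drop families) $i+2$. For $F_1$, $G_1$, and $H_1$ one has $\delta_d = -(e+1)$ and $e$ takes the successive values $\Delta, \Delta-1, \Delta-2$, producing the sequence of increments $-1, 0, +1$; this forces $h(i) > h(i+1) = h(i+2) < h(i+3)$, a double drop of initial degree $i$. For $F_2$ and $G_2$ a parallel computation with $\delta_d = -2(e+1)$ and $e = (a-1)/2$, respectively $\delta_d = -(2e+1)$ and $e = ab/2$, yields the two increments $-1, +1$, hence a single drop. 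For $G_3$ the two increments are $ab - \genfrac(){0cm}{0}{m+2}{2}$ and $ab - \genfrac(){0cm}{0}{m+1}{2}$; the defining conditions on $m$ (largest integer with $\genfrac(){0cm}{0}{m+1}{2} < ab$, combined with $ab$ not being of the form $\genfrac(){0cm}{0}{N}{2}$) force the first to be strictly negative and the second strictly positive, again a single drop.

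The main difficulty has already been surmounted: Theorem \ref{hwv}, via its transversality input from Propositions \ref{k_3} and \ref{sf_1}, secured the additivity of the two Hilbert functions throughout the critical range, and the numerical hypotheses in the definitions of the six families (for instance $i \ge 2a$ for $F_1$, the quadratic lower bound \eqref{f2_ineq} for $F_2$, the inequality \eqref{g3_ineq} for $G_3$, and $i \ge ab+1$, $i \ge ab/2+2$, $i \ge abc$ for $G_1, G_2, H_1$) were imposed precisely so that the critical range falls inside the range of validity of Proposition \ref{hv_values} and of Theorem \ref{hwv}. With those facts in hand, the present theorem amounts to the elementary arithmetic sketched above; the only care required is to match each family's socle degree $j$ against $d$ to extract the correct value of $e$, which is what makes the cancellation in $\Delta_d + \delta_d$ produce the tight $\pm 1$ patterns that drive non-unimodality.
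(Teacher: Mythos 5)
Your proposal is correct and follows essentially the same route as the paper's own proof: invoke Theorem \ref{hwv} for additivity on the critical range, apply Lemma \ref{add_deltas} with the increment formulas from Corollaries \ref{Delta} and \ref{delta}, and check the signs of $\Delta_d+\delta_d$ family by family. The arithmetic you sketch (increments $-1,0,+1$ for the double-drop families, $-1,+1$ for $F_2$ and $G_2$, and the strict inequalities on $ab$ versus $\genfrac(){0cm}{0}{m+1}{2}$ and $\genfrac(){0cm}{0}{m+2}{2}$ for $G_3$) matches the paper's computations exactly.
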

\begin{proof}
By Theorem \ref{hwv}, we are entitled to use Lemma \ref{add_deltas},
in which we use the values of $\Delta_d$ and $\delta_d$ given in
Corollaries \ref{Delta} and \ref{delta}. We let $h :=
h_{\mathcal{E}(C)\bigoplus\mathcal{F}(C')}$.  For each family, our
approach is to determine relationships between consecutive values of
$h(d)$ that demonstrate the non-unimodality of $h$ in the critical
range.

For $F_2$, we have $j-i = (a-1)/2$, so
\begin{align*}
h(i+1) &= h(i) + \Delta_i + \delta_i\\
&=h(i) + a - 2[(a-1)/2 +1]\\
&=h(i) -1,
\end{align*}
\noindent and
\begin{align*}
h(i+2) &= h(i+1) + \Delta_{i+1} + \delta_{i+1}\\
&=h(i+1) + a - 2([(a-1)/2 -1] +1)\\
&=h(i+1) + 1.
\end{align*}

For $G_2$, we have $j-i = ab/2$, so
\begin{align*}
h(i+1) &= h(i) + \Delta_i + \delta_i\\
&=h(i) + ab - [2(ab/2) + 1]\\
&=h(i) -1,
\end{align*}
\noindent and
\begin{align*}
h(i+2) &= h(i+1) + \Delta_{i+1} + \delta_{i+1}\\
&=h(i+1) + ab - [2(ab/2 -1) + 1]\\
&=h(i+1) + 1.
\end{align*}

For $G_3$, we have $j-i = m$, where by definition
\begin{equation*}
\genfrac(){0cm}{0}{m+1}{2} < ab < \genfrac(){0cm}{0}{m+2}{2}.
\end{equation*}
\noindent We have
\begin{align*}
h(i+1) &= h(i) + \Delta_i + \delta_i\\
&=h(i) + ab - \genfrac(){0cm}{0}{m+2}{2}\\
&< h(i),
\end{align*}
\noindent and
\begin{align*}
h(i+2) &= h(i+1) + \Delta_{i+1} + \delta_{i+1}\\
&=h(i+1) + ab - \genfrac(){0cm}{0}{(m-1)+2}{2}\\
&=h(i+1) + ab - \genfrac(){0cm}{0}{m+1}{2}\\
&> h(i+1).
\end{align*}

For $F_1, G_1$, and $H_1$, we have $j-i = \Delta$, so
\begin{align*}
h(i+1) &= h(i) + \Delta_i + \delta_i\\
&=h(i) + \Delta - (\Delta +1)\\
&=h(i) -1,
\end{align*}
\noindent and
\begin{align*}
h(i+2) &= h(i+1) + \Delta_{i+1} + \delta_{i+1}\\
&=h(i+1) + \Delta - ([\Delta -1] +1)\\
&=h(i+1),
\end{align*}
\noindent and
\begin{align*}
h(i+3) &= h(i+2) + \Delta_{i+2} + \delta_{i+2}\\
&=h(i) + \Delta - ([\Delta -2] +1)\\
&=h(i+1) + 1.
\end{align*}
\end{proof}

\section{Computation of Types}

\begin{lem} \label{type_wv}
For any of the families $F_1(a,i,s), F_2(a,i,s), G_1(a,b,i,s),
G_2(a,b,i,s),\\ G_3(a,b,i,s), H_1(a,b,c,i,s)$, let $s$ be chosen
such that $s \le m_P(j)$.  Then for general $C$ and $C'$, the type
of $A_{\mathcal{E}(C)\bigoplus\mathcal{F}(C')}$ is $s + u$.
\end{lem}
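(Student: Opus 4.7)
The plan is to compute the type as the dimension of the subspace of $\mathcal{D}_j$ dual to the algebra. By Matlis Duality (Theorem \ref{matlis_duality}), $A_{\mathcal{E}(C) \bigoplus \mathcal{F}(C')}$ has $\mathcal{W}_A = \mathcal{E}(C) \bigoplus \mathcal{F}(C')$, and applying Lemma \ref{hilb_char} at $d = j$ (where $R_0 = k$) gives $\dim_k [A_{\mathcal{E}(C) \bigoplus \mathcal{F}(C')}]_j = \dim_k (\mathcal{E}(C) \bigoplus \mathcal{F}(C'))$. So it suffices to show that, for general $C$ and $C'$, this dimension equals $s + u$.

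First, I would establish that $\dim_k \mathcal{E}(C) = s$ for general $C$. This is precisely Corollary \ref{full_type} applied to $\mathcal{E}$, which is applicable because we have assumed $s \le m_P(j)$. Next, I would establish that $\dim_k \mathcal{F}(C') = u$ for general $C'$. Again this is Corollary \ref{full_type}, this time applied to $\mathcal{F}$, provided we can verify $u \le m_Q(j)$. For each of the six families this is immediate: for $F_1, F_2, G_1, G_3, H_1$ the constraint $Q$ permits at least all monomials in some three or four variables, so $m_Q(j) \ge \binom{j+2}{2}$ is far larger than $u \le 2$; for $G_2$ we have $Q = (1)$ and $u = 1$, and certainly $m_Q(j) \ge 1$. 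So $\dim_k \mathcal{F}(C') = u$ holds for general $C'$ in every family.

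Third, Theorem \ref{hwv}(i) tells us that for general $C$ and $C'$, $\mathcal{E}(C) \cap \mathcal{F}(C') = \{0\}$, so the sum really is an internal direct sum. Each of these three conditions holds on a dense Zariski-open subset of its parameter space, so by (finitely many applications of) Lemma \ref{zar_comp} they hold simultaneously for general $(C,C') \in k^{sm_P(j)} \times k^{um_Q(j)}$. For such a pair,
\begin{equation*}
\dim_k(\mathcal{E}(C) \bigoplus \mathcal{F}(C')) = \dim_k \mathcal{E}(C) + \dim_k \mathcal{F}(C') = s + u,
\end{equation*}
which is the required type.

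There is no serious obstacle: the content of the lemma has already been assembled in Corollary \ref{full_type} and Theorem \ref{hwv}(i). The only routine bookkeeping is the case-by-case check that $u \le m_Q(j)$, which is trivial given that $u \in \{1,2\}$ and the chosen $Q$'s in every family admit many monomials of degree $j$.
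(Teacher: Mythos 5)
Your proposal is correct and follows essentially the same route as the paper: the paper's proof consists exactly of citing Corollary \ref{full_type} for $\dim_k\mathcal{E}(C)=s$ and $\dim_k\mathcal{F}(C')=u$, and Theorem \ref{hwv} for $\mathcal{E}(C)\cap\mathcal{F}(C')=\{0\}$. The extra details you supply (type equals $\dim_k\mathcal{W}_A$ via Lemma \ref{hilb_char}, the check that $u\le m_Q(j)$, and the appeal to Lemma \ref{zar_comp}) are exactly the steps the paper leaves implicit.
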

\begin{proof}
By Corollary \ref{full_type}, for general $C$ and $C'$, the
dimension of $\mathcal{E}(C)$ is $s$ and the dimension of
$\mathcal{F}(C')$ is $u$.  By Theorem \ref{hwv}, $\mathcal{E}(C)
\cap \mathcal{F}(C') = \{0\}$.
\end{proof}

Among the defining parameters, we will call $a,b$, and $c$ the
\emph{P-parameters}, since they define the constraint $P$.

\begin{lem}
For any of the families $F_1(a,i,s), F_2(a,i,s), G_1(a,b,i,s),
G_2(a,b,i,s),\\ G_3(a,b,i,s), H_1(a,b,c,i,s)$, for any fixed choice
of the $P$-parameters, the value of $j-i_f$ is constant, given by
the following formulas:
\begin{align*}
&[For \; F_1 \; G_1, \; and \; H_1]: \; \Delta -3.\\
&[For \; F_2]: \; (a-1)/2 - 2.\\
&[For \; G_2]: \; ab/2 - 2.\\
&[For \; G_3]: \; m-2, \; where \; m \; is \; the \; greatest \;
integer \; such \; that \genfrac(){0cm}{0}{m+1}{2} < ab.
\end{align*}
\end{lem}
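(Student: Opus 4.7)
The plan is to observe that this lemma is essentially a bookkeeping exercise that follows immediately from (i) the definition of each family, which prescribes the socle degree $j$ as $i$ plus a quantity depending only on the $P$-parameters, and (ii) Theorem~\ref{non_uni}, which identifies whether the drop is single or double and hence pins down $i_f-i$ as either $2$ or $3$. Once both of these are in hand, the identity $j-i_f=(j-i)-(i_f-i)$ shows that $j-i_f$ is a function of the $P$-parameters alone, independent of the remaining parameters $i$ and $s$.

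First I would separate the six families into two groups according to the drop type established in Theorem~\ref{non_uni}. The families $F_2$, $G_2$, $G_3$ exhibit a single drop of initial degree $i$, so by Definition~\ref{crit} the final degree is $i_f=i+2$. The families $F_1$, $G_1$, $H_1$ exhibit a double drop of initial degree $i$, so by Definition~\ref{crit} the final degree is $i_f=i+3$. This is the only place Theorem~\ref{non_uni} is used.

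Next I would consult the definition of each family to record $j-i$. For $F_1$, the definition sets $j=i+a$, so $j-i_f=a-3=\Delta-3$; for $G_1$, $j=i+ab$, giving $j-i_f=ab-3=\Delta-3$; for $H_1$, $j=i+abc$, giving $j-i_f=abc-3=\Delta-3$. Similarly, for $F_2$, $j=i+(a-1)/2$, so $j-i_f=(a-1)/2-2$; for $G_2$, $j=i+ab/2$, so $j-i_f=ab/2-2$; and for $G_3$, $j=i+m$ where $m$ is the largest integer with $\binom{m+1}{2}<ab$, so $j-i_f=m-2$. In every case the right-hand side depends only on the $P$-parameters $a,b,c$ (and, in $G_3$, on the derived quantity $m$ which itself is determined by $a,b$), so it is constant as $i$ and $s$ vary.

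There is no real obstacle here: the lemma is a direct substitution of known quantities and nothing beyond Definition~\ref{crit} and Theorem~\ref{non_uni} is needed. The only subtlety worth flagging in the write-up is that, although for $F_2$, $G_2$, and $G_3$ the quantities $(a-1)/2$, $ab/2$, and $m$ are forced to be integers by the parity conventions built into the respective family definitions, this is already ensured at the definition stage (oddness of $a$, parity of $ab$, and the fact that $m$ is an integer by construction), so no additional verification is required.
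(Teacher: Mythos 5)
Your proof is correct and matches the paper's argument: read off $j-i$ from each family's definition, use the single/double-drop classification to get $i_f-i\in\{2,3\}$, and subtract. The only cosmetic difference is that you cite Theorem~\ref{non_uni} for the drop type, whereas the paper relies on the drop-type assignment already built into the families' definitions; either justification is available at this point in the text.
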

\begin{proof}
From the definitions, the value of $j-i$ is $\Delta$ for $F_1, G_1$,
and $H_1$; $(a-1)/2$ for $F_2$; $ab/2$ for $G_2$; and the greatest
integer $m$ such that $\genfrac(){0cm}{0}{m+1}{2} < ab$ for $G_3$.
For $F_1, G_1$, and $H_1$, which have double drops, $i_f = i + 3$;
for the others, which have single drops, $i_f = i + 2$.
\end{proof}

We define the \emph{ceiling function} $\ceil(x)$ as follows.  For
any non-negative real number $x$, $\ceil(x)$ is the the smallest
integer
$c$ such that $c \ge x$.\\

\begin{lem} \label{best_s}
For any of the families $F_1(a,i,s), F_2(a,i,s), G_1(a,b,i,s), G_2(a,b,i,s),\\
G_3(a,b,i,s), H_1(a,b,c,i,s)$, for general $C$ and $C'$, the requirement that $s$ is
$h_{\mathcal{E}}$-sufficient is equivalent to the condition that $s \ge \ceil
\genfrac{}{}{}{0}{m_P(i_f)}{m_P(j-i_f)}$.  For any fixed choice of the
$P$-parameters, this formula attains the smallest possible value when $i$ is as
small as permitted by the definition of the family.
\end{lem}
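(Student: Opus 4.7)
The plan is to unwind the definition of $h_{\mathcal{E}}$-sufficient directly in terms of the block dimensions of the $(j-i_f)^{th}$ cropped matrix, and then to reduce the minimization problem over $i$ (for fixed $P$-parameters) to a monotonicity statement for $m_P$.

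First I would recall from Definition \ref{cropped} that the $(j-i_f)^{th}$ cropped matrix $U$ of $\mathcal{E}$ has its rows indexed by pairs $(E,\iota)$ with $E \in \mathcal{M}_P(j-i_f)$ and $\iota \in \{1,\dots,s\}$, and its columns indexed by $D \in \mathcal{M}_P(i_f)$. Hence $U$ is an $sm_P(j-i_f) \times m_P(i_f)$ matrix. The condition that $s$ is $h_{\mathcal{E}}$-sufficient was defined to mean exactly that $U$ has at least as many rows as columns, i.e.
\begin{equation*}
sm_P(j-i_f) \ge m_P(i_f).
\end{equation*}
Since $s$ is a positive integer and $m_P(j-i_f)$ is a positive integer (note $j-i_f \ge 0$ by the preceding lemma together with the numerical restrictions imposed in each family's definition), this inequality is equivalent to $s \ge \ceil(m_P(i_f)/m_P(j-i_f))$, which is the first assertion.

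For the second assertion, fix the $P$-parameters. The preceding lemma gives an explicit formula showing that $j-i_f$ depends only on the $P$-parameters, so the denominator $m_P(j-i_f)$ is constant as $i$ varies. On the other hand, $i_f$ equals $i+2$ or $i+3$ depending on whether the family has a single or double drop, so $i_f$ is a strictly increasing function of $i$. Since each constraint $P$ in use is $(r-2)$-fold constrained (so $P$ fails to constrain at least one coordinate), Lemma \ref{h_inc} applies and shows that $m_P$ is a nondecreasing function on $\{0,1,\dots,j\}$; combined with the easily-checked fact that $i_f \le j$ throughout the admissible range (this reduces to $j-i_f \ge 0$, already verified above), we conclude that $m_P(i_f)$ is a nondecreasing function of $i$. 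The ratio $m_P(i_f)/m_P(j-i_f)$, and hence its ceiling, is therefore a nondecreasing function of $i$, so it attains its minimum at the smallest value of $i$ permitted by the definition of the family.

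The argument is essentially bookkeeping: there is no real obstacle, only the minor verification that the exponent $j-i_f$ stays in the range where Lemma \ref{h_inc} and Proposition \ref{u_count} give the stated block dimensions. This is a short check against the numerical hypotheses bundled into each family's definition (for instance $a\ge 4$ for $F_1$, $a\ge 7$ for $F_2$, $b\ge a\ge 2$ with the relevant inequality on $i$ for $G_1,G_2,G_3$, and $c\ge b\ge a\ge 2$ for $H_1$), each of which forces the value of $j-i_f$ given in the previous lemma to be non-negative.
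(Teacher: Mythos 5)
Your proof is correct and follows essentially the same route as the paper's: unwind the definition of $h_{\mathcal{E}}$-sufficient via the $sm_P(j-i_f)\times m_P(i_f)$ dimensions of the cropped matrix, note that $j-i_f$ is constant in $i$ for fixed $P$-parameters by the preceding lemma, and conclude monotonicity in $i$ from Lemma \ref{h_inc} together with $i_f=i+2$ or $i+3$. The extra checks you record (positivity of $m_P(j-i_f)$ and $j-i_f\ge 0$) are sensible but do not change the argument.
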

\begin{proof}
The definition of $h_{\mathcal{E}}$-sufficient is that the $(j-i_f)^{th}$ cropped
matrix of $\mathcal{E}$ has at least as many rows as columns.  It has $sm_P(j-i_f)$
rows and $m_P(i_f)$ columns, so the condition is that $sm_P(j-i_f) \ge m_P(i_f)$, or
equivalently that $s \ge \genfrac{}{}{}{0}{m_P(i_f)}{m_P(j-i_f)}$.
We introduce the ceiling function because $s$ must be an integer.\\

For any of the families, once we have fixed a choice of
$P$-parameters, the lemma follows if we show that $\ceil
\genfrac{}{}{}{0}{m_P(i_f)}{m_P(j-i_f)}$ is a non-decreasing
function of $i$.  Since the ceiling function is nondecreasing, it is
enough to show that $\genfrac{}{}{}{0}{m_P(i_f)}{m_P(j-i_f)}$ is
non-decreasing as a function of $i$, and by the previous lemma it is
enough that $m_P(i_f)$ be non-decreasing as a function of $i$. By
Lemma \ref{h_inc}, it is enough that $i_f$ be a non-decreasing
function of $i$.  But $i_f := i + 2$ or $i + 3$, depending on
whether the family has a single drop or a double drop.
\end{proof}

\begin{lem}
For any of the families $F_1(a,i,s), F_2(a,i,s), G_1(a,b,i,s),
G_2(a,b,i,s),\\ G_3(a,b,i,s), H_1(a,b,c,i,s)$, for any fixed choice
of the $P$-parameters and $i$,\\ let $s = \ceil
\genfrac{}{}{}{0}{m_P(i_f)}{m_P(j-i_f)}$.  Then for general $C$ and
$C'$, the type of the algebra so obtained is $u + s$.
\end{lem}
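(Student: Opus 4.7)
The plan is essentially to assemble three ingredients that have already been put in place: Lemma \ref{best_s} guarantees that the stated choice $s = \ceil\genfrac{}{}{}{0}{m_P(i_f)}{m_P(j-i_f)}$ is $h_{\mathcal{E}}$-sufficient, so the hypotheses of Theorem \ref{non_uni} (and hence of Theorem \ref{hwv}) are satisfied; Lemma \ref{type_wv} then computes the type as $u+s$, provided one knows that $s \le m_P(j)$.

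First I would invoke Lemma \ref{best_s} to conclude that, for this $s$, the algebra $A_{\mathcal{E}(C)\bigoplus\mathcal{F}(C')}$ falls under all the preceding general results. Next I would verify the hypothesis $s \le m_P(j)$ needed for Lemma \ref{type_wv}. Because $m_P(j)$ is an integer, the inequality $\ceil\genfrac{}{}{}{0}{m_P(i_f)}{m_P(j-i_f)} \le m_P(j)$ is equivalent to
\begin{equation*}
m_P(i_f) \le m_P(j)\cdot m_P(j-i_f).
\end{equation*}
By Lemma \ref{h_inc}, $m_P$ is non-decreasing on $\{0,1,\dots,j\}$, so $m_P(i_f) \le m_P(j)$ (since $i_f \le j$ in every family, as a quick pass through the definitions shows: $j-i_f$ equals $\Delta -3$, $(a-1)/2-2$, $ab/2-2$, or $m-2$, each of which is $\ge 0$ under the stated restrictions $a\ge 4$, $a\ge 7$, $ab\ge 4$, $ab>\binom{m+1}{2}$, etc.), and $m_P(j-i_f) \ge m_P(0) = 1$. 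Multiplying the first inequality by $m_P(j-i_f)\ge 1$ yields the required bound.

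With $s \le m_P(j)$ established, Lemma \ref{type_wv} applies directly and states that, for general $C\in k^{sm_P(j)}$ and $C'\in k^{um_Q(j)}$, the type of $A_{\mathcal{E}(C)\bigoplus\mathcal{F}(C')}$ equals $s+u$. Combined with Theorem \ref{non_uni}, which is simultaneously valid for general $C,C'$ by Lemma \ref{zar_comp}, we get a non-unimodal level algebra of the predicted type; this finishes the proof.

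The only real subtlety — and the one step I would have to be careful about — is the check that $j - i_f \ge 0$ in every family, because the proof of the inequality $s \le m_P(j)$ hinges on $m_P(j-i_f)$ being a well-defined positive integer. Since each family's definition comes with a lower bound on $i$ together with the formulas $i_f = i+2$ or $i+3$ and $j-i \in \{a,(a-1)/2,ab,ab/2,m,abc\}$, this check is a short case analysis against the standing hypotheses ($a\ge 4$ for $F_1$, $a\ge 7$ for $F_2$, $b\ge a\ge 2$ for the $G$ families, $c\ge b\ge a\ge 2$ for $H_1$, and the specific bound $\binom{m+1}{2}<ab$ for $G_3$), and it poses no conceptual difficulty.
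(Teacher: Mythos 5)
Your proposal is correct and follows essentially the same route as the paper: invoke Lemma \ref{best_s} to see the chosen $s$ is $h_{\mathcal{E}}$-sufficient, reduce to checking $s \le m_P(j)$ for Lemma \ref{type_wv}, and establish that bound from $m_P(i_f) \le m_P(j)$ (Lemma \ref{h_inc}) together with $m_P(j-i_f) \ge 1$. The paper phrases the ceiling estimate slightly differently, as $\ceil\genfrac{}{}{}{0}{m_P(i_f)}{m_P(j-i_f)} \le \ceil(m_P(i_f)) = m_P(i_f) \le m_P(j)$, but this is the same arithmetic rearranged.
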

\begin{proof}
By Lemma \ref{best_s}, the specified value of $s$ yields an algebra
in the family.  To verify the formula for the type, by Lemma
\ref{type_wv} we must verify that $s \le m_P(j)$. We have
\begin{equation*}
\ceil \genfrac{}{}{}{0}{m_P(i_f)}{m_P(j-i_f)} \le \ceil(m_P(i_f))=
m_P(i_f) \le m_P(j),
\end{equation*}
\noindent where the last inequality follows from Lemma \ref{h_inc}.
\end{proof}

Up to this point, we have stated results that emphasized the
similarities between the families.  Now, we focus on the particulars
of each family in turn.

\begin{thm}{} \label{f1}
In the family $F_1(a,i,s)$, we have
\begin{enumerate}
\item [(i)] For a fixed choice of $a$ and $i$,
the smallest possible type $t = 1 + s$ is achieved by taking $s =
\ceil \genfrac{}{}{}{0}{a(2i-a+9)}{a^2-3a+2}$.
\item[(ii)] For a fixed choice of $a$, the smallest possible
type $t = 1 + s$ is achieved by taking $i = 2a$, $s = \ceil
\genfrac{}{}{}{0}{a(3a+9)}{a^2-3a+2}$.  With these choices, the type
is greater than 5 for $a \le 20$, and the type is exactly 5 for $a
\ge 21$.
\item[(iii)] For any choice of $a$, $i$, and $s$, the type is at least 5.
\end{enumerate}
\begin{proof}
For (i), we combine the various lemmas in this section with Lemma
\ref{spec_3}.
\begin{align*}
s &= \ceil \genfrac{}{}{}{0}{m_P(i_f)}{m_P(j-i_f)}\\
 &= \ceil \genfrac{}{}{}{0}{m_P(i + 3)}{m_P(a - 3)}\\
&= \ceil \genfrac{}{}{}{0}{a[2(i+3)-a+3]/2}{a([2(a-3)-a+3]/2) + 1}\\
&= \ceil \genfrac{}{}{}{0}{a(2i-a+9)}{a(a-3) +2}\\
&= \ceil \genfrac{}{}{}{0}{a(2i-a+9)}{a^2-3a+2}.
\end{align*}

For (ii), we observe that $s$ is an increasing function of $i$, and
we substitute the smallest permissible value $i = 2a$ to obtain $t =
1 + s = 1 + \ceil \genfrac{}{}{}{0}{a(3a+9)}{a^2-3a+2}$. We observe
that the fraction $\alpha := \genfrac{}{}{}{0}{3a^2+9a}{a^2-3a+2}$
is a decreasing function of $a$ that is always strictly greater than
3. Evaluating for $a=20$ gives $\alpha =
\genfrac{}{}{}{0}{1380}{342}
> 4$, and for $a = 21$,
$\alpha = \genfrac{}{}{}{0}{1512}{380} < 4$.\\

Part (iii) follows immediately from part (ii).
\end{proof}
\end{thm}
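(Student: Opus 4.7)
The plan is to combine the earlier lemmas that express the type as $u+s$ where $s$ is as small as possible while remaining $h_{\mathcal{E}}$-sufficient, with the explicit formula for $m_P(d)$ in codimension $3$ from Proposition \ref{spec_3}. For family $F_1$, we have $u=1$, $j = i+a$, $P = (a-1)$, and a double drop so $i_f = i+3$ and $j - i_f = a-3$. By Lemma \ref{best_s}, the smallest permissible $s$ is $\ceil(m_P(i+3)/m_P(a-3))$.

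For part (i), I would compute the numerator and denominator separately. Since $i \ge 2a$ we have $i+3 \ge q-1$, so Proposition \ref{spec_3}(i) gives $m_P(i+3) = a(2i - a + 9)/2$. The key subtlety is the denominator: $a-3$ is precisely the boundary value $q-2$, so we must use Proposition \ref{spec_3}(ii), which contributes the ``$+1$'' correction: $m_P(a-3) = a(2(a-3) - a + 3)/2 + 1 = (a^2 - 3a + 2)/2$. Dividing yields exactly $\ceil(a(2i-a+9)/(a^2 - 3a + 2))$.

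For part (ii), I would observe that $a(2i - a + 9)/(a^2 - 3a + 2)$ is linear and increasing in $i$ (and the ceiling is monotone), so the minimum over $i \ge 2a$ is attained at $i = 2a$, giving the stated formula. To determine the threshold $a$-value, set $\alpha(a) := (3a^2 + 9a)/(a^2 - 3a + 2)$ and solve $\alpha(a) \le 4$, which is equivalent to $a^2 - 21a + 8 \ge 0$. The quadratic formula gives the critical value $(21 + \sqrt{409})/2 \approx 20.6$, so I would finish by computing $\alpha(20) = 1380/342 > 4$ and $\alpha(21) = 1512/380 < 4$ to confirm the cutoff at $a = 21$.

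Part (iii) will follow immediately once I verify $\alpha(a) > 3$ for every admissible $a \ge 4$: the inequality $3a^2 + 9a > 3(a^2 - 3a + 2)$ reduces to $18a > 6$, which is trivially true. Hence $\ceil(\alpha(a)) \ge 4$, and since $s$ is non-decreasing in $i$, we get $s \ge 4$ and therefore $t = 1 + s \ge 5$ for every legitimate choice of parameters. The only real obstacle is making sure the boundary application of Proposition \ref{spec_3}(ii) is invoked rather than (i); without the $+1$ correction in $m_P(a-3)$ the denominator would factor as $a(a-3)$ and the analysis (and in particular the exact threshold $a = 21$) would come out wrong.
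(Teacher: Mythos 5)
Your proposal is correct and follows essentially the same route as the paper: Lemma \ref{best_s} plus Proposition \ref{spec_3}, with the crucial ``$+1$'' correction in $m_P(a-3)$ since $a-3$ is the boundary case $q-2$, then minimization at $i=2a$ and the evaluations $\alpha(20)>4$, $\alpha(21)<4$. Your explicit quadratic-inequality derivation of the threshold and the check that $\alpha>3$ merely spell out what the paper asserts by monotonicity, so there is no substantive difference.
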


\begin{thm}{} \label{f2}
In the family $F_2(a,i,s)$, we have
\begin{enumerate}
\item [(i)]For a fixed choice of $a$ and $i$, the
smallest possible type $t = 2 + s$ is achieved by taking $s
=\ceil\genfrac{}{}{}{0}{4a(2i-a+7)}{(a-1)(a-3)}$.
\item[(ii)]For a fixed choice of $a$, the smallest possible type $t = 2+s$
is achieved by taking  $i = M := \ceil\genfrac{}{}{}{0}{2a - 3 +
\sqrt{2a^2+8a+7}}{2}$, $s =
\ceil\genfrac{}{}{}{0}{4a(2M-a+7)}{(a-1)(a-3)}$. With these choices,
the type is 12 for $a= 205$, $a = 209$, and $a \ge 213$. The
corresponding values of $M$ are 350, 357, and 364.  For any other
value of $a$, the type is greater than 12.
\item[(iii)] For any choice of $a$, $i$, and $s$, the type is at least 12.
\end{enumerate}
\end{thm}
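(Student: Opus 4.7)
The plan is to mirror the proof of Theorem \ref{f1}: compute the minimal admissible value of $s$ explicitly using Lemma \ref{best_s} and the formulas for $m_P(d)$ in Proposition \ref{spec_3}, then analyze the resulting expression as $a \to \infty$ and finish with a finite numerical check for the small values of $a$.

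For (i), since $F_2$ has a single drop, $i_f = i + 2$, and since $j = i + (a-1)/2$ we have $j - i_f = (a-5)/2$. The hypothesis $i \ge M$ combined with $M \ge a$ (which follows from $\sqrt{2a^2+8a+7} \ge 3$) gives $i_f \ge a + 2 > q-1$, so Proposition \ref{spec_3}(i) applies:
\[m_P(i_f) = \frac{a\bigl(2(i+2) - a + 3\bigr)}{2} = \frac{a(2i - a + 7)}{2}.\]
Since $(a-5)/2 < a$, Proposition \ref{spec_3}(iii) gives
\[m_P(j - i_f) = \genfrac(){0cm}{0}{(a-5)/2 + 2}{2} = \genfrac(){0cm}{0}{(a-1)/2}{2} = \frac{(a-1)(a-3)}{8}.\]
Lemma \ref{best_s} then yields $s = \ceil \genfrac{}{}{}{0}{4a(2i-a+7)}{(a-1)(a-3)}$, proving (i).

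For (ii), the formula in (i) is non-decreasing in $i$, so minimizing $s$ forces $i = M$, giving the stated value. To verify that the resulting type $t = 2 + s$ equals $12$ precisely for the claimed values of $a$, I would analyze
\[\alpha(a) := \frac{4a(2M - a + 7)}{(a-1)(a-3)}.\]
Expanding $\sqrt{2a^2 + 8a + 7} = a\sqrt{2} + O(1)$ gives $M = a(1 + \sqrt{2}/2) + O(1)$, and so $\alpha(a) \to 4(1 + \sqrt{2}) \approx 9.6568$ as $a \to \infty$. Hence $\ceil \alpha(a) = 10$ for all sufficiently large $a$. The precise list of $a$ achieving type $12$ would then be confirmed by a finite numerical check in the transition region, verifying $\ceil \alpha(a) = 10$ exactly when $a = 205$, $a = 209$, or $a \ge 213$, with corresponding $M$-values $350, 357, 364$.

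For (iii), any admissible triple $(a, i, s)$ satisfies $s \ge \ceil \alpha(a) \ge 10$ by the monotonicity in $i$ combined with (ii), hence $t = s + 2 \ge 12$. The main obstacle will be the numerical bookkeeping in (ii): since both $M$ and $\alpha(a)$ involve ceilings of irrational expressions in $a$, the ceiling on $M$ can push $\alpha(a)$ slightly above $10$ even when its continuous analog lies well below, producing the sporadic pattern with gaps at $a = 207$ and $a = 211$. Pinning down exactly which odd $a$ give $\ceil \alpha(a) = 10$ requires examining the fractional parts in the narrow transition window, but the limit computation restricts the range of $a$ that need checking, so the remaining work reduces to a routine finite computation.
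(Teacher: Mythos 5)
Your proposal follows the paper's proof essentially step for step: part (i) is the identical computation via Lemma \ref{best_s} and Proposition \ref{spec_3} (with $j-i_f=(a-1)/2-2=(a-5)/2$), and parts (ii)--(iii) rest on the same limit $4(1+\sqrt{2})\approx 9.66$ followed by a finite check, with (iii) deduced from (ii) exactly as in the paper. The one detail the paper supplies that you leave implicit is how to make ``sufficiently large'' effective despite the inner ceiling in $M$: it sandwiches $s$ between two explicit nonincreasing functions $L(a)\le s\le U(a)$ that both equal $10$ at $a=220$, runs the machine check for $a\le 220$, and then discards the even values of $a$ appearing in that check (your exceptional list $205$, $209$, $a\ge 213$ is correct only because $F_2$ requires $a$ odd).
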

\begin{proof}
For (i), we combine the various lemmas in this section with Lemma
\ref{spec_3}. To evaluate $m_P((a-1)/2 - 2)$, we observe $(a-1)/2 -
2 \le a$.

\begin{align*}
s &= \ceil \genfrac{}{}{}{0}{m_P(i_f)}{m_P(j-i_f)}\\
 &= \ceil \genfrac{}{}{}{0}{m_P(i + 2)}{m_P((a-1)/2 - 2)}\\
&= \ceil \genfrac{}{}{}{0}{a[2(i+2)-a+3]/2}{\genfrac(){0cm}{0}{(a-1)/2 - 2+2}{2}}\\
&= \ceil \genfrac{}{}{}{0}{a[2i - a + 7]/2}{\genfrac(){0cm}{0}{(a-1)/2}{2}}\\
&=\ceil \genfrac{}{}{}{0}{a[2i - a + 7]/2}{[(a-1)/2][(a-1)/2-1]/2}\\
&= \ceil\genfrac{}{}{}{0}{a(2i-a+7)}{(a-1)(a-3)/4}\\
&=\ceil\genfrac{}{}{}{0}{4a(2i-a+7)}{(a-1)(a-3)}.
\end{align*}

For (ii), we observe that $s$ is a non-decreasing function of $i$,
and we substitute the smallest permissible value $i = M$ to obtain
\begin{align*}
s &= \ceil\genfrac{}{}{}{0}{4a(2M-a+7)}{(a-1)(a-3)} \\
&=\ceil\genfrac{}{}{}{0}{4a(2\ceil(\genfrac{}{}{}{0}{2a - 3 +
\sqrt{2a^2+8a+7}}{2})-a+7)}{(a-1)(a-3)}.
\end{align*}
Because of the effect of the inner ceiling function, we do not claim that $s$ is
non-increasing as a function of $a$. In fact, using a computer, we calculated $s$
for integer values of $a$ up to 220, and found that $s= 10$ for $a$ = 198, 202, 205,
206, 208, 209, and 210, and for all values of $a \ge 212$; and that $s > 10$ for all
other values of $a$.  However, the formula for $s$ is sandwiched between
\begin{equation*}
L(a) := \ceil\genfrac{}{}{}{0}{4a(2(\genfrac{}{}{}{0}{2a - 3 +
\sqrt{2a^2+8a+7}}{2})-a+7)}{(a-1)(a-3)}
\end{equation*}
\noindent and
\begin{equation*}
U(a) := \ceil\genfrac{}{}{}{0}{4a(2(\genfrac{}{}{}{0}{2a - 3 +
\sqrt{2a^2+8a+7}}{2}+1)-a+7)}{(a-1)(a-3)},
\end{equation*}
\noindent which are both nonincreasing as functions of $a$ and which
both approach the value $\ceil [4(2 + \sqrt{2} - 1)] =  \ceil [4 +
4\sqrt{2}] = 10$ as a limit for large values of $a$.  Since $L(220)
= U(220) = 10$, it must be that $s=10$ for $a \ge 220$.\\

Our construction of $F_2$ requires $a$ to be odd, so we have type $2
+ s = 12$ for $a = 205$ and 209, and for any odd $a \ge 213$. For $a
= 205$, $M = 350, j = 452$. For $a = 209$, $M = 357, j = 461$. For
$a = 213$, $M =
364, j = 470$.\\

Part (iii) follows immediately from part (ii).
\end{proof}

\begin{thm}{} \label{g1}
In the family $G_1(a,b,i,s)$, we have
\begin{enumerate}
\item [(i)] For a fixed choice of $a$, $b$, and $i$,
the smallest possible type $t = 1 + s$ is achieved by taking $s =
\ceil \genfrac{}{}{}{0}{2i-a-b+10}{2ab-a-b-2}$.
\item[(ii)] For a fixed choice of $a$ and $b$, the smallest possible
type $t = 1 + s$ is achieved by taking $i = ab + 1$, $s = \ceil
\genfrac{}{}{}{0}{2ab-a-b+12}{2ab-a-b-2}$.  With these choices, the
type is 3 for $(a,b)$ if and only if either $a \ge 3, b\ge4$ or $a
\ge 2, b \ge 6$; the lowest values of $(a,b)$ for which the type is
4 are $(2,4)$ and $(3,3)$.
\item[(iii)] For any choice of $a$, $b$, $i$, and $s$, the type is at least 3.
\end{enumerate}
\end{thm}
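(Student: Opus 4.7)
The plan is to reduce the theorem to a closed-form calculation of the quantity $s = \lceil m_P(i_f)/m_P(j-i_f)\rceil$ furnished by Lemma~\ref{best_s}, and then to analyze when the resulting expression is $\le 2$.

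First I would set up the parameters for the family $G_1(a,b,i,s)$. Since $G_1$ exhibits a double drop, $i_f = i+3$, and by the definition of the family $j-i_f = ab-3$. The constraint is $P=(a-1,b-1)$, so we are in the $(r-2)$-fold constrained situation of codimension $4$ and may appeal to Proposition~\ref{spec_4} for values of $m_P$. For both $d=i+3$ and $d=ab-3$ I need to check that $d \ge q-1 = a+b-3$, so that $m_P(d)=ab(2d-a-b+4)/2$. In the first case this follows from $i\ge ab+1$ and $(a-1)(b-1)\ge 1$; in the second case from $ab-3\ge a+b-3 \Leftrightarrow (a-1)(b-1)\ge 1$, which holds for $a,b\ge 2$. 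Substituting,
\begin{equation*}
 s = \left\lceil \frac{ab(2(i+3)-a-b+4)/2}{ab(2(ab-3)-a-b+4)/2}\right\rceil
     = \left\lceil \frac{2i-a-b+10}{2ab-a-b-2}\right\rceil,
\end{equation*}
which is exactly the formula in (i). Since this ratio is an increasing function of $i$ on $i\ge ab+1$, plugging in $i=ab+1$ yields the formula for $s$ in (ii).

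For the classification in (ii), I would write $M := 2ab-a-b$ and rewrite $s = \lceil (M+12)/(M-2)\rceil = \lceil 1 + 14/(M-2)\rceil$. Then $s\le 2$ (i.e.\ type is $3$) iff $(M+12)/(M-2)\le 2$, which simplifies to $M\ge 16$. A short case check among pairs $(a,b)$ with $2\le a\le b$ shows that $M\ge 16$ is equivalent to $(a\ge 3$ and $b\ge 4)$ or $(a=2$ and $b\ge 6)$. Similarly $s\le 3$ (type~$\le 4$) is equivalent to $M\ge 9$, and the minimal pairs (in the componentwise partial order on $(a,b)$ with $a\le b$) that satisfy $9\le M<16$ are $(2,4)$ and $(3,3)$; any other pair with type exactly $4$ dominates one of these two. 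This gives the characterization stated in (ii).

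Part (iii) is then essentially free: the ratio $(2i-a-b+10)/(2ab-a-b-2)$ exceeds $1$ iff $i> ab-6$, and since we are assuming $i\ge ab+1 > ab-6$, the ratio is strictly greater than $1$, so $s\ge 2$ and hence $t=1+s\ge 3$. The only genuine work in the proof is the bookkeeping confirming that $(i+3)$ and $(ab-3)$ lie in the range where Proposition~\ref{spec_4}(i) applies, and the elementary inequality manipulations for the classification; there is no deep obstacle, and the main thing to be careful about is the ordering convention used to identify the ``lowest'' pairs producing each type.
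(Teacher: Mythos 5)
Your proposal is correct and follows essentially the same route as the paper: apply Lemma \ref{best_s} to get $s=\ceil(m_P(i+3)/m_P(ab-3))$, evaluate both via Proposition \ref{spec_4}(i) after checking $ab-3\ge a+b-3$, minimize over $i$ at $i=ab+1$, and then analyze the resulting ratio. The only (cosmetic) difference is that you solve the inequalities symbolically via $\alpha=1+14/(M-2)$ with $M=2ab-a-b$, whereas the paper tabulates $\alpha$ numerically for the six boundary pairs; both yield the same classification.
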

\begin{proof}
For (i), we combine the various lemmas in this section with Lemma
\ref{spec_4}.  To evaluate $m_P(ab-3)$, we observe $ab-3 \ge a+b -
3$.
\begin{align*}
s &= \ceil \genfrac{}{}{}{0}{m_P(i_f)}{m_P(j-i_f)}\\
 &= \ceil \genfrac{}{}{}{0}{m_P(i + 3)}{m_P(ab - 3)}\\
&= \ceil \genfrac{}{}{}{0}{ab[2(i+3)-a -b +4]/2}{ab[2(ab-3)-a-b+4]/2}\\
&= \ceil \genfrac{}{}{}{0}{2i-a-b+10}{2ab-a-b-2}.
\end{align*}

For (ii), we observe that $s$ is an increasing function of $i$, and
we substitute the smallest permissible value $i = ab+1$ to obtain $t
= 1 + s = 1 + \ceil \genfrac{}{}{}{0}{2ab-a-b+12}{2ab-a-b-2}$.  We
observe that the fraction $\alpha :=
\genfrac{}{}{}{0}{2ab-a-b+12}{2ab-a-b-2}$ is a decreasing function,
separately in $a$ and $b$, that is always strictly greater than 1.
Evaluating $\alpha$ for the relevant values of $(a,b)$, we have:
\begin{align*}
(a,b) = (2,3):&  \;\; \alpha = 19/5, \;\; 4 >\alpha> 3.\\
&\\
(a,b) = (2,4):&  \;\; \alpha = 22/8,\;\;  3 > \alpha> 2.\\
(a,b) = (2,5):&  \;\; \alpha =25/11, \;\;   3 > \alpha > 2.\\
(a,b) = (3,3):&  \;\; \alpha =24/10 , \;\;   3 > \alpha> 2.\\
&\\
(a,b) = (3,4):&  \;\; \alpha =29/15, \;\;   2 >\alpha > 1.\\
(a,b) = (2,6):&  \;\; \alpha =28/14, \;\;   2 = \alpha > 1.\\
\end{align*}

Part (iii) follows immediately from part (ii).
\end{proof}

\begin{thm}{} \label{g2}
In the family $G_2(a,b,i,s)$, we have

\begin{enumerate}
\item [(i)] For a fixed choice of $a$, $b$, and $i$,
the smallest possible type $t = 1 + s$ is achieved by taking
\begin{align*}
s &= \ceil\genfrac{}{}{}{0}{ab[2i-a-b+8]}{ab[ab-a-b] + 2}\; \; if
\; a \; = \; 2. \\
s &= \ceil\genfrac{}{}{}{0}{2i-a-b+8}{ab-a-b} \;  \; if\; a \;
> \; 2.
\end{align*}
\item[(ii)]
For a fixed choice of $a$ and $b$, the smallest possible type $t = 1
+ s$ is achieved by taking $i = ab/2 + 2$, in which case
\begin{align*}
s &= \ceil\genfrac{}{}{}{0}{ab[ab-a-b+12]}{ab[ab-a-b] + 2}\; \; if
\; a \; = \; 2. \\
s &= \ceil\genfrac{}{}{}{0}{ab-a-b+12}{ab-a-b} \; \; if \; a \;
> \; 2.
\end{align*}
\noindent With these choices, the type is 3 for $(a,b)$ if and only
if either $a \ge 2, b\ge14$ or $a \ge 3, b\ge8$ or $a \ge 4, b \ge
6$. The smallest values of $(a,b)$ for which the type is 4 are
(2,8), (3,6), and (4,4).
\item[(iii)] For any choice of $a$, $b$, and $i$, the type is at least 3.
\end{enumerate}
\end{thm}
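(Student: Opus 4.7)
The plan is to follow the same template used in Theorems \ref{f1}, \ref{f2}, and \ref{g1}. For part (i), I would combine Lemma \ref{best_s} with Proposition \ref{spec_4} to evaluate $s = \ceil(m_P(i_f)/m_P(j-i_f))$, where $i_f = i+2$ (single drop) and $j-i_f = ab/2 - 2$. First I check that $i+2$ lies in the main regime of Proposition \ref{spec_4}: since $i \ge ab/2+2$, the inequality $(a-2)(b-2) \ge 0$ yields $ab \ge 2a+2b-4$, hence $i+2 \ge ab/2+4 \ge a+b-3$, and so $m_P(i+2) = ab(2i-a-b+8)/2$. The case split then enters in the denominator. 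For $a \ge 3$, using $b \ge a$ together with the parity restriction $ab$ even (which excludes the borderline $(3,3)$), one gets $(a-2)(b-2) \ge 2$, so $ab/2 - 2 \ge a+b-3$ and $m_P(ab/2-2) = ab(ab-a-b)/2$. For $a = 2$ the value $ab/2-2 = b-2$ equals $q-2$ exactly, which sits in the exceptional case of Proposition \ref{spec_4}(ii), giving $m_P(b-2) = b(b-2)+1$. Forming the two ratios and clearing common factors yields the two stated formulas.

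For part (ii), the numerator $m_P(i+2)$ is non-decreasing in $i$ by Lemma \ref{h_inc} while the denominator is independent of $i$, so the minimum $s$ occurs at $i = ab/2+2$. Substitution produces the two formulas for $s$. Then the condition $s \le 2$ (equivalently type $\le 3$) becomes, in the $a \ge 3$ case, the clean inequality $(a-1)(b-1) \ge 13$, which combined with the requirements $b \ge a$, $ab$ even, yields: type $= 3$ exactly when $a \ge 3, b \ge 8$ or $a \ge 4, b \ge 6$. For $a = 2$, the ratio $b(b+10)/(b-1)^2$ decreases monotonically to $1$; direct analysis of $b(b+10) \le 2(b-1)^2$, equivalently $b^2 - 14b + 2 \ge 0$, shows $s = 2$ first at $b = 14$. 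The three critical "last failing" cases $(2,8)$, $(3,6)$, $(4,4)$ are then verified by direct substitution.

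For part (iii), I would show $s \ge 2$ always. When $a \ge 3$, the ratio $(ab-a-b+12)/(ab-a-b)$ is strictly greater than $1$ because $ab - a - b \ge (a-1)(b-1) - 1 \ge 0$ and $12 > 0$. When $a = 2$, since $i \ge b+2$ we have $b(2i-b+6) \ge b(b+10) > (b-1)^2$ for all $b \ge 2$. In either case, $s \ge 2$ and so type $= 1 + s \ge 3$.

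The main obstacle is purely the bookkeeping of the $a = 2$ versus $a \ge 3$ dichotomy: the point $ab/2 - 2$ lands precisely on the anomalous boundary $q-2$ of Proposition \ref{spec_4} exactly when $a = 2$, forcing the $+1$ correction and producing the distinct formula in the statement. No new idea is needed beyond those already exploited in Theorems \ref{f1}--\ref{g1}; the remaining work is careful arithmetic to identify the threshold pairs $(a,b)$.
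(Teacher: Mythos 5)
Your proposal is correct and follows the paper's proof in all essentials: both reduce to $s = \ceil(m_P(i_f)/m_P(j-i_f))$ via Lemma \ref{best_s}, both derive the $a=2$ versus $a>2$ dichotomy from the fact that $ab/2-2$ hits the anomalous value $q-2$ of Proposition \ref{spec_4} exactly when $a=2$, and both then minimize over $i$ at $i = ab/2+2$. The only (cosmetic) difference is that you locate the threshold pairs $(a,b)$ by solving closed-form inequalities such as $(a-1)(b-1)\ge 13$ and $b^2-14b+2\ge 0$, whereas the paper tabulates the ratios $\alpha$ and $\beta$ at the borderline values and invokes monotonicity.
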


\begin{proof}
For (i), we combine the various lemmas in this section with Lemma
\ref{spec_4}.  To evaluate $m_P(ab/2-2)$, we observe that if $a =
2$, $ab/2 - 2 = b-2 = a + b - 4$; but if $a > 2$, $ab/2 - 2 =
a(b-2)/2 + a -2 > (b-2) + a -2 = a + b - 4$.\\

If $a = 2$,
\begin{align*}
s &= \ceil \genfrac{}{}{}{0}{m_P(i_f)}{m_P(j-i_f)}\\
 &= \ceil \genfrac{}{}{}{0}{m_P(i + 2)}{m_P(ab/2 - 2)}\\
&= \ceil \genfrac{}{}{}{0}{ab[2(i+2)-a -b +4]/2}{ab[2(ab/2 -2)-a-b+4]/2 + 1}\\
&= \ceil \genfrac{}{}{}{0}{ab[2i-a -b +8]}{ab[ab-a-b] + 2}.
\end{align*}

If $a > 2$,
\begin{align*}
s &= \ceil \genfrac{}{}{}{0}{m_P(i_f)}{m_P(j-i_f)}\\
 &= \ceil \genfrac{}{}{}{0}{m_P(i + 2)}{m_P(ab/2 - 2)}\\
&= \ceil \genfrac{}{}{}{0}{ab[2(i+2)-a -b +4]/2}{ab[2(ab/2 -2)-a-b+4]/2}\\
&= \ceil \genfrac{}{}{}{0}{2i-a -b +8}{ab-a-b}.
\end{align*}

For (ii), we observe that $s$ is an increasing function of $i$, and
we substitute the smallest permissible value $i = ab/2 + 2$ to
obtain, for $a = 2$, $t = 1 + s = 1 +
\ceil\genfrac{}{}{}{0}{ab(ab-a-b+12)}{ab(ab-a-b) + 2}$; and for $a >
2$, $t = 1 + s = 1 + \ceil\genfrac{}{}{}{0}{ab-a-b+12}{ab-a-b}$. We
let $\alpha := \genfrac{}{}{}{0}{ab(ab-a-b+12)}{ab(ab-a-b) + 2}$ and
$\beta := \genfrac{}{}{}{0}{ab-a-b+12}{ab-a-b}$. We observe that,
for all values of $a$ and $b$, $\alpha > 1$ and $\beta > 1$.  In
addition, $\alpha$ and $\beta $ are both decreasing functions,
separately in $a$ and $b$. Evaluating $\alpha$ and $\beta$ for the
relevant values of $(a,b)$ (recalling that either $a$ or $b$ must be
even), we have:
\begin{align*}
&(a,b) = (2,6):  \;\; \alpha = 192/50, \;\; 4 > \alpha > 3.\\
&(a,b) = (2,7):  \;\; \alpha = 238/72,\;\;4 > \alpha > 3.\\
&(a,b) = (3,4):  \;\; \beta = 17/5, \;\;4 > \beta > 3.\\
&\\
&(a,b) = (2,8):  \;\; \alpha = 288/98,\;\;3 > \alpha > 2.\\
&(a,b) = (2,13):  \;\; \alpha = 598/288,\;\;3 > \alpha > 2.\\
&(a,b) = (3,6):  \;\; \beta = 21/9, \;\;3 > \beta  > 2.\\
&(a,b) = (4,4):  \;\; \beta =20/8 , \;\;3 > \beta > 2.\\
&(a,b) = (4,5):  \;\; \beta = 23/11, \;\;3 > \beta  > 2.\\
&\\
&(a,b) = (2,14):  \;\; \alpha = 672/338,\;\;2 > \alpha >1.\\
&(a,b) = (3,8):  \;\; \beta = 25/13, \;\;2 > \beta  >1.\\
&(a,b) = (4,6):  \;\; \beta = 26/14, \;\;2 > \beta >1.
\end{align*}

Part (iii) follows immediately from part (ii).
\end{proof}

\begin{thm}{} \label{h1}
In the family $H_1(a,b,c,i,s)$
\begin{enumerate}
\item [(i)] For a fixed choice of $a$, $b$, $c$, and $i$,
the smallest possible type $t = 1 + s$ is achieved by taking $s =
\ceil \genfrac{}{}{}{0}{2i-a-b-c+11}{2abc-a-b-c-1}.$
\item[(ii)] For a fixed choice of $a$, $b$, and $c$,  the smallest possible
type $t = 1 + s$ is achieved by taking $i = abc$, $s = \ceil
\genfrac{}{}{}{0}{2abc-a-b-c+11}{2abc-a-b-c-1}$.  With these
choices, the type is 3 unless $(a,b,c)=(2,2,2)$, in which case the
type is 4.
\item[(iii)] For any choice of $a$, $b$, $i$, and $s$, the type is at least 3.
\end{enumerate}
\end{thm}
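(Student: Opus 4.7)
The plan is to imitate closely the proofs of Theorems \ref{f1} and \ref{g1}, since $H_1$ is also a double-drop family with $u=1$, only with $r=5$ instead of $r=3$ or $4$. The three parts will follow from (i) a direct computation of $s$ using Lemma \ref{best_s}, (ii) minimization in $i$ followed by a case analysis in $(a,b,c)$, and (iii) an immediate corollary of (ii).

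For part (i), I would start from Lemma \ref{best_s}, which says the optimal $s$ is $\ceil(m_P(i_f)/m_P(j-i_f))$. Since $H_1$ has a double drop, $i_f = i+3$, and by the lemma computing $j - i_f$ we have $j - i_f = \Delta - 3 = abc - 3$. To evaluate $m_P(abc-3)$, I would apply Proposition \ref{r-2_fold} with $r=5$, $n=3$, constraint $P = (a-1,b-1,c-1)$, so that $q = a+b+c-3$. The hypothesis $d \ge q$ becomes $abc - 3 \ge a+b+c-3$, i.e. $abc \ge a+b+c$, which holds for all $a,b,c \ge 2$ (an easy AM-GM style check). Then Proposition \ref{r-2_fold} gives
\begin{align*}
m_P(abc-3) &= \tfrac{1}{2}\,abc\,\bigl(2(abc-3) - (a+b+c) + 5\bigr) = \tfrac{1}{2}\,abc\,(2abc - a - b - c - 1),\\
m_P(i+3) &= \tfrac{1}{2}\,abc\,(2i - a - b - c + 11).
\end{align*}
Taking the ratio and applying the ceiling yields the claimed formula $s = \ceil\bigl((2i-a-b-c+11)/(2abc-a-b-c-1)\bigr)$.

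For part (ii), the numerator of the expression inside the ceiling is strictly increasing in $i$ while the denominator does not depend on $i$, so the minimum is attained at the smallest legal value $i = abc$. Substituting gives $s = \ceil(\alpha)$ where
\begin{equation*}
\alpha := \frac{2abc-a-b-c+11}{2abc-a-b-c-1} = 1 + \frac{12}{2abc-a-b-c-1}.
\end{equation*}
Two observations finish the proof. First, $\alpha > 1$ strictly (since the numerator exceeds the denominator by $12$), so $\ceil(\alpha) \ge 2$, which means the type $1+s$ is at least $3$. Second, $\ceil(\alpha) = 2$ exactly when $\alpha \le 2$, i.e. $12 \le 2abc-a-b-c-1$, i.e. $2abc \ge a+b+c+13$. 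For $(a,b,c)=(2,2,2)$ one computes $2abc-a-b-c = 10 < 13$, giving $\alpha = 21/10$ and $s=3$, so the type is $4$. For every other admissible $(a,b,c)$ with $c \ge b \ge a \ge 2$, the smallest candidates $(2,2,3), (2,3,3), (3,3,3),\ldots$ all satisfy $2abc-a-b-c \ge 13$ (e.g.\ $(2,2,3)$ gives $17$), and the inequality is monotone increasing in each variable, so $\alpha \le 2$ and $s=2$, giving type $3$.

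Part (iii) is then immediate: for any $a,b,c,i,s$ we can always improve $s$ downward to the value in (i) without increasing the type, and then improve $i$ downward to $abc$ without increasing the type, reducing to the case analyzed in (ii), where the type is shown to be at least $3$. The only nontrivial technical step is the arithmetic verification that $2abc - a - b - c \ge 13$ for every $(a,b,c) \ne (2,2,2)$ with $c \ge b \ge a \ge 2$; this is the main obstacle, but because the function $2abc - a - b - c$ is monotone increasing in each of $a,b,c$ on the region $a,b,c \ge 2$, one only has to check the minimal cases $(2,2,3)$ (and symmetric) and the failure case $(2,2,2)$ explicitly.
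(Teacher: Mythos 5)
Your proposal is correct and follows essentially the same route as the paper: Lemma \ref{best_s} plus Proposition \ref{r-2_fold} for part (i), monotonicity in $i$ and the evaluation at $(2,2,2)$ versus $(2,2,3)$ for part (ii), and (iii) as an immediate consequence. One trivial slip: for $(a,b,c)=(2,2,2)$ the value is $\alpha = 21/9$, not $21/10$, though either way $\ceil(\alpha)=3$ and the conclusion stands.
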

\begin{proof}
For (i), we combine the various lemmas in this section with Lemma
\ref{r-2_fold}.  To evaluate $m_P(abc-3)$, we observe $abc-3 \ge a+b
+c- 3 = (a-1) + (b-1) + (c-1)$.
\begin{align*}
s &= \ceil \genfrac{}{}{}{0}{m_P(i_f)}{m_P(j-i_f)}\\
 &= \ceil \genfrac{}{}{}{0}{m_P(i + 3)}{m_P(abc - 3)}\\
&= \ceil \genfrac{}{}{}{0}{ab[2(i+3)-a -b -c+5]/2}{ab[2(abc-3)-a-b-c+5]/2}\\
&= \ceil \genfrac{}{}{}{0}{2i-a-b-c + 11}{2abc-a-b-c - 1}.
\end{align*}

For (ii), we observe that $s$ is an increasing function of $i$, and
we substitute the smallest permissible value $i = abc$ to obtain\\
$t = 1 + s = 1 + \ceil \genfrac{}{}{}{0}{2abc -a-b-c +
11}{2abc-a-b-c - 1}$.  We observe that the fraction\\
$\alpha := \genfrac{}{}{}{0}{2abc -a-b-c + 11}{2abc-a-b-c - 1}$ is a
decreasing function, separately in $a$, $b$, and $c$, that is always
strictly greater than 1. Evaluating for the relevant values of
$(a,b,c)$, we have:

\begin{align*}
(a,b,c) = (2,2,2):& \;\; \alpha = 21/9,\;\; 3 > \alpha > 2.\\
(a,b,c) = (2,2,3):&  \;\; \alpha = 28/16,\;\; 2 > \alpha > 1.\\
\end{align*}

Part (iii) follows immediately from part (ii).
\end{proof}

For the family $G_3(a,b,i,s)$, we do not attempt to state a theorem with closed-form
solutions, similar to those we have proved for the other five families.  The most
serious obstacle is finding a closed-form solution of \eqref{g3_ineq}, which is
cubic in the variable $i$. Instead, we content ourselves with demonstrating the
method that led to choosing the example
$G_3(4,4,8,7)$ above.\\

We begin with some suitable choice of $a$ and $b$, in this case $a =
b = 4$, in which case $m = 5$ because
\begin{equation*}
\genfrac(){0cm}{0}{5 + 1}{2} = 15 < ab=16 < 21 =
\genfrac(){0cm}{0}{5 + 2}{2}.
\end{equation*}
\noindent Then \eqref{g3_ineq} becomes
\begin{equation*}
(16i -32) + 56 \le (i+3)(i+2)(i+1)/6.
\end{equation*}
\noindent This is false for $i=7$ since $136 > 120$, but true for
$i=8$, since $152 \le 165$, so we must choose $i \ge 8$; and the
additional requirement that $i \ge a+ b -3 = 5$ imposes no further
condition. If we choose $i = 8$, with the intention of obtaining the
smallest possible type for this choice of $a$ and $b$, the condition
on $s$ (using Proposition \ref{spec_4}) is that
\begin{equation*}
s \ge \genfrac{}{}{}{0}{m_P(i + 2)}{m_P(m-2)} =
\genfrac{}{}{}{0}{m_P(10)}{m_P(3)}
=\genfrac{}{}{}{0}{16(20-4-4+4)/2}{\genfrac(){0cm}{0}{3+3}{3} }=
\genfrac{}{}{}{0}{128}{20} = 6.4,
\end{equation*}
\noindent so we must take $s \ge 7$.

\chapter{Further Remarks}

\section{For Which Codimensions and Types are Non-Unimodal Level Algebras
Possible?}

We now return to a question raised in Chapter 1, at which time we
were not yet ready to provide justification for the answer given:
for a specified codimension $r$ and type $t$, must level algebras
necessarily be unimodal?

\begin{prop} \label{arb_t}
In codimension $3$, there exist non-unimodal level algebras for any
type $5$ or greater.  In codimensions $4$ and $5$, there exist
non-unimodal level algebras for any type $3$ or greater.
\end{prop}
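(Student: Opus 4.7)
The plan is to apply the non-unimodal families constructed in Chapter~7 and to show that the type parameter can be made arbitrary by varying the number of generators $s$. By Theorem~\ref{non_uni} every algebra in the families $F_1, G_1, H_1$ is non-unimodal, and by Lemma~\ref{type_wv} combined with Lemma~\ref{best_s}, the type of $A_{\mathcal{E}(C)\bigoplus \mathcal{F}(C')}$ equals $s + u = s + 1$ whenever
\[
\ceil \frac{m_P(i_f)}{m_P(j-i_f)} \le s \le m_P(j),
\]
where $i_f = i + 3$ for these three families since all of them exhibit double drops. The proposition thus reduces to showing that for each required type $t$, parameters can be chosen so that $s = t - 1$ lies inside this interval.

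For codimension~$3$ with $t \ge 5$, I fix $a = 21$, the value for which $F_1$ attains minimum type~$5$ by Theorem~\ref{f1}(ii). Using the explicit formulas of Proposition~\ref{spec_3}, the lower endpoint of the admissible $s$-interval is $\ceil(21(i-6)/190)$ and the upper endpoint is $m_P(i + 21) = 21(i + 12)$ for $i \ge 42$. Both endpoints grow linearly in $i$, but the upper has slope $21$ while the lower has slope $21/190$; hence both tend to infinity with $i$ and the gap between them grows without bound. A direct verification shows the union of admissible intervals taken over $i \ge 42$ covers all integers $s \ge 4$, so every type $t \ge 5$ is realized.

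For codimension~$4$ and $t \ge 3$, I repeat the argument with $G_1(3, 4, i, s)$, which achieves minimum type~$3$ by Theorem~\ref{g1}(ii). For codimension~$5$ and $t \ge 3$, I use $H_1(2, 2, 3, i, s)$, which achieves minimum type~$3$ by Theorem~\ref{h1}(ii). In each case Propositions~\ref{spec_4} and \ref{r-2_fold} supply the endpoint formulas, and the same growth analysis applies: the upper endpoint has slope $\Delta = ab$ (respectively $abc$) in $i$, while the lower endpoint has slope of order $\Delta/m_P(j-i_f)$, which is much smaller since $m_P(j-i_f) = 90$ and $96$ respectively. Consequently the admissible intervals again cover every integer $s$ from the minimum value upward.

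I do not anticipate a substantive obstacle; the argument is mostly bookkeeping with the formulas for $m_P(d)$. The only point requiring care is confirming that the admissible intervals $I(i)$ and $I(i+1)$ overlap, rather than leaving gaps in the integers they cover. This is immediate: when $i$ is incremented by~$1$, the upper endpoint $m_P(j)$ increases by $\Delta$ (which is $21$, $12$, and $12$ in the three cases), while the lower endpoint increases by at most $1$. Since $\Delta > 1$, consecutive intervals overlap heavily, and the union of all $I(i)$ is the set of integers from the minimum threshold onward, which yields the desired set of types.
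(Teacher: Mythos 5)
Your argument is correct, and it reaches the conclusion by a genuinely different route than the paper. You hold the $P$-parameters fixed at the values realizing the minimum type ($a=21$ for $F_1$; $(a,b)=(3,4)$ for $G_1$; $(a,b,c)=(2,2,3)$ for $H_1$) and let the initial degree $i$ grow, showing that the admissible window $\ceil\genfrac{}{}{}{0}{m_P(i_f)}{m_P(j-i_f)} \le s \le m_P(j)$ sweeps out every integer from the minimal threshold onward, because the upper endpoint advances by $\Delta>1$ per unit increase of $i$ while the lower endpoint advances by $\Delta/m_P(j-i_f)<1$ (your denominators $190$, $90$, $96$ check out against Propositions \ref{spec_3}, \ref{spec_4}, and \ref{r-2_fold}), so consecutive windows overlap. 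The paper does the opposite: it fixes $i$ at its minimum value relative to the $P$-parameters ($i=2a$, $ab+1$, $abc$) and enlarges $a$ (resp.\ $a,b$, resp.\ $a,b,c$) until $m_P(j)\ge t$, then invokes Theorems \ref{f1}, \ref{g1}, and \ref{h1} to see that the lower threshold on $s$ never exceeds $4$ (resp.\ $2$), so $s=t-1$ is always admissible. Both parametrizations are legitimate traversals of the same two-dimensional parameter space; the paper's yields socle degree growing roughly like $\sqrt{t}$, whereas yours keeps $j-i$ fixed and gives $j$ growing linearly in $t$, at the cost of the (correctly handled) interval-overlap bookkeeping. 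The only points you leave implicit are the starting values $i\ge 13$ and $i\ge 12$ for $G_1$ and $H_1$ and the fact that the type formula $s+u$ requires $s\le m_P(j)$ via Lemma \ref{type_wv}, both of which your setup already accommodates.
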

\begin{proof}
For codimension 3, we let $t \ge 5$ and describe a procedure for finding a member of
$F_1(a,i,s)$ of type $t$.  We choose $a \ge 21$ such that $m_P(3a) \ge t$, that is,
$\genfrac{}{}{}{0}{a(5a+3)}{2} \ge t$.  We apply Theorem \ref{non_uni} for $F_1$,
with $i = 2a$ and $s=t-1$. That is, we let $\mathcal{E}(C) := \langle
f_1(C),...,f_s(C)\rangle $, where $f_1(C),...,f_s(C)$ are general elements of
$\mathcal{W}_{\mathcal{M}_P(3a)} \subseteq \mathcal{D}_{3a}$, and we let
$\mathcal{F}(C') := \langle g_1(C')\rangle $, where $g_1(C')$ is a general element
of $\mathcal{D}_{3a}$.  To ensure that $A_{\mathcal{E}(C)\bigoplus\mathcal{F}(C')}$
is non-unimodal of type $1 + s = 1 + (t-1) = t$ for general $C$ and $C'$, we must
check that the parameters $a, i, s$ are permissible. It is immediate that $a \ge 4$
and $i \ge 2a$, and by Lemma \ref{best_s} we must check that $s \ge
m_P(2a+3)/m_P(a-3)$.  But, having assumed that $a \ge 21$, we know from Theorem
\ref{f1} that $4 \ge m_P(2a+3)/m_P(a-3)$, and we have also assumed that $s = :t-1
\ge 4.$  Thus the values of the parameters $a, i$, and $s$ are permissable, so
$A_{\mathcal{E}(C)\bigoplus\mathcal{F}(C')}$ is indeed a member of the
family $F_1$, hence non-unimodal by Theorem \ref{non_uni}.\\

Analogous constructions are available in codimensions 4 and 5. For codimension 4, we
let $t \ge 3$ and we find a member of $G_1(a,b,i,s)$ of given type $t$. We choose $b
\ge a \ge 4$ such that $m_P(2ab+1) \ge t$, that is, $\genfrac{}{}{}{0}{ab(4ab
-a-b+6)}{2} \ge t$, and we again apply Theorem \ref{non_uni}, this time for $G_1$,
with $i=ab+1$ and $s = t-1$. That is, we let $\mathcal{E}(C) := \langle
f_1(C),...,f_s(C)\rangle $, where $f_1(C),...,f_s(C)$ are general elements of
$\mathcal{W}_{\mathcal{M}_P(2ab+1)} \subseteq \mathcal{D}_{2ab+1}$, and we let
$\mathcal{F}(C') := \langle g_1(C')\rangle $, where $g_1(C')$ is a general element
of $k[x,y,z]_{2ab+1}\subseteq \mathcal{D}_{2ab+1}$. To ensure that
$A_{\mathcal{E}(C)\bigoplus\mathcal{F}(C')}$ is non-unimodal of type $1 + s = 1 +
(t-1) = t$ for general $C$ and $C'$, we must check that the parameters $a, b, i, s$
are permissible. It is immediate that $b \ge a \ge 2$ and $i \ge ab + 1$, and by
Lemma \ref{best_s} we must check that $s \ge m_P(ab+4)/m_P(ab-3)$.  But, having
assumed that $b \ge a \ge 4$, we know from Theorem \ref{g1} that $2 \ge
m_P(ab+4)/m_P(ab-3)$, and we have also assumed that $s := t-1 \ge 2.$  Thus the
values of the parameters $a, b, i$, and $s$ are permissable, so
$A_{\mathcal{E}(C)\bigoplus\mathcal{F}(C')}$ is indeed a member of the
family $G_1$, hence non-unimodal by Theorem \ref{non_uni}.\\

For codimension 5, we let $t \ge 3$ and we find a member of $H_1(a,b,c,i,s)$ of
given type $t$. We choose $ c \ge b \ge a \ge 3$ such that $m_P(2abc) \ge t$, that
is,\\ $\genfrac{}{}{}{0}{abc(4abc -a-b-c+5)}{2} \ge t$, and we again apply Theorem
\ref{non_uni}, this time for $H_1$, with $i=abc$ and $s = t-1$. That is, we let
$\mathcal{E}(C) := \langle f_1(C),...,f_s(C)\rangle $, where $f_1(C),...,f_s(C)$ are
general elements of $\mathcal{W}_{\mathcal{M}_P(2abc)} \subseteq
\mathcal{D}_{2abc}$, and we let $\mathcal{F}(C') := \langle g_1(C')\rangle $, where
$g_1(C')$ is a general element of $k[x,y,z]_{2abc}\subseteq \mathcal{D}_{2abc}$. To
ensure that $A_{\mathcal{E}(C)\bigoplus\mathcal{F}(C')}$ is non-unimodal of type $1
+ s = 1 + (t-1) = t$ for general $C$ and $C'$, we must check that the parameters $a,
b, c, i, s$ are permissible. It is immediate that $c \ge b \ge a \ge 2$ and $i \ge
abc$, and by Lemma \ref{best_s} we must check that $s \ge m_P(abc+3)/m_P(abc-3)$.
But, having assumed that $c \ge b \ge a \ge 3$, we know from Theorem \ref{h1} that
$2 \ge m_P(abc+3)/m_P(abc-3)$, and we have also assumed that $s := t-1 \ge 2.$ Thus
the values of the parameters $a, b, c, i$, and $s$ are permissable, so
$A_{\mathcal{E}(C)\bigoplus\mathcal{F}(C')}$ is indeed a member of the
family $H_1$, hence non-unimodal by Theorem \ref{non_uni}.\\
\end{proof}

\begin{prop} \label{ext_mech}
Let $\mathcal{W} = \langle f_1,...,f_t\rangle  \subseteq k[x_1,...,x_r]_j$ be a
nonzero vector subspace of dimension $t$. Define $\mathcal{V} = \langle f_1,...,f_t,
x_{r+1}^{j}\rangle \subseteq k[x_1,...,x_r,x_{r+1}]_j$. Then $A_{\mathcal{W}}$ is a
level algebra of codimension $r$, type $t$, and socle degree $j$; and
$A_{\mathcal{V}}$ is a level algebra of codimension $r+1$, type $t+1$, and  socle
degree $j$.  For $d = 1,...,j$, $h_{\mathcal{V}}(d) = h_{\mathcal{W}}(d) + 1$.
\end{prop}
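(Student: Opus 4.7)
The plan is as follows. The statements about $A_\mathcal{W}$ and $A_\mathcal{V}$ being level of socle degree $j$ come for free from Matlis duality: by Lemma~\ref{Matlis_bij} (or Theorem~\ref{matlis_duality}), for any nonzero vector subspace $\mathcal{W}\subseteq\mathcal{D}_j$, the quotient $R/\mathrm{Ann}_R(\mathcal{W})$ is level of socle degree $j$. So the substantive content is verifying (a) the types, (b) the codimensions, and (c) the Hilbert function identity.

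For the types I would use Lemma~\ref{hilb_char}: the type of $A_\mathcal{W}$ equals $h_\mathcal{W}(j)=\dim_k(R_0*\mathcal{W})=\dim_k\mathcal{W}=t$, and similarly for $\mathcal{V}$ I need $\dim_k\mathcal{V}=t+1$. The latter is immediate because $x_{r+1}^j$ involves the new variable $x_{r+1}$ while every $f_i$ lies in $k[x_1,\dots,x_r]_j$, so $x_{r+1}^j\notin\langle f_1,\dots,f_t\rangle$. For codimension, I must check that no nonzero linear form $L=c_1 X_1+\dots+c_{r+1}X_{r+1}$ annihilates all of $\mathcal{V}$. If $c_{r+1}\ne 0$, then $L*x_{r+1}^j = jc_{r+1}x_{r+1}^{j-1}\ne 0$; if $c_{r+1}=0$, then $L$ reduces to a linear form in $X_1,\dots,X_r$ and its nonannihilation of some $f_i$ is exactly the content of the codimension-$r$ hypothesis on $A_\mathcal{W}$ (which holds for $A_\mathcal{W}$ by assumption, and this same check handles its codimension).

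For the Hilbert function identity I would apply Lemma~\ref{splice_lemma} with the decomposition $\mathcal{V}=\mathcal{W}\oplus\langle x_{r+1}^j\rangle$, so that for each $d$,
\begin{equation*}
h_\mathcal{V}(d)\le h_\mathcal{W}(d)+h_{\langle x_{r+1}^j\rangle}(d),
\end{equation*}
with equality iff $R_{j-d}*\mathcal{W}\cap R_{j-d}*\langle x_{r+1}^j\rangle=\{0\}$ inside $\mathcal{D}_d=k[x_1,\dots,x_{r+1}]_d$. Since each $f_i$ involves only $x_1,\dots,x_r$, every element of $R_{j-d}*\mathcal{W}$ lies in $k[x_1,\dots,x_r]_d$. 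On the other hand, any partial derivative of $x_{r+1}^j$ of order $j-d$ is zero unless one differentiates purely with respect to $X_{r+1}$, so $R_{j-d}*\langle x_{r+1}^j\rangle = \langle x_{r+1}^d\rangle$. For $d\ge 1$ the monomial $x_{r+1}^d$ is not a polynomial in $x_1,\dots,x_r$, so the two subspaces meet trivially, and moreover $h_{\langle x_{r+1}^j\rangle}(d)=\dim_k\langle x_{r+1}^d\rangle=1$. Combining these gives $h_\mathcal{V}(d)=h_\mathcal{W}(d)+1$ for $d=1,\dots,j$.

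The only mild obstacle is the bookkeeping around the degree-$0$ case: at $d=0$ both $R_j*\mathcal{W}$ and $R_j*\langle x_{r+1}^j\rangle$ land in $\mathcal{D}_0=k$ and coincide, so the intersection is one-dimensional and the identity fails by exactly $1$; this is consistent with the statement restricting to $d\ge 1$. Everything else is a direct unwinding of the splicing lemma together with the observation that $x_{r+1}$ is a genuinely new variable whose derivatives cannot be produced from forms in the old variables.
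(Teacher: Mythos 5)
Your proof is correct and follows essentially the same route as the paper's: the identical decomposition $\mathcal{V}=\mathcal{W}\bigoplus\langle x_{r+1}^{j}\rangle$, the observation that $R_{j-d}*\langle x_{r+1}^{j}\rangle=\langle x_{r+1}^{d}\rangle$ meets $R_{j-d}*\mathcal{W}$ trivially, and the combination of Lemma \ref{splice_lemma} with Lemma \ref{hilb_char}. You are somewhat more explicit than the paper about verifying the codimension and the $d=0$ boundary case, but these are elaborations of the same argument rather than a different approach.
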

\begin{proof}
By Theorem \ref{matlis_duality}, $A_{\mathcal{W}}$ and
$A_{\mathcal{V}}$ are level algebras of the stated codimension and
socle degree, and the types are, by construction, the dimensions
respectively of $\mathcal{W}$ and $\mathcal{V}$.  We write
\begin{align*}
\mathcal{V} &= \langle f_1,...,f_t, x_{r+1}^{j}\rangle \\
 &= \langle f_1,...,f_t\rangle \bigoplus
\langle x_{r+1}^{j}\rangle \\
 &= \mathcal{W} \bigoplus \langle x_{r+1}^{j}\rangle
\end{align*}
\noindent and observe that $R_{j-d}* \langle x_{r+1}^{j}\rangle  =
\langle x_{r+1}^{d}\rangle  \subseteq \mathcal{D}_d$ is a vector
space of dimension 1 whose intersection with $R_{j-d}*\mathcal{W}$
is $\{0\}$.  By Lemmas \ref{hilb_char} and \ref{splice_lemma},
\begin{align*}
h_{\mathcal{V}}(d) &= h_{\mathcal{W}}(d) + h_{\langle x_{r+1}^{j}\rangle }(d)\\
 &=h_{\mathcal{W}}(d) + 1.
\end{align*}
\end{proof}

\begin{prop} \label{extend}
If there exists a non-unimodal level algebra $A$ of codimension $r$,
type $t$, and socle degree $j$, then there exists a non-unimodal
level algebra $A'$ of codimension $r+1$, type $t+1$, and socle
degree $j$.
\end{prop}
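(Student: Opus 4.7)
The plan is to deduce this as an essentially immediate corollary of Proposition \ref{ext_mech}. By Matlis duality (Theorem \ref{matlis_duality}), the hypothesized non-unimodal level algebra can be written as $A = A_{\mathcal{W}}$ for some $t$-dimensional subspace $\mathcal{W} \subseteq k[x_1,\ldots,x_r]_j$. Choosing any basis $f_1,\ldots,f_t$ of $\mathcal{W}$ and setting $\mathcal{V} := \langle f_1,\ldots,f_t, x_{r+1}^{j}\rangle \subseteq k[x_1,\ldots,x_{r+1}]_j$, Proposition \ref{ext_mech} immediately gives that $A' := A_{\mathcal{V}}$ is a level algebra of codimension $r+1$, type $t+1$, and socle degree $j$, with
\begin{equation*}
h_{A'}(d) = h_A(d) + 1 \quad \text{for } d = 1,\ldots,j.
\end{equation*}

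Once this is in hand, the only remaining task is to check that adding $1$ uniformly in degrees $1,\ldots,j$ preserves non-unimodality. By definition, non-unimodality of $A$ furnishes degrees $d_1 < d_2 < d_3$ with $h_A(d_1) > h_A(d_2)$ and $h_A(d_2) < h_A(d_3)$. I would next observe that we may take $d_1 \geq 1$: since $h_A(0) = 1$, if $d_1 = 0$ were forced then $h_A(d_2) = 0$, but $R$ is generated in degree $1$, so any graded quotient that vanishes in some degree vanishes in all higher degrees, contradicting $h_A(d_3) > h_A(d_2)$. Hence $1 \leq d_1 < d_2 < d_3 \leq j$, the shift formula applies at all three degrees, and adding $1$ to each preserves the strict inequalities, so $A'$ is non-unimodal.

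The main obstacle is essentially nonexistent — all of the technical work is already packaged inside Proposition \ref{ext_mech} (which in turn rests on Matlis duality and Lemma \ref{splice_lemma}). The only subtlety worth spelling out is the small lemma above confirming that the witness degrees for non-unimodality lie in the range where the Hilbert function actually shifts by $+1$.
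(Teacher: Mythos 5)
Your proof is correct and takes essentially the same route as the paper, whose entire argument is to invoke Proposition \ref{ext_mech} with $A = A_{\mathcal{W}}$ and $A' = A_{\mathcal{V}}$ and declare the non-unimodality immediate. Your extra step verifying that the witness degrees $d_1 < d_2 < d_3$ can be taken in $\{1,\ldots,j\}$ (the range where the $+1$ shift actually holds, since $h_{A'}(0)=h_A(0)=1$) is a genuine edge case the paper silently skips, and you handle it correctly.
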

\begin{proof}
This follows immediately from the construction of the previous
proposition. Writing $A = A_{\mathcal{W}}$, take $A' =
A_{\mathcal{V}}$.
\end{proof}

We next cite a result of D. Bernstein from \cite{BI1}, which we
restate in our own notation.
\begin{thm}{}
Let $r = 5, R = k[X_1,...,X_5], \mathcal{D}= k[x_1,...,x_5]$.
Consider the family of vector subspaces $\mathcal{W} = \langle x_4f
+ x_5g\rangle  \subseteq \mathcal{D}_{16}$, where $f, g \in
k[x_1,x_2,x_3]_{15}$. Then for general $f$ and $g$,
$A_{\mathcal{W}}$ is a level algebra of type 1 and socle degree 16,
with Hilbert function
(1,5,12,22,35,51,70,91,90,91,70,51,35,22,12,5,1).  That is,
$A_{\mathcal{W}}$ is a non-unimodal Gorenstein algebra..
\end{thm}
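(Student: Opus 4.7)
The plan is to use Matlis duality together with the cropped-matrix machinery developed earlier in the paper to reduce the computation to a single codimension-$3$ Hilbert function. Since $\dim_k\mathcal{W} = 1$, Theorem \ref{matlis_duality} shows that $A_\mathcal{W}$ is level of socle degree $16$ and type $1$, hence Gorenstein. By Lemma \ref{hilb_char}, $h_\mathcal{W}(d) = \dim_k R_e * F$ where $e := 16 - d$ and $F := x_4 f + x_5 g \in \mathcal{D}_{16}$. Writing $R' := k[X_1,X_2,X_3]$, $\mathcal{D}' := k[x_1,x_2,x_3]$, and $S_k := \mathcal{D}'_k$, the polynomial $F$ is linear in $x_4, x_5$ with no $x_4 x_5$ term, so $X^E * F$ vanishes whenever the $(X_4, X_5)$-exponents of $X^E$ lie outside $\{(0,0), (1,0), (0,1)\}$. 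This yields the internal decomposition
\[
R_e * F = V_A \oplus V_B,
\]
with $V_A := \{x_4(P*f) + x_5(P*g) : P \in R'_e\} \subseteq x_4 S_{d-1} \oplus x_5 S_{d-1}$ and $V_B := R'_{e-1}*f + R'_{e-1}*g \subseteq S_d$; the two ambient monomial subspaces of $\mathcal{D}_d$ meet only in $\{0\}$, so $h_\mathcal{W}(d) = \dim V_A + \dim V_B$.

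The key step is to interpret both $\dim V_A$ and $\dim V_B$ as values at two different degrees of a single codimension-$3$ Hilbert function, namely that of $R'/\mathrm{Ann}_{R'}(\langle f,g\rangle)$; call this Hilbert function $h'$. For $V_B$, Lemma \ref{hilb_char} applied in codimension $3$ gives $\dim V_B = h'(d)$ directly. For $V_A$, consider the linear map $\Psi : R'_e \to S_{d-1} \oplus S_{d-1}$, $P \mapsto (P*f, P*g)$; composing with the isomorphism $(a,b) \mapsto x_4 a + x_5 b$ identifies $V_A$ with $\mathrm{image}(\Psi)$, so $\dim V_A = \rank(\Psi)$. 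A routine re-indexing then shows that $\rank(\Psi)$ equals the rank of the cropped coefficient matrix of Definition \ref{cropped} for the family $\mathcal{E}' := \langle f, g\rangle \subseteq \mathcal{D}'_{15}$ at degree $e$ and derivative order $d-1$: under the natural identification, the entry at position $((\alpha, i), \beta)$ in both matrices is the coefficient of $x^{\alpha + \beta}$ in $f_i$ (up to a positive integer), where $\alpha$ and $\beta$ are three-variable multi-indices of degrees $d-1$ and $e$ respectively, and $i \in \{1,2\}$. Hence by Corollary \ref{crop_rank_c}, $\dim V_A = h'(e)$.

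With both summands reduced to $h'$, I would apply Theorem \ref{quoted} in codimension $3$ (splicing $\langle g\rangle$ onto $\langle f\rangle$ with $g$ general, using Lemma \ref{zar_comp} to arrange all genericity conditions on $f$ and $g$ simultaneously) together with Theorem \ref{cat_max_rank} to compute
\[
h'(k) = \min\bigl(2\tbinom{17-k}{2},\ \tbinom{k+2}{2}\bigr).
\]
Substituting $k = d$ for $V_B$ and $k = e = 16-d$ for $V_A$ and summing yields the closed form
\[
h_\mathcal{W}(d) = \min\bigl(2\tbinom{e+1}{2},\ \tbinom{d+2}{2}\bigr) + \min\bigl(2\tbinom{d+1}{2},\ \tbinom{e+2}{2}\bigr),
\]
which is symmetric under $d \leftrightarrow e$, as a Gorenstein Hilbert function must be. A direct evaluation for $d = 0, 1, \ldots, 16$ reproduces the claimed vector; in particular, at $d = 8$ both minima equal $\binom{10}{2} = 45$, giving $h_\mathcal{W}(8) = 90$, whereas at $d = 7, 9$ the minima take the values $36$ and $55$ in opposite orders, giving $h_\mathcal{W}(7) = h_\mathcal{W}(9) = 91$ and producing the characteristic non-unimodal dip.

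The main obstacle is the identification in the second paragraph: recognizing that $\rank(\Psi)$ coincides with a cropped-matrix rank for the \emph{same} family $\langle f, g\rangle$ arising in $V_B$, but evaluated at a different derivative order. Once this dual interpretation is secured, the whole argument reduces to a mechanical application of Theorems \ref{quoted} and \ref{cat_max_rank}, and the non-unimodal drop at $d = 8$ follows from immediate arithmetic.
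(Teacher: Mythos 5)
The paper does not prove this statement at all: it is quoted as a result of Bernstein from \cite{BI1} and stamped with a $\square$, so there is no internal proof to compare against. Your argument is, as far as I can check, correct and genuinely fills that gap using the paper's own machinery. The decomposition $R_e*F = V_A \oplus V_B$ is right (the monomials $X^E$ with $(e_4,e_5)\notin\{(0,0),(1,0),(0,1)\}$ kill $F$, and the two ambient monomial subspaces of $\mathcal{D}_d$ are disjoint), the identification $\dim V_B = h'(d)$ is immediate from Lemma \ref{hilb_char} in codimension $3$, and the key step $\dim V_A = h'(e)$ is the catalecticant transpose symmetry: the matrix of $\Psi$ and the matrix of $(Q_1,Q_2)\mapsto Q_1*f+Q_2*g$ have $((D,i),E)$ and $(E,(D,i))$ entries that are both positive multiples of the coefficient $z^{(i)}_{D+E}$, so they are transposes up to positive diagonal scalings and have equal rank; your re-indexing through Definition \ref{cropped} and Corollary \ref{crop_rank_c} says exactly this. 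I verified the resulting closed form $h_{\mathcal{W}}(d)=h'(d)+h'(16-d)$ with $h'(k)=\min\bigl(2\binom{17-k}{2},\binom{k+2}{2}\bigr)$ reproduces the stated $h$-vector, including $h(7)=36+55=91$, $h(8)=45+45=90$, $h(9)=55+36=91$.

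One refinement worth making: you compute $h'$ by invoking Theorems \ref{quoted} and \ref{cat_max_rank}, both of which the paper quotes without proof and explicitly declines to rely on. This is not necessary. For the unconstrained two-generator family $\langle f,g\rangle\subseteq\mathcal{D}'_{15}$, every entry of the relevant cropped matrix is nonzero, so every maximal square submatrix satisfies the hypotheses of Lemma \ref{pv_1} and the matrix has maximal rank $\min\bigl(2\binom{17-k}{2},\binom{k+2}{2}\bigr)$; Lemma \ref{general_max} and Theorem \ref{omnibus} then give $h'(k)$ for general $(f,g)$ in the product parameter space directly, which also cleans up the genericity bookkeeping (finitely many dense open conditions on the pair $(f,g)$, combined by Lemma \ref{zar_comp}). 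With that substitution your proof rests entirely on results the paper actually proves.
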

$\hspace{\fill} \square$

\begin{prop} \label{big_non}
Given integers $r \ge 5$ and $t \ge 1$, there exists a non-unimodal
level algebra of codimension $r$ and type $t$.
\end{prop}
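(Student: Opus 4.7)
The plan is to case-split on where $t$ lies relative to $r$ and to link the target $(r, t)$ back to one of three base sources of non-unimodal level algebras via repeated applications of Proposition \ref{extend}. The three sources are: Proposition \ref{arb_t}, which produces non-unimodal level algebras in codimension 5 of every type $t_0 \ge 3$; the Bernstein--Iarrobino theorem, cited in Chapter 1 as giving Gorenstein (type 1) non-unimodal level algebras in every codimension $r_0 \ge 5$; and a new construction, supplied within the proof, of a non-unimodal level algebra in codimension 5 of type 2.

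For $t \ge r - 2$, I would start from a codim-5 type-$(t - r + 5)$ non-unimodal furnished by Proposition \ref{arb_t} (legitimate since $t - r + 5 \ge 3$) and apply Proposition \ref{extend} exactly $r - 5$ times. For $1 \le t \le r - 4$, I would start from a codim-$(r - t + 1)$ type-1 Gorenstein non-unimodal furnished by Bernstein--Iarrobino (legitimate since $r - t + 1 \ge 5$) and apply Proposition \ref{extend} exactly $t - 1$ times. These two ranges cover every $(r, t)$ except the diagonal $t = r - 3$, which I would handle by producing a non-unimodal codim-5 type-2 algebra and then applying \ref{extend} exactly $r - 5$ times to reach codim $r$ and type $r - 3$.

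For the codim-5, type-2 construction, let $\mathcal{W}_B := \langle x_4 f + x_5 g \rangle \subseteq \mathcal{D}_{16}$ with general $f, g \in k[x_1, x_2, x_3]_{15}$ (Bernstein's data, socle degree $j = 16$), and set $\mathcal{W}_2 := \mathcal{W}_B + \langle x_4^{16} \rangle$. Since $x_4^{16} \notin \mathcal{W}_B$ for general $f, g$, the sum is direct and $\dim \mathcal{W}_2 = 2$; by Theorem \ref{matlis_duality}, $A_{\mathcal{W}_2}$ is a level algebra of type 2, and its codimension is 5 because $\mathcal{W}_2 \supseteq \mathcal{W}_B$ and no $X_i$ annihilates $\mathcal{W}_B$. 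The nontrivial step is checking non-unimodality of $A_{\mathcal{W}_2}$, which by Lemma \ref{splice_lemma} reduces to showing $R_{16 - d} * \mathcal{W}_B \cap R_{16 - d} * \langle x_4^{16} \rangle = \{0\}$ at the critical degrees $d = 7, 8, 9$; since $R_{16 - d} * \langle x_4^{16} \rangle = \langle x_4^d \rangle$, this amounts to $x_4^d \notin R_{16 - d} * \mathcal{W}_B$.

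The main obstacle is the latter inclusion. The key structural observation is that $X_4^{a_4} X_5^{a_5} * (x_4 f + x_5 g) = 0$ whenever $a_4 + a_5 \ge 2$, since $f$ and $g$ involve only $x_1, x_2, x_3$; the surviving cases $(a_4, a_5) \in \{(0,0), (1,0), (0,1)\}$ produce elements lying respectively in $x_4 \cdot k[x_1, x_2, x_3]_{d-1} + x_5 \cdot k[x_1, x_2, x_3]_{d-1}$, in $k[x_1, x_2, x_3]_d$, and in $k[x_1, x_2, x_3]_d$. Hence every element of $R_{16-d} * \mathcal{W}_B$ lies in the subspace $k[x_1, x_2, x_3]_d + x_4 \cdot k[x_1, x_2, x_3]_{d-1} + x_5 \cdot k[x_1, x_2, x_3]_{d-1}$ of $\mathcal{D}_d$. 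For $d \ge 2$, the monomial $x_4^d = x_4 \cdot x_4^{d-1}$ is not in this subspace because $x_4^{d-1} \notin k[x_1, x_2, x_3]_{d-1}$. Thus $h_{\mathcal{W}_2}(d) = h_{\mathcal{W}_B}(d) + 1$ for $d \ge 2$, which at $d = 7, 8, 9$ yields $h_{\mathcal{W}_2}(7) = 92$, $h_{\mathcal{W}_2}(8) = 91$, $h_{\mathcal{W}_2}(9) = 92$, preserving Bernstein's dip and completing the proof.
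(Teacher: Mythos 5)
Your proof is correct and follows the paper's architecture: reduce to boundary cases via Proposition \ref{extend}, with Proposition \ref{arb_t} covering types $\ge 3$ in codimension 5, the Bernstein example covering type 1, and a splice of the Bernstein example with a pure power of a ``new'' variable covering type 2. (You adjoin $x_4^{16}$ where the paper adjoins $x_5^{16}$ --- immaterial by symmetry --- and your verification that $x_4^d \notin R_{16-d}*\langle x_4f+x_5g\rangle$ via the containment $R_{16-d}*\langle x_4f+x_5g\rangle \subseteq k[x_1,x_2,x_3]_d + x_4\,k[x_1,x_2,x_3]_{d-1}+x_5\,k[x_1,x_2,x_3]_{d-1}$ is exactly the right computation; combined with Lemma \ref{splice_lemma} it gives $h_{\mathcal{W}_2}(d)=h_{\mathcal{W}_B}(d)+1$ at the critical degrees, preserving the dip.) The one place you genuinely diverge is the range $1 \le t \le r-4$: you invoke Bernstein--Iarrobino for non-unimodal Gorenstein algebras in \emph{every} codimension $\ge 5$, but the theorem actually quoted in this chapter covers only $r=5$; the stronger statement appears only in the historical remarks of Chapter 1. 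The paper keeps this leg self-contained by exhibiting the single generator $x_4f + x_5g + x_6^{16} + \cdots + x_r^{16} \in \mathcal{D}_{16}$, whose annihilator is a type-1 non-unimodal level algebra of codimension $r$ by the same splicing argument (Lemmas \ref{splice_lemma} and \ref{hilb_char}). Your route is legitimate if you are content to cite \cite{BI1} for the general-codimension Gorenstein result; otherwise supply that construction, which costs only one more computation of the kind you already carried out for the type-2 case.
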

\begin{proof}
By Proposition \ref{extend}, it is enough to demonstrate a
non-unimodal level algebra (a) when $r=5$, for any $t$, and (b) when
$t=1$, for any $r$.\\

By virtue of the Bernstein example and Proposition \ref{arb_t}, to establish (a) it
only remains to consider the case $r=5$, $t=2$. For this, we modify the Bernstein
example so that $\mathcal{V} = \langle x_4f + x_5g, x_5^{16}\rangle $.  Then, for $d
\ge 2$,
\begin{align*}
h_{\mathcal{V}}(d) &= h_{\langle x_4f +x_5g\rangle }(d) + h_{\langle x_5^{16}\rangle }(d)\\
 &=h_{\mathcal{W}}(d) + 1.
\end{align*}
\noindent for exactly the same reasons as in the proof of
Proposition \ref{ext_mech}.  So $A_{\mathcal{V}}$ is non-unimodal.\\

To establish (b) for codimension $r > 5$, we modify the Bernstein
example in a different way.  This time, we let $\mathcal{V} =
\langle x_4f + x_5g + x_6^{16} + ... + x_r^{16}\rangle $, and then
for $d = 2,...,j-1$,

\begin{align*}
h_{\mathcal{V}}(d) &= \dim_kR_e*\langle x_4f+ x_5g + x_6^{16} + ... + x_r^{16}\rangle \\
&=\dim_kR_e*\langle x_4f+ x_5g \rangle  + \dim_kR_e*\langle x_6^{16} + ... + x_r^{16}\rangle \\
 &=h_{\mathcal{W}}(d) + (r-5),
\end{align*}
\noindent once again by Lemma \ref{splice_lemma} and Lemma
\ref{hilb_char}. So, again, $A_{\mathcal{V}}$ is non-unimodal.
\end{proof}

We remark that Propositions \ref{arb_t} and \ref{big_non} were used
in Chapter 1 to list the codimensions and types for which
non-unimodal level algebras are known to exist.

\section{Minimal Socle Degree}

For non-unimodal level algebras of given codimension $r$ and type
$t$, we have no actual methods for determining what the lowest
possible socle degree $j$ might be.  We make several remarks about
interesting cases.\\

For $r=3, t=5$, the only known non-unimodals come from the family
$F_1(a,i,s)$.  For these, we see from Theorem \ref{f1} that it is
possible to achieve type 5 only for $a \ge 21$.  We have $j = i + a
\ge 2a + a = 3a$, so the smallest
known socle degree is 63.\\

For $r=4, t = 3$, we must consider the families $G_1(a,b,i,s)$ and $G_2(a,b,i,s)$.
(Logically, we ought also to consider $G_3(a,b,i,s)$, but this family is not known
to yield algebras of type 3). By arguments analogous to the one in the previous
paragraph, we see from Theorems \ref{g1} and \ref{g2} that we must check
$G_1(2,6,13,2)$, $G_1(3,4,13,2)$, $G_2(2,14,16,2)$, $G_2(3,8,14,2)$, and
$G_2(4,6,14,2)$, of respective socle degrees $j = 25,25,30,26,26$. Thus $j=25$
represents the smallest known socle degree,
arising from family $G_1$.\\

For $r=5, t=3$, the lowest known socle degree does not result from
family $H_1$, which can do no better than $H_1(2,2,3,12,2)$, of
socle degree 24.  Instead, we can modify the Bernstein example
$\mathcal{W} = \langle x_4f + x_5g\rangle $, discussed earlier, of a
non-unimodal Gorenstein of codimension 5.  We let $\mathcal{V} =
\langle x_4f + x_5g, x_5^{16}, x_4x_5^{15}\rangle $.  Then, for $d
\ge 2$,
\begin{align*}
h_{\mathcal{V}}(d) &= h_{\langle x_4f +x_5g\rangle }(d) + h_{\langle x_5^{16}, x_4x_5^{15}\rangle }(d)\\
 &=h_{\mathcal{W}}(d) + 2.
\end{align*}
\noindent So we can find socle degree $j = 16$.\\

For $r=5, t=4$, there are several possible sources to consider.
$H_1(2,2,2,8,3)$ gives $j=16$. We could try modifying a type-3
member of family $H_1$ by adding a generator, but the socle degree
would then be at least 24.  We could try using Proposition
\ref{ext_mech}, applied to a type-3 non-unimodal of codimension 4,
but the socle degree would be at least 25.  Finally, we could
consider another modification of the Bernstein example: we let
$\mathcal{V} = \langle x_4f + x_5g, x_5^{16}, x_4x_5^{15},
x_4^2x_5^{14}\rangle $. Then, for $d \ge 2$,
\begin{align*}
h_{\mathcal{V}}(d) &= h_{\langle x_4f +x_5g\rangle }(d) +
 h_{\langle x_5^{16}, x_4x_5^{15}, x_4^2x_5^{14}\rangle }(d)\\
 &=h_{\mathcal{W}}(d) + 3.
\end{align*}
\noindent This is another example of socle degree $j = 16$.  With
its single drop, this is certainly different from $H_1(2,2,2,8,3)$,
which has a
double drop.\\

\end{document}